\let\mathcal=\CMcal
\begin{document}
\def\sbt{\raisebox{1.2pt}{$\scriptscriptstyle\,\bullet\,$}}

\def\alp{\alpha}
\def\bet{\beta}
\def\gam{\gamma}
\def\del{\delta}
\def\eps{\epsilon}
\def\zet{\zeta}
\def\tht{\theta}
\def\iot{\iota}
\def\kap{\kappa}
\def\lam{\lambda}
\def\sig{\sigma}
\def\ome{\omega}
\def\vep{\varepsilon}
\def\vth{\vartheta}
\def\vpi{\varpi}
\def\vrh{\varrho}
\def\vsi{\varsigma}
\def\vph{\varphi}
\def\Gam{\Gamma}
\def\Del{\Delta}
\def\Tht{\Theta}
\def\Lam{\Lambda}
\def\Sig{\Sigma}
\def\Ups{\Upsilon}
\def\Ome{\Omega}
\def\vka{\varkappa}
\def\vDe{\varDelta}
\def\vSi{\varSigma}
\def\vTh{\varTheta}
\def\vGm{\varGamma}
\def\vOm{\varOmega}
\def\vPi{\varPi}
\def\vPh{\varPhi}
\def\vPs{\varPsi}
\def\vUp{\varUpsilon}
\def\vXi{\varXi}

\def\frka{{\mathfrak a}}    \def\frkA{{\mathfrak A}}
\def\frkb{{\mathfrak b}}    \def\frkB{{\mathfrak B}}
\def\frkc{{\mathfrak c}}    \def\frkC{{\mathfrak C}}
\def\frkd{{\mathfrak d}}    \def\frkD{{\mathfrak D}}
\def\frke{{\mathfrak e}}    \def\frkE{{\mathfrak E}}
\def\frkf{{\mathfrak f}}    \def\frkF{{\mathfrak F}}
\def\frkg{{\mathfrak g}}    \def\frkG{{\mathfrak G}}
\def\frkh{{\mathfrak h}}    \def\frkH{{\mathfrak H}}
\def\frki{{\mathfrak i}}    \def\frkI{{\mathfrak I}}
\def\frkj{{\mathfrak j}}    \def\frkJ{{\mathfrak J}}
\def\frkk{{\mathfrak k}}    \def\frkK{{\mathfrak K}}
\def\frkl{{\mathfrak l}}    \def\frkL{{\mathfrak L}}
\def\frkm{{\mathfrak m}}    \def\frkM{{\mathfrak M}}
\def\frkn{{\mathfrak n}}    \def\frkN{{\mathfrak N}}
\def\frko{{\mathfrak o}}    \def\frkO{{\mathfrak O}}
\def\frkp{{\mathfrak p}}    \def\frkP{{\mathfrak P}}
\def\frkq{{\mathfrak q}}    \def\frkQ{{\mathfrak Q}}
\def\frkr{{\mathfrak r}}    \def\frkR{{\mathfrak R}}
\def\frks{{\mathfrak s}}    \def\frkS{{\mathfrak S}}
\def\frkt{{\mathfrak t}}    \def\frkT{{\mathfrak T}}
\def\frku{{\mathfrak u}}    \def\frkU{{\mathfrak U}}
\def\frkv{{\mathfrak v}}    \def\frkV{{\mathfrak V}}
\def\frkw{{\mathfrak w}}    \def\frkW{{\mathfrak W}}
\def\frkx{{\mathfrak x}}    \def\frkX{{\mathfrak X}}
\def\frky{{\mathfrak y}}    \def\frkY{{\mathfrak Y}}
\def\frkz{{\mathfrak z}}    \def\frkZ{{\mathfrak Z}}

\def\cal{\fam2}
\def\cala{{\cal A}}
\def\calb{{\cal B}}
\def\calc{{\cal C}}
\def\cald{{\cal D}}
\def\cale{{\cal E}}
\def\calf{{\cal F}}
\def\calg{{\cal G}}
\def\calh{{\cal H}}
\def\cali{{\cal I}}
\def\calj{{\cal J}}
\def\calk{{\cal K}}
\def\call{{\cal L}}
\def\calm{{\cal M}}
\def\caln{{\cal N}}
\def\calo{{\cal O}}
\def\calp{{\cal P}}
\def\calq{{\cal Q}}
\def\calr{{\cal R}}
\def\cals{{\cal S}}
\def\calt{{\cal T}}
\def\calu{{\cal U}}
\def\calv{{\cal V}}
\def\calw{{\cal W}}
\def\calx{{\cal X}}
\def\caly{{\cal Y}}
\def\calz{{\cal Z}}

\def\AA{{\mathbb A}}
\def\BB{{\mathbb B}}
\def\CC{{\mathbb C}}
\def\DD{{\mathbb D}}
\def\EE{{\mathbb E}}
\def\FF{{\mathbb F}}
\def\GG{{\mathbb G}}
\def\HH{{\mathbb H}}
\def\II{{\mathbb I}}
\def\JJ{{\mathbb J}}
\def\KK{{\mathbb K}}
\def\LL{{\mathbb L}}
\def\MM{{\mathbb M}}
\def\NN{{\mathbb N}}
\def\OO{{\mathbb O}}
\def\PP{{\mathbb P}}
\def\QQ{{\mathbb Q}}
\def\RR{{\mathbb R}}
\def\SS{{\mathbb S}}
\def\TT{{\mathbb T}}
\def\UU{{\mathbb U}}
\def\VV{{\mathbb V}}
\def\WW{{\mathbb W}}
\def\XX{{\mathbb X}}
\def\YY{{\mathbb Y}}
\def\ZZ{{\mathbb Z}}

\def\bfa{{\mathbf a}}    \def\bfA{{\mathbf A}}
\def\bfb{{\mathbf b}}    \def\bfB{{\mathbf B}}
\def\bfc{{\mathbf c}}    \def\bfC{{\mathbf C}}
\def\bfd{{\mathbf d}}    \def\bfD{{\mathbf D}}
\def\bfe{{\mathbf e}}    \def\bfE{{\mathbf E}}
\def\bff{{\mathbf f}}    \def\bfF{{\mathbf F}}
\def\bfg{{\mathbf g}}    \def\bfG{{\mathbf G}}
\def\bfh{{\mathbf h}}    \def\bfH{{\mathbf H}}
\def\bfi{{\mathbf i}}    \def\bfI{{\mathbf I}}
\def\bfj{{\mathbf j}}    \def\bfJ{{\mathbf J}}
\def\bfk{{\mathbf k}}    \def\bfK{{\mathbf K}}
\def\bfl{{\mathbf l}}    \def\bfL{{\mathbf L}}
\def\bfm{{\mathbf m}}    \def\bfM{{\mathbf M}}
\def\bfn{{\mathbf n}}    \def\bfN{{\mathbf N}}
\def\bfo{{\mathbf o}}    \def\bfO{{\mathbf O}}
\def\bfp{{\mathbf p}}    \def\bfP{{\mathbf P}}
\def\bfq{{\mathbf q}}    \def\bfQ{{\mathbf Q}}
\def\bfr{{\mathbf r}}    \def\bfR{{\mathbf R}}
\def\bfs{{\mathbf s}}    \def\bfS{{\mathbf S}}
\def\bft{{\mathbf t}}    \def\bfT{{\mathbf T}}
\def\bfu{{\mathbf u}}    \def\bfU{{\mathbf U}}
\def\bfv{{\mathbf v}}    \def\bfV{{\mathbf V}}
\def\bfw{{\mathbf w}}    \def\bfW{{\mathbf W}}
\def\bfx{{\mathbf x}}    \def\bfX{{\mathbf X}}
\def\bfy{{\mathbf y}}    \def\bfY{{\mathbf Y}}
\def\bfz{{\mathbf z}}    \def\bfZ{{\mathbf Z}}

\def\scra{{\mathscr A}}
\def\scrb{{\mathscr B}}
\def\scrc{{\mathscr C}}
\def\scrd{{\mathscr D}}
\def\scre{{\mathscr E}}
\def\scrf{{\mathscr F}}
\def\scrg{{\mathscr G}}
\def\scrh{{\mathscr H}}
\def\scri{{\mathscr I}}
\def\scrj{{\mathscr J}}
\def\scrk{{\mathscr K}}
\def\scrl{{\mathscr L}}
\def\scrm{{\mathscr M}}
\def\scrn{{\mathscr N}}
\def\scro{{\mathscr O}}
\def\scrp{{\mathscr P}}
\def\scrq{{\mathscr Q}}
\def\scrr{{\mathscr R}}
\def\scrs{{\mathscr S}}
\def\scrt{{\mathscr T}}
\def\scru{{\mathscr U}}
\def\scrv{{\mathscr V}}
\def\scrw{{\mathscr W}}
\def\scrx{{\mathscr X}}
\def\scry{{\mathscr Y}}
\def\scrz{{\mathscr Z}}

\def\phm{\phantom}
\def\smallstrut{\vphantom{\vrule height 3pt }}
\def\bdm #1#2#3#4{\left(
\begin{array} {c|c}{\ds{#1}}
 & {\ds{#2}} \\ \hline
{\ds{#3}\vphantom{\ds{#3}^1}} &  {\ds{#4}}
\end{array}
\right)}

\def\GL{\mathrm{GL}}
\def\PGL{\mathrm{PGL}}
\def\SL{\mathrm{SL}}
\def\Mp{\mathrm{Mp}}
\def\SU{\mathrm{SU}}
\def\SO{\mathrm{SO}}
\def\U{\mathrm{U}}
\def\Mat{\mathrm{M}}
\def\Tr{\mathrm{Tr}}
\def\tr{\mathrm{tr}}
\def\Ad{\mathrm{Ad}}
\def\Paf{\mathrm{Paf}}
\def\new{\mathrm{new}}
\def\Wh{\mathrm{Wh}}
\def\FJ{\mathrm{FJ}}
\def\Fj{\mathrm{Fj}}
\def\Sym{\mathrm{Sym}}
\def\sym{\mathrm{sym}}
\def\supp{\mathrm{supp}}
\def\proj{\mathrm{proj}}
\def\Hom{\mathrm{Hom}}
\def\End{\mathrm{End}}
\def\Ker{\mathrm{Ker}}
\def\Res{\mathrm{Res}}
\def\res{\mathrm{res}}
\def\cusp{\mathrm{cusp}}
\def\Irr{\mathrm{Irr}}
\def\rank{\mathrm{rank}}
\def\sgn{\mathrm{sgn}}
\def\diag{\mathrm{diag}}
\def\Wd{\mathrm{Wd}}
\def\nd{\mathrm{nd}}
\def\d{\mathrm{d}}
\def\La{\langle}
\def\Ra{\rangle}

\def\trs{\,^t\!}
\def\tri{\,^\iot\!}
\def\iu{\sqrt{-1}}
\def\oo{\hbox{\bf 0}}
\def\ono{\hbox{\bf 1}}
\def\smallcirc{\lower .3em \hbox{\rm\char'27}\!}
\def\AAf{\AA_\bff}
\def\thalf{\tfrac{1}{2}}
\def\bsl{\backslash}
\def\wtl{\widetilde}
\def\til{\tilde}
\def\Ind{\operatorname{Ind}}
\def\ind{\operatorname{ind}}
\def\cind{\operatorname{c-ind}}
\def\ord{\operatorname{ord}}
\def\beq{\begin{equation}}
\def\eeq{\end{equation}}
\def\d{\mathrm{d}}

\newcounter{one}
\setcounter{one}{1}
\newcounter{two}
\setcounter{two}{2}
\newcounter{thr}
\setcounter{thr}{3}
\newcounter{fou}
\setcounter{fou}{4}
\newcounter{fiv}
\setcounter{fiv}{5}
\newcounter{six}
\setcounter{six}{6}
\newcounter{sev}
\setcounter{sev}{7}

\newcommand{\shp}{\rm\char'43}

\def\lddots{\mathinner{\mskip1mu\raise1pt\vbox{\kern7pt\hbox{.}}\mskip2mu\raise4pt\hbox{.}\mskip2mu\raise7pt\hbox{.}\mskip1mu}}
\newcommand{\1}{1\hspace{-0.25em}{\rm{l}}}

\makeatletter
\def\varddots{\mathinner{\mkern1mu
    \raise\p@\hbox{.}\mkern2mu\raise4\p@\hbox{.}\mkern2mu
    \raise7\p@\vbox{\kern7\p@\hbox{.}}\mkern1mu}}
\makeatother

\def\today{\ifcase\month\or
 January\or February\or March\or April\or May\or June\or
 July\or August\or September\or October\or November\or December\fi
 \space\number\day, \number\year}

\makeatletter
\def\varddots{\mathinner{\mkern1mu
    \raise\p@\hbox{.}\mkern2mu\raise4\p@\hbox{.}\mkern2mu
    \raise7\p@\vbox{\kern7\p@\hbox{.}}\mkern1mu}}
\makeatother

\def\today{\ifcase\month\or
 January\or February\or March\or April\or May\or June\or
 July\or August\or September\or October\or November\or December\fi
 \space\number\day, \number\year}

\makeatletter
\@addtoreset{equation}{section}
\def\theequation{\thesection.\arabic{equation}}

\theoremstyle{plain}
\newtheorem{theorem}{Theorem}[section]
\newtheorem*{theorem_a}{Theorem A}
\newtheorem*{theorem_b}{Theorem B}
\newtheorem*{theorem_c}{Theorem C}
\newtheorem*{theorem_e}{Theorem}
\newtheorem*{corollary_a}{Corollary A}
\newtheorem{lemma}[theorem]{Lemma}
\newtheorem{proposition}[theorem]{Proposition}
\theoremstyle{definition}
\newtheorem{definition}[theorem]{Definition}
\newtheorem{conjecture}[theorem]{Conjecture}
\theoremstyle{remark}
\newtheorem{remark}[theorem]{Remark}
\newtheorem*{main_remark}{Remark}
\newtheorem{corollary}[theorem]{Corollary}

\renewcommand{\thepart}{\Roman{part}}
\setcounter{tocdepth}{1}
\setcounter{section}{0} 


\title[]{\bf On the lifting of Hilbert cusp forms to Hilbert-Siegel cusp forms}  
\author{Tamotsu Ikeda}
\address{Department of mathematics, Kyoto University, Kitashirakawa Oiwake-cho, Sakyo-ku, Kyoto, 606-8502, Japan}
\email{ikeda@math.kyoto-u.ac.jp}
\author{Shunsuke Yamana}
\address{Department of mathematics, Kyoto University, Kitashirakawa Oiwake-cho, Sakyo-ku, Kyoto, 606-8502, Japan/Hakubi Center, Yoshida-Ushinomiya-cho, Sakyo-ku, Kyoto, 606-8501, Japan}
\email{yamana07@math.kyoto-u.ac.jp}
\begin{abstract}
Starting from a Hilbert cusp form of weight $2\kap$, we will construct a Hilbert-Siegel cusp form of weight $\kap+\frac{m}{2}$ and degree $m$ and its transfer to inner forms of symplectic groups. 
\end{abstract}
\keywords{Hilbert-Siegel cusp forms, Duke-Imamoglu-Ikeda lifts, Saito-Kurokawa lifts} 
\subjclass{11F30, 11F41} 
\maketitle

\tableofcontents

\section{Introduction}\label{sec:1}

The present investigation deals with the following problem: 
starting from simple automorphic data such as cusp forms on $\GL_2$, 
construct more complicated automorphic forms on groups of higher degree. 
Toward this problem, Ikeda \cite{I2} has constructed a lifting associating to an elliptic cusp form a Siegel cusp form of even genus. 
This paper generalizes it to Hilbert cusp forms. 

To illustrate our results, let $F$ be a totally real number field of degree $d$ with ad\`{e}le ring $\AA$. 
We write $\AAf$ and $\AA_\infty$ for the finite part and the infinite part of the ad\`{e}le ring. 
We denote the set of $d$ real primes of $F$ by $\frkS_\infty$. 
Let $\Sym_m=\{z\in\Mat_m\;|\trs z=z\}$ be the space of symmetric matrix of size $m$ and $W_m$ a symplectic vector space of dimension $2m$. 
We take matrix representation 
\[Sp_m=\left\{g\in\GL_{2m}\;\biggl|\;g\begin{pmatrix} 0 & -\ono_m \\ \ono_m & 0\end{pmatrix}\trs g=\begin{pmatrix} 0 & -\ono_m \\ \ono_m & 0\end{pmatrix}\right\} \]
of the associated symplectic group $Sp(W_m)$ by choosing a Witt basis of $W_m$. 
We define homomorphisms $\frkm:\GL_m\to Sp_m$ and $\frkn:\Sym_m\to Sp_m$ by 
\begin{align*}
\frkm(a)&=\begin{pmatrix} a & 0 \\ 0 & \trs a^{-1}\end{pmatrix}, & 
\frkn(b)&=\begin{pmatrix} \ono_m & b \\ 0 & \ono_m\end{pmatrix}. 
\end{align*}
Let $\Mp(W_m)_\AA\twoheadrightarrow Sp_m(\AA)$ be the metaplectic double cover. 
We here denote the inverse images of $Sp_m(\AA_\infty)$, $Sp_m(\AAf)$ and $Sp_m(F_v)$ by $\Mp(W_m)_\infty$, $\Mp(W_m)_\bff$ and $\Mp(W_m)_v$, respectively.  

Define the character $\bfe_\infty:\CC^d\to\CC^\times$ by $\bfe_\infty(\calz)=\prod_{v\in\frkS_\infty}e^{2\pi\iu \calz_v}$. 
Let $\psi=\otimes_v\psi_v$ be the additive character of $\AA/F$ whose restriction to $\AA_\infty$ is $\bfe_\infty|_{\RR^d}$. 
For $\xi\in\Sym_m(F)$ we define the character $\psi^\xi_\bff=\otimes_{v\notin\frkS_\infty}\psi_v^\xi:\Sym_m(\AAf)\to\CC^\times$ by $\psi^\xi_\bff(z)=\prod_{v\notin\frkS_\infty}\psi_v(\tr(\xi z_v))$. 
For $t\in F^\times_v$ there is an $8^\mathrm{th}$ root of unity $\gam(\psi_v^t)$ such that for all Schwartz functions $\phi$ on $F_v$ 
\[\int_{F_v}\phi(x_v)\psi_v(tx_v^2)\,\d x_v=\gam(\psi_v^t)|2t|_v^{-1/2}\int_{F_v}\calf\phi(x_v)\psi_v\left(-\frac{x_v^2}{4t}\right)\,\d x_v, \]
where $\d x_v$ is the self-dual Haar measure on $F_v$ with respect to the Fourier transform $\calf\phi(y)=\int_{F_v}\phi(x_v)\psi_v(x_vy)\,\d x_v$. 
Set $\gam^{\psi_v}(t)=\gam(\psi_v)/\gam(\psi^t_v)$. 
We denote the set of totally positive elements of $F$ by $F^\times_+$, the set of positive definite symmetric matrices of rank $m$ over $\RR$ by $\Sym_m(\RR)^+$ and the set of totally positive definite symmetric matrices of rank $m$ over $F$ by $\Sym_m^+$. 
Let 
\[\calh_m=\{\calz\in\Sym_m(\CC)\;|\;\Im\calz\in\Sym_m(\RR)^+\}\] 
be the Siegel upper half space  of degree $m$. 
For $t\in F^\times$ we write $\hat\chi^t=\otimes^{}_v\hat\chi^t_v$ for the quadratic character of $\AA^\times/F^\times$ associated to the extension $F(\sqrt{t})/F$ and denote its restriction to the finite id\`{e}le group $\AAf^\times$ by $\hat\chi_\bff^t$. 
For $\ell\in\RR^d$ we will set $|t|^\ell=\prod_{v\in\frkS_\infty}|t|_v^{\ell_v}$. 

The Lie group $\Mp(W_m)_v$ acts on $\calh_m$ through $Sp_m(F_v)$ for $v\in\frkS_\infty$. 
There is a unique factor of automorphy $\jmath:\Mp(W_m)_v\times\calh_m\to\CC^\times$ satisfying $\jmath(\til g_v,\calz_v)^2=\det(C_v\calz_v+D_v)$. 
We here write the projection of $\til g_v$ to $Sp_m(F_v)$ as $\begin{pmatrix} * & * \\ C_v & D_v\end{pmatrix}$. 
For each tuple $\kap$ of $d$ integers we set $J_{\kap/2}(\til g,\calz)=\prod_{v\in\frkS_\infty}\jmath(\til g_v,\calz_v)^{\kap_v}$ for $\til g\in\Mp(W_m)_\infty$ and $\calz\in\calh_m^d$. 

For $\ell\in\frac{1}{2}\ZZ^d$, $\til g\in\Mp(W_m)_\infty$ and a function $\calf$ on $\calh^d_m$ we define another function $\calf|_\ell\til g$ on $\calh_m^d$ by $\calf|_\ell\til g(\calz)=\calf(\til g\calz)J_\ell(\til g,\calz)^{-1}$. 
Define the origin of $\calh_m^d$ and the standard maximal compact subgroup of $\Mp(W_m)_\infty$ by 
\begin{align*}
\bfi_m&=(\iu\ono_m,\dots,\iu\ono_m)\in\calh^d_m, &
\til\calk_m&=\{\til g\in\Mp(W_m)_\infty\;|\;\til g(\bfi_m)=\bfi_m\}. 
\end{align*} 

A Hilbert-Siegel cusp form $\calf$ of degree $m$ and weight $\ell$ is a smooth function on $\Mp(W_m)_\AA$ which is left invariant under $Sp_m(F)$ and transforms on the right by the character $\til k\mapsto J_\ell(\til k,\bfi_m)^{-1}$ of $\til\calk_m$ and such that $\calf_{\til\Del}$ is a holomorphic function on $\calh_m^d$ having a Fourier expansion of the form 
\beq
\calf_{\til\Del}(\calz)=\sum_{\xi\in \Sym^+_m}|\det\xi|^{\ell/2}\bfw_\xi(\til\Del,\calf)\bfe_\infty(\tr(\xi\calz)) \label{tag:11}
\eeq 
for each $\til\Del\in\Mp(W_m)_\bff$, where $\bfw_\xi(\calf)$ is a function on $\Mp(W_m)_\bff$ and the function $\calf_{\til\Del}:\calh^d_m\to\CC$ is defined by 
\begin{align*}
\calf_{\til\Del}|_\ell\til g(\bfi_m)&=\calf(\til g\til\Del), &\til g&\in \Mp(W_m)_\infty. 
\end{align*}
Let $\frkC^{(m)}_\ell$ denote the space of Hilbert-Siegel cusp forms of degree $m$ and weight $\ell$. 
The group $\Mp(W_m)_\bff$ acts on the space $\frkC^{(m)}_\ell$ by right translation. 
It is important to know which representations appear in this space. 

Let $\pi_\bff\simeq\otimes'_{v\notin\frkS_\infty}\pi_v$ be an irreducible admissible unitary generic representation of $\PGL_2(\AA_\bff)$. 
For a technical reason we suppose that none of $\pi_v$ is supercuspidal, i.e., there is a collection of continuous characters $\mu_v$ of the multiplicative groups of nonarchimedean local fields $F_v$ such that $\pi_\bff$ is equivalent to the unique irreducible submodule of the principal series representation $\otimes_{v\notin\frkS_\infty}'I(\mu^{}_v,\mu_v^{-1})$, where $\mu_v$ is unramified for almost all $v$.   
We form the restricted tensor product $I_m^{\psi_\bff}(\mu_\bff)=\otimes'_{v\notin\frkS_\infty}I_m^{\psi_v}(\mu_v)$, where $I_m^{\psi_v}(\mu_v)$ is the representation of $\Mp(W_m)_v$ on the space of smooth functions $h_v$ on the local metaplectic group transforming on the left according to
\[h_v((\frkm(a)\frkn(b),\zet)\til g)=\zet^m\gam^{\psi_v}(\det a)^m\mu_v(\det a)|\det a|_v^{(m+1)/2}h_v(\til g)\]
for all $\zet\in\{\pm 1\}$, $a\in\GL_m(F_v)$, $z\in\Sym_m(F_v)$ and $\til g\in\Mp(W_m)_v$. 
This representation has a unique irreducible submodule $A^{\psi_\bff}_m(\mu_\bff)$, which is unitary. 

Put $\calk_1=\{g\in\SL_2(\AA_\infty)\;|\;g(\bfi_1)=\bfi_1\}$. 
Given  
\[g\in\GL_2(\AA_\infty)^+:=\{(g_v)\in\GL_2(\AA_\infty)\;|\;\det g_v>0\text{ for all }v\in\frkS_\infty\} \]
 and a function $\calf$ on $\calh^d_1$, we define a function $\calf|_\kap g$ on $\calh^d_1$ by 
\begin{align*}
\calf|_\kap g(\calz)&=\calf(g\calz)J_\kap(g,\calz)^{-1}, &
J_\kap(g,\calz)&=|\det g|^{-\kap/2}\prod_{v\in\frkS_\infty}(c_v\calz_v+d_v)^{\kap_v},  
\end{align*} 
where $g_v=\begin{pmatrix} * & * \\ c_v & d_v\end{pmatrix}$. 
A Hilbert cusp form $\calf$ on $\PGL_2$ of weight $2\kap$ is a smooth function on $\GL_2(\AA)$ satisfying 
\begin{align*}
\calf(z\gam gk)&=\calf(g) J_{2\kap}(k,\bfi_1)^{-1} &
(z&\in \AA^\times,\; \gam\in\GL_2(F),\; g\in\GL_2(\AA),\; k\in \calk_1)
\end{align*} 
and having a Fourier expansion of the form 
\[\calf_\Del(\calz)=\sum_{t\in F^\times_+}|t|^\kap\bfw_t(\Del,\calf)\bfe_\infty(t\calz) \] 
for each $\Del\in\GL_2(\AAf)$, where $\bfw_t(\calf)$ is a function on $\GL_2(\AAf)$ and the function $\calf_\Del:\calh_1^d\to\CC$ is defined by $\calf_\Del|_{2\kap}g(\bfi_1)=\calf(g\Del)$ for $g\in\GL_2(\AA_\infty)^+$. 
We write $\frkC_{2\kap}$ for the space of Hilbert cusp forms on $\PGL_2$ of weight $2\kap$. 


\begin{theorem}\label{thm:11}
Notation being as above, $A^{\psi_\bff}_m(\mu_\bff)$ appears in $\frkC^{(m)}_{(2\kap+m)/2}$ if and only if $\pi_\bff$ appears in $\frkC_{2\kap}$ and $(-1)^{\sum_{v\in\frkS_\infty}\kap_v}\prod_{v\notin\frkS_\infty}\mu_v(-1)=1$.  
\end{theorem}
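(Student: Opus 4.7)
The strategy is to prove the two implications separately, generalizing Ikeda's construction \cite{I2} to the Hilbert setting.

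For the ``if'' direction, I would construct the lift $\scrf$ explicitly via its Fourier expansion. Given a generator $f$ of $\pi_\bff\subset\frkC_{2\kap}$, fix a companion half-integral weight Hilbert cusp form $h$ of weight $\kap+\tfrac{1}{2}$ attached to $f$ by the (Hilbert) Shimura--Waldspurger correspondence, and define for each $\xi\in\Sym_m^+$ and each $\til\Del\in\Mp(W_m)_\bff$ a candidate Fourier coefficient
\[
\bfw_\xi(\til\Del,\scrf)=c(D_\xi;h)\cdot\prod_{v\notin\frkS_\infty}\scrw_v(\xi,\til\Del_v),
\]
where $D_\xi\in F^\times_+$ is the ``fundamental'' part of $\det(2\xi)$ and $\scrw_v$ is a local Siegel-series factor engineered so that, at unramified $v$, it reproduces the spherical vector of $I_m^{\psi_v}(\mu_v)$. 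One then assembles the series~\eqref{tag:11} and verifies: (i) absolute convergence, (ii) modularity under $Sp_m(F)$, (iii) the correct transformation law under $\til\calk_m$ giving the weight $(2\kap+m)/2$, and (iv) the identification of the generated $\Mp(W_m)_\bff$-representation as $A^{\psi_\bff}_m(\mu_\bff)$. The sign condition $(-1)^{\sum_{v\in\frkS_\infty}\kap_v}\prod_{v\notin\frkS_\infty}\mu_v(-1)=1$ appears as the global compatibility between the archimedean and nonarchimedean central characters of the metaplectic cover: it is precisely the condition that the product of local representations descends to a genuine automorphic representation of $\Mp(W_m)_\AA$.

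For the converse, starting from $\scrf\in\frkC^{(m)}_{(2\kap+m)/2}$ generating $A^{\psi_\bff}_m(\mu_\bff)$, recover the Hilbert cusp form on $\PGL_2$ by descent. A natural route is via the Fourier--Jacobi expansion: the index-$1$ Fourier--Jacobi coefficient of $\scrf$ is a Hilbert--Jacobi form of weight $(2\kap+m)/2$; a Hilbert analogue of the Eichler--Zagier--Skoruppa correspondence then produces a half-integral weight cusp form of weight $\kap+\tfrac{1}{2}$, and the (Hilbert) Shimura correspondence yields a weight-$2\kap$ cusp form on $\PGL_2$. Comparison of Hecke eigenvalues at unramified places identifies the resulting representation as $\pi_\bff$, while the sign condition is forced by the same central-character compatibility seen in the forward direction.

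The principal obstacle is step (ii) in the construction: proving the modularity of the explicitly defined series. Classically this is done by realizing $\scrf$ as a residue or specialization of a Siegel Eisenstein series on $\Mp(W_m)_\AA$, using a doubling or Rankin--Selberg identity that matches the product of local Siegel series with global matrix coefficients of the Eisenstein series. Extending this to a totally real base field of arbitrary degree requires careful adelic bookkeeping, precise archimedean sections realizing the prescribed weight, and control over the local Siegel series at finite places where $\mu_v$ is ramified; the sign condition should then emerge as the nonvanishing condition for the relevant archimedean integral at the point of specialization.
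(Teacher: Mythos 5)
Your overall architecture (build the lift as an explicit Fourier series whose coefficients factor through a weight-$\kap+\frac{1}{2}$ companion form, and recover $\pi_\bff$ in the converse by Fourier--Jacobi descent plus the Shimura correspondence) matches the paper's, and your converse direction is essentially the argument given there (Lemmas \ref{lem:75}, \ref{lem:86} and \ref{lem:81}). The serious gap is in your step (ii). You propose to prove modularity of the series by realizing it as a specialization or residue of a Siegel Eisenstein series via a doubling/Rankin--Selberg identity, and you describe the passage to general totally real $F$ as ``careful adelic bookkeeping.'' That is exactly the method of \cite{I2}, and the paper states explicitly that it fails outside $F=\QQ$: the identification of the candidate Fourier coefficients with those of the Eisenstein series rests on the algebraic independence of the variables $p^{-s}$ over distinct rational primes, which breaks down over a general totally real field where distinct finite places can share the same residue cardinality $q_v$. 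This is not a bookkeeping issue but the reason a new proof was needed. The paper's actual mechanism is different: $Sp_m(F)$ is generated by $\PP_m(F)$ together with an embedded $\SL_2(F)$, invariance under $\PP_m(F)$ is automatic from the shape of the series, and invariance under the remaining generators is checked by showing that every corank-one Fourier--Jacobi coefficient $\calf^R_\phi$ ($R\in\Sym^+_{m-1}$) is itself a degree-one cusp form (Lemmas \ref{lem:73}--\ref{lem:75} and (\ref{tag:83})). This reduces the whole existence statement to the case $m=1$, which is Waldspurger's theory (Lemma \ref{lem:81}); no Eisenstein series enters.

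Your account of the sign condition is also not correct. The restricted tensor product $A^{\psi_\bff}_m(\mu_\bff)$ always defines a genuine representation of $\Mp(W_m)_\bff$; there is no descent obstruction of the kind you describe, and the condition does not arise as the nonvanishing of an archimedean integral. As a \emph{necessary} condition it falls out of the transformation law of the Fourier coefficients under $\til\frkm(a)$ with $a=-\ono_m$ (Lemma \ref{lem:51}), and as a \emph{sufficient} condition it is Waldspurger's criterion $\prod_v\eps_v=\vep(1/2,\pi)$ for the member $\otimes_v\sig^{\psi_v}_{\eps_v}(\pi_v)$ of the packet to occur in $\scra_{00}$, combined with the fact that the archimedean components must be holomorphic discrete series of lowest weight $\kap_v+\frac{1}{2}$ and that the finite places with $\eps_v=-$ are exactly those where $\mu_v=\alp^{1/2}$. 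If you want to salvage your outline, replace the Eisenstein-series argument in step (ii) by the Fourier--Jacobi reduction and route the sign condition through Waldspurger's classification at $m=1$.
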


The representation $A^{\psi_\bff}_1(\mu_\bff)$ is the Shimura correspondence and $A^{\psi_\bff}_2(\mu_\bff)$ is the Saito-Kurokawa lifting. 
Both are theta liftings (cf. \S\S \ref{ssec:81}, \ref{ssec:82} and \ref{ssec:94}). 

More importantly, we can describe how the representation $A^{\psi_\bff}_m(\mu_\bff)$ is embedded in $\frkC^{(m)}_{(2\kap+m)/2}$ quite explicitly. 
Fix a Haar measure $\d b=\otimes_v\d b_v$ on  $\Sym_m(\AAf)$. 
Then we can associate to each $\xi\in\Sym_m^+$ a basis vector $w^{\mu_\bff}_\xi$ of the one-dimensional vector space $\Hom_{\Sym_m(\AAf)}(I^{\psi_\bff}_m(\mu_\bff)\circ\frkn,\psi^\xi_\bff)$ by 
\[w^{\mu_\bff}_\xi(\otimes_{v\notin\frkS_\infty}h_v)=\prod_{v\notin\frkS_\infty}w^{\mu_v}_\xi(h_v), \]
where $w^{\mu_v}_\xi\in\Hom_{\Sym_m(F_v)}(I_m^{\psi_v}(\mu_v)\circ\frkn,\psi^\xi_v)$ is defined by 
\begin{multline*}
w^{\mu_v}_\xi(h_v)=\int_{\Sym_m(F_v)}h_v\left(\left(\begin{pmatrix} 0 & \ono_m \\ -\ono_m & 0 \end{pmatrix}\frkn(b_v),1\right)\right)\overline{\psi_v^\xi(b_v)}\,\d b_v\\
\times\frac{|\det\xi|_v^{(m+1)/4}}{L\bigl(\frac{1}{2},\mu_v\hat\chi_v^{\det\xi}\bigl)}\prod_{j=1}^{[(m+1)/2]}L(2j-1,\mu_v^2)\times\begin{cases}
1 &\text{if $2\nmid m$, }\\
L\left(\frac{m+1}{2},\mu_v\hat\chi_v^{(-1)^{m/2}}\right) &\text{if $2|m$. } 
\end{cases}
\end{multline*}
The integral diverges but makes sense as it stabilizes.  
One can check that $w^{\mu_v}_\xi(h_v)=1$ for almost all $v$. 
We write $\vrh$ for the right regular action of $\Mp(W_m)_\bff$ on the space of fuctions on $\Mp(W_m)_\bff$. 

\begin{theorem}\label{thm:12}
If $\pi_\bff$ appears in $\frkC_{2\kap}$ and $(-1)^{\sum_{v\in\frkS_\infty}\kap_v}\prod_{v\notin\frkS_\infty}\mu_v(-1)=1$, then $A^{\psi_\bff}_m(\mu_\bff)$ appears in the decomposition of $\frkC^{(m)}_{(2\kap+m)/2}$ with multiplicity one, and there is a set $\{c_t\}_{t\in F^\times_+}$ of complex numbers such that the $\Mp(W_m)_\bff$-intertwining embeddings $i_m^\eta:A^{\psi_\bff}_m(\mu^{}_\bff\hat\chi^\eta_\bff)\hookrightarrow\frkC^{(m)}_{(2\kap+m)/2}$ are given for all $m$ and $\eta\in F^\times_+$ by 
\[i^\eta_m(h)_{\til\Del}(\calz)=\sum_{\xi\in\Sym^+_m}|\det\xi|^{(2\kap+m)/4}c_{\eta\det\xi}\bfe_\infty(\tr(\xi\calz))w_\xi^{\mu^{}_\bff\hat\chi^\eta_\bff}(\vrh(\til\Del)h). \] 
\end{theorem}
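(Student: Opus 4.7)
I take the displayed formula as a \emph{definition} of $i^\eta_m$ and verify that the resulting function lies in $\frkC^{(m)}_{(2\kap+m)/2}$; the $\Mp(W_m)_\bff$-equivariance is automatic, since $\til\Del\mapsto w^{\mu^{}_\bff\hat\chi^\eta_\bff}_\xi(\vrh(\til\Del)h)$ is $\Mp(W_m)_\bff$-equivariant in $(h,\til\Del)$ by construction of $w^{\mu^{}_\bff\hat\chi^\eta_\bff}_\xi$. Multiplicity one then follows from a local uniqueness principle: at each nonarchimedean $v$ the space $\Hom_{\Sym_m(F_v)}(I^{\psi_v}_m(\mu_v)\circ\frkn,\psi^\xi_v)$ is one-dimensional (a standard Mackey-theoretic analysis of the degenerate principal series along the Siegel parabolic), so any $\Mp(W_m)_\bff$-intertwiner of $A^{\psi_\bff}_m(\mu^{}_\bff\hat\chi^\eta_\bff)$ into $\frkC^{(m)}_{(2\kap+m)/2}$ must have its $\xi$-th Fourier coefficient a scalar multiple of $w^{\mu^{}_\bff\hat\chi^\eta_\bff}_\xi$, and the scalar can depend only on $\det\xi$ by transformation under $\frkm(\GL_m(F))$; thus the embedding is pinned down by a single sequence $\{c_t\}_{t\in F^\times_+}$.

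\textbf{Defining the scalars $c_t$.} By the $m=1$ case of Theorem~\ref{thm:11}, which is the classical Shimura/theta correspondence for $\PGL_2\times\wtl{\SL}_2$, the sign condition $(-1)^{\sum_v\kap_v}\prod_v\mu_v(-1)=1$ is precisely the global epsilon-factor condition forcing nonvanishing of the half-integral-weight theta lift of $\pi_\bff$. Picking any nonzero $i^1_1$ for $m=1$ and reading off its Fourier coefficients produces the family $\{c_t\}$. The crucial point is that this single family must simultaneously serve all $m$ and all $\eta\in F^\times_+$: this reflects the quadratic-twist identity of Waldspurger/Kohnen-Zagier type, according to which $|c_t|^2$ matches $L(\tfrac12,\pi\otimes\hat\chi^t)$ up to exactly the local factors absorbed into the normalization of $w^{\mu_v}_\xi$. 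Verifying this compatibility across different $\eta$ is a consistency check on the local factors in the definition of $w^{\mu_v}_\xi$ and can be done place by place.

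\textbf{Modularity is the main obstacle.} What remains is to prove left invariance of the candidate function under $Sp_m(F)$. Invariance under the Siegel parabolic $\frkm(\GL_m(F))\frkn(\Sym_m(F))$ is immediate from the shape of the Fourier expansion together with the transformation character by which $I^{\psi_\bff}_m(\mu^{}_\bff\hat\chi^\eta_\bff)$ is defined on the left, so by the Bruhat decomposition it suffices to check invariance under the single Weyl involution $J_m$ having $-\ono_m$ in the lower-left block and $\ono_m$ in the upper-right. My strategy, modeled on \cite{I2}, is to realize the defining sum as the pullback of a degenerate Siegel Eisenstein series on $Sp_{2m}$ via the doubling embedding $Sp_m\times Sp_m\hookrightarrow Sp_{2m}$: the $L$-factor normalizations built into $w^{\mu_v}_\xi$ are precisely those required to regularize the local doubling integrals at the relevant point, and $J_m$-invariance then follows from the functional equation of that Eisenstein series. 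A viable alternative is an inductive Fourier-Jacobi descent on $m$, using Saito-Kurokawa ($m=2$) and Shimura ($m=1$) as the base cases. Convergence of the series on compacta of $\calh_m^d$ reduces to Deligne's bound on $c_t$ together with polynomial control of $w^{\mu_v}_\xi$ via the Satake parameters of $\mu_v$; holomorphy is manifest from the expansion, and cuspidality follows from the restriction of the sum to $\Sym_m^+$.
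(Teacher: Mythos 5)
There are two genuine gaps. First, your multiplicity-one argument breaks at the step ``the scalar can depend only on $\det\xi$ by transformation under $\frkm(\GL_m(F))$.'' The local uniqueness of $\Wh_\xi$ plus equivariance under the Siegel parabolic only forces $c_{\xi[a]}=c_\xi\mu_\bff(\det a)^{-1}\prod_v\sgn_v(\det a)^{\kap_v}$ (the paper's Lemma \ref{lem:51}(\ref{lem:512})), i.e.\ $c_\xi$ depends only on the $\GL_m(F)$-equivalence class of $\xi$ --- and for $m\ge 2$ there are many inequivalent totally positive classes with the same determinant. Showing that the ratio $c_\xi/e_{\det\xi}$ is the \emph{same} constant across all classes is exactly the hard content of the paper's \S\ref{ssec:84}--\ref{ssec:85}: one descends via Fourier--Jacobi coefficients along $R\in\Sym^+_{m-1}$ to the $m=1$ space, invokes Waldspurger's multiplicity one for $\Mp(W_1)$ to get a constant $\del_R$ for each $R$, and then connects any two $\xi_0,\xi_3$ by a chain of quadratic forms (Lemmas \ref{lem:84}, \ref{lem:85}, which require the nonvanishing set $\scrs^{\pi_\bff}_m$ and a Waldspurger-type density of twists with $L(1/2,\pi\otimes\hat\chi^t)\neq 0$). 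None of this is present in your argument, and without it multiplicity one does not follow.

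Second, your primary route to modularity --- realizing the series as a pullback of a Siegel Eisenstein series on $Sp_{2m}$ via doubling and deducing $J_m$-invariance from its functional equation --- is not viable: the candidate function is cuspidal and cannot be such a pullback, and the Eisenstein-series-flavored method of \cite{I2} (which in fact rests on algebraic independence of the $p^{-s}$) is exactly what the introduction says fails outside $F=\QQ$. The paper's actual proof is the route you list only as an ``alternative'': $Sp_m(F)$ is generated by $\PP_m(F)$ and the embedded $Sp_{m'}(F)$, invariance under the latter is detected by the $(R,\phi)$-th Fourier--Jacobi coefficients, and the local maps $\bet^{\psi_v}_R:I^{\psi_v}_m(\mu_v)\otimes\overline{\ome^{\psi_v}_R}\to I^{\psi_v}_1(\mu_v\hat\chi^{\det R}_v)$ together with the archimedean Gaussian computation (Lemmas \ref{lem:73}--\ref{lem:75}) identify these coefficients with $\calf_{(2\kap+1)/2}(\bet^{\psi_\bff}_R(h\otimes\bar\phi),\{c_{R\oplus t}\})$, whose modularity is the $m=1$ input from Waldspurger's theory. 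That reduction, plus the precise matching of $L$-factor normalizations in $w^{\mu_v}_\xi$ under $\bet^{\psi_v}_R$ (Lemma \ref{lem:72}), is the core of the proof and cannot be replaced by a ``consistency check on local factors done place by place.'' (Minor: convergence uses the polynomial bound of Proposition A.6.4 of \cite{Sh3} on half-integral-weight Fourier coefficients, not Deligne's bound, which is not available for $c_t$.)
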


The constant $c_t$ is a mysterious part of the $t^{\mathrm{th}}$ Fourier coefficient of a Hilbert cusp form of weight $\kap+\frac{1}{2}$ whose square is related to the central value $L\bigl(\frac{1}{2},\pi^{}_\bff\otimes\hat\chi^t_\bff\bigl)$ (cf. Theorem 12.3 of \cite{HI}). 
The formula of Fourier coefficients looks like the classical Maass relation. 
Theorem \ref{thm:61} constructs analogous liftings of $\pi_\bff$ to inner forms of symplectic groups of even rank, which are given by similar Fourier series with the same coefficients $\{c_t\}$. 
The series naturally extends to a cusp form on similitude groups for even $m$ (see Remark \ref{rem:61}(\ref{rem:611})). 

Theorem \ref{thm:12} is a generalization of the lifting constructed by Ikeda \cite{I2}, where he discussed the case in which $F=\QQ$, $m$ is even and $\mu_p\hat\chi^{(-1)^{m/2}}_p$ is an unramified unitary character of $\QQ_p^\times$ for all rational primes $p$. 
The proof in \cite{I2} uses the algebraic independence of the $p^{-s}$ and works only over $\QQ$. 
Furthermore, this method cannot apply to nonsplit inner forms of symplectic groups even when $F=\QQ$. 
Subsequently, Ikeda invented a more general approach and proved in his unpublished preprint that the representation occurs in the space $\frkC^{(m)}_{(2\kap+m)/2}$ with multiplicity one. 
Later, Yamana refined and generalized this new approach, giving the explicit Fourier expansions.  
The present article was written finally by combining Yamana's manuscript with Ikeda's original preprint. 

The method is applicable to Maass forms in principle. 
Indeed, the proof should require only suitable estimates of real analytic degenerate Whittaker functions, which guarantees convergence of the Fourier series, and its inductive structure analogous to Lemma \ref{lem:74}. 

\subsection*{Acknowledgement}
Ikeda is partially supported by the JSPS KAKENHI Grant Number 26610005.
Yamana is partially supported by JSPS Grant-in-Aid for Young Scientists (B) 26800017. 
This paper was partly written while Yamana was visiting 
University of Rijeka, and he thanks Neven Grbac for his hospitality and encouragement. 
We thank Marcela Hanzer for suggesting the unitarity of $A^{\psi_\bff}_m(\mu_\bff)$. 

\section{Preliminaries}\label{sec:2}

\subsection{Notation}\label{ssec:21}

For an associative ring $\calo$ with identity element we denote by $\calo^{\times}$ the group of all its invertible elements and by $\Mat^m_n(\calo)$ the $\calo$-module of all $m\times n$ matrices with entries in $\calo$. 
Put $\calo^m=\Mat^m_1(\calo)$, $\Mat_n(\calo)=\Mat^n_n(\calo)$ and $\GL_n(\calo)=\Mat_n(\calo)^\times$. 
The zero element of $\Mat^m_n(\calo)$ is denoted by $0$ and the identity element of the ring $\Mat_n(\calo)$ is denoted by $\ono_n$. 
The transpose of a matrix $x$ is denoted by $\trs x$. 
If $x_1,\dots,x_k$ are square matrices, then $\diag[x_1,\dots,x_k]$ denotes the matrix with $x_1,\dots,x_k$ in the diagonal blocks and $0$ in all other blocks. 
Assume that $\calo$ has an involution $a\mapsto a^\iot$. 
For a matrix $x$ over $\calo$, let $\trs x$ be the transpose of $x$ and $x^*=\trs x^\iot$ the conjugate transpose of $x$. 
Given $\eps\in\{\pm 1\}$, we let $S_n^\eps=\{z\in\Mat_m(\calo)\;|\;z^*=\eps z\}$ be the space of $\eps$-hermitian matrices of size $m$. 
Set $z[x]=x^*zx$ for matrices $z\in S_m^\eps$ and $x\in\Mat^m_n$. 
Given $\eps$-hermitian matrices $B\in S^\eps_j$ and $\vXi\in S^\eps_k$, we sometimes write $B\oplus \vXi$ instead of $\diag[B,\vXi]\in S^\eps_{j+k}$, particularly when we view them as $\eps$-hermitian forms. 
We say that $\vXi$ is represented by $B$ if there is a matrix $x\in\Mat^j_k(\calo)$ such that $B[x]=\vXi$. 

We denote by $\NN$, $\ZZ$, $\QQ$, $\RR$, $\CC$, $\RR^\times_+$, $\SS$ and $\mu_k$ the set of strictly positive rational integers, the ring of rational integers, the fields of rational, real, complex numbers, the groups of strictly positive real numbers, complex numbers of absolute value $1$ and $k$th roots of unity. 
We define the sign character $\sgn:\RR^\times\to \mu_2$ by $\sgn(x)=x/|x|$. 
When $X$ is a totally disconnected locally compact topological space or a smooth real manifold, we write $\cals(X)$ for the space of Schwartz-Bruhat functions on $X$. 

\subsection{Quaternionic unitary groups}\label{ssec:22}

Let $F$ for the moment be an arbitrary field and $D$ a quaternion algebra over $F$, by which we understand a central simple algebra over $F$ such that $[D:F]=4$. 
We frequently regard $D$ as an algebra variety over $F$. 
We denote by $^\iot$ the main involution of $D$,  by $x^*=\trs x^\iot$ the conjugate transpose of a matrix $x\in\Mat_n(D)$, by $\nu:\GL_n(D)\to\GG_m$ the reduced norm and by $\tau:\Mat_n(D)\to\GG_{a}$ the reduced trace, where $\GG_m=\GL_1$ and $\GG_a=\Mat_1$ are the multiplicative and additive groups in one variable over $F$.  
If $n=1$, then $\nu(x)=xx^{\iot}$ and $\tau(x)=x+x^{\iot}$ for $x\in D$. 
Put 
\begin{align*}
S_n&=\{B\in\Mat_n(D)\;|\;B^* =-B\}, & 
S^\nd_n&=S_n\cap\GL_n(D).  
\end{align*}
When $n=1$, we simply write $D_-^{}=S_1^{}$ and $D^\nd_{-}=S^\nd_1$. 
We identify $S_n$ with the space of $D$-valued skew hermitian forms on the right $D$-module $D^n$, by which we understand an $F$-linear map $B:D^n\times D^n\to D$ such that 
\begin{align*}
B(x,y)^\iot&=-B(y,x), & 
B(xa,yb)&=a^\iot B(x,y)b &
(a,b&\in D;\; x,y\in D^n). 
\end{align*}

We define the algebraic group $\calg_n$ by
\[\calg_n=\{g\in\GL_{2n}(D)\;|\;gJ_ng^*=\lam_n(g)J_n\text{ with }\lam_n(g)\in \GG_{m}\}, \]
where 
\[J_n=\begin{pmatrix} 0 & \ono_n \\ \ono_n & 0\end{pmatrix}\in\GL_{2n}(F). \]
We call $\lam_n:\calg_n\to\GG_{m}$ the similitude character. 
We are interested in its kernel $G_n=\{g\in\calg_n\;|\;\lam_n(g)=1\}$. 
For $A\in\GL_n(D)$, $z\in S_n$ and $t\in\GG_m$ we define matrices in $\GL_{2n}(D)$ by 
\begin{align*}
\bfm(A)&=\begin{pmatrix} A & 0 \\ 0 & (A^{-1})^{*}\end{pmatrix}, & 
\bfn(z)&=\begin{pmatrix} \ono_n & z \\ 0 & \ono_{n}\end{pmatrix}, &
\bfd(t)&=\begin{pmatrix} t\cdot\ono_n & 0 \\ 0 & \ono_n\end{pmatrix}. 
\end{align*}
Let $P_n$ be the parabolic subgroup of $G_n$ which has a Levi factor $M_n=\{\bfm(A)\;|\;A\in\GL_n(D)\}$ and the unipotent radical $N_n=\{\bfn(z)\;|\;z\in S_n\}$. 

\subsection{The split case}\label{ssec:23}

We include the case in which $D$ is the matrix algebra $\Mat_2(F)$ of degree $2$ over $F$. Let us now take this case. 
We often identify $\Mat_m(D)$ with $\Mat_{2m}(F)$ by viewing an element $(x_{ij})$ of $\Mat_m(D)$ as a matrix of size $2m$ whose $(i,j)$-block of size $2$ is $x_{ij}$. Put \begin{align*}J&=\begin{pmatrix} 0 & 1 \\ -1 & 0\end{pmatrix}\in\GL_2(F), &B_m&=\diag[\underbrace{J,\dots,J}_m]\in\GL_{2m}(F). \end{align*}
Then we easily see that $X^*=B_m^{-1}\trs XB_m$ for $X\in\Mat_m(D)$, where $\trs X$ denotes the transpose of $X$ as a matrix of size $2m$. 
We are led to \begin{align*}
\sig_nG_n\sig_n^{-1}&=Sp_{2n}, & B_nS_n&=\Sym_{2n}, 
\end{align*} where $\sig_n=\diag[\ono_{2n},B_n]$. 
Thus $G_n$ is an inner form of $Sp_{2n}$. 

\subsection{The case $n=1$}\label{ssec:24}

When $\calg$ is an algebraic group over a field $F$ and $Z$ is its center, we write $P\calg$ for the adjoint group $\calg/Z$. 
It is important to note that the group $P\calg_1$ is isomorphic to a certain orthogonal group. 
To see this relation, we recall some well-known facts on Clifford algebras. 
The basic reference is \cite{Sh4}. 
For the time being, we will take $V$ to be a finite dimensional vector space over a field $F$ of characteristic different from $2$, and let $q_V:V\to F$ be a nondegenerate symmetric $F$-bilinear form. 

A Clifford algebra of $(V,q_V)$ is an $F$-algebra $A$ with an $F$-linear map $p: V\to A$ satisfying the following properties:
\begin{enumerate}
\item[$\bullet$] $A$ has an identity element, which we denote by $\ono_A$; 
\item[$\bullet$] $A$ as an $F$-algebra is generated by $p(V)$ and $\ono_A$; 
\item[$\bullet$] $p(x)^2=q_V(x)\ono_A$ for every $x\in V$;
\item[$\bullet$] $A$ has dimension $2^\ell$ over $F$, where $\ell=\dim V$. 
\end{enumerate}
It is known that such a pair $(A,p)$ is unique up to isomorphism. 
Moreover, $p$ is injective, and as such, $V$ can be viewed as a subspace of $A$ via $p$. 
We denote this algebra $A$ by $A(V)$. 
The basic equalities are $xy+yx=q_V(x+y)-q_V(x)-q_V(y)$ for $x,y\in V$. 

There is an automorphism $\bet\mapsto\bet'$ of $A(V)$ such that $v'=-v$ for every $v\in V$. 
Similarly, there is an anti-automorphism $\bet\mapsto\bet^\rho$ of $A(V)$ such that $v^\rho=v$ for every $v\in V$. 
Let us put 
\begin{align*}
A^+(V)&=\{\bet\in A(V)\;|\;\bet'=\bet\}, \\
G^+(V)&=\{\bet\in A^+(V)^\times\;|\; \bet V\bet^{-1}=V\}. 
\end{align*}
Put $\mu_1(\bet)=\bet\bet^\rho$ for $\bet\in G^+(V)$. 
The map $\mu_1$ gives a homomorphism of $G^+(V)$ to $F^\times$. 
For $\bet\in G^+(V)$ we can define $\vth(\bet)\in\GL(V)$ by $\vth(\bet)v=\bet v\bet^{-1}$ $(v\in V)$. 
Then it is well-known that $\vth$ gives an isomorphism of $G^+(V)/F^\times$ onto the special orthogonal group 
\[\SO(V)=\{g\in\SL(V)\;|\;(gx,gy)=(x,y)\text{ for all }x,y\in V\}. \]

By restricting the symmetric bilinear form on $D^2$ given by $(x,y)\mapsto \frac{1}{2}\tau(xy^\iot)$, we obtain a three dimensional quadratic space $V_D=(D_-,q_{D_-})$ of discriminant $1$. 
In what follows we take $V=Fe\oplus V_D\oplus Fe'$ and define the quadratic form $q_V$ by 
\begin{align*}
q_{D_-}(x)&=-\nu(x), &
q_V(re+x+r'e')&=rr'+q_{D_-}(x) 
\end{align*}
for $r,r'\in F$ and $x\in D_-$. 

\begin{lemma}\label{lem:21}
Notation and assumption being as above, there is an $F$-linear ring homomorphism $\vPs:A(V)\to\Mat_2(D)$ such that 
\[\vPs(\bet^\rho)=\begin{pmatrix} 0 & 1 \\ 1 & 0 \end{pmatrix}\vPs(\bet)^{*}\begin{pmatrix} 0 & 1 \\ 1 & 0 \end{pmatrix}\]
for all $\bet\in A(V)$ and whose restriction gives isomorphisms 
\begin{align*}
A^{+}(V)&\simeq\Mat_{2}(D), & 
G^+(V)&\simeq \calg_1. 
\end{align*}
Furthermore, for given $t\in F^\times$, $a\in D^\times$ and $z\in D_-$, 
\begin{align*}
&(\vth\circ\vPs^{-1})(\bfd(t)\bfm(A))(re+x+r'e')=t\nu(A)re+AxA^{-1}+(t\nu(A))^{-1}r'e', \\ 
&(\vth\circ\vPs^{-1})(\bfn(z))(re+x+r'e')=(r-\tau(zx)+r'\nu(z))e+(x+r'z)+r'e', \\
&(\vth\circ\vPs^{-1})\left(\begin{pmatrix} 0 & 1 \\ 1 & 0 \end{pmatrix}\right)(re+x+r'e')=r'e-x+re'.  
\end{align*}
\end{lemma}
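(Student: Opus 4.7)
The plan is to construct $\vPs$ explicitly on a spanning set of $V$, extend by the universal property of the Clifford algebra, and then verify each listed identity by direct $2\times 2$ matrix computation. I would define
\[
\vPs(e)=\begin{pmatrix} 0 & 1 \\ 0 & 0\end{pmatrix},\qquad
\vPs(e')=\begin{pmatrix} 0 & 0 \\ 1 & 0\end{pmatrix},\qquad
\vPs(x)=\begin{pmatrix} x & 0 \\ 0 & -x\end{pmatrix}\text{ for }x\in D_-.
\]
Since $x^\iot=-x$ for $x\in D_-$, one has $x^2=-xx^\iot=-\nu(x)=q_V(x)$ inside $D$, so $\vPs(x)^2=q_V(x)\ono_2$. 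The identities $\vPs(e)^2=\vPs(e')^2=0$ are immediate, and the polarized vanishings $\vPs(e)\vPs(x)+\vPs(x)\vPs(e)=0$ and $\vPs(e')\vPs(x)+\vPs(x)\vPs(e')=0$ reflect the orthogonality of $V_D$ with $Fe\oplus Fe'$, while $\vPs(e)\vPs(e')+\vPs(e')\vPs(e)=\ono_2$ matches $(e,e')_V=1$. The universal property then extends $\vPs$ uniquely to an $F$-algebra homomorphism $A(V)\to\Mat_2(D)$.

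Set $J=\left(\begin{smallmatrix} 0 & 1 \\ 1 & 0\end{smallmatrix}\right)$. Both $\bet\mapsto\vPs(\bet^\rho)$ and $\bet\mapsto J\vPs(\bet)^*J$ are anti-multiplicative in $\bet$ (the latter because $(B_1B_2)^*=B_2^*B_1^*$ and $J^2=\ono_2$), and they both fix $\ono$; it therefore suffices to check they agree on $V$, i.e.\ $\vPs(v)=J\vPs(v)^*J$ for $v\in V$. The three cases $v=e$, $v=e'$, $v\in D_-$ are each a one-line $2\times 2$ calculation using $x^\iot=-x$ for $x\in D_-$. This proves the stated compatibility of involutions.

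Because $V$ is nondegenerate of odd dimension $5$, the even Clifford algebra $A^+(V)$ is a central simple $F$-algebra of dimension $2^{5-1}=16$; the target $\Mat_2(D)$ is also central simple of $F$-dimension $16$. The image $\vPs(ee')=\mathrm{diag}[1,0]$ is nonzero, so $\vPs|_{A^+(V)}$ is a nonzero homomorphism between central simple $F$-algebras of equal dimension, hence an isomorphism. Restricting to units gives $A^+(V)^\times\cong\GL_2(D)$. Now for $\bet\in A^+(V)^\times$, membership in $G^+(V)$ is equivalent to $\bet V\bet^{-1}\subseteq V$; combined with the involution identity above one gets $\vPs(\bet\bet^\rho)=\vPs(\bet)J\vPs(\bet)^*J$, so $\mu_1(\bet)=\bet\bet^\rho\in F^\times$ if and only if $\vPs(\bet)J\vPs(\bet)^*=\mu_1(\bet)J$. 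This is exactly the defining relation of $\calg_1$ (with $\lam_1\circ\vPs=\mu_1$), giving $G^+(V)\simeq\calg_1$.

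Finally, for each of the three distinguished elements $g\in\{\bfd(t)\bfm(A),\bfn(z),\left(\begin{smallmatrix} 0 & 1 \\ 1 & 0\end{smallmatrix}\right)\}$, I would compute $g\,\vPs(v)\,g^{-1}$ for $v\in\{e,e',x\}$ and exhibit the result as $\vPs$ of an element of $V$, reading off the formula for $(\vth\circ\vPs^{-1})(g)$. The one nonobvious algebraic identity needed along the way is $A^{-*}xA^*=AxA^{-1}$ for $A\in D^\times$ and $x\in D$, which follows from $AA^*=AA^\iot=\nu(A)\in F^\times$, so that $A^*=\nu(A)A^{-1}$ and the scalar $\nu(A)$ cancels. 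The main obstacle is not conceptual but bookkeeping: many short cases, with the central-simplicity/dimension argument in the third paragraph being the one place where a slightly non-computational input is needed to upgrade the homomorphism $\vPs|_{A^+(V)}$ to an isomorphism.
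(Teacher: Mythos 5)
Your construction of $\vPs$ is in substance the paper's own map: your assignment on $V$ is exactly the paper's $p'(re+x+r'e')=\left(\begin{smallmatrix} x & r\\ r' & -x\end{smallmatrix}\right)$, and the universal-property extension you invoke is the same homomorphism the paper obtains by realizing the full Clifford algebra as $\Mat_2(D)\oplus\Mat_2(D)$ via $v\mapsto(p'(v),-p'(v))$ (citing Shimura for the generation statement) and letting $\vPs$ be the first projection. Your route to $A^{+}(V)\simeq\Mat_2(D)$ -- central simplicity of the even Clifford algebra of an odd-dimensional space plus a dimension count and unitality -- is a legitimate alternative to the paper's identification of $A^+(V)$ with the diagonal, and the involution identity and the three conjugation formulas are indeed routine as you describe.

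There is, however, a genuine gap in your third paragraph at the identification $G^+(V)\simeq\calg_1$. What your argument actually establishes is that $\vPs$ carries $\{\bet\in A^{+}(V)^\times\;|\;\bet\bet^\rho\in F^\times\}$ bijectively onto $\calg_1$. But $G^+(V)$ is by definition $\{\bet\in A^{+}(V)^\times\;|\;\bet V\bet^{-1}=V\}$, and the only inclusion available from the background is $G^+(V)\subseteq\{\bet\;|\;\bet\bet^\rho\in F^\times\}$ (the fact that $\mu_1$ maps $G^+(V)$ into $F^\times$); this gives $\vPs(G^+(V))\subseteq\calg_1$. The reverse inclusion -- that $\bet\bet^\rho\in F^\times$, equivalently $\vPs(\bet)\in\calg_1$, forces $\bet V\bet^{-1}\subseteq V$ -- is precisely the nontrivial content (surjectivity onto $\calg_1$), and nothing in your write-up shows that an arbitrary $g\in\calg_1$ normalizes $\vPs(V)$. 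The paper closes exactly this point by observing that $p'(V)\left(\begin{smallmatrix}0&1\\1&0\end{smallmatrix}\right)=\{\bet\in\Mat_2(D)\;|\;\bet^*=\bet\}$, whence for $g\in\calg_1$ one has $g\,p'(V)\,g^{-1}=\lam_1(g)^{-1}\,g\,p'(V)\left(\begin{smallmatrix}0&1\\1&0\end{smallmatrix}\right)g^*\left(\begin{smallmatrix}0&1\\1&0\end{smallmatrix}\right)\subseteq p'(V)$, because $ghg^*$ is again hermitian. Alternatively you could repair the gap with material you already plan to compute: your final-paragraph calculations show that conjugation by $\bfd(t)\bfm(A)$, $\bfn(z)$ and $\left(\begin{smallmatrix}0&1\\1&0\end{smallmatrix}\right)$ preserves $\vPs(V)$, and these elements generate $\calg_1$ (Bruhat decomposition for the rank-one group $G_1$ together with the $\bfd(t)$), which yields the missing inclusion; but as written you neither invoke such a generation statement nor the hermitian description, so $G^+(V)\simeq\calg_1$ is asserted rather than proved.
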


\begin{proof}
This isomorphism is explained in \S\S A4.2 and A4.3 of \cite{Sh4}, to which we refer for additional explanation. 
We shall give a brief account for the convenience of the reader. 
Define a map $p:V\to\Mat_2(D)\oplus \Mat_2(D)$ by $p(v)=(p'(v),-p'(v))$, where $p':V\to\Mat_2(D)$ is defined by 
\begin{align*}
p'(re+x+r'e')&=\begin{pmatrix} x & r \\ r' & -x \end{pmatrix} 
&(r,r'&\in F,\; x\in D_{-}).
\end{align*}
Notice that $p(v)^2=(q_V(v),q_V(v))$ for every $v\in V$. 
Since the elements $p(v)$ for all $v\in V$ generate $\Mat_2(D)\oplus\Mat_2(D)$ (see \S 4.2 of \cite{Sh4}), the pair $(\Mat_2(D)\oplus\Mat_2(D),p)$ is a Clifford algebra of $(V,q_V)$. 
It can clearly be seen that 
\[A^+(V)=\{(\bet,\bet)\;|\;\bet\in\Mat_2(D)\}. \]
We define $\vPs$ as the projection onto the first factor. 
The canonical involution of $A^+(V)$ can be given as above, as this is so for every elements of $p(V)$. 
Hence we know that $\vPs(G^{+}(V))\subset \calg_1$ and conclude that $\mu_1$ coincides with the similitude character $\lam_1$ of $\calg_1$. 
To prove the reverse inclusion, we first observe that 
\[p'(V)\begin{pmatrix} 0 & 1 \\ 1 & 0 \end{pmatrix}=\{\bet\in\Mat_2(D)\;|\;\bet^*=\bet\}. \]
If $g\in\calg_1$, then 
\[gp'(V)g^{-1}=\lam_1(g)^{-1}gp'(V)\begin{pmatrix} 0 & 1 \\ 1 & 0 \end{pmatrix}g^{*}\begin{pmatrix} 0 & 1 \\ 1 & 0 \end{pmatrix}=p(V)\]
and hence $\Psi^{-1}(g)=(g,g)\in G^+(V)$. 
The last assertion can be verified by a simple calculation. 
\end{proof} 

\section{Degenerate Whittaker functions}\label{sec:3}

The ground field $F$ is a totally real number field or its completion. 
Excluding the case of the real field, we let $\frko$ be the maximal order of $F$ and fix a maximal order $\calo$ of $D$. 
In the real case we set $\psi=\bfe|_\RR$. 
When $F$ is an extension of $\QQ_p$, we define the character $\psi$ of $F$ by $\psi(x)=\bfe(-y)$ with $y\in\QQ$ such that $\Tr_{F/\QQ_p}(x)-y\in\ZZ_p$. 
In the global case we put $\psi_\infty(x)=\prod_{v\in\frkS_\infty}\bfe(x_v)$, $\psi_\bff(x)=\prod_{v\notin\frkS_\infty}\psi_v(x_v)$ and $\psi(x)=\psi_\infty(x_\infty)\psi_\bff(x_\bff)$ for $x\in\AA$. 

\subsection{Degenerate principal series}\label{ssec:31}

In this and the next subsections $F$ is a completion at a nonarchimedean prime. 
We denote by $q$ the order of the residue field of the valuation ring $\frko$,   by $\alp(t)=|t|$ the normalized absolute value of $t\in F^\times$ and by $\hat\chi^t$ the quadratic character of $F^\times$ associated to $F(\sqrt{t})/F$ via class field theory. 
For $B\in S_n^\nd$ we set $\hat\chi^B=\hat\chi^{(-1)^n\nu(B)}$.  
We write $\Ome(F^\times)$ for the group of all continuous homomorphisms from $F^\times$ to $\CC^\times$. 
Continuous homomorphisms of the form $\alp^s$ for some $s\in\CC$ are called unramified. 
Define $\sig(\mu)$ as the unique real number such that $\mu\alp^{-\sig(\mu)}$ is unitary. 

For $\mu\in\Ome(F^\times)$ the normalized induced representation $J_n(\mu)$ is realized on the space of smooth functions $f:\calg_n\to\CC$ satisfying  
\[f(\bfd(t)\bfm(A)\bfn(z)g)=\mu(t^n\nu(A))|t^n\nu(A)|^{(2n+1)/2}f(g)\]
for all $t\in F^\times$, $A\in\GL_n(D)$, $z\in S_n$ and $g\in\calg_n$. 
We denote its restriction to $G_n$ by $I_n(\mu)$. 
Since $\calg_n=\{\bfd(t)\;|\;t\in F^\times\}\ltimes G_n$, these representations can also be realized on the space of functions $f:G_n\to\CC$ satisfying  
\[f(\bfm(A)\bfn(z)g)=\mu(\nu(A))|\nu(A)|^{(2n+1)/2}f(g)\]
for all $A\in\GL_n(D)$, $z\in S_n$ and $g\in G_n$. 

Let $B\in S_n$. 
We define the character $\psi^B:S_n\to\SS$ by $\psi^B(z)=\psi(\tau(B z))$ for $z\in S_n$. 
For any smooth representation $\vPi$ of $G_n$ we put 
\[\Wh_B(\vPi)=\Hom_{S_n}(\vPi\circ\bfn,\psi^B). \]

\begin{proposition}[{\cite{Ka,KR,Y2}}]\label{prop:31}
\begin{enumerate}
\renewcommand\labelenumi{(\theenumi)}
\item\label{prop:311} If $-\frac{1}{2}<\sig(\mu)<\frac{1}{2}$, then $I_n(\mu)$ is irreducible and unitary. 
\item\label{prop:312} $\dim\Wh_B(I_n(\mu))=1$ for all $\mu\in\Ome(F^\times)$ and $B\in S_n^\nd$. 
\item\label{prop:313} Assume that $\mu^2=\alp$. 
Then $I_n(\mu)$ has a unique irreducible subrepresentation $A_n(\mu)$, which is unitary.  
Moreover, $A_n(\mu)$ is the unique irreducible subrepresentation of $J_n(\mu)$. 
Furthermore, $\Wh_B(A_n(\mu))$ is nonzero if and only if $\hat\chi^B\neq \mu\alp^{-1/2}$. 
\end{enumerate}
\end{proposition}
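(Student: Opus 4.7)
The plan is to transplant the standard analysis of degenerate principal series on the Siegel parabolic of $Sp_{2n}$ to the quaternionic inner form $G_n$, following closely the cited works. The central tool for all three parts is the long intertwining operator $M(\mu)\colon I_n(\mu)\to I_n(\mu^{-1})$, defined for $\sig(\mu)$ large by
$$M(\mu)f(g)=\int_{S_n}f\!\left(\begin{pmatrix}0 & \ono_n\\ \ono_n & 0\end{pmatrix}\bfn(z)g\right)\d z$$
and continued meromorphically in $\mu$ after normalization by the appropriate product of local $L$-factors.

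For (\ref{prop:311}), I would compute the normalized $M(\mu)$ on the spherical section via a Gindikin--Karpelevich formula over $D$, obtaining a product of local $L$-factors from which holomorphy and invertibility in the range $|\sig(\mu)|<\tfrac{1}{2}$ is transparent. Composing $M(\mu)$ with its adjoint produces a nondegenerate invariant Hermitian form on $I_n(\mu)$ that is positive on the spherical line; standard complementary-series propagation extends this to positive definiteness on the whole space, giving unitarity, and irreducibility is then immediate since a proper submodule would be annihilated by the invertible $M(\mu)$.

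For (\ref{prop:312}), I would use the Bruhat decomposition $G_n=\bigsqcup_{0\le r\le n}P_nw_rP_n$ to filter $I_n(\mu)|_{N_n}$ with successive quotients coming from smaller Jacobi parabolics. Applying $\Hom_{S_n}(\cdot,\psi^B)$ for $B\in S_n^{\nd}$ and using Frobenius reciprocity, only the open cell $r=n$ supports a nontrivial functional: on the intermediate cells the character $\psi^B$ is nontrivial on the corresponding subgroup of $S_n$ because $B$ is nondegenerate, so these pieces contribute zero. The open cell produces the one-dimensional space spanned by the regularized integral
$$w_B(f)=\int_{S_n}f\!\left(\begin{pmatrix}0 & \ono_n\\ \ono_n & 0\end{pmatrix}\bfn(z)\right)\overline{\psi^B(z)}\,\d z.$$

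For (\ref{prop:313}), at $\mu^2=\alp$ one reads the Jordan--H\"older structure off the image and kernel of $M(\mu)$: the length is two, $A_n(\mu)$ is the socle, and unitarity of $A_n(\mu)$ is inherited as the endpoint limit of the complementary-series Hermitian form built in (\ref{prop:311}). The statement about $J_n(\mu)$ follows from $J_n(\mu)|_{G_n}=I_n(\mu)$. For the vanishing criterion I would evaluate the meromorphic family $\mu\mapsto w_B$ on a section cutting out $A_n(\mu)$ and apply the local functional equation relating $w_B$ on $I_n(\mu)$ and on $I_n(\mu^{-1})$ through $M(\mu)$: the ratio is an explicit gamma factor, and vanishing of $w_B|_{A_n(\mu)}$ occurs exactly when this factor has a zero at $\mu^2=\alp$, which translates (using that both $\hat\chi^B$ and $\mu\alp^{-1/2}$ are quadratic on the nose when $\mu^2=\alp$) to $\hat\chi^B=\mu\alp^{-1/2}$. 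The main obstacle will be this final step: pinning down the exact quadratic character in the vanishing criterion requires careful bookkeeping of Weil indices and $L$-factor normalizations arising from the quaternionic structure via Lemma~\ref{lem:21}, and is the most delicate local computation of the three parts.
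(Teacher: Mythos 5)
Your treatments of (\ref{prop:311}) and (\ref{prop:312}) follow the same lines as the cited sources the paper simply quotes (Gindikin--Karpelevich/complementary-series propagation for unitarity and irreducibility, Bruhat-cell filtration for the one-dimensionality), and are fine in outline. The genuine problems are in (\ref{prop:313}). First, the claim that the length of $I_n(\mu)$ at $\mu^2=\alp$ is two is false in general: the quotient $I_n(\mu)/A_n(\mu)$ is a direct sum $\bigoplus_{B'}R^\psi_n(B')$ over \emph{all} equivalence classes of nondegenerate skew-hermitian $B'$ with $\hat\chi^{B'}=\mu\alp^{-1/2}$, and there is typically more than one such class (in the split case, forms with the same discriminant character but opposite Hasse invariant), so the length can exceed two. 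Second, your mechanism for the vanishing criterion presupposes that $A_n(\mu)$ is cut out as the image of the normalized intertwining operator (or by a meromorphic family of sections specializing into it); this structural fact is not established in your sketch and is essentially of the same depth as the module-structure results you are trying to use, while the ``explicit gamma factor'' in the quaternionic case is the functional equation of the quaternionic Siegel series, i.e.\ precisely the content of the cited preprints --- so this step is not a routine bookkeeping matter. Third, even granting the computation, showing that the particular functional $w^\mu_B$ restricts to zero on $A_n(\mu)$ does not by itself give $\Wh_B(A_n(\mu))=0$: you must also know that restriction $\Wh_B(I_n(\mu))\to\Wh_B(A_n(\mu))$ is surjective, which follows from the exactness of the twisted Jacquet functor combined with (\ref{prop:312}); this step is used explicitly in the paper and is missing from your argument.

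For contrast, the paper's route avoids the intertwining operator entirely at this point: it quotes the structure $I_n(\mu)/A_n(\mu)\simeq\sum_{\hat\chi^{B'}=\mu\alp^{-1/2}}R^\psi_n(B')$ from \cite{KR,Y2}, invokes the fact that $\Wh_B(R^\psi_n(B'))\neq 0$ exactly when $B$ is equivalent to $B'$ (Lemma 3.5 of \cite{KR} in the symplectic case, a computation based on the Lemme on p.~73 of \cite{MVW} in the quaternionic case), and then concludes by exactness of the Jacquet functor together with $\dim\Wh_B(I_n(\mu))=1$; unitarity of $A_n(\mu)$ comes from Tadi\'c's result on ends of complementary series, which matches your sketch in spirit. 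Finally, your one-line dismissal of the $J_n(\mu)$ statement via $J_n(\mu)|_{G_n}=I_n(\mu)$ skips the actual point: one must check that the $G_n$-submodule $A_n(\mu)$ is stable under the elements $\bfd(t)$, which the paper gets from the uniqueness of the irreducible $G_n$-submodule (conjugation by $\bfd(t)$ permutes irreducible submodules, hence fixes $A_n(\mu)$).
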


\begin{proof}
The module structure of $I_n(\mu)$ is determined by Kudla-Rallis \cite{KR} in the symplectic case and by the author \cite{Y2} in the quaternion case. 
The unitarity follows from the general fact on irreducible subquotients of ends of complementary series explained in Section 3 of \cite{T}.  
The second part is proved in \cite{Ka}. 
We will prove (\ref{prop:313}).  
Since $J_n(\bfd(t),\mu)A_n(\mu)$ is an irreducible submodule of $I_n(\mu)$, we know that $J_n(\bfd(t),\mu)A_n(\mu)=A_n(\mu)$ for all $t\in F^\times$.  
It is known that  
\[I_n(\mu)/A_n(\mu)\simeq\sum_{\hat\chi^{B'}=\mu\alp^{-1/2}}R^\psi_n(B'), \] 
where $B'$ extends over all equivalence classes of nondegenerate skew hermitian matrices of size $n$ with character $\mu\alp^{-1/2}$ and $R^\psi_n(B')$ is an irreducible representation arising via the Weil representation of $G_n\times \U(B')$ associated to $B'$, where $\U(B')=\{g\in\GL_n(D)\;|\;B'[g]=B'\}$.  
In the symplectic case $\Wh_B(R^\psi_n(B'))$ is nonzero if and only if $B$ is equivalent to $B'$ by Lemma 3.5 of \cite{KR}. 
One can see that this result is valid in the quaternion case by a basic calculation based on Lemme on p.~73 of \cite{MVW}. 
The claimed fact derives from the exactness of the Jacquet functor combined with these observations.  
\end{proof}

\subsection{Jacquet integrals}\label{ssec:32}

Put
\begin{align*}
\calr_n&=S_n\cap\Mat_n(\calo), & 
\scrr_n&=\{B\in S_n\;|\;\tau(B\calr_n)\subset\frko\}, &
\scrr_n^\nd&=\scrr_n\cap S_n^\nd.    
\end{align*}
For an ideal $\frkc$ of $\frko$ we put 
\[ \Gam_n[\frkc]=\left\{\begin{pmatrix} a & b \\ c & d \end{pmatrix}\in G_n\;\biggl|\; a,d\in\Mat_n(\calo),\;b\in\frkc^{-1}\Mat_n(\calo),\;c\in\frkc\Mat_n(\calo)\right\}. \]
Define a Haar measure $\d z$ on $S_n$ so that the measure of $\calr_n$ is $1$.  
For $g\in G_n$ the quantity $\vep_\frkc(g)$ is defined by writing $g=pk$ with $p=\bfm(A)\bfn(z)\in P_n$, $k\in\Gam_n[\frkc]$, and setting $\vep_\frkc(g)=|\nu(A)|$. 
For $\mu\in\Ome(F^\times)$, $f\in I_n(\mu)$ and $s\in\CC$ we define $f^{(s)}\in I_n(\mu\alp^s)$ by $f^{(s)}(g)=f(g)\vep_\frko(g)s$. 
For $B\in S_n^\nd$ the integral 
\[\bfw^{\mu\alp^s}_B(f^{(s)})=\int_{S_n}f^{(s)}(J_n\bfn(z))\overline{\psi^B(z)}\,\d z\]
defines a formal Dirichlet series in the variable $s$, which is absolutely convergent for $\Re s>n+\frac{1}{2}-\sig(\mu)$. 
Actually, the integral stabilizes and consequently, it is a polynomial of $q^{-s}$, from which we can evaluate $\bfw^{\mu\alp^s}_B(f^{(s)})$ at $s=0$ so as to get a basis vector $\bfw^\mu_B\in\Wh_B(I_n(\mu))$. 
From now on we assume that $\sig(\mu)>-\frac{1}{2}$ and set
\[w^\mu_B(f)=|\nu(B)|^{(2n+1)/4}\bfw^\mu_B(f)\frac{L\left(n+\frac{1}{2},\mu\right)}{L\left(\frac{1}{2},\mu\hat\chi^B\right)}\prod_{j=1}^nL(2j-1,\mu^2). \]
We write $\vrh$ (resp. $\wp$) for the right regular action of $G_n$ (resp. $\calg_n$) on the space of fuctions on $G_n$ (resp. $\calg_n$). 

\begin{lemma}\label{lem:31}
Let $B\in S_n^\nd$ and $\mu\in\Ome(F^\times)$. 
Assume that $\sig(\mu)>-\frac{1}{2}$. 
\begin{enumerate}
\renewcommand\labelenumi{(\theenumi)}
\item\label{lem:311} $0\neq w^\mu_B\in\Wh_B(I_n(\mu))$. 
\item\label{lem:312} When $\mu^2=\alp$, the restriction of $w^\mu_B$ to $A_n(\mu)$ is nonzero if and only if $\hat\chi^B\neq\mu\alp^{-1/2}$. 
\item\label{lem:313} If $t\in F^\times$ and $A\in\GL_n(D)$, then 
\[w^\mu_B\circ\wp(\bfd(t)\bfm(A))=\mu(t^n\nu(A))^{-1}w^\mu_{tB[A]}. \]
\end{enumerate}
\end{lemma}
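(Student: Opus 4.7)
I would prove the parts in order (\ref{lem:313}), (\ref{lem:311}), (\ref{lem:312}): the equivariance (\ref{lem:313}) lets me reduce non-vanishing in (\ref{lem:311}) to a convenient representative of $B$, and (\ref{lem:312}) follows formally from (\ref{lem:311}) via the module structure supplied by Proposition \ref{prop:31}(\ref{prop:313}).

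For (\ref{lem:313}), I would carry out the change of variables $z=tAwA^*$ in the Jacquet integral, using the commutation relations $\bfm(A)\bfn(w)\bfm(A)^{-1}=\bfn(AwA^*)$, $\bfd(t)^{-1}\bfn(z)\bfd(t)=\bfn(t^{-1}z)$ and the identity $J_n\bfd(t)\bfm(A)=\bfd(t)\bfm(t^{-1}(A^{-1})^*)J_n$. The Jacobian on $S_n$ (of $F$-dimension $n(2n+1)$) equals $|t^n\nu(A)|^{2n+1}$, and the left-transformation rule of $f\in J_n(\mu)$ under $\bfd(t)\bfm(t^{-1}(A^{-1})^*)$ contributes the factor $\mu(t^n\nu(A))^{-1}|t^n\nu(A)|^{-(2n+1)/2}$, yielding
\[\bfw^\mu_B(\wp(\bfd(t)\bfm(A))f)=\mu(t^n\nu(A))^{-1}|t^n\nu(A)|^{(2n+1)/2}\bfw^\mu_{tB[A]}(f).\]
The extra $|t^n\nu(A)|^{(2n+1)/2}$ is absorbed by $|\nu(tB[A])|^{(2n+1)/4}=|t^n\nu(A)|^{(2n+1)/2}|\nu(B)|^{(2n+1)/4}$, and the $L$-factor is unchanged because $\hat\chi^{tB[A]}=\hat\chi^B$ (the discriminant changes only by a square).

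For (\ref{lem:311}), $\bfw^\mu_B\in\Wh_B(I_n(\mu))$ holds by construction, so it remains to check non-vanishing. I would test on the normalized spherical section $f^\circ$ when $D$ and $\mu$ are unramified, invoking the local Siegel series formula (Shimura in the symplectic case, extended to the quaternionic case in earlier work of the author) to obtain an explicit expression of the form
\[\bfw^\mu_B(f^\circ)=|\nu(B)|^{-(2n+1)/4}\cdot\frac{L\!\left(\tfrac{1}{2},\mu\hat\chi^B\right)}{L\!\left(n+\tfrac{1}{2},\mu\right)\prod_{j=1}^nL(2j-1,\mu^2)}\cdot\bfF_B(\mu),\]
where $\bfF_B(\mu)$ is a polynomial in the Satake parameters that does not vanish for suitable $B\in\scrr^\nd_n$. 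The normalization in the statement cancels the prefactor and the $L$-quotient exactly, so $w^\mu_B(f^\circ)=\bfF_B(\mu)\neq 0$. For general $B\in S_n^\nd$ one reduces to a primitive $B$ via (\ref{lem:313}); in the ramified case one uses a partial-spherical test section, and since the normalization constant is finite and non-zero under the assumption $\sig(\mu)>-\tfrac{1}{2}$, non-vanishing of $\bfw^\mu_B$ implies non-vanishing of $w^\mu_B$.

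For (\ref{lem:312}), I would apply the exact twisted Jacquet functor $V\mapsto V_{N_n,\psi^B}$ to the short exact sequence
\[0\to A_n(\mu)\to I_n(\mu)\to\bigoplus_{\hat\chi^{B'}=\mu\alp^{-1/2}}R^\psi_n(B')\to 0\]
from Proposition \ref{prop:31}(\ref{prop:313}) and dualize to obtain
\[0\to\bigoplus_{\hat\chi^{B'}=\mu\alp^{-1/2}}\Wh_B(R^\psi_n(B'))\to\Wh_B(I_n(\mu))\to\Wh_B(A_n(\mu))\to 0.\]
If $\hat\chi^B\neq\mu\alp^{-1/2}$, each $\Wh_B(R^\psi_n(B'))$ vanishes (a Whittaker model at $B$ for $R^\psi_n(B')$ requires $B\sim B'$, which forces $\hat\chi^B=\hat\chi^{B'}$), so the restriction is an isomorphism of $1$-dimensional spaces and $w^\mu_B|_{A_n(\mu)}\neq 0$ by (\ref{lem:311}); if $\hat\chi^B=\mu\alp^{-1/2}$, then $\Wh_B(A_n(\mu))=0$ by Proposition \ref{prop:31}(\ref{prop:313}), so $w^\mu_B|_{A_n(\mu)}=0$ automatically. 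The main obstacle is (\ref{lem:311}): pinning down the exact form of the local Siegel series in the quaternionic setting, matching its $L$-factor structure to the normalization in the statement, and verifying that the residual polynomial $\bfF_B(\mu)$ does not vanish for the chosen $B$; the other two parts are then routine.
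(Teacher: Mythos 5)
Your parts (\ref{lem:313}) and (\ref{lem:312}) are correct and are exactly what the paper leaves implicit. For (\ref{lem:313}): the substitution $z=tAwA^*$ with Jacobian $|t^n\nu(A)|^{2n+1}$ on $S_n$, the identity $J_n\bfd(t)\bfm(A)=\bfd(t)\bfm(t^{-1}(A^{-1})^*)J_n$, and the absorption of the surplus modulus factor into $|\nu(tB[A])|^{(2n+1)/4}$ together with $\hat\chi^{tB[A]}=\hat\chi^B$ (the discriminant $(-1)^n\nu(B)$ changes by the square $t^{2n}\nu(A)^2$) reproduce the paper's ``obvious changes of variables.'' For (\ref{lem:312}): the twisted-Jacquet exact sequence attached to $I_n(\mu)/A_n(\mu)\simeq\sum_{\hat\chi^{B'}=\mu\alp^{-1/2}}R^\psi_n(B')$, combined with $\dim\Wh_B(I_n(\mu))=1$ and the support condition on $\Wh_B(R^\psi_n(B'))$, is precisely the content of the paper's proof of Proposition \ref{prop:31}(\ref{prop:313}), so your deduction is the intended one.

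For (\ref{lem:311}) your primary route is both heavier than necessary and has genuine gaps. The paper has already recorded in \S\ref{ssec:32} that the stabilized integral $\bfw^\mu_B$ \emph{is} a basis vector of the one-dimensional space $\Wh_B(I_n(\mu))$ (Proposition \ref{prop:31}(\ref{prop:312}) plus stabilization), so the only thing to check is that the normalizing factor $|\nu(B)|^{(2n+1)/4}L\bigl(n+\frac{1}{2},\mu\bigr)L\bigl(\frac{1}{2},\mu\hat\chi^B\bigr)^{-1}\prod_{j=1}^nL(2j-1,\mu^2)$ is finite and nonzero, which is exactly where $\sig(\mu)>-\frac{1}{2}$ enters; your closing sentence states this, and that alone is the proof. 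The Siegel-series detour you offer as the main argument is problematic on three counts: (a) it only covers unramified data, and ``a partial-spherical test section'' in the ramified case is not an argument; (b) reducing a general $B$ to a chosen representative via (\ref{lem:313}) only moves $B$ within its $F^\times\times\GL_n(D)$-orbit, which preserves the class of $(-1)^n\nu(B)$ modulo squares, so a single representative in $\scrr_n^\nd$ does not reach all of $S_n^\nd$; and (c) the polynomial $F(B,X)$ has constant term $1$ but may vanish at the particular point determined by $\mu$, so nonvanishing of $\bfw^\mu_B$ on the spherical vector is not guaranteed. Drop the detour and keep your final observation.
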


\begin{proof}
The first part is clear. 
The second part is evident from Proposition \ref{prop:31}. 
The third part can be verified by obvious changes of variables. 
\end{proof}

The following result is derived in \cite{Y4}. 

\begin{lemma}\label{lem:33}
Let $f\in I_n(\mu)$. 
For any compact subset $C$ of $G_n$ there are positive constants $c$ and $M$ such that for all $\Del\in C$ 
\[|w^\mu_B(\vrh(\Del)f)|\leq c|\nu(B)|^{-M}. \]
\end{lemma}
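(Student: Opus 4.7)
The plan is to combine smoothness of $f$, stabilization of the Jacquet integral, and the scaling relation from Lemma~\ref{lem:31}(\ref{lem:313}). The first two will force $w_B^\mu(\vrh(\Del)f)$ to vanish for large $|\nu(B)|$, while the scaling reduces the remaining range of small $|\nu(B)|$ to finitely many orbit representatives.

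First I would reduce to finitely many smooth vectors. Since $f\in I_n(\mu)$ is smooth and $C$ is compact, the stabilizer of $f$ under $\vrh$ is an open subgroup of $G_n$, so $C$ is covered by finitely many of its right cosets and the set $\{\vrh(\Del)f:\Del\in C\}$ is a finite collection $\{f_1,\dots,f_r\}$. It then suffices to establish, for each $f_i$, a bound $|w_B^\mu(f_i)|\leq c_i|\nu(B)|^{-M_i}$ and take the maximum. Smoothness of $f_i$ produces a lattice $L_i\subset S_n$ with $f_i(g\bfn(u))=f_i(g)$ for $u\in L_i$. The change of variable $z\mapsto z+u$ in the stabilized integral
\[\bfw_B^\mu(f_i)=\int_{S_n}f_i(J_n\bfn(z))\overline{\psi^B(z)}\,\d z\]
yields $\bfw_B^\mu(f_i)=\psi^B(u)^{-1}\bfw_B^\mu(f_i)$ for every $u\in L_i$, and hence $w_B^\mu(f_i)=0$ unless $B$ lies in the dual lattice $L_i^\ast$. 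Since $L_i^\ast\cap S_n^\nd$ forces $|\nu(B)|\leq q^{T_i}$ for some $T_i$ depending on $f_i$, the desired bound is automatic when $|\nu(B)|>q^{T_i}$.

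For $B\in L_i^\ast\cap S_n^\nd$ with $|\nu(B)|$ possibly very small, I would exploit the scaling. The action $(t,A)\cdot B=tB[A]$ of $F^\times\times\GL_n(D)$ on $S_n^\nd$ has only finitely many orbits, by the classification of nondegenerate skew-hermitian forms over a local field. Pick representatives $B_0^{(1)},\dots,B_0^{(s)}$ and write $B=tB_0[A]$ for suitable $t$, $A$ and representative $B_0$. Combining Lemma~\ref{lem:31}(\ref{lem:313}) with the left equivariance of $J_n(\mu)$ along $\bfd(t)\bfm(A)\in\calg_n$ gives
\[|w_B^\mu(f_i)|=\left(\frac{|\nu(B)|}{|\nu(B_0)|}\right)^{\sig(\mu)+(2n+1)/4}|w_{B_0}^\mu(h_{t,A})|,\]
where $h_{t,A}(g)=f_i((\bfd(t)\bfm(A))^{-1}g\,\bfd(t)\bfm(A))$. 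Since $\sig(\mu)+(2n+1)/4>0$ by the standing hypothesis $\sig(\mu)>-\tfrac{1}{2}$, the problem reduces to bounding $|w_{B_0}^\mu(h_{t,A})|$ polynomially in $|\nu(B)|^{-1}$ for each of the finitely many representatives $B_0$.

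The hard part will be this last polynomial estimate. The element $\bfd(t)\bfm(A)$ leaves every compact subset of $\calg_n$ as $|t|$ or $|\nu(A)|$ becomes small, so the family $\{h_{t,A}\}$ is not uniformly bounded in the smooth topology on $I_n(\mu)$. The resolution is to examine, on the effective support of the stabilized integral defining $w_{B_0}^\mu(h_{t,A})$, how the Iwasawa decomposition of $(\bfd(t)\bfm(A))^{-1}g\,\bfd(t)\bfm(A)$ depends on $(t,A)$, and to expand $f_i$ using its left equivariance under $P_n$ on each Bruhat cell of $G_n$. A cell-by-cell analysis then produces the required polynomial dependence on $|t|^{-1}$ and $|\nu(A)|^{-1}$, hence on $|\nu(B)|^{-1}$, completing the argument.
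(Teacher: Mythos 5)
Your preliminary reductions are sound: the finiteness of $\{\vrh(\Del)f:\Del\in C\}$, the vanishing of $w^\mu_B(f_i)$ for $B$ outside the dual lattice $L_i^*$ (which disposes of all large $|\nu(B)|$), and the scaling identity $|w^\mu_B(f_i)|=(|\nu(B)|/|\nu(B_0)|)^{\sig(\mu)+(2n+1)/4}\,|w^\mu_{B_0}(h_{t,A})|$, which indeed follows from Lemma \ref{lem:31}(\ref{lem:313}) combined with the left $\calp_n$-equivariance of $f_i$ viewed in $J_n(\mu)$, using $|t^n\nu(A)|^2=|\nu(B)|/|\nu(B_0)|$. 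But the argument stops exactly where the lemma begins. The entire content of the statement is polynomial control of the degenerate Whittaker functional as $|\nu(B)|\to 0$, and after your orbit reduction this becomes the assertion that $|w^\mu_{B_0}(h_{t,A})|$ grows at most polynomially in $|t^n\nu(A)|^{-1}$. That is not a simplification but a reformulation of the original estimate with the roles of $B$ and the vector interchanged: nothing has been gained from the finiteness of the orbit representatives, because the conjugated family $\{h_{t,A}\}$ is unbounded in $I_n(\mu)$ (its level degrades as $\bfd(t)\bfm(A)$ leaves compact sets), so there is no a priori bound on the functional to invoke. The closing paragraph — Iwasawa decomposition of $p^{-1}gp$, a Bruhat cell-by-cell expansion, and the claim that this ``produces the required polynomial dependence'' — is a plan rather than a proof: you give no reason why the stabilized integral restricted to each cell is polynomially bounded in $|t^n\nu(A)|^{-1}$, and that is precisely the delicate point (on the big cell, for instance, the relevant restriction of $h_{t,A}$ is not a Schwartz function, so its regularized Fourier transform at $B_0$ still has to be estimated).

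For comparison, the paper does not prove the lemma internally at all: it cites \cite{Y4}, where the estimate is obtained through explicit formulas of Siegel-series type. This is in the spirit of Lemma \ref{lem:34}, where for the distinguished vector one has $w^{\alp^s}_B(\vep_\frkd^{s+(2n+1)/2})=c_s|\nu(B)|^{(2n+1)/4}F(B,q^{-s-(2n+1)/2})$ and the polynomial bound comes from Ikeda's explicit estimate $|F(B,q^{-s})|\leq|\nu(B)|^{-(13n^2+13n+4)/2}$; general vectors are handled by explicit computations of the same kind (with congruence conditions), not by a soft conjugation argument. So your proposal is not wrong in its reductions, but it has a genuine gap: the key quantitative estimate, which is the actual substance of Lemma \ref{lem:33}, is only asserted. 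To complete your route you would have to carry out the cell analysis quantitatively, which in effect amounts to redoing the Siegel-series computation of \cite{Y4}.
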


The Siegel series associated to $B\in \scrr_n$ is defined by 
\[b(B,s)=\sum_{z\in S_n/\calr_n} \psi'(-\tau(Bz))\nu[z]^{-s}, \]
where $\nu[z]=[z\calo^n+\calo^n:\calo^n]^{1/2}$ and $\psi'$ is an arbitrarily fixed additive character on $F$ of order zero. 
We define the function $\gam(B,s)$ by 
\[\gam(B,s)=\zet(s)^{-1}L(s-n,\hat\chi^B)\times\begin{cases}
\prod_{j=1}^n\zet(2s-2j)^{-1} &\text{if $D\simeq\Mat_2(F)$, }  \\
\prod_{j=1}^{\strut{[n/2]}}\zet(2s-4j)^{-1} &\text{otherwise. }
\end{cases}\]
Put $F(B,q^{-s})=b(B,s)\gam(B,s)^{-1}$.
Then $F(B,X)$ is a polynomial of degree $f^B$ with constant term $1$ by \cite{I5,Y4} (see Section \ref{sec:10} for the definition of $f^B$). 

We denote the different of $F/\QQ_p$ by $\frkd$. 

\begin{lemma}\label{lem:34}
If $B\in\scrr_n^\nd$, then there is a constant $c_s$ independent of $B$ such that $w_B^{\alp^s}(\vep_\frkd^{s+(2n+1)/2})=c_s|\nu(B)|^{(2n+1)/4}F(B,q^{-s-(2n+1)/2})$. 
If $\frkd=\frko$ and $D$ is not a division quaternion algebra, then $c_s=1$ and there is a positive constant $M$, depending only on $n$, such that $|w^{\alp^s}_B(\vep_\frkd^{s+(2n+1)/2})|\leq |\nu(B)|^{-M}$ for all $B\in S_n^\nd$ and $-\frac{1}{2}<\Re s<\frac{1}{2}$. 
\end{lemma}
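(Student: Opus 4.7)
My plan is to interpret the unnormalized Jacquet integral as the local Siegel series up to a constant independent of $B$, and then feed the result through the $L$-factor normalization of $w^\mu_B$ to package the answer as $F(B,q^{-s-(2n+1)/2})$. Starting from
$$\bfw^{\alp^s}_B(\vep_\frkd^{s+(2n+1)/2})=\int_{S_n}\vep_\frkd(J_n\bfn(z))^{s+(2n+1)/2}\overline{\psi^B(z)}\,\d z,$$
interpreted via stabilization as a polynomial in $q^{-s}$, I would use the Bruhat-type identity
$$J_n\bfn(z)=\bfn(z^{-1})\bfm(-z^{-1})\begin{pmatrix}\ono_n & 0\\ z^{-1} & \ono_n\end{pmatrix}\qquad (z\in S_n^\nd),$$
which one verifies directly using $z^*=-z$, to exhibit $\vep_\frkd(J_n\bfn(z))$ as a function of the divisibility of $z^{-1}$. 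Since right translation by $\bfn(r)$ with $r\in\frkd^{-1}\calr_n$ preserves $\vep_\frkd$ and the hypothesis $B\in\scrr_n$ makes $\psi^B$ constant on such cosets, the integral collapses to a sum over $S_n/\frkd^{-1}\calr_n$; this sum matches the defining expression of the Siegel series $b(B,s+(2n+1)/2)$ up to a constant $c_s'$ depending only on Haar measure normalizations and on the value $\vep_\frkd(J_n)$.

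To pass from $\bfw^{\alp^s}_B$ to $w^{\alp^s}_B$, I would verify by reindexing $j\mapsto n+1-j$ in $\prod_{j=1}^n L(2j-1,\alp^{2s})$ that
$$\frac{L(n+\tfrac{1}{2},\alp^s)}{L(\tfrac{1}{2},\alp^s\hat\chi^B)}\prod_{j=1}^n L(2j-1,\alp^{2s})=\gam(B,s+(2n+1)/2)^{-1}$$
in the split case, while in the quaternionic case the two sides differ by a $B$-independent ratio of zeta factors, which is simply absorbed into $c_s$. Combining with $b(B,s')\gam(B,s')^{-1}=F(B,q^{-s'})$ then yields the claimed formula. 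When $\frkd=\frko$ and $D$ is not a division algebra, $\Gam_n[\frko]$ is a hyperspecial maximal compact, $\vep_\frkd(J_n)=1$, all volume normalizations equal $1$, and the zeta products match exactly, so $c_s=1$ by direct inspection.

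For the uniform bound in the strip $-\tfrac{1}{2}<\Re s<\tfrac{1}{2}$, I would first use the homogeneity relation of Lemma~\ref{lem:31}\,(\ref{lem:313}) to reduce to $B\in\scrr_n^\nd$, absorbing the resulting scalar factor into the final bound. For such $B$ the explicit formula then reduces the task to bounding $|F(B,q^{-s-(2n+1)/2})|$ uniformly, where $|q^{-s-(2n+1)/2}|<q^{-n}$ throughout the strip. This estimate follows either from the defining Dirichlet series for $b(B,\cdot)$, which converges absolutely after stabilization in the relevant range, or by returning to the Jacquet integral and extracting decay in $|\nu(B)|$ from cancellation of $\psi^B$ via Poisson summation.

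The main obstacle is the identification in the first paragraph when $D$ is a non-split quaternion algebra or when $\frkd\neq\frko$: then $\Gam_n[\frkd]$ is not hyperspecial, so the Bruhat decomposition must be carried out relative to the appropriate lattice in $\calo^{2n}$ with careful tracking of normalization constants. Uniformity in the strip is also delicate because the degree $f^B$ of $F(B,X)$ is unbounded as $B$ varies, so any elementary estimate must be supplemented by the cancellation coming from $\psi^B$.
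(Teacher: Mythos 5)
Your route to the first assertion is the standard one and is, in outline, exactly what the paper outsources to Lemma 4.5 of \cite{Y4}: your factorization of $J_n\bfn(z)$ for $z\in S_n^\nd$ (whose last factor is $J_n\bfn(z^{-1})J_n^{-1}$) is correct, the collapse of the stabilized integral along $\frkd^{-1}\calr_n$-cosets is legitimate since $\bfn(r)\in\Gam_n[\frkd]$ and $\psi^B$ is trivial on $\frkd^{-1}\calr_n$ for $B\in\scrr_n$, and the reindexing $j\mapsto n+1-j$ does identify the normalizing $L$-factors with $\gam(B,s+\tfrac{2n+1}{2})^{-1}$ in the split case, the division case differing by a $B$-independent zeta ratio. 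But the two items you defer --- the evaluation of $\vep_\frkd(J_n\bfn(z))$ in terms of $\nu[z]$ (uniformly in the division case and for $\frkd\neq\frko$, with the character/measure bookkeeping shown to be independent of $B$) and the unramified computation giving $c_s=1$ --- are precisely the content of the cited lemma, so this part of your argument is a plausible re-derivation sketch rather than a proof. Separately, your reduction of the estimate from $S_n^\nd$ to $\scrr_n^\nd$ via Lemma \ref{lem:31}(\ref{lem:313}) does not work for this particular vector: that relation replaces $\vep_\frko^{s+(2n+1)/2}$ by $\wp(\bfd(t)\bfm(A))\vep_\frko^{s+(2n+1)/2}$, which coincides with $\vep_\frko^{s+(2n+1)/2}$ only for $t\in\frko^\times$ and $A\in\GL_n(\calo)$, and such scalings keep $\scrr_n^\nd$ stable rather than absorbing a general $B\in S_n^\nd$. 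The correct observation is cheaper: $z\mapsto\vep_\frko(J_n\bfn(z))$ is $\calr_n$-periodic, so the stabilized integral vanishes outright unless $B\in\scrr_n$, and the extension to all of $S_n^\nd$ is trivial.

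The genuine gap is the uniform bound on the strip. For $-\tfrac12<\Re s<\tfrac12$ the defining Jacquet integral is far outside its region of absolute convergence (which requires $\Re s>n+\tfrac12$, as recorded just before the lemma), so no soft convergence argument is available; after stabilization you are left with the polynomial value $F(B,q^{-s-(2n+1)/2})$, whose number of terms $f^B+1$ is unbounded as $B$ varies, so a term-by-term estimate cannot produce a bound of the form $|\nu(B)|^{-M}$ with $M$ depending only on $n$ --- a difficulty you concede yourself, and the appeal to cancellation of $\psi^B$ ``via Poisson summation'' is a restatement of the problem, not an argument. The paper supplies exactly this missing quantitative input by quoting Ikeda: in the proof of Lemma 4.1 of \cite{I2} one has $|F(B,q^{-s})|\leq|\nu(B)|^{-(13n^2+13n+4)/2}$ for $\Re s>0$, uniformly in $B$; since $\Re\bigl(s+\tfrac{2n+1}{2}\bigr)>n>0$ on the strip and $c_s=1$, this combines with the first assertion and $|\nu(B)|\leq 1$ for $B\in\scrr_n^\nd$ to give the stated bound with an explicit $M=\tfrac{13n^2+13n+4}{2}-\tfrac{2n+1}{4}$. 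Without this estimate (or an equivalent control of the coefficients of the Siegel series polynomial), the second half of the lemma is not established by your proposal.
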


\begin{proof}
The first part is Lemma 4.5 of \cite{Y4}. 
Ikeda shows that when $\Re s>0$, 
\[|F(B,q^{-s})|\leq |\nu(B)|^{-(13n^2+13n+4)/2}\]
in the proof of Lemma 4.1 of \cite{I2}, from which we obtain the desired estimate. 
\end{proof}

\subsection{Degenerate Whittaker functions: the archimedean case}\label{ssec:33}

We discuss the case in which $F=\RR$ and $D=\Mat_2(\RR)$. 
We define the additive character of $\CC$ by $\bfe(z)=e^{2\pi\iu z}$ for $z\in\CC$. 
Put 
\begin{align*}
S_n^+&=\{B\in S_n\;|\;BB_n\in\Sym_{2n}(\RR)^+\}, & 
\calg^+_n&=\{g\in\calg_n\;|\;\lam_n(g)>0\}. 
\end{align*}
We can define the action of $\calg^+_n$ on the space 
\[\frkH_n=\{Z\in\Mat_{2n}(\CC)\;|\;\trs(ZB_n^{-1})=ZB_n^{-1},\;\Im(ZB_n^{-1})\in\Sym_{2n}(\RR)^+\} \] 
and the automorphy factor on $\calg^+_n\times\frkH_n$ by 
\begin{align*}
gZ&=(AZ+B)(CZ+D)^{-1}, & 
j(g,Z)&=\nu(g)^{-1/2}\nu(CZ+D)
\end{align*}
for $Z\in\frkH_n$ and $g=\begin{pmatrix} A & B \\ C & D\end{pmatrix}\in\calg^+_n$ with matrices $A,B,C,D$ of size $n$ over $\Mat_2(\RR)$. 
There is a biholomorphic isomorphism from $\frkH_n$ onto the Siegel upper half space $\calh_{2n}$ (cf. \S \ref{ssec:23}). 
Define the origin of $\frkH_n$ and the standard maximal compact subgroup of $G_n$ by 
\begin{align*}
\bfi&=\iu B_n\in\frkH_n, & 
K_n&=\{g\in G_n\;|\;g(\bfi)=\bfi\}. 
\end{align*}
For $\ell\in\NN$ and $B\in S_n^+$ we define a function $W^{(\ell)}_B:\calg^+_n\to\CC$ by 
\[W^{(\ell)}_{B}(g)=\nu(B)^{\ell/2}\bfe(\tau(Bg(\bfi)))j(g,\bfi)^{-\ell}. \]
Clearly, 
\beq
W^{(\ell)}_B(\bfn(z)\bfd(t)\bfm(A)gk)=\bfe(\tau(Bz))\sgn(\nu(A))^\ell W^{(\ell)}_{tB[A]}(g)j(k,\bfi)^{-\ell} \label{tag:31}
\eeq 
for $z\in S_n$, $A\in\GL_{2n}(\RR)$, $t\in\RR^\times_+$, $g\in\calg^+_n$ and $k\in K_n$. 

\subsection{Degenerate Whittaker functions: the global case}\label{ssec:34}

Until the end of the next section $D$ is a totally indefinite quaternion algebra over a totally real number field $F$. 
For each place $v$ of $F$ and an algebraic group $\calv$ defined over $F$, let $F_v$ be the $v$-completion of $F$ and put $\calv_v=\calv(F_v)$ to make our exposition simpler. The ad\`{e}le group, the finite part of the ad\`{e}le group, the infinite part of the ad\`{e}le group and its connected component of the identity are denoted by $\calv(\AA)$, $\calv(\AAf)$, $\calv(\AA_\infty)$ and $\calv(\AA_\infty)^+$, respectively. 
For an ad\`{e}le point $x\in\calv(\AA)$ we denote its projections to $\calv(\AAf)$, $\calv(\AA_\infty)$ and $\calv_v$ by $x_\bff$, $x_\infty$ and $x_v$, respectively. 

We will denote the group of totally positive elements of $F$ by $F^\times_+$. 
Put 
\[S_n^+=\{B\in S_n(F)\;|\;B\in S_n(F_v)^+\text{ for all }v\in\frkS_\infty\}. \]
When $n=1$, we write $D_-^+=S_1^+$. 
For $B\in S_n(F)$ we define the characters $\psi^B:S_n(\AA)\to\SS$ by $\psi^B(z)=\psi(\tau(B z))$ whose restriction to $S_n(\AAf)$ is denoted by $\psi^B_\bff$. 
For any smooth representation $\vPi$ of $G_n(\AAf)$ we put 
\[\Wh_B(\vPi)=\Hom_{S_n(\AAf)}(\vPi\circ\bfn,\psi^B_\bff). \]

For $v\notin\frkS_\infty$ and $\mu_v,\lam_v\in\Ome(F_v^\times)$ let $I(\mu_v,\lam_v)$ denote the representation of $\PGL_2(F_v)$ on the space of all smooth functions $f$ on $\GL_2(F_v)$ satisfying 
\[f\left(\begin{pmatrix} a & b \\ 0 & d\end{pmatrix}g\right)=\mu_v(a)\lam_v(d)|ad^{-1}|^{1/2}f(g)\]
for all $a,d\in F_v^\times$; $b\in F_v$ and $g\in\GL_2(F_v)$. 
This representation $I(\mu_v,\lam_v)$ is irreducible unless $\mu_v\lam_v^{-1}\in\{\alp,\alp^{-1}\}$. 
If $\mu_v\lam_v^{-1}=\alp$, then $I(\mu_v,\lam_v)$ has a unique irreducible submodule, which we denote by $A(\mu_v,\lam_v)$. 

In what follows we fix, once and for all, an irreducible admissible unitary generic representation $\pi_\bff$ of $\PGL_2(\AAf)$ whose local components are not supercuspidal. 
Then $\pi_\bff$ is equivalent to the unique irreducible submodule $A(\mu_\bff^{},\mu_\bff^{-1})$ of $I(\mu_\bff^{},\mu_\bff^{-1})=\otimes_{v\notin\frkS_\infty}'I(\mu_v^{},\mu_v^{-1})$ for some character $\mu_\bff$ of the finite idele group $\AAf^\times$ whose restriction to $F_v^\times$ is denoted by $\mu_v$ and which fulfills the following conditions: 
\begin{itemize}
\item $-\frac{1}{2}<\sig(\mu_v)\leq\frac{1}{2}$ for all $v$; 
\item $\mu_v^2=\alp$ whenever $\sig(\mu_v)=\frac{1}{2}$; 
\item $\mu_v$ is unramified and $\sig(\mu_v)<\frac{1}{2}$ for almost all $v$. 
\end{itemize}
Let $\frkS_{\pi_\bff}$ be the set of nonarchimedean primes $v$ such that $\sig(\mu_v)=\frac{1}{2}$.

Let $I_n(\mu_\bff)$ and $J_n(\mu_\bff)$ be the degenerate principal series representations of $G_n(\AAf)$ and $\calg_n(\AAf)$ induced from the character $\bfd(t)\bfm(A)\mapsto\mu_\bff(t^n\nu(A))$. 
They have factorizations $I_n(\mu_\bff)\simeq\otimes_{v\notin\frkS_\infty}'I_n(\mu_v)$ and $J_n(\mu_\bff)\simeq\otimes_{v\notin\frkS_\infty}'J_n(\mu_v)$. 
For $B\in S^+_n$ Lemma \ref{lem:34} defines a nonzero vector $w^{\mu_\bff}_B\in\Wh_B(I_n(\mu_\bff))$ by 
\[w^{\mu_\bff}_B(f)=\prod_{v\notin\frkS_\infty}w^{\mu_v}_B(f_v), \]
provided that $f=\otimes_{v\notin\frkS_\infty}f_v$ is factorizable. 
Let us set 
\[A_n(\mu_\bff)=(\otimes_{v\in\frkS_{\pi_\bff}}A_n(\mu_v))\otimes(\otimes'_{v\notin\frkS_\infty\cup\frkS_{\pi_\bff}} I_n(\mu_v)). \]
We can view $A_n(\mu_\bff)$ as the unique irreducible subrepresentation of both $I_n(\mu_\bff)$ and $J_n(\mu_\bff)$. 
Put 
\[S^{\pi_\bff}_n=\{B\in S_n^+\;|\;\hat\chi^B_v\neq\mu_v\alp^{-1/2}\text{ for all }v\in\frkS_{\pi_\bff}\}. \]
When $n=1$, we will sometimes write $D^{\pi_\bff}_-=S^{\pi_\bff}_1$. 
Proposition \ref{prop:31} and Lemma \ref{lem:31} give the following result:  

\begin{lemma}\label{lem:35}
Let $B\in S^+_n$.  
Then $\Wh_B(A_n(\mu_\bff))$ is nonzero if and only if the restriction of $w^{\mu_\bff}_B$ to $A_n(\mu_\bff)$ is nonzero if and only if $B\in S^{\pi_\bff}_n$. 
\end{lemma}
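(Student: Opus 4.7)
The plan is to deduce the lemma from the local results already established in Proposition \ref{prop:31} and Lemma \ref{lem:31}, by exploiting the tensor-product factorization of both $A_n(\mu_\bff)$ and of the Whittaker functional $w^{\mu_\bff}_B=\prod_v w^{\mu_v}_B$.

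The first step is to establish the factorization
\[\Wh_B(A_n(\mu_\bff))\simeq\bigotimes_{v\in\frkS_{\pi_\bff}}\Wh_B(A_n(\mu_v))\otimes\bigotimes_{v\notin\frkS_\infty\cup\frkS_{\pi_\bff}}\Wh_B(I_n(\mu_v)),\]
a consequence of $\psi^B_\bff=\otimes_{v\notin\frkS_\infty}\psi^B_v$, the definition of $A_n(\mu_\bff)$, and the fact that each local Hom-space is at most one-dimensional. This one-dimensionality follows from Proposition \ref{prop:31}(\ref{prop:312}) (and by restriction for $v\in\frkS_{\pi_\bff}$), with $w^{\mu_v}_B$ supplying a distinguished generator that is spherical at almost every $v$, so the restricted tensor product is well defined.

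I would then read off the local non-vanishing conditions place by place. For $v\notin\frkS_\infty\cup\frkS_{\pi_\bff}$, Proposition \ref{prop:31}(\ref{prop:311}) gives irreducibility of $I_n(\mu_v)$ and Lemma \ref{lem:31}(\ref{lem:311}) yields $0\neq w^{\mu_v}_B\in\Wh_B(I_n(\mu_v))$, so these places pose no constraint. For $v\in\frkS_{\pi_\bff}$ one has $\mu_v^2=\alp$, whence Proposition \ref{prop:31}(\ref{prop:313}) asserts that $\Wh_B(A_n(\mu_v))\neq 0$ if and only if $\hat\chi^B_v\neq\mu_v\alp^{-1/2}$, while Lemma \ref{lem:31}(\ref{lem:312}) asserts that the restriction of $w^{\mu_v}_B$ to $A_n(\mu_v)$ is nonzero under exactly the same condition.

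Combining these local statements through the displayed factorization, both $\Wh_B(A_n(\mu_\bff))\neq 0$ and the non-vanishing of the restriction of $w^{\mu_\bff}_B$ to $A_n(\mu_\bff)$ are equivalent to the single condition $\hat\chi^B_v\neq\mu_v\alp^{-1/2}$ for every $v\in\frkS_{\pi_\bff}$, which is precisely the definition of $B\in S^{\pi_\bff}_n$. I do not foresee any real obstacle; the only mildly delicate point is the justification of the Hom-space factorization when $\psi^B_\bff$ may be ramified at some finite places, but this is immediate from the one-dimensionality of each local Hom-space and the compatibility of the canonical generators $w^{\mu_v}_B$ with the restricted tensor product structure.
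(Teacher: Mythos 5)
Your argument is correct and is exactly the route the paper takes: the paper simply states that Proposition \ref{prop:31} and Lemma \ref{lem:31} give the result, and your write-up supplies the standard intermediate step (factorization of $\Wh_B$ over places via the one-dimensionality of the local Hom-spaces and the distinguished generators $w^{\mu_v}_B$) before reading off the local non-vanishing criteria. No gaps; this is just a more explicit version of the paper's intended proof.
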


\section{Holomorphic cusp forms on quaternion unitary groups}\label{sec:4}

When $\calf$ is a smooth function on $N_n(F)\bsl G_n(\AA)$ and $B\in S_n(F)$, let\[W_B(g,\calf)=\int_{S_n(F)\bsl S_n(\AA)}\calf(\bfn(z)g)\overline{\psi^B(z)}\,\d z\]
be the $B^{\mathrm{th}}$ Fourier coefficient of $\calf$. 
For $\ell\in\ZZ^d$ and $B\in S^+_n$ we define a function $W_B^{(\ell)}:\calg_n(\AA_\infty)^+\to\CC$ by 
\[W_B^{(\ell)}(g)=\prod_{v\in\frkS_\infty}W^{(\ell_v)}_B(g_v). \]

\begin{definition}\label{def:41}
The symbol $\frkG^n_\ell$ (resp. $\frkT^n_\ell$, $\frkC^n_\ell$) denotes the space of all smooth functions $\calf$ on $\calg_n(F)\bsl\calg_n(\AA)$ (resp. $P_n(F)\bsl G_n(\AA)$, $G_n(F)\bsl G_n(\AA)$) that admit Fourier expansions of the form 
\[\calf(g)=\sum_{B\in S_n^+}\bfw_B(g_\bff,\calf)W^{(\ell)}_B(g_\infty) \]
which is absolutely and uniformly convergent on any compact neighborhood of $g=g_\infty g_\bff\in \calg_n(\AA_\infty)^+\calg_n(\AAf)$ (resp. $g=g_\infty g_\bff\in G_n(\AA)$). 
\end{definition}

\begin{remark}\label{rem:41}
Put 
\begin{align*}
\calp_n&=\{\bfd(t)p\;|\;t\in\GG_m,\;p\in P_n\}, & 
\calp^+_n&=\{\bfd(t)p\;|\;t\in F^\times_+,\;p\in P_n(F)\}. 
\end{align*} 
Since $\calg_n(\AA)=\calp_n(F)\calg_n(\AA_\infty)^+\calg_n(\AAf)$, smooth functions on $\calp_n(F)\bsl\calg_n(\AA)$ can naturally be identified with smooth functions on $\calp^+_n\bsl\calg_n(\AA_\infty)^+\calg_n(\AAf)$. 
\end{remark} 

\begin{lemma}\label{lem:41}
If $i:A_n(\mu_\bff)\to\frkT^n_\ell$ is a $G_n(\AAf)$-intertwining map, then there are complex numbers $c_B$ such that 
\[i(g,f)=\sum_{B\in S^{\pi_\bff}_n}c^{}_BW^{(\ell)}_B(g_\infty)w_B^{\mu_\bff}(\vrh(g_\bff)f)\]
for all $f\in A_n(\mu_\bff)$, $g=g_\infty g_\bff$, $g_\infty\in G_n(\AA_\infty)$ and $g_\bff\in G_n(\AAf)$. 
\end{lemma}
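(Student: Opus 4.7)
The plan is to recognize the $B^{\mathrm{th}}$ Fourier coefficient of $i(f)$, viewed as a linear functional on $A_n(\mu_\bff)$, as a degenerate Whittaker functional, and then invoke Lemma \ref{lem:35}.

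Fix $f\in A_n(\mu_\bff)$ and set $\calf=i(f)\in\frkT^n_\ell$. Since $S_n(F)\bsl S_n(\AA)$ is compact, the Fourier coefficient
\[\calf_B(g):=\int_{S_n(F)\bsl S_n(\AA)}\calf(\bfn(z)g)\,\overline{\psi^B(z)}\,\d z\]
is well-defined. I would substitute the Fourier expansion of Definition \ref{def:41} into this integral; using the transformation law $W^{(\ell)}_{B'}(\bfn(z_\infty)g_\infty)=\psi^{B'}_\infty(z_\infty)W^{(\ell)}_{B'}(g_\infty)$ from \eqref{tag:31} and interchanging the integral with the sum (justified by the absolute uniform convergence built into Definition \ref{def:41}), uniqueness of the Fourier series on the compact group $S_n(F)\bsl S_n(\AA)$ simultaneously yields the identification
\[\calf_B(g)=\bfw_B(g_\bff,i(f))\,W^{(\ell)}_B(g_\infty)\]
and the $S_n(\AAf)$-covariance $\bfw_B(\bfn(z_\bff)g_\bff,i(f))=\psi^B_\bff(z_\bff)\,\bfw_B(g_\bff,i(f))$ for every $z_\bff\in S_n(\AAf)$.

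The $G_n(\AAf)$-intertwining property of $i$ then gives $\bfw_B(g_\bff h,i(f))=\bfw_B(g_\bff,i(\vrh(h)f))$ for all $h\in G_n(\AAf)$. Combined with the covariance just extracted, this shows that the scalar functional $\ell_B\colon f\mapsto\bfw_B(1,i(f))$ belongs to $\Wh_B(A_n(\mu_\bff))$ and recovers the full coefficient through $\bfw_B(g_\bff,i(f))=\ell_B(\vrh(g_\bff)f)$. By Lemma \ref{lem:35}, $\ell_B$ vanishes whenever $B\notin S^{\pi_\bff}_n$, and for $B\in S^{\pi_\bff}_n$ there is a unique constant $c_B\in\CC$ with $\ell_B=c_B\,w_B^{\mu_\bff}$. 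Inserting these back into the Fourier expansion of $\calf$ produces the claimed formula.

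The only step demanding a little care is the initial identification of $\calf_B(g)$ with $\bfw_B(g_\bff,i(f))W^{(\ell)}_B(g_\infty)$, where the integral defining $\calf_B$ is exchanged with the Fourier sum of Definition \ref{def:41}; this is legitimate by the absolute uniform convergence stipulated there. Everything else is formal: the Whittaker covariance drops out of translation invariance of the integral in the $S_n(\AAf)$-direction, and the uniqueness of degenerate Whittaker models recorded in Proposition \ref{prop:31}(\ref{prop:312}) and Lemma \ref{lem:35} forces the one-parameter shape of the Fourier coefficients.
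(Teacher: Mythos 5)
Your proposal is correct and follows essentially the same route as the paper: the paper likewise identifies $f\mapsto\bfw_B(\ono_{2n},i(f))$ as an element of $\Wh_B(A_n(\mu_\bff))$ via the $S_n(\AAf)$-covariance of the Fourier coefficients and then invokes Lemma \ref{lem:35} to produce the constants $c_B$ and the vanishing off $S^{\pi_\bff}_n$. Your added care about interchanging the integral with the Fourier sum is exactly the justification the paper leaves implicit in Definition \ref{def:41}.
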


\begin{proof}
Notice that the coefficient $\bfw_B(g_\bff,\calf)$ is given by 
\[\bfw_B(g_\bff,\calf)=|\nu(B)|^{-\ell/2}\bfe_\infty(\tau(B(\bfi,\dots,\bfi)))W_B(g_\bff,\calf). \]
Recall that $|\nu(B)|^{\ell/2}=\prod_{v\in\frkS_\infty}|\nu(B)|_v^{\ell_v/2}$. 
In particular, for $z\in S_n(\AAf)$ 
\[\bfw_B(\bfn(z)g_\bff,\calf)=\psi^B_\bff(z)\bfw_B(g_\bff,\calf). \]
Therefore the $\CC$-linear functional $f\mapsto\bfw_B(\ono_{2n},i(f))$ belongs to the space $\Wh_B(A_n(\mu_\bff))$. 
There is a complex number $c_B$ such that $\bfw^{}_B(\ono_{2n},i(f))=c^{}_Bw^{\mu_\bff}_B(f)$ for all $f\in A_n(\mu_\bff)$ in view of Lemma \ref{lem:35}. 
\end{proof}

We associate to $f\in A_n(\mu_\bff)$, $\ell=(\ell_v)_{v\in\frkS_\infty}\in\ZZ^d$ and complex numbers $c_B$ indexed by $B\in S^{\pi_\bff}_n$ the Fourier series 
\begin{align*}
\calf_\ell(g;f,\{c_B\})&=\sum_{B\in S^{\pi_\bff}_n}c^{}_BW^{(\ell)}_B(g_\infty)w_B^{\mu_\bff}(\vrh(g_\bff)f), &
g&=g_\infty g_\bff\in G_n(\AA), 
\end{align*}
assuming that the series is absolutely convergent. 

\begin{lemma}\label{lem:42}
Notation being as above, if for any lattice $L$ in $S_n(F)$ there are positive constants $C$ and $M$ such that $|c_B|\leq CN_{F/\QQ}(\nu(B))^M$ for all $B\in S_n^+\cap L$, then the series $\calf_\ell(g;f,\{c_B\})$ is absolutely and uniformly convergent on any compact subset of $G_n(\AA)$ for every $f\in A_n(\mu_\bff)$. 
\end{lemma}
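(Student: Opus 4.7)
The plan is to fix a compact set $C\subset G_n(\AA)$, which we may assume factors as a product $C_\infty\times C_\bff$ of compact subsets, and to majorize $\calf_\ell(g;f,\{c_B\})$ on $C$ by a convergent series. The strategy combines three ingredients: exponential decay of the archimedean Whittaker function $W^{(\ell)}_B(g_\infty)$ as $B$ grows, a polynomial upper bound on the finite-adelic Whittaker functional provided by Lemmas \ref{lem:33} and \ref{lem:34}, and the polynomial growth hypothesis on $\{c_B\}$. After reducing by linearity to the case where $f=\otimes_{v\notin\frkS_\infty}f_v$ is a pure tensor, admissibility of $A_n(\mu_\bff)$ shows that the image $\vrh(C_\bff)f$ lies in a finite-dimensional subspace fixed by some common compact open subgroup $K_0\subset G_n(\AAf)$. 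In particular there is a compact open subgroup $L^\perp\subset S_n(\AAf)$ under which every $\vrh(g_\bff)f$ is $\bfn$-invariant, so the functional $w_B^{\mu_\bff}\circ\vrh(g_\bff)$ vanishes unless $\psi^B_\bff$ is trivial on $L^\perp$; this confines $B$ to the fractional lattice $L\subset S_n(F)$ dual to $L^\perp$ under $\psi$, uniformly in $g_\bff\in C_\bff$.

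For $B\in L\cap S_n^{\pi_\bff}$, I would combine Lemma \ref{lem:33} at the finitely many places where $f_v$ is non-spherical or where Lemma \ref{lem:34} does not directly apply (e.g.\ where $\frkd_v\neq\frko_v$ or $D_v$ is a division algebra) with Lemma \ref{lem:34} at the remaining places to obtain, uniformly on $C_\bff$,
\[
|w_B^{\mu_\bff}(\vrh(g_\bff)f)|\leq C_1\prod_{v\notin\frkS_\infty}|\nu(B)|_v^{-M}=C_1|N_{F/\QQ}(\nu(B))|^M,
\]
the last equality using the product formula together with the totally real hypothesis on $F$, which gives $\prod_{v\in\frkS_\infty}|\nu(B)|_v=|N_{F/\QQ}(\nu(B))|$. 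Applying the growth hypothesis on $\{c_B\}$ to this specific lattice $L$, we then obtain a polynomial bound on $|c_B\cdot w_B^{\mu_\bff}(\vrh(g_\bff)f)|$ in terms of $|N_{F/\QQ}(\nu(B))|$.

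For the archimedean factor, as $g_\infty$ runs over $C_\infty$ the positive definite matrix $\Im g_v(\bfi)$ stays in a compact subset of $\Sym_{2n}(\RR)^+$, so there exists $\vep>0$ with $\tau(B_vg_v(\bfi))\geq\vep\|B_v\|$ for every $v\in\frkS_\infty$ and $B_v\in S_n(F_v)^+$, where $\|\cdot\|$ is any fixed norm on $S_n(F_v)$. The definition of $W^{(\ell_v)}_B$ then yields
\[
|W_B^{(\ell)}(g_\infty)|\leq C_2\prod_{v\in\frkS_\infty}(1+\|B_v\|)^{N_v}\exp\left(-2\pi\vep\sum_{v\in\frkS_\infty}\|B_v\|\right),
\]
and since $|N_{F/\QQ}(\nu(B))|$ is bounded by a polynomial of degree $2nd$ in $\max_v\|B_v\|$, the entire summand is majorized by $P(\|B\|_\infty)\exp(-2\pi\vep\|B\|_\infty)$ with $\|B\|_\infty:=\sum_{v\in\frkS_\infty}\|B_v\|$ and $P$ a polynomial depending on $C$, $f$, $\ell$, and $\{c_B\}$. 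Since $L\cap S_n^+$ is discrete in the finite-dimensional real space $\prod_{v\in\frkS_\infty}S_n(F_v)$, it contains only polynomially many elements with $\|B\|_\infty\leq T$, and the resulting majorant converges.

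The main obstacle is the bookkeeping at the finite places: Lemma \ref{lem:34} is stated only for $\frkd=\frko$ and $D$ not a division algebra, so one must enlarge the exceptional finite set to include every prime where either hypothesis fails and handle those via Lemma \ref{lem:33}. Once a single polynomial bound in $|N_{F/\QQ}(\nu(B))|$ is secured uniformly across all finite places and $g_\bff\in C_\bff$, the archimedean exponential decay carries the argument, yielding the claimed absolute and locally uniform convergence.
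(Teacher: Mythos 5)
Your proof is correct and follows essentially the same route as the paper's: confine $B$ to a lattice via smoothness of $f$, bound the finite part polynomially in $N_{F/\QQ}(\nu(B))$ by combining Lemmas \ref{lem:33} and \ref{lem:34}, and beat it with the exponential decay of $W^{(\ell)}_B(g_\infty)$ together with a polynomial lattice-point count. The paper's own argument (following Section 4 of \cite{I2}) is just a terser version of the same estimates.
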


\begin{proof}
The proof goes along the same lines of the arguments in Section 4 of \cite{I2}. 
It suffices to show that the series
\[\sum_{B\in S^{\pi_\bff}_n}c^{}_B|\nu(B)|^{\ell/2} w^{\mu_\bff}_B(f)\bfe^{}_\infty(\tau(BZ))\]
is absolutely and uniformly convergent on any compact subset of $\frkH^d_n$. 
Put $\calr_n=S_n(F)\cap\Mat_n(\calo)$. 
Take a natural number $N$ such that $w^{\mu_\bff}_B(f)=0$ unless $B\in N^{-1}\calr_n$. 
Lemmas \ref{lem:33} and \ref{lem:34} say that $w^{\mu_\bff}_B(f)\leq C'N_{F/\QQ}(\nu(B))^{M'}$ for all $B\in S^{\pi_\bff}_n$ with constants $C'$ and $M'$ depending only on $f$. 
Note that $N_{F/\QQ}(\nu(B))\leq (nd)^{-nd}(\Tr_{F/\QQ}\tau(B))^{nd}$. 
The number of $B\in N^{-1}\calr_n\cap S^+_n$ such that $\Tr_{F/\QQ}\tau(B)\leq T$ is $O(T^{dn(2n+1)})$. 
From these estimates the series converges absolutely and uniformly on 
\[\{Z\in\frkH_n^d\;|\;\Im(Z_vB_n^{-1})-\eps\ono_n\in\Sym_{2n}(F_v)^+\text{ for all }v\in\frkS_\infty\}\] 
for any positive constant $\eps$. 
\end{proof}

\begin{definition}\label{def:42}
Let $T^n_\ell(\mu_\bff)$ be the vector space which consists of sets $\{c_B\}_{B\in S^{\pi_\bff}_n}$ of complex numbers such that the series $\calf_\ell(g;f,\{c_B\})$ converges absolutely and uniformly on any compact subset of $G_n(\AA)$ for all $f\in A_n(\mu_\bff)$ and such that for all $B\in S^{\pi_\bff}_n$ and $A\in\GL_n(D(F))$
\[c_{B[A]}=c_B\mu_\bff(\nu(A))^{-1}\prod_{v\in\frkS_\infty}\sgn_v(\nu(A))^{\ell_v}. \]
Let $C^n_\ell(\mu_\bff)$ (resp. $G^n_\ell(\mu_\bff)$) denote the space of coefficients $\{c_B\}\in T^n_\ell(\mu_\bff)$ such $\calf_\ell(f,\{c_B\})\in\frkC^n_\ell$ (resp. $\frkG^n_\ell$) for all $f\in A_n(\mu_\bff)$. 
\end{definition}

\begin{lemma}\label{lem:43}
\begin{enumerate}
\renewcommand\labelenumi{(\theenumi)}
\item\label{lem:431} $\calf_\ell(f,\{c_B\})\in\frkT^n_\ell$ for $\{c_B\}\in T^n_\ell(\mu_\bff)$ and $f\in A_n(\mu_\bff)$. 
\item\label{lem:432} Let $\{c_B\}\in C^n_\ell(\mu_\bff)$. 
Then $\{c_B\}\in G^n_\ell(\mu_\bff)$ if and only if $c_{tB}=c_B\mu_\bff(t)^{-n}$ for all $t\in F^\times_+$. 
\end{enumerate}
\end{lemma}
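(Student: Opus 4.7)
The plan for both parts is to verify left-equivariance of the Fourier series $\calf_\ell(g;f,\{c_B\})$ by direct computation, relying on Formula (\ref{tag:31}) for the archimedean Siegel--Whittaker function, Lemma \ref{lem:31}(\ref{lem:313}) for the finite Jacquet integrals, and the coefficient transformation laws built into Definition \ref{def:42}.

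For part (\ref{lem:431}), the series is already in the Fourier-expanded form of Definition \ref{def:41}, and its absolute and uniform convergence on compact subsets of $G_n(\AA)$ is the defining hypothesis of $T^n_\ell(\mu_\bff)$; so what remains is left-invariance under $P_n(F)=M_n(F)N_n(F)$. First, for $\bfn(z)$ with $z\in S_n(F)$, Formula (\ref{tag:31}) contributes the archimedean factor $\bfe_\infty(\tau(Bz_\infty))$ while the defining Whittaker property of $w^{\mu_\bff}_B$ contributes the finite factor $\psi^B_\bff(z_\bff)$; their product is $\psi^B(z)=1$ since $\psi$ is trivial on $F$. Second, for $\bfm(A)$ with $A\in\GL_n(D(F))$, both (\ref{tag:31}) and Lemma \ref{lem:31}(\ref{lem:313}) shift the index $B\mapsto B[A]$, introducing the factors $\prod_{v\in\frkS_\infty}\sgn_v(\nu(A))^{\ell_v}$ and $\mu_\bff(\nu(A))^{-1}$ respectively. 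Reindexing the sum via the bijection $B\mapsto B[A]$ on $S^{\pi_\bff}_n$ (stable because $\hat\chi^{B[A]}_v=\hat\chi^B_v$) and applying the relation $c_{B[A]}=c_B\mu_\bff(\nu(A))^{-1}\prod_v\sgn_v(\nu(A))^{\ell_v}$ of Definition \ref{def:42} produces the required cancellation.

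For part (\ref{lem:432}), I would exploit the adelic decomposition $\calg_n(\AA)=\calp_n(F)\calg_n(\AA_\infty)^+\calg_n(\AAf)$ of Remark \ref{rem:41}, together with $\calp_n(F)\cap\calg_n(\AA_\infty)^+\calg_n(\AAf)=\calp^+_n$ and $\calg_n(F)=\calp_n(F)G_n(F)$. Under the standing hypothesis $\{c_B\}\in C^n_\ell(\mu_\bff)$, the passage from $\frkC^n_\ell$ to $\frkG^n_\ell$ reduces to $\calp^+_n$-invariance of $\calf_\ell$ on $\calg_n(\AA_\infty)^+\calg_n(\AAf)$, and given part (\ref{lem:431}) this in turn reduces to $\bfd(t)$-invariance for $t\in F^\times_+$. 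Specializing (\ref{tag:31}) to $A=\ono_n$, $z=0$, $k=1$ yields $W^{(\ell)}_B(\bfd(t_\infty)g_\infty)=W^{(\ell)}_{tB}(g_\infty)$, legitimate because for $t\in F^\times_+$ the scaling is uniform across archimedean places so $tB\in S^+_n$; likewise Lemma \ref{lem:31}(\ref{lem:313}) with $A=\ono_n$ yields $w^{\mu_\bff}_B\circ\vrh(\bfd(t_\bff))=\mu_\bff(t)^{-n}w^{\mu_\bff}_{tB}$. Reindexing $B'=tB$ in the series (a bijection on $S^{\pi_\bff}_n$ since $\nu(tB)=t^{2n}\nu(B)$ is a square), the identity $\calf_\ell(\bfd(t)g;f,\{c_B\})=\calf_\ell(g;f,\{c_B\})$ becomes term-by-term equivalent to the coefficient condition $c_{tB}=c_B\mu_\bff(t)^{-n}$.

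The proof is essentially careful bookkeeping of equivariance factors, and the transformation laws in Definition \ref{def:42} are designed precisely to cancel the Whittaker twists; I do not anticipate a genuine obstacle beyond sign-tracking and the norm identities $\nu(tB)=t^{2n}\nu(B)$ and $\hat\chi^{B[A]}_v=\hat\chi^B_v$ that make the relevant reindexings preserve $S^{\pi_\bff}_n$. The main delicate point is the structural reduction in part (\ref{lem:432}) from $\calg_n(F)$-invariance on $\calg_n(\AA)$ to $\calp^+_n$-invariance on the archimedean-positive component, which is handled uniformly by the identification of Remark \ref{rem:41}.
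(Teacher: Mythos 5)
Your proposal is correct and follows essentially the same route as the paper: part (\ref{lem:431}) is exactly the computation of the transform of $W^{(\ell)}_B(g_\infty)w^{\mu_\bff}_B(\vrh(g_\bff)f)$ under left translation by $P_n(F)$ via (\ref{tag:31}) and Lemma \ref{lem:31}(\ref{lem:313}), cancelled by the coefficient law in Definition \ref{def:42}, and part (\ref{lem:432}) is the same reduction, through Remark \ref{rem:41} and $\calg_n(F)=\calp_n(F)G_n(F)$, to $\bfd(t)$-invariance for $t\in F^\times_+$ of the series extended to $\calg_n(\AA_\infty)^+\calg_n(\AAf)$. The only cosmetic difference is that where you write $\vrh(\bfd(t_\bff))$ you are implicitly using the action $\wp$ of $\calg_n(\AAf)$ on $A_n(\mu_\bff)\subset J_n(\mu_\bff)$, which is precisely how the paper phrases the extended series $\calf'_\ell$.
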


\begin{proof}
Fix $f\in A_n(\mu_\bff)$ and put $w^{(\ell)}_B(g)=W^{(\ell)}_B(g_\infty)w^{\mu_\bff}_B(\vrh(g_\bff)f)$. 
Then Lemma \ref{lem:31}(\ref{lem:313}) and (\ref{tag:31}) say that 
\[w^{(\ell)}_B(\bfn(z)\bfd(t)\bfm(A)g)=\frac{\psi^B(z)}{\mu_\bff(t^n\nu(A))}w^{(\ell)}_{tB[A]}(g)\prod_{v\in\frkS_\infty}\sgn_v(\nu(A))^{\ell_v} \]
for $z\in S_n(\AA)$, $A\in\GL_n(D(F))$, $t\in F^\times_+$ and $g\in \calg_n(\AA_\infty)^+\calg_n(\AAf)$, from which $\calf_\ell(f,\{c_B\})$ is left invariant under $P_n(F)$. 

We can define the function $\calf'_\ell(g;f,\{c_B\}): \calg_n(\AA_\infty)^+\calg_n(\AAf)\to\CC$ by 
\begin{align*}
\calf'_\ell(g;f,\{c_B\})&=\sum_{B\in S^{\pi_\bff}_n}c^{}_BW^{(\ell)}_B(g_\infty)w_B^{\mu_\bff}(\wp(g_\bff)f), & 
f&\in A_n(\mu_\bff). 
\end{align*} 
Then $\calf'_\ell(f,\{c_B\})$ is left invariant under $\calp^+_n$ if and only if $c_{tB}=c_B\mu_\bff(t)^{-n}$ for all $t\in F^\times_+$, in which case $\calf'_\ell(f,\{c_B\})$ naturally defines a cusp form in $\frkG^n_\ell$ owing to Remark \ref{rem:41}. 
\end{proof}


\section{Hilbert-Siegel cusp forms of half-integral weight}\label{sec:5}

\subsection{The metaplectic group}\label{ssec:51}

We first review the basic facts about the metaplectic double cover of $Sp_m(\AA)$. 
This is mostly a well-known material that can be found in e.g. \cite{I,Sh3,G,HI}. 
Let $Sp(W_m)=Sp_m$ be the symplectic group of rank $m$ acting on the $2m$ dimensional symplectic vector space $W_m$ over a field $F$ on the left.  
For $a\in\GL_m$ and $b\in\Sym_m$ we define matrices of size $2m$ by 
\begin{align*}
\frkm(a)&=\begin{pmatrix} a & 0 \\ 0 & \trs a^{-1}\end{pmatrix}, & 
\frkn(b)&=\begin{pmatrix} \ono_m & b \\ 0 & \ono_m\end{pmatrix}. 
\end{align*}
These matrices generate the parabolic subgroup $\PP_m$ of $Sp(W_m)$ with unipotent radical $\UU_n=\{\frkn(b)\;|\;b\in\Sym_m\}$. 
Put $\Sym_m^\nd=\Sym_m\cap\GL_m$. 

Let $F$ be a local field of characteristic zero. 
We exclude the complex case. 
The metaplectic group $\Mp(W_m)$ is the nontrivial central extension of $Sp(W_m)$ by $\mu_2$.  
Let $\mu_2$ inject into the center of $\Mp(W_m)$. 
We call functions on $\Mp(W_m)$ or representations of $\Mp(W_m)$ genuine if $-1\in\mu_2$ acts by multiplication by $-1$. 
We choose the section $\bfs:Sp(W_m)\to\Mp(W_m)$ in such a way that 
\begin{align*}
\zet\bfs(g)\cdot\zet'\bfs(g')&=\zet\zet' c(g,g')\bfs(gg') & 
(\zet,\zet'&\in\mu_2;\;g,g'\in Sp_m(F))
\end{align*}
where $c(g,g')$ is the Rao two cocycle on $Sp_m(F)$. 
The restriction of $\bfs$ to $\UU_m$ is a group homomorphism, by which we view $\UU_m$ as a subgroup of $\Mp(W_m)$. 
We will write $\til\frkm=\bfs\circ\frkm$ and $\til\frkn=\bfs\circ\frkn$. 
Note that  
\begin{align*}
\til\frkm(a)\til\frkm(a')&=\hat\chi^{\det a}(\det a')\til\frkm(aa'), & 
\til\frkm(a)\til\frkn(b)\til\frkm(a)^{-1}&=\til\frkn(ab\trs a)
\end{align*} 
for $a,a'\in \GL_m(F)$ and $b\in\Sym_m(F)$. 
For a subgroup $H$ of $Sp_m(F)$ we denote the inverse image of $H$ in $\Mp(W_m)$ by $\til H$. 
As in Section \ref{sec:1} we define the function $\gam^\psi:F^{\times 2}\bsl F^\times\to\mu_4$, which possesses the following properties:  
\begin{align}
\gam^\psi(tt')&=\gam^\psi(t)\gam^\psi(t')\hat\chi^t(t'), &
\gam^{\psi^t}(t')&=\gam^\psi(t')\hat\chi^t(t').  \label{tag:51}
\end{align}

When $F$ is of odd residual characteristic, there is a unique splitting $Sp_m(\frko)\hookrightarrow\Mp(W_m)$, by which we regard $Sp_m(\frko)$ as a subgroup of $\Mp(W_m)$.  
In other words there is a continuous map $\zet:Sp_m(F)\to\mu_2$ such that $c(k,k')=\zet(k)\zet(k')\zet(kk')$ for $k,k'\in Sp_m(\frko)$. 
We shall set $\zet(g)=1$ in the real and dyadic cases to make our exposition uniform. 
We use a cocycle 
\[\bfc(g,g')=c(g,g')\zet(g)\zet(g')\zet(gg')\]
with global applications in view, namely, we identity $\Mp(W_m)$ with the product $Sp_m(F)\times\mu_2$ whose group law is given by $(g,\zet)(g',\zet')=(gg',\zet\zet'\bfc(g,g'))$. 
It should be remarked that the section $\bfs$ is now given by $\bfs(g)=(g,\zet(g))$. 

The real metaplectic group acts on the Siegel upper half-space $\calh_m$ through $Sp_m(\RR)$. 
There is a unique factor of automorphy $\jmath:\Mp(W_m)\times\calh_m\to\CC^\times$ satisfying $\jmath(\zet\bfs(g),\calz)^2=\det(C\calz+D)$ for $\zet\in\mu_2$ and $g=\begin{pmatrix} * & * \\ C & D \end{pmatrix}\in\Mp(W_m)$. 
For each $\ell\in\frac{1}{2}\ZZ$ we put $J_\ell(\til g,\calz)=\jmath(\til g,\calz)^{2\ell}$. 
For each positive definite $\xi\in\Sym_m(\RR)$ we define a function on the real metaplectic group by 
\[W^{(\ell)}_\xi(\til g)=(\det\xi)^{\ell/2}\bfe(\xi\til g(\iu\ono_m))J_\ell(\til g,\iu\ono_m)^{-1}. \]
If $\zet\in\mu_2$, $a\in\GL_m(\RR)$ and $\til g\in\Mp(W_m)$, then 
\beq
W^{(\ell)}_\xi(\zet \til\frkm(a)\til g)=\zet^{2\ell}\gam^\psi(\det a)^{2\ell}W^{(\ell)}_{\xi[a]}(\til g). \label{tag:52}
\eeq

\subsection{Representations of the metaplectic group}\label{ssec:52}

Now we assume $F$ to be nonarchimedean. 
For $\xi\in\Sym_m^\nd$ and a smooth representation $\vPi$ of $\Mp(W_m)$ we set $\Wh_\xi(\vPi)=\Hom_{\Sym_m(F)}(\vPi\circ\til\frkn,\psi^\xi)$. 
When $\mu\in\Ome(F^\times)$ and $\sig(\mu)>-\frac{1}{2}$, we define the representation $I^\psi_m(\mu)$ of $\Mp(W_m)$ and the nonzero functional $w^\mu_\xi\in\Wh_\xi(I^\psi_m(\mu))$ as in Section \ref{sec:1}. 

\begin{proposition}[{\cite[Lemma 3]{W2}}, \cite{Sw}]\label{prop:51}
\begin{enumerate}
\renewcommand\labelenumi{(\theenumi)}
\item\label{prop:511} If $-\frac{1}{2}<\sig(\mu)<\frac{1}{2}$, then $I_m^\psi(\mu)$ is irreducible and unitary. 
\item\label{prop:512} $\dim\Wh_\xi(I^\psi_m(\mu))=1$ for all $\xi\in\Sym_m^\nd$ and $\mu\in\Ome(F^\times)$. 
\item\label{prop:513} Assume that $\mu^2=\alp$. 
Then $I_m^\psi(\mu)$ has a unique irreducible subrepresentation $A_m^\psi(\mu)$, which is unitary. 
Furthermore, $\Wh_\xi(A_m^\psi(\mu))$ is nonzero if and only if the restriction of $w^\mu_\xi$ to $A_m^\psi(\mu)$ is nonzero if and only if $\hat\chi^{\det\xi}\neq \mu\alp^{-1/2}$. 
\item\label{prop:514} If $F$ is not dyadic, $\psi$ is of order $0$, $\mu$ is unramified, $\xi\in\Sym_m^\nd\cap\GL_m(\frko)$ and $h(k)=1$ for $k\in Sp_m(\frko)$, then $w^\mu_\xi(h)=1$.  
\item\label{prop:515} For all $\xi\in\Sym_m^\nd$ and $a\in\GL_m(F)$  
\[w^\mu_\xi\circ\vrh(\zet\til\frkm(a))=\zet^m\gam^\psi(\det a)^m\mu(\det a)^{-1}w^\mu_{\xi[a]}. \]
\end{enumerate}
\end{proposition}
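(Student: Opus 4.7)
The plan parallels the proof of Proposition \ref{prop:31} closely, replacing the results of Kudla–Rallis \cite{KR} used there for the quaternionic case by their analogues for the metaplectic group: Sweet \cite{Sw} for module structure and Waldspurger \cite{W2} for Whittaker uniqueness. For (\ref{prop:511}) and the first assertion of (\ref{prop:513}), Sweet shows that $I^\psi_m(\mu)$ is irreducible throughout the strip $-\frac{1}{2}<\sig(\mu)<\frac{1}{2}$, and at $\mu^2=\alp$ admits a unique irreducible submodule $A^\psi_m(\mu)$ with
\[
I^\psi_m(\mu)/A^\psi_m(\mu)\simeq\bigoplus_{\hat\chi^{\det\xi'}=\mu\alp^{-1/2}}\vTh^\psi_m(\xi'),
\]
where $\xi'$ runs over equivalence classes of nondegenerate symmetric matrices of size $m$ with the prescribed character and $\vTh^\psi_m(\xi')$ is the local theta lift of the trivial character of $\mathrm{O}(\xi')$ to $\Mp(W_m)$. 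The unitarity statements then follow from the general complementary-series mechanism reviewed in Section 3 of \cite{T}, exactly as in Proposition \ref{prop:31}(\ref{prop:311}).

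The one-dimensionality in (\ref{prop:512}) is Lemma 3 of \cite{W2}: Jacquet's filtration of $I^\psi_m(\mu)$ under the Bruhat decomposition $\PP_m\bsl Sp_m/\PP_m$ shows that only the open cell contributes to $\Wh_\xi(I^\psi_m(\mu))$ when $\xi$ is nondegenerate, and the stabilized Jacquet integral $\bfw^\mu_\xi$ spans the resulting one-dimensional space. For the nonvanishing clause in (\ref{prop:513}), I apply exactness of the twisted Jacquet functor $\vPi\mapsto\vPi_{\UU_m,\psi^\xi}$ to the short exact sequence cutting out $A^\psi_m(\mu)$; combined with (\ref{prop:512}) and Lemma 3.5 of \cite{KR} (which identifies $\Wh_\xi(\vTh^\psi_m(\xi'))\neq 0$ with $\xi$ being equivalent to $\xi'$), this pins the nonvanishing locus down to $\{\xi\;|\;\hat\chi^{\det\xi}\neq\mu\alp^{-1/2}\}$.

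Parts (\ref{prop:514}) and (\ref{prop:515}) are direct local computations. For (\ref{prop:514}), carry out the unramified Gindikin–Karpelevich-type evaluation: the stabilized integral $\bfw^\mu_\xi(h)$ on the spherical section produces a quotient of local $L$-values that is exactly the inverse of the normalizing constant inserted in the definition of $w^\mu_\xi$ (including the extra $L\!\left(\frac{m+1}{2},\mu\hat\chi^{(-1)^{m/2}}\right)$ when $m$ is even), so $w^\mu_\xi(h)=1$; this is the metaplectic analogue of the Siegel series formula used in Lemma \ref{lem:34}. For (\ref{prop:515}), perform the change of variables $z\mapsto a^{-1}z\,\trs a^{-1}$ inside $\bfw^\mu_\xi$, use the relation $\til\frkm(a)\til\frkn(z)\til\frkm(a)^{-1}=\til\frkn(az\trs a)$ together with the Weil-index identities (\ref{tag:51}) to track the appearance of $\gam^\psi(\det a)^m$, and observe that the Jacobian combines with the normalizing factor $|\det\xi|^{(m+1)/4}$ to yield the asserted formula. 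The main subtlety I expect is the consistency of the normalization at the boundary $\mu^2=\alp$: the factor $L\!\left(\frac{1}{2},\mu\hat\chi^{\det\xi}\right)$ in the denominator of $w^\mu_\xi$ has a pole precisely when $\hat\chi^{\det\xi}=\mu\alp^{-1/2}$, which is the locus where (\ref{prop:513}) predicts vanishing on $A^\psi_m(\mu)$, so the stabilization must be arranged so that this pole cancels the simultaneous vanishing of the bare integral, producing a nonzero holomorphic functional throughout $I^\psi_m(\mu)$.
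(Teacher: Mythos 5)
Your argument is essentially the paper's: the paper disposes of the proposition by observing that for even $m$ the induced representation descends to $Sp_m$ so that everything follows from Proposition \ref{prop:31} and Lemma \ref{lem:31}, citing Theorem 16.2 of \cite{Sh3} for (\ref{prop:514}) and \cite{Sw} (``or can be derived analogously'') for the rest; you instead run the analogue of the Kudla--Rallis/Sweet argument uniformly in $m$, which is exactly the ``derived analogously'' route and is fine. The structure of your proof of (\ref{prop:513}) --- exactness of the twisted Jacquet functor applied to $0\to A^\psi_m(\mu)\to I^\psi_m(\mu)\to\oplus\,\vTh^\psi_m(\xi')\to 0$ together with the analogue of Lemma 3.5 of \cite{KR} --- is the same mechanism used in the proof of Proposition \ref{prop:31}(\ref{prop:313}).

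One correction to your closing paragraph: the ``subtlety'' you anticipate does not exist. If $\mu^2=\alp$ and $\hat\chi^{\det\xi}=\mu\alp^{-1/2}$, then $\mu\hat\chi^{\det\xi}=\mu^2\alp^{-1/2}=\alp^{1/2}$, and $L\bigl(\tfrac{1}{2},\alp^{1/2}\bigl)=(1-q^{-1})^{-1}$ is finite and nonzero; a pole of $L\bigl(\tfrac{1}{2},\mu\hat\chi^{\det\xi}\bigl)$ would force $\mu\hat\chi^{\det\xi}=\alp^{-1/2}$, hence $\sig(\mu)=-\tfrac{1}{2}$, which is excluded by the standing hypothesis $\sig(\mu)>-\tfrac{1}{2}$. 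So the normalizing factor is always finite and nonzero, no pole--zero cancellation is needed, and the nonvanishing of $w^\mu_\xi$ on all of $I^\psi_m(\mu)$ follows simply from the manifest nonvanishing of the stabilized integral together with (\ref{prop:512}). The vanishing of $w^\mu_\xi$ on the submodule $A^\psi_m(\mu)$ when $\hat\chi^{\det\xi}=\mu\alp^{-1/2}$ is a genuine representation-theoretic fact (the functional factors through the quotient $\oplus\,\vTh^\psi_m(\xi')$ on that locus), not an artifact of the normalization.
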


\begin{proof}
When $m$ is even, all the results are included in Proposition \ref{prop:31} and Lemma \ref{lem:31}. 
The fourth part is Theorem 16.2 of \cite{Sh3}. 
The other assertions are included in \cite{Sw} or can be derived analogously. 
\end{proof}


\subsection{Holomorphic cusp forms on $\Mp(W_m)$}\label{ssec:53}

Let $F$ be a totally real number field. 
For each place $v$ of $F$ we adopt the notation by adding a subscript $v$ for objects associated to $F_v$. 
We define the adelic cocycle 
\begin{align*}
&\bfc:Sp_m(\AA)\times Sp_m(\AA)\to\mu_2, & \bfc(g,g')&=\prod_v\bfc_v(g_v,g'_v)
\end{align*}
for $g,g'\in Sp_m(\AA)$. 
Recall that $\bfc_v(g_v, g'_v)=1$ for almost all $v$ by the definition of the local cocycle $\bfc_v$. 
We write $\Mp(W_m)_\AA$ for the central extension of $Sp_m(\AA)$ associated to $\bfc$. 

Let $\prod_v'\Mp(W_m)_v$ denote the restricted direct product with respect to the subgroups $\{Sp_m(\frko_v)\}_{v\nmid 2}$. 
There exist a canonical embedding $\Mp(W_m)_v\hookrightarrow\Mp(W_m)_\AA$ and a canonical surjection $\prod_v'\Mp(W_m)_v\twoheadrightarrow\Mp(W_m)_\AA$. 
The image of $(\til g_v)\in\prod_v'\Mp(W_m)_v$ is also denoted by $(\til g_v)$. 
Let $\Mp(W_m)_\infty$ (resp. $\Mp(W_m)_\bff$) denote the image of $\prod_{v\in\frkS_\infty}\Mp(W_m)_v$ (resp. $\prod_{v\notin\frkS_\infty}'\Mp(W_m)_v$). 
If we are given a collection of genuine admissible representations $\sig_v$ of $\Mp(W_m)_v$ such that the space of $Sp_m(\frko_v)$-invariant vectors in $\sig_v$ is one-dimensional for almost all $v$, then we can form a genuine admissible representation of $\Mp(W_m)_\AA$ by taking a restricted tensor product $\otimes_v'\sig_v$. 

It is well-known that $\Mp(W_m)_\AA$ splits over the subgroup of rational points $Sp_m(F)$. 
An explicit splitting is given by 
\begin{align*}
\bfs:Sp_m(F)&\to\Mp(W_m)_\AA, & 
\bfs(\gam)&=(\gam,\prod_v\zet_v(\gam)), 
\end{align*}
where if $\gam\in Sp_m(F)$, then $\zet_v(\gam)=1$ for almost all $v$. 
Though the expression $\prod_v\zet_v(g_v)$ does not make sense for all $g\in Sp_m(\AA)$, we will denote the element $(g,\prod_v\zet_v(g_v))$ by $\bfs(g)$ whenever it makes sense.  
For example, $\bfs(\frkm(a)\frkn(b))$ is defined for $a\in\GL_m(\AA)$ and $b\in\Sym_m(\AA)$. 
We will regard $Sp_m(F)$ and $\UU_m(\AA)$ as subgroups of $\Mp(W_m)_\AA$ via $\bfs$. 
For $\xi\in\Sym_m(F)$ and a smooth function $\calf:\UU_m(F)\bsl\Mp(W_m)_\AA\to\CC$ let 
\[W_\xi(\til g,\calf)=\int_{\Sym_m(F)\bsl\Sym_m(\AA)}\calf(\bfs(\frkn(b))\til g)\psi^\xi(-b)\, \d b\]
denote the $\xi^\mathrm{th}$ Fourier coefficient of $\calf$. 

For $\ell\in\frac{1}{2}\ZZ^d$ and $\xi\in\Sym^+_m$ we define a function $W_\xi^{(\ell)}$ on $\Mp(W_m)_\infty$ by 
\[W_\xi^{(\ell)}(\til g)=\prod_{v\in\frkS_\infty}W^{(\ell_v)}_\xi(\til g_v). \]

\begin{definition}\label{def:51}
The symbol $\frkC^{(m)}_\ell$ (resp. $\frkT^{(m)}_\ell$) denotes the space of all smooth functions $\calf$ on $Sp_m(F)\bsl\Mp(W_m)_\AA$ (resp. $\PP_m(F)\bsl \Mp(W_m)_\AA$) that admit Fourier expansions of the form 
\[\calf(\til g)=\sum_{\xi\in\Sym_m^+}\bfw_\xi(\til g_\bff,\calf)W^{(\ell)}_\xi(\til g_\infty) \]
which is absolutely and uniformly convergent on any compact neighborhood of $\til g=\til g_\infty\til g_\bff$, where $\til g_\infty\in\Mp(W_m)_\infty$ and $\til g_\bff\in\Mp(W_m)_\bff$. 
\end{definition}

Notation being as in \S \ref{ssec:34}, we form the restricted tensor product 
\[A^{\psi_\bff}_m(\mu_\bff)=\{\otimes_{v\in\frkS_{\pi_\bff}}A^{\psi_v}_m(\mu_v)\}\otimes\{\otimes_{v\notin\frkS_\infty\cup\frkS_{\pi_\bff}}'I^{\psi_v}_m(\mu_v)\}. \]
For $\xi\in\Sym_m^+$ and a smooth representation $\vPi$ of $\Mp(W_m)_\bff$ we put 
\[\Wh_\xi(\vPi)=\Hom_{\Sym_m(\AAf)}(\vPi\circ\bfs\circ\frkn,\psi^\xi_\bff). \]
We can define $w^{\mu_\bff}_\xi\in\Wh_\xi(A^{\psi_\bff}_m(\mu_\bff))$ by setting $w^{\mu_\bff}_\xi(h)=\prod_{v\notin\frkS_\infty}w^{\mu_v}_\xi(h_v)$ for factorizable vectors $h=\otimes_vh_v\in A^{\psi_\bff}_m(\mu_\bff)$. 
Put 
\begin{align*}
\Sym^{\pi_\bff}_m&=\{\xi\in \Sym^+_m\;|\;\hat\chi^{\det\xi}_v\neq\mu_v\alp^{-1/2}\text{ for all }v\in\frkS_{\pi_\bff}\}, &
F^\times_{\pi_\bff}&=\Sym^{\pi_\bff}_1. 
\end{align*}
Proposition \ref{prop:51} tells us that $\Wh_\xi(A^{\psi_\bff}_m(\mu_\bff))$ is nonzero if and only if the restriction of $w^{\mu_\bff}_\xi$ to $A^{\psi_\bff}_m(\mu_\bff)$ is nonzero if and only if $\xi\in\Sym_m^{\pi_\bff}$. 

\begin{definition}\label{def:52}
Assume that $2\ell_v-m$ is even for every $v\in\frkS_\infty$. 
Let $T^{(m)}_\ell(\mu_\bff)$ (resp. $C^{(m)}_\ell(\mu_\bff)$) denote the vector space which consists of sets $\{c_\xi\}$ of complex numbers indexed by $\xi\in\Sym_m^{\pi_\bff}$ such that the series 
\[\calf_\ell(\til g;h,\{c_\xi\})=\sum_{\xi\in\Sym_m^{\pi_\bff}}c_\xi W^{(\ell)}_\xi(\til g_\infty)w^{\mu_\bff}_\xi(\vrh(\til g_\bff)h)\] 
belongs to $\frkT^{(m)}_\ell$ (resp. $\frkC^{(m)}_\ell$) for every $h\in A_m^{\psi_\bff}(\mu_\bff)$. 
\end{definition}

\begin{lemma}\label{lem:51}
\begin{enumerate}
\renewcommand\labelenumi{(\theenumi)}
\item\label{lem:511} $C^{(m)}_\ell(\mu_\bff)\neq\{0\}$ if and only if there is a $\Mp(W_m)_\bff$ intertwining embedding $A^{\psi_\bff}_m(\mu_\bff)\hookrightarrow\frkC_\ell^{(m)}$. 
\item\label{lem:512} Assume that $\calf_\ell(h,\{c_\xi\})$ converges for all $h\in A_m^{\psi_\bff}(\mu_\bff)$. 
Then $\{c_\xi\}\in T^{(m)}_\ell(\mu_\bff)$ if and only if for all $a\in\GL_m(F)$ and $\xi\in\Sym_m^{\pi_\bff}$,  
\[c_{\xi[a]}=c_\xi\mu_\bff(\det a)^{-1}\prod_{v\in\frkS_\infty}\sgn_v(\det a)^{(2\ell_v-m)/2}. \] 
\item\label{lem:513} If $T^{(m)}_\ell(\mu_\bff)\neq\{0\}$, then $\mu_\bff(-1)(-1)^{\sum_{v\in\frkS_\infty}(2\ell_v-m)/2}=1$. 
\item\label{lem:514} $\dim C^{(1)}_\ell(\mu_\bff)\leq 1$. 
\item\label{lem:515} If $\{c_t\}\in C^{(1)}_\ell(\mu_\bff)$, then $\{c_{\eta t}\}\in C^{(1)}_\ell(\mu^{}_\bff\hat\chi^\eta_\bff)$ for all $\eta\in F^\times_+$. 
\item\label{lem:516} We choose $0\neq\{c_t\}\in C^{(1)}_\ell(\mu_\bff)$, assuming that $C^{(1)}_\ell(\mu_\bff)\neq\{0\}$. 
Then $c_t\neq 0$ if and only if $t\in F^\times_{\pi_\bff}$ and $L(1/2,\pi^{}_\bff\otimes\hat\chi^t_\bff)\neq 0$. 
\end{enumerate}
\end{lemma}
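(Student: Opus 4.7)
The plan is to handle parts (\ref{lem:511})--(\ref{lem:513}) by direct computation using Proposition~\ref{prop:51}, and to dispatch (\ref{lem:514})--(\ref{lem:516}) by invoking global automorphic input (the Shimura--Waldspurger correspondence and Waldspurger's formula). For (\ref{lem:511}), the forward direction is tautological because $w^{\mu_\bff}_\xi(\vrh(\til\Del\til g_\bff)h)=w^{\mu_\bff}_\xi(\vrh(\til g_\bff)\vrh(\til\Del)h)$. Conversely, mimicking Lemma~\ref{lem:41}, the $\xi^{\mathrm{th}}$ Fourier coefficient $h\mapsto\bfw_\xi(\ono,i(h))$ of an intertwining embedding $i$ lies in $\Wh_\xi(A^{\psi_\bff}_m(\mu_\bff))$, which by Proposition~\ref{prop:51}(\ref{prop:512})(\ref{prop:513}) is at most one-dimensional and nonzero only when $\xi\in\Sym_m^{\pi_\bff}$; writing this coefficient as $c_\xi w^{\mu_\bff}_\xi(h)$ produces the desired $\{c_\xi\}$. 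For (\ref{lem:512}), I apply (\ref{tag:52}) at each archimedean place and Proposition~\ref{prop:51}(\ref{prop:515}) at each finite place to compute the effect of left multiplication by $\bfs(\frkm(a))$ on $\calf_\ell(h,\{c_\xi\})$; the local $\gamma^{\psi_v}$-factors combine via Weil's global identity $\prod_v\gamma^{\psi_v}(t)=1$ for $t\in F^\times$ together with the real-place formula $\gamma^{\psi_v}(t)^2=\hat\chi^t_v(t)$ to give precisely the claimed archimedean sign character. Part (\ref{lem:513}) then follows by taking $a\in O(\xi_0,F)\subset\GL_m(F)$ to be a reflection fixing some $\xi_0\in\Sym_m^{\pi_\bff}$ with $c_{\xi_0}\neq 0$, so that $\xi_0[a]=\xi_0$ and $\det a=-1$.

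For (\ref{lem:514}), by (\ref{lem:511}) the dimension of $C^{(1)}_\ell(\mu_\bff)$ equals the multiplicity of $A^{\psi_\bff}_1(\mu_\bff)$ in $\frkC^{(1)}_\ell$, which is at most one by Waldspurger's multiplicity-one theorem for half-integral weight cusp forms on $\Mp(W_1)_\AA$. For (\ref{lem:515}), verifying the transformation law of (\ref{lem:512}) for $\{c_{\eta t}\}$ under $\mu_\bff\hat\chi^\eta_\bff$ reduces, via the identity $\xi[a]=a^2\xi$ and the rule for $\{c_t\}$, to the identity $\hat\chi^\eta_\bff(a)=1$ for all $a\in F^\times$; this holds because $\eta\in F^\times_+$ makes $\hat\chi^\eta_v$ trivial at every $v\in\frkS_\infty$, so $\hat\chi^\eta_\bff|_{F^\times}=\prod_{v\in\frkS_\infty}\hat\chi^\eta_v|_{F^\times}^{-1}=1$ by the global product formula. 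The cusp-form property of the transferred series is then obtained by transporting $\calf_\ell(h,\{c_t\})$ along the change-of-additive-character isomorphism $A^{\psi_\bff}_1(\mu_\bff\hat\chi^\eta_\bff)\simeq A^{\psi^\eta_\bff}_1(\mu_\bff)$, under which $w^{\mu_\bff\hat\chi^\eta_\bff}_t$ matches $w^{\mu_\bff}_{\eta t}$ up to an $\eta$-dependent constant (the $L$-factor $L(\tfrac{1}{2},\mu_v\hat\chi^{\det\xi}_v)$ is invariant under the joint substitution $\mu_v\mapsto\mu_v\hat\chi^\eta_v$, $t\mapsto\eta t$, which is the key compatibility). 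Finally, (\ref{lem:516}) is Waldspurger's central-value formula: $|c_t|^2$ is proportional to $L(\tfrac{1}{2},\pi_\bff\otimes\hat\chi^t_\bff)$ times a product of nonvanishing local factors, and the local obstruction $\hat\chi^t_v=\mu_v\alp^{-1/2}$ (equivalent to $t\notin F^\times_{\pi_\bff}$) is exactly the criterion for $w^{\mu_v}_t$ to vanish on $A^{\psi_v}_1(\mu_v)$ from Proposition~\ref{prop:51}(\ref{prop:513}).

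The chief obstacle is (\ref{lem:514}): the local Whittaker uniqueness of Proposition~\ref{prop:51}(\ref{prop:512}) pins down $c_t$ only within a fixed $F^\times$-orbit (i.e., within a square class $tF^{\times 2}$), and the linkage across distinct square classes is a genuinely global phenomenon that can only be obtained from Waldspurger's theorem on cusp forms of half-integral weight. A secondary technical point is the careful tracking of normalizing scalars in (\ref{lem:515}), where one must verify the compatibility of the explicit Whittaker functionals $w^{\mu_\bff}_{\eta t}$ and $w^{\mu_\bff\hat\chi^\eta_\bff}_t$ under the transport of cusp forms.
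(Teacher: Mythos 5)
Your treatment of parts (\ref{lem:511})--(\ref{lem:514}) and (\ref{lem:516}) is correct and essentially the paper's: (\ref{lem:511}) is the Lemma \ref{lem:41} argument, (\ref{lem:512}) is exactly the computation from Proposition \ref{prop:51}(\ref{prop:515}) and (\ref{tag:52}) (your use of Weil's product formula $\prod_v\gam^{\psi_v}(t)=1$ for $t\in F^\times$ together with $\gam^{\psi_v}(t)^2=\hat\chi^t_v(t)$, which follows from (\ref{tag:51}), is a correct way to extract the archimedean sign character), (\ref{lem:513}) follows from (\ref{lem:512}) via an isometry of $\xi_0$ with determinant $-1$, (\ref{lem:514}) is Waldspurger's multiplicity one theorem \cite{W}, and (\ref{lem:516}) is the nonvanishing criterion of Waldspurger, cited in the paper as Theorem 4.1 of \cite{PS}.

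The genuine gap is in (\ref{lem:515}). Verifying the transformation law of (\ref{lem:512}) for $\{c_{\eta t}\}$ (which you do correctly, using that $\hat\chi^\eta_\bff$ is trivial on $F^\times$ for $\eta\in F^\times_+$) only yields membership in $T^{(1)}_\ell(\mu^{}_\bff\hat\chi^\eta_\bff)$; the content of (\ref{lem:515}) is invariance of the new series under all of $\SL_2(F)$, i.e.\ membership in $C^{(1)}_\ell(\mu^{}_\bff\hat\chi^\eta_\bff)$. Your mechanism --- ``transporting $\calf_\ell(h,\{c_t\})$ along the isomorphism $A^{\psi_\bff}_1(\mu^{}_\bff\hat\chi^\eta_\bff)\simeq A^{\psi^\eta_\bff}_1(\mu_\bff)$'' --- is an identification of abstract $\Mp(W_1)_\bff$-modules (in fact an equality of induced spaces, since $\gam^{\psi^\eta_v}(a)\mu_v(a)=\gam^{\psi_v}(a)\mu_v(a)\hat\chi^\eta_v(a)$ by (\ref{tag:51})), and such an identification does not act on functions on $Sp_1(F)\bsl\Mp(W_1)_\AA$, so it cannot produce the required left $\SL_2(F)$-invariance, which is a global property not implied by the $\PP_1(F)$-transformation law. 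Moreover, as written, ``$w^{\mu^{}_\bff\hat\chi^\eta_\bff}_t$ matches $w^{\mu_\bff}_{\eta t}$'' compares functionals on two non-isomorphic $\Mp(W_1)_\bff$-modules, so the statement is not meaningful without specifying the map that relates them. What is needed, and what the paper supplies, is a genuine global operation: realize $\Mp(W_1)_\AA$ as a normal subgroup of the Kubota double cover of $\GL_2(\AA)$ and lift conjugation by $\diag[\eta,1]\in\GL_2(F)$ to $\Mp(W_1)_\AA$; this lift preserves the splitting over $\SL_2(F)$ (and, $\eta$ being totally positive, preserves holomorphy and the weight at the archimedean places), hence carries each cusp form $\calf_\ell(h,\{c_t\})$ to a cusp form, while conjugation replaces $\psi$ by $\psi^\eta$, so the resulting forms realize $A^{\psi_\bff}_1(\mu^{}_\bff\hat\chi^\eta_\bff)$ with Fourier coefficients proportional to $c_{\eta t}$. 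This route also gives the convergence of the new series for free, which your argument would otherwise have to re-establish before (\ref{lem:512}) can even be applied.
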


\begin{proof}
The proof of (\ref{lem:511}) is similar to that of Lemma \ref{lem:41}. 
Proposition \ref{prop:51}(\ref{prop:515}) and (\ref{tag:52}) prove (\ref{lem:512}). 
The third statement is its simple consequence. 
The assertion (\ref{lem:514}) follows from the fact proved by Waldspurger \cite{W} that every irreducible representation of $\Mp(W_1)_\AA$ occurring in the decomposition
of the space of cusp forms on $\Mp(W_1)$ appears with multiplicity one. 
We can realize $\Mp(W_1)_\AA$ as a normal subgroup of a double cover of $\GL_2(\AA)$ constructed by using the Kubota two cocyle. 
The conjugation action of $\{\diag[a,1]\;|\;a\in F^\times\}$ on $\SL_2(\AA)$ has a lift to $\Mp(W_1)_\AA$, which preserves the subgroup $\SL_2(F)$. 
We obtain $A^{\psi_\bff}_1(\mu^{}_\bff\hat\chi^\eta_\bff)$ by conjugation by $\diag[\eta,1]$, which proves (\ref{lem:515}).   
The last assertion is Theorem 4.1 of \cite{PS}.  
\end{proof}

\section{Main theorem}\label{sec:6}

\subsection{Liftings to inner forms of $Sp_{2n}$}\label{ssec:61}

We write $\vrh$ for the right regular action of $G_n(\AA_\bff)$ on the space of fuctions on $G_n(\AA_\bff)$. 

\begin{theorem}\label{thm:61}
Notations and assumptions being as in Theorem \ref{thm:12}, the assignment
\[f\mapsto\sum_{B\in S^+_n}|\nu(B)|^{(\kap+n)/2}c_{\eta\nu(B)}\bfe_\infty(\tau(BZ))w^{\mu^{}_\bff\hat\chi^{(-1)^n\eta}_\bff}_B(\vrh(\Del)f) \]
defines an embedding $A_n\bigl(\mu_\bff^{}\hat\chi^{(-1)^n\eta}_\bff\bigl)\hookrightarrow\frkC^n_{\kap+n}$ for every $n$ and $\eta\in F^\times_+$, where $\Del\in G_n(\AAf)$ and $Z\in\frkH^d_n$. 
\end{theorem}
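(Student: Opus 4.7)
The plan is to realise the assignment in the statement as the Fourier series of Section \ref{sec:4} and then to verify the hypotheses of the machinery developed there. Write $\mu^\eta_\bff := \mu^{}_\bff\hat\chi^{(-1)^n\eta}_\bff$ and, for $B \in S^+_n$, set $c^\eta_B := c_{\eta\nu(B)}$, where $\{c_t\}_{t\in F^\times_+}$ is the set of constants furnished by Theorem \ref{thm:12}; by the $m=1$ instance the $c_t$ are (up to a normalising factor) the Fourier coefficients of the Hilbert cusp form of weight $\kap+\frac{1}{2}$ on $\Mp(W_1)$ attached to $\pi_\bff$. Once $|\nu(B)|^{(\kap+n)/2}\bfe_\infty(\tau(BZ))$ is repackaged into $W^{(\kap+n)}_B(g_\infty)$ via $Z = g_\infty(\bfi)$, the assignment of Theorem \ref{thm:61} becomes $f \mapsto \calf_{\kap+n}(\cdot;f,\{c^\eta_B\})$. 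It thus suffices to show $\{c^\eta_B\} \in C^n_{\kap+n}(\mu^\eta_\bff)$ and that the map is nonzero: $G_n(\AAf)$-equivariance is built into the definition through the $\psi^B_\bff$-equivariance of $w^{\mu^\eta_\bff}_B$, and irreducibility of $A_n(\mu^\eta_\bff)$ (Proposition \ref{prop:31}(\ref{prop:313})) then upgrades nonvanishing to an embedding.

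The formal $\GL_n(D(F))$-equivariance demanded by Definition \ref{def:42} reduces, via $\nu(B[A]) = \nu(A)^2\nu(B)$, to the $m=1$ transformation rule of Lemma \ref{lem:51}(\ref{lem:512}) applied to the scalar $\nu(A)\in F^\times$. The character twist is absorbed by Hilbert reciprocity: for $a \in F^\times$ one has
\[
\hat\chi^{(-1)^n\eta}_\bff(a) = \prod_{v\in\frkS_\infty}\sgn_v(a)^n,
\]
because $(-1)^n\eta$ is totally positive exactly when $n$ is even; after substitution the archimedean sign exponents balance on the two sides of the required identity. A parallel computation with $t \in F^\times_+$ and $\nu(tB) = t^{2n}\nu(B)$ gives $c^\eta_{tB} = c^\eta_B\mu^\eta_\bff(t)^{-n}$, which is the scaling relation of Lemma \ref{lem:43}(\ref{lem:432}). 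Polynomial growth of the $c_t$ inherited from their origin as Fourier coefficients of a Hilbert cusp form produces $|c^\eta_B| \ll N_{F/\QQ}(\nu(B))^M$ on any lattice of $S_n(F)$, so Lemma \ref{lem:42} furnishes absolute uniform convergence on compacta of $G_n(\AA)$ and Lemma \ref{lem:43}(\ref{lem:431}) places the series in $\frkT^n_{\kap+n}$, i.e.\ left-invariant under $P_n(F)$.

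The main obstacle is promoting this to full $G_n(F)$-invariance, equivalently to realising the series in $\frkC^n_{\kap+n}$ --- only invariance under $P_n(F)$ is visible from the Fourier expansion, and the extra Weyl-element invariance is what carries the arithmetic content. The strategy is to work on the similitude group: the scaling relation already verified is precisely the extra condition of Lemma \ref{lem:43}(\ref{lem:432}), so once cuspidality on $\calg_n$ is established the function lifts to an element of $\frkG^n_{\kap+n}$, whose restriction to $G_n(\AA)$ lies in $\frkC^n_{\kap+n}$. The $\calg_n(F)$-invariance itself I would obtain by a Fourier--Jacobi/restriction induction on $n$ in the spirit of Lemma \ref{lem:74}, using the Siegel-series identities of Section \ref{sec:10} (the polynomial factorisation of $F(B,q^{-s})$) to relate the $c^\eta_B$ at rank $n$ to those at rank $n-1$ along the Jacobi subgroup of $G_n$. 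The induction bottoms out at $n = 1$, where the series (with $\{c_{\eta t}\}$ as coefficients) is literally the Hilbert cusp form on $\Mp(W_1)$ of weight $\kap+\frac{1}{2}$ produced by Theorem \ref{thm:12}; its modularity under $\SL_2(F)$ is the input on which the whole induction rests. Nonvanishing of the map, and hence injectivity, then follows from Lemma \ref{lem:51}(\ref{lem:516}): Theorem \ref{thm:12} with $m=1$ gives a nonzero embedding, forcing some $c_t \neq 0$, and for any such $t$ one can choose diagonal $B\in S^{\pi_\bff}_n$ with $\eta\nu(B) = t$, so some Fourier coefficient of our candidate embedding is nonzero for a suitable $f$.
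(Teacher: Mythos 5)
Your overall architecture --- repackage the series as $\calf_{\kap+n}(\,\cdot\,;f,\{c_{\eta\nu(B)}\})$, check the $\GL_n(D(F))$-equivariance and scaling relations of Definition~\ref{def:42}, get convergence from Lemma~\ref{lem:42}, and then reduce full $G_n(F)$-invariance to a rank-one base case by Fourier--Jacobi induction --- matches the paper through Lemma~\ref{lem:76}. But there is a genuine gap at the base of your induction. The Fourier--Jacobi reduction of Lemma~\ref{lem:76} (with $i=n-1$) lands you in $C^1_{\kap+1}(\mu^{}_\bff\hat\chi^S_\bff)$, i.e.\ in coefficients of holomorphic cusp forms of weight $\kap+1$ on $G_1(\AA)$ --- a rank-two group, an inner form of $Sp_2$, with Fourier coefficients indexed by $\vXi\in D_-^+$. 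You assert that this base case ``is literally the Hilbert cusp form on $\Mp(W_1)$ of weight $\kap+\tfrac{1}{2}$ produced by Theorem~\ref{thm:12},'' thereby identifying $\frkC^1_{\kap+1}$ with $\frkC^{(1)}_{(2\kap+1)/2}$. These are spaces of forms on different groups ($G_1$ versus the metaplectic double cover of $\SL_2$), and the passage from $\{c_t\}\in C^{(1)}_{(2\kap+1)/2}(\mu_\bff)$ to $\{c_{\nu(\vXi)}\}\in C^1_{\kap+1}(\mu^{}_\bff\hat\chi^{-1}_\bff)$ is exactly the hard step the Fourier--Jacobi tower cannot supply: within $G_n$ the tower bottoms out at $G_1$ and goes no lower.

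The missing ingredient is the theta correspondence for the dual pair $\Mp(W_1)\times\SO(V)$ with $\dim V=5$, using the exceptional isomorphism $G^+(V)\simeq\calg_1$ of Lemma~\ref{lem:21}. The paper's Lemma~\ref{lem:96} establishes the base case by forming the Saito--Kurokawa theta lift of $\calf_{(2\kap+1)/2}(h,\{c_t\})$, computing its Fourier coefficients via Waldspurger's formula (Lemma~\ref{lem:95}) together with an explicit archimedean Schwartz function (Lemma~\ref{lem:94}) and the compatibility of Jacquet integrals in the mixed model (Lemma~\ref{lem:93}), and invoking the tower property plus the Rallis inner product formula for cuspidality and nonvanishing. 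Without some substitute for this transfer your induction has no anchor: invariance of the weight-$(\kap+\tfrac12)$ form under $\SL_2(F)$ does not by itself yield invariance of the weight-$(\kap+1)$ series on $G_1$ under $G_1(F)$. (A smaller inaccuracy: the rank-$n$ to rank-$(n-1)$ step is carried by the Weil representation of the Jacobi group and Lemmas~\ref{lem:72} and~\ref{lem:75}, not by Siegel-series identities, which enter only in the unramified normalization.)
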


\begin{remark}\label{rem:61} 
\begin{enumerate}
\renewcommand\labelenumi{(\theenumi)}
\item\label{rem:611} In light of Lemma \ref{lem:43}(\ref{lem:432}) the embedding in Theorem \ref{thm:61} naturally defines an embedding $A_n(\mu_\bff^{}\hat\chi^{(-1)^n\eta}_\bff)\hookrightarrow\frkG^n_{\kap+n}$. 
\item\label{rem:612} The multiplicity of $A_1(\mu_\bff^{}\hat\chi^{-\eta}_\bff)$ in $\frkG^1_{\kap+1}$ is one by Corollary 7.7 of \cite{G}. 
However, we do not know if this result can imply the multiplicity of $A_1(\mu_\bff^{}\hat\chi^{-\eta}_\bff)$ in $\frkC^1_{\kap+1}$. 
If we have assumed that it is one, then we could have proved that the multiplicity of $A_n(\mu_\bff^{}\hat\chi^{(-1)^n\eta}_\bff)$ in $\frkC^n_{\kap+n}$ is one in the same way as in \S \ref{ssec:85}. 
\end{enumerate} 
\end{remark}

\subsection{Compatibility with the Arthur conjecture}\label{ssec:62}

We explain how Theorem \ref{thm:61} can be viewed in the framework of Arthur's conjecture. 
For details of the conjecture, the reader should consult \cite{A}. 
The conjecture specialized to our current case is discussed in \cite{G} and Section 14 of \cite{I2}.  

Let $\call$ be the hypothetical Langlands group over $F$. 
Hypothetically, there is a bijective correspondence between the set of all equivalence classes of $m$-dimensional irreducible representations of $\call$ and the set of all irreducible cuspidal automorphic representations of $\GL_m(\AA)$. 
If $\pi$ is a cuspidal automorphic representation of $\PGL_2(\AA)$, then $\pi$ corresponds to a map $\rho(\pi):\call\to\SL_2(\CC)$. 
Let $\sym^{2n-1}$ be the irreducible $2n$-dimensional representation of $\SL_2(\CC)$. 
As is well-known, we may assume that $\sym^{2n-1}(\SL_2(\CC))\subset Sp_n(\CC)$. 
Thus $\rho(\pi)\boxtimes\sym^{2n-1}$ gives rise to a homomorphism $\call\times\SL_2(\CC)\to\SO_{4n}(\CC)$. 
Embedding $\SO_{4n}(\CC)$ into $\SO_{4n+1}(\CC)=\hat G_n$, we get a homomorphism $\call\times\SL_2(\CC)\to\hat G_n$. 
One postulates that for each place $v$ there is a natural conjugacy class of embeddings $\call_v\hookrightarrow\call$, where $\call_v$ is the Weil group of $F_v$ if $v\in\frkS_\infty$, and the Weil-Deligne group of $F_v$ if $v\notin\frkS_\infty$. 
We obtain a homomorphism $\rho(\pi_v)\boxtimes\sym^{2n-1}:\call_v\times\SL_2(\CC)\to\hat G_n$ for each $v$. 

The Arthur conjecture suggests that there exists a finite set $\vPi_n(\pi_v)=\vPi^+_n(\pi_v)\cup \vPi^-_n(\pi_v)$ of equivalence classes of unitary admissible representations of $G_{n,v}$ associated to $\rho(\pi_v)\boxtimes\sym^{2n-1}$. 
Moreover, it is required that if $G_{n,v}\simeq Sp_{2n}(F_v)$, then $\vPi^+_n(\pi_v)$ contains the Langlands quotient $I^+_n(\pi_v)$ of 
\[\Ind^{Sp_{2n}(F_v)}_{P_{2,2,\dots,2}(F_v)}(\pi_v\otimes\alp^{n-1/2})\boxtimes(\pi_v\otimes\alp^{n-3/2})\boxtimes\cdots\boxtimes(\pi_v\otimes\alp^{1/2}), \]
where $P_{2,2,\dots,2}$ is the standard parabolic subgroup of $Sp_{2n}$ with Levi subgroup $\GL_2\times\cdots\times\GL_2$. 
Choose $\vPi_v\in\vPi^{\eps_v}_n(\pi_v)$ for each $v$. 
Then $\otimes'_v\vPi_v$ is an automorphic representation of $G_n(\AA)$ generated by square-integrable automorphic forms if and only if $\prod_v\eps_v=\vep(1/2,\pi)$. 

Let $v$ be a finite place such that $D_v\simeq\Mat_2(F_v)$. 
Let $\mu\in\Ome(F_v^\times)$. 
If $\sig(\mu)>-\frac{1}{2}$, then we obtain an intertwining map 
\[I_n(\mu)\to\Ind^{Sp_{2n}(F_v)}_{P_{2,2,\dots,2}(F_v)}(I(\mu,\mu^{-1})\otimes\alp^{-(2n-1)/2})\boxtimes\cdots\boxtimes(I(\mu,\mu^{-1})\otimes\alp^{-1/2})\]
by applying Proposition 4.1 and Lemma 5.1 of \cite{Y3} repeatedly. 
Therefore if $-\frac{1}{2}<\sig(\mu)<\frac{1}{2}$, then $I_n(\mu)\simeq I^+_n(I(\mu,\mu^{-1}))$. 
If $\mu^2=\alp$ and $\mu\neq\alp^{1/2}$, then $A_n(\mu)\simeq I^+_n(A(\mu^{},\mu^{-1}))$ by Proposition 3.11(2) of \cite{J}. 
On the other hand, $A_n(\alp^{1/2})$ is the Langlands quotient of 
\[\Ind^{Sp_{2n}(F_v)}_{P_{2,2,\dots,2}(F_v)}A(\alp^n,\alp^{n-1})\boxtimes\cdots\boxtimes A(\alp^2,\alp)\boxtimes A_1(\alp^{1/2}) \]
by Proposition 3.10(2) of \cite{J}. 
We guess that $A_n(\alp^{1/2})\in A^-_n(A(\alp^{1/2},\alp^{-1/2}))$. 
We presume that the reasoning above is correct even when $D_v$ is division. 

Let $\pi\simeq(\otimes_{v\in\frkS_\infty}\pi_v)\otimes\pi_\bff$ be an irreducible cuspidal automorphic representation of $\PGL_2(\AA)$ on which we impose the following conditions:  
\begin{enumerate}
\item[(\roman{one})] $\pi_v$ is a discrete series with extremal weight $\pm 2\kap_v$ for all $v\in\frkS_\infty$;
\item[(\roman{two})] $\pi_\bff\simeq A(\mu^{}_\bff,\mu_\bff^{-1})$; 
\item[(\roman{thr})] $\mu_\bff(-1)(-1)^{\sum_{v\in\frkS_\infty}\kap_v}=1$. 
\end{enumerate} 
Let $v\in\frkS_\infty$ and assume that $\kap_v$ is sufficiently large so that $W^{(\kap_v+n)}_B$ generates a holomorphic discrete series representation of $Sp_{2n}(F_v)$. 
The holomorphic discrete series representation with lowest $K$-type $(\det)^{\kap_v+n}$ belongs to $A^{(-1)^n}_n(\pi_v)$. 
Put $l={\shp}\{v\notin\frkS_\infty\;|\;\pi_v\otimes\hat\chi^{(-1)^n\eta}_v=A(\alp^{1/2},\alp^{-1/2})\}$. 
Since 
\[\vep(1/2,\pi\otimes\hat\chi^{(-1)^n\eta})=(-1)^{l+\sum_{v\in\frkS_\infty}\kap_v}\mu^{}_\bff(-1)\hat\chi^{(-1)^n\eta}_\bff(-1)=(-1)^{l+nd} \]
for all $\eta\in F^\times_+$, the restriction (\roman{thr}) is compatible with the Arthur conjecture. 

\section{Fourier-Jacobi modules}\label{sec:7}

\subsection{Jacobi groups}\label{ssec:71}

Fix $0\leq i\leq n$. 
Put $n'=n-i$. 
For $z\in S_i$ and $x,y\in\Mat^i_{n'}(D)$ we use the notation
\begin{align*}
\bfv(x,y;z)&=\left(\begin{array}{c|c}
\begin{matrix} \ono_i & x \\ 0 & \ono_{n'} \end{matrix}
& \begin{matrix} z-yx^* & y \\ -y^* & 0 \end{matrix}\\\hline 
0
& \begin{matrix} \ono_i & 0 \\ -x^* & \ono_{n'} \end{matrix}
\end{array}\right), & 
\eta_i&=\left(\begin{array}{cc|cc}
  &            & \ono_i &            \\
  & \ono_{n'} &   &            \\ \hline 
\ono_i &            &   &            \\
  &            &   & \ono_{n'} 
\end{array}\right)\in G_n. 
\end{align*}
We define some subgroups of $G_n$ by 
\begin{align*}
X_i&=\{\bfv(x,0;0)\;|\; x\in\Mat^i_{n'}(D)\}, & 
Y_i&=\{\bfv(0,y;0)\;|\; y\in\Mat^i_{n'}(D)\}, \\    
Z_i&=\{\bfv(0,0;z)\;|\; z\in S_i\}, &
\caln_i&=\{\bfv(x,y;z)\;|\; x,y\in\Mat^i_{n'}(D),\; z\in S_i\}. 
\end{align*}
We identity $X_i$ and $Y_i$ with the space $\Mat^i_{n'}(D)$. 
We view $G_i$ and $G_{n'}$ with subgroups of $G_n$ via the embeddings
\begin{align*}
g_1&\mapsto\left(\begin{array}{cc|cc} 
a_1 & & b_1 & \\ & \ono_{n'} & & \\\hline
c_1  & & d_1 & \\ & & & \ono_{n'} 
\end{array}\right), &
g_2&\mapsto\left(\begin{array}{cc|cc} 
\ono_i & & & \\ & a_2 & & b_2 \\\hline
  & & \ono_i & \\ & c_2 & & d_2 
\end{array}\right),
\end{align*}
where we write a typical element $g_1\in G_i$ in the form $\begin{pmatrix} a_1 & b_1 \\ c_1 & d_1\end{pmatrix}$ with matrices $a_1$, $b_1$, $c_1$, $d_1$ of size $i$ over $D$, and similarly for $g_2\in G_{n'}$. 
Put $\calj_i=G_{n'}\cdot\caln_i$. 
We sometimes write 
\begin{align*}
\bfm'(A)&=\begin{pmatrix} A & 0 \\ 0& (A^{-1})^*\end{pmatrix}, & 
\bfn'(z)&=\begin{pmatrix} \ono_{n'} & z \\ 0 & \ono_{n'}\end{pmatrix} 
\end{align*} 
for $A\in\GL_{n'}(D)$ and $z\in S_{n'}$. 
We will frequently specialize to the case $i=n-1$ in our application to the proof of main theorems. 

\subsection{Weil representations of Jacobi groups}\label{ssec:72}

Let $F$ be a local field. 
Fix $S\in S^\nd_i$. 
We regard $S$ as a homomorphism $Z_i\to\GG_a$ by $z\mapsto\tau(Sz)$. 
Then $\caln_i/\Ker S$ is a Heisenberg group with center $Z_i/\Ker S$ and a natural symplectic structure $\caln_i/Z_i$. 
The Schr\"{o}dinger representation $\ome_S^\psi$ of $\caln_i$ with central character $\psi^S$ is realized on the Schwartz space $\cals(X_i)$ by 
\beq
(\ome_S^\psi(\bfv(x,y;z))\phi)(u)=\phi(u+x)\psi^S(z)\psi(2\tau(u^*Sy))\label{tag:71}
\eeq 
for $\phi\in\cals(X_i)$. 
By the Stone-von Neumann theorem, $\ome_S^\psi$ is a unique irreducible representation of $\caln_i$ on which $Z_i$ acts by $\psi^S$. 

This embedding and the conjugating action give a homomorphism $G_{n'}\hookrightarrow Sp(\caln_i/Z_i)$ and Kudla \cite{K2} gave an explicit local splitting $G_{n'}\hookrightarrow\Mp(\caln_i/Z_i)$, where $\Mp(\caln_i/Z_i)$ is the metaplectic extension of $Sp(\caln_i/Z_i)$. 
The representation $\ome^\psi_S$ of $\caln_i$ extends to the Weil representation of $\Mp(\caln_i/Z_i)\ltimes \caln_i$. 
The pullback to $P_{n'}$ of this representation is given by
\begin{align}
(\ome^\psi_S(\bfm'(A))\phi)(u)&=\hat\chi^S(\nu(A))|\nu(A)|^i\phi(uA), \notag\\
(\ome^\psi_S(\bfn'(z))\phi)(u)&=\psi^S(uzu^{*})\phi(u), \notag\\
(\ome^\psi_S(J_{n'})\phi)(u)&=\gam^\psi(S)(\calf_S\phi)(u) \label{tag:72}
\end{align} 
for $\phi\in\cals(X_i)$, $u\in X_i$, $A\in\GL_{n'}(D)$ and $z\in S_{n'}$, where $\gam^\psi(S)$ is a certain $8^\mathrm{th}$ root of unity and $\calf_S\phi$ is the Fourier transform defined by 
\[(\calf_S\phi)(u)=\int_{X_i}\phi(x)\psi(2\tau(x^*Su))\,\d x. \]

\subsection{The nonarchimedean case}\label{ssec:73}

Let $F$ be a finite extension of $\QQ_p$. 

\begin{lemma}\label{lem:71}
Let $f\in I_n(\mu)$ and $\phi\in\cals(X_i)$. 
If $\sig(\mu)\gg 0$, then the integral 
\begin{multline*}
\bet^\psi_S(g';f\otimes\bar\phi)=\frac{L\left(n+\frac{1}{2},\mu\right)}{L\left(n'+\frac{1}{2},\mu\hat\chi^S\right)}\prod_{j=1}^iL(2n'+2j-1,\mu^2)\\
\times\int_{Y_i\bsl \caln_i}f(\eta_i vg')\overline{(\ome^\psi_S(vg')\phi)(0)}\, \d v
\end{multline*}
is absolutely convergent. 
Moreover, it is meaningful for all $\Re\mu>-n'-\frac{1}{2}$ by analytic continuation and gives an $\caln_i$-invariant and $G_{n'}$-intertwining map 
\[\bet^\psi_S:I_n(\mu)\otimes\overline{\ome^\psi_S}\to I_{n'}(\mu\hat\chi^S). \]
\end{lemma}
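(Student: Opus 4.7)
The plan is to analyze the integral by first unfolding coset representatives of $Y_i\bsl\caln_i$, then handling convergence, equivariance, and finally the analytic continuation via the $L$-factor normalization. First, parametrize $Y_i\bsl\caln_i$ by pairs $(x,z)\in X_i\times Z_i$, so that $v=\bfv(x,0;z)$ runs over representatives. By \eqref{tag:71} and the multiplicativity of $\ome_S^\psi$, one has
\[(\ome_S^\psi(vg')\phi)(0)=(\ome_S^\psi(g')\phi)(x)\,\psi^S(z),\]
so the integrand decomposes as $f(\eta_i\bfv(x,0;z)g')\,\overline{(\ome^\psi_S(g')\phi)(x)}\,\overline{\psi^S(z)}$. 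A direct matrix computation of $\eta_i\bfv(x,0;z)$ (decomposing $\eta_i\bfv(x,0;z)=\bfm(A(x))\bfn(z+\text{quadratic in }x)\eta_i\bfv'$ via the $P_n$-Iwasawa type decomposition) exhibits the integrand as a product of a polynomial twist and the character $f$ pulls down.

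For absolute convergence when $\sig(\mu)\gg 0$, bound $|f|$ using the Iwasawa decomposition: it is majorized by a character on the Levi, so the $z$-integral is controlled by the rapidly oscillating $\psi^S(-z)$ paired with a polynomial function, whereas the $x$-integral is controlled by the Schwartz decay of $\ome_S^\psi(g')\phi$. One arranges this into a finite sum of Godement–Jacquet or degenerate Whittaker-type integrals, all of which converge in the right half-plane $\sig(\mu)\gg 0$; this is analogous to Lemma \ref{lem:33} applied to the nondegenerate rank-$i$ piece appearing after unfolding $\eta_i$.

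Next, verify the three algebraic properties. The $\caln_i$-invariance of $\bet^\psi_S$ (as a map into functions on $G_{n'}$) follows by translating $v\mapsto u v$ for $u\in\caln_i$ and using that $f(\eta_i u v g')$ matches the twist of $\ome^\psi_S(u)$ acting on $\phi$ modulo the central character $\psi^S$, so the pair $\ome^\psi_S\otimes I_n(\mu)$ produces an $\caln_i$-coinvariant. The $G_{n'}$-intertwining property is immediate from right-invariance of Haar measure on $Y_i\bsl\caln_i$ under right-translation by $g'$. The left transformation rule required to land in $I_{n'}(\mu\hat\chi^S)$ is checked on the generators $\bfm'(A)$ and $\bfn'(z)$: substituting these in the integral and changing variables, one uses \eqref{tag:72} together with the fact that $f$ transforms by $\mu(\nu(A))|\nu(A)|^{(2n+1)/2}$ on the big Levi, and the factor $\hat\chi^S(\nu(A))$ is produced exactly by the Weil representation normalization. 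The combinatorics of the Jacobian and the absolute value exponents conspire (as $2n+1-2i=2n'+1$) to give the required character of $P_{n'}$.

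The main obstacle is the meromorphic continuation to $\Re\mu>-n'-\frac{1}{2}$. The strategy is to reduce to a normalized Jacquet-type integral already controlled in \S\ref{ssec:32}: the unnormalized integral has poles coming precisely from the Eisenstein-series denominator
\[\prod_{j=0}^{i}L(2n'+2j-1,\mu^2)\cdot L\bigl(n'+\tfrac{1}{2},\mu\hat\chi^S\bigr)^{-1}\cdot L\bigl(n+\tfrac{1}{2},\mu\bigr),\]
and the explicit factor inserted in the definition of $\bet^\psi_S$ is chosen to cancel these poles. Concretely, one evaluates $\bet^\psi_S$ on the spherical vector with $\phi$ the characteristic function of a lattice and uses the unramified computation (mirroring Lemma \ref{lem:34} and an inductive formula akin to Lemma \ref{lem:74}) to see that the integral stabilizes to a polynomial in $q^{-s}$; for general $(f,\phi)$ this carries over by uniformly approximating and using the Cauchy integral bound. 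This yields holomorphy on the half-plane $\Re\mu>-n'-\frac{1}{2}$, completing the proof.
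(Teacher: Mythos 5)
Your unfolding of $Y_i\bsl\caln_i$ into $X_i\times Z_i$ and your direct check of the $P_{n'}$-transformation law (with the exponent count $(2n+1)/2+i-2i=(2n'+1)/2$) are fine; this part in effect replaces the paper's appeal to the computation in the proof of Theorem 3.2 of \cite{I}, which the authors simply note carries over to the quaternion case. The genuine gap is in the analytic continuation, which is the main content of the lemma. The paper's argument is structural: for fixed $x\in X_i$ the inner integral over $Z_i$ is exactly the Jacquet integral of \S\ref{ssec:32} applied to the restriction to $G_i$ of the translated section, which lies in $I_i(\mu\alp^{n'})$; since such Jacquet integrals stabilize, this inner integral is, for \emph{every} $f$ (not only the unramified vector), a polynomial in $q^{\pm s}$ and hence entire in $\mu$, while the $X_i$-integral is a finite sum because $\phi$ is compactly supported. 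Moreover the $L$-factor in the definition of $\bet^\psi_S$ is precisely the normalizing factor of $w^{\mu\alp^{n'}}_S$ for $G_i$ (note $L\bigl(i+\tfrac{1}{2},\mu\alp^{n'}\bigr)=L\bigl(n+\tfrac{1}{2},\mu\bigr)$, $L\bigl(\tfrac{1}{2},\mu\alp^{n'}\hat\chi^S\bigr)=L\bigl(n'+\tfrac{1}{2},\mu\hat\chi^S\bigr)$, $L(2j-1,(\mu\alp^{n'})^2)=L(2n'+2j-1,\mu^2)$), and its only possible poles occur for $\sig(\mu)\leq -n'-\tfrac{1}{2}$, which is exactly where the restriction $\Re\mu>-n'-\tfrac{1}{2}$ comes from.

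Your route does not establish this. Evaluating on the spherical vector with $\phi$ a lattice characteristic function and then passing to general $(f,\phi)$ ``by uniformly approximating and using the Cauchy integral bound'' is not a valid step: a general section is not a parameter-independent finite combination of translates of the spherical section, and holomorphy does not transfer through approximation without locally uniform bounds in the parameter that you have not provided; the stabilization property must be invoked for arbitrary $f$, and once it is, no approximation is needed. The surrounding framing is also off: the unnormalized integral has no poles (it stabilizes), so the inserted $L$-factors cannot be ``cancelling'' poles --- they are themselves the only potential source of poles and are what forces the half-plane $\Re\mu>-n'-\tfrac{1}{2}$ (your product also starts at $j=0$, which disagrees with the statement). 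Finally, for absolute convergence the oscillation of $\psi^S$ is irrelevant, since one bounds $|f|$; the correct justification for $\sig(\mu)\gg 0$ is simply the convergence of the rank-$i$ Jacquet integral, as you in fact indicate at the end of that paragraph.
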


\begin{proof}
The integral over $Z_i$ can be viewed as a Jacquet integral of the restriction of $f$ to $G_i$, which belongs to $I_i(\mu\alp^{n'})$.  
Thus it is entire on the whole of the complex manifold $\Ome(F^\times)$. 
Since $(\ome^\psi_S(xg')\phi)(0)=(\ome^\psi_S(g')\phi)(x)$ for $x\in X_i$, the integral over $X_i$ is convergent for all $\mu$. 
When $D\simeq\Mat_2(F)$, Ikeda showed that $\bet^\psi_S(f\otimes\bar\phi)\in I_{n'}(\mu\hat\chi^S)$ in the proof of Theorem 3.2 of \cite{I}. 
The computation applies equally well to the quaternion case. 
\end{proof}

\begin{lemma}\label{lem:72}
If $n=i+n'$, $S\in S_i^\nd$, $\vXi\in S_{n'}^\nd$, $f\in I_n(\mu)$ and $\phi\in\ome^\psi_S$, then 
\[w^{\mu\hat\chi^S}_{\vXi}(\bet^\psi_S(f\otimes\bar\phi))=C_S|\nu(\vXi)|^{-i/2}\int_{X_i}\overline{\phi(x)}w^\mu_{S\oplus \vXi}(\vrh(x)f)\,\d x. \]
\end{lemma}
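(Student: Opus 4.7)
The plan is to substitute the definitions of $w^{\mu\hat\chi^S}_\vXi$ and $\bet^\psi_S$, unfold the resulting nested integrals into a single multiple integral over $S_n\cong Z_i\oplus X_i\oplus S_{n'}$, and then recognize the result as the Jacquet integral for $S\oplus\vXi$ after applying the explicit Weil-representation formula for $\ome^\psi_S(J_{n'})$.

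First I would establish the matrix identity
\[\eta_i\bfv(x,0;z)J_{n'}\bfn'(z')=J_n\bfn\begin{pmatrix} z & x \\ -x^* & z'\end{pmatrix}\]
for $x\in X_i$, $z\in S_i$, $z'\in S_{n'}$. A block computation shows $\eta_iJ_{n'}=J_n$; then, noting $\bfv(x,0;0)=\bfm\begin{pmatrix}\ono_i&x\\0&\ono_{n'}\end{pmatrix}$, that $J_n\bfm(A)=\bfm((A^{-1})^*)J_n$, and that $\bfn\begin{pmatrix}z&0\\0&0\end{pmatrix}$ commutes with $J_{n'}$, the identity telescopes. Parametrizing $Y_i\bsl\caln_i$ by $(x,z)\in X_i\times Z_i$ via $v=\bfv(x,0;z)$ and adjoining $z'\in S_{n'}$ gives a parametrization of $S_n$ with $dw=dz\,dx\,dz'$.

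Next I would expand the LHS. Writing the Jacquet integral for $w^{\mu\hat\chi^S}_\vXi$, substituting the formula for $\bet^\psi_S(f\otimes\overline\phi)(J_{n'}\bfn'(z'))$, and using (\ref{tag:71}) to compute $(\ome^\psi_S(\bfv(x,0;z))\phi')(0)=\phi'(x)\psi^S(z)$ with $\phi'=\ome^\psi_S(J_{n'}\bfn'(z'))\phi$, the LHS becomes
\[(\text{const})\int_{S_n}f(J_n\bfn(w))\overline{\psi^{S\oplus\vXi}(w)\,\phi'(x)}\,dw,\]
where the character factors because $\tau((S\oplus\vXi)w)=\tau(Sz)+\tau(\vXi z')$ is independent of the off-diagonal block $x$. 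I would then expand $\phi'(x)$ via (\ref{tag:72}) as $\gam^\psi(S)\int_{X_i}\psi(\tau(Syz'y^*))\phi(y)\psi(2\tau(y^*Sx))\,dy$, exchange the order of integration, and verify that the combined character in $w$ equals $\psi^{T_y}(w)$, where
\[T_y=\begin{pmatrix}S & Sy \\ y^*S & \vXi+y^*Sy\end{pmatrix}=(S\oplus\vXi)[A_y],\qquad A_y=\begin{pmatrix}\ono_i & y \\ 0 & \ono_{n'}\end{pmatrix}.\]
The quadratic Weil kernel exactly reproduces the cross-terms $Sy$, $y^*S$ and the shift $y^*Sy$ of the $A_y$-congruence, using $\tau(M^*)=\tau(M)$ and $S^*=-S$ to derive $\tau(y^*Sx)=-\tau(x^*Sy)$.

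After this identification, the inner $w$-integral equals the unnormalized Jacquet integral $\bfw^\mu_{T_y}(f)$. Since $\nu(A_y)=1$, one has $\nu(T_y)=\nu(S\oplus\vXi)$ and $\hat\chi^{T_y}=\hat\chi^{S\oplus\vXi}$, so the normalizing L-factors for $w^\mu_{T_y}$ and $w^\mu_{S\oplus\vXi}$ agree; Lemma \ref{lem:31}(\ref{lem:313}) gives $w^\mu_{T_y}(f)=w^\mu_{S\oplus\vXi}(\vrh(\bfm(A_y))f)=w^\mu_{S\oplus\vXi}(\vrh(y)f)$. Finally I would collect the L-factor prefactors: the one from $\bet^\psi_S$, the normalizer of $w^{\mu\hat\chi^S}_\vXi$, and the inverse of the normalizer of $w^\mu_{S\oplus\vXi}$. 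Using the telescoping $\prod_{j=1}^n L(2j-1,\mu^2)=\prod_{j=1}^{n'}L(2j-1,\mu^2)\prod_{j=1}^i L(2n'+2j-1,\mu^2)$ together with the cancellations of $L(n'+\tfrac12,\mu\hat\chi^S)$ and $L(\tfrac12,\mu\hat\chi^{S\oplus\vXi})$, one extracts $|\nu(\vXi)|^{-i/2}$ and isolates the $S$-dependent constant $C_S=\overline{\gam^\psi(S)}\,|\nu(S)|^{-(2n+1)/4}$.

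The main obstacle is the convergence and Fubini exchange: the Jacquet integrals are only formally defined, in general via stabilization or analytic continuation. The rigorous route is to prove the identity first for $\sig(\mu)\gg 0$, where absolute convergence allows freely interchanging the integrals, and then extend by analytic continuation using the polynomial structure of Whittaker functionals in $q^{-s}$ (Lemma \ref{lem:34}). The algebraic verification of the trace identity that turns the Weil kernel into the congruence character $\psi^{T_y}$ is the other nontrivial step, but is a routine matrix calculation once the correct form of $T_y$ is identified.
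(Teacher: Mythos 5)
Your proposal is correct and follows essentially the same route as the paper's proof: unfold $w^{\mu\hat\chi^S}_\vXi\circ\bet^\psi_S$, use $\eta_i=J_nJ_{n'}$ together with (\ref{tag:71})--(\ref{tag:72}) (in particular $\ome^\psi_S(J_{n'})=\gam^\psi(S)\calf_S$) to reassemble the unipotent integrations into the $S_n$-Jacquet integral for $\sig(\mu)\gg0$, and finish with the identical telescoping of $L$-factors producing $|\nu(\vXi)|^{-i/2}$ and the constant $C_S$. The only difference is organizational: where the paper moves the $X_i$-variable across $J_n$ into the Levi inside the argument of $f$ and exchanges the order of integration, you expand $\ome^\psi_S(J_{n'}\bfn'(z'))\phi$ by the Fourier kernel, recognize the completed-square character $\psi^{(S\oplus\vXi)[A_y]}$, and then convert via the equivariance in Lemma \ref{lem:31}(\ref{lem:313}) --- the same computation carried out in a different order.
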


\begin{proof}
Since $\eta_i=J_n\cdot J_{n'}$ and $J_{n'}\bfv(0,y,z)J_{n'}=\bfv(y,0,z)$, we observe that 
\begin{align*}
&\int_{\caln_i}f(J_nvJ_{n'}g')\overline{(\ome^\psi_S(vJ_{n'}g')\phi)(0)}\, \d v\\
=&\int_{Z_i}\int_{Y_i}\int_{X_i}f(J_nxzyJ_{n'}g')\overline{(\ome^\psi_S(zyJ_{n'}g')\phi)(x)}\, \d x\d y\d z\\
=&\int_{S_i}\int_{\Mat^i_{n'}(D)}\int_{X_i}f(J_n\bfv(0,y;z)J_{n'}g')\overline{(\ome^\psi_S(J_{n'}\bfv(y,0;z)g')\phi)(x)}\, \d x\d y\d z\\
=&\int_{S_i}\int_{\Mat^i_{n'}(D)}f(\eta_i\bfv(y,0;z)J_{n'}g')\overline{\calf_S(\ome^\psi_S(J_{n'}\bfv(y,0;z)g')\phi)(0)}\, \d y\d z\\
=&\gam^\psi(S)^{-1}\int_{Z_i}\int_{X_i}f(\eta_i\bfv(y,0;z)g')\overline{(\ome^\psi_S(\bfv(y,0;z)g')\phi)(0)}\, \d y\d z  
\end{align*}
by (\ref{tag:71}) and (\ref{tag:72}).  
Since 
\begin{align*}
&|\nu(\vXi)|^{(2n'+1)/4}\frac{L\left(n'+\frac{1}{2},\mu\hat\chi^S\right)}{L\left(\frac{1}{2},\mu\hat\chi^S\hat\chi^{\vXi}\right)}\prod_{j=1}^{n'}L(2j-1,\mu^2)\\
&\times\frac{L\left(n+\frac{1}{2},\mu\right)}{L\left(n'+\frac{1}{2},\mu\hat\chi^S\right)}\prod_{j=1}^iL(2n'+2j-1,\mu^2)\\
=&|\nu(S)|^{-(2n+1)/4}|\nu(\vXi)|^{-i/2}|\nu(S\oplus \vXi)|^{(2n+1)/4}\frac{L\left(n+\frac{1}{2},\mu\right)}{L\left(\frac{1}{2},\mu\hat\chi^{S\oplus \vXi}\right)}\prod_{j=1}^nL(2j-1,\mu^2),
\end{align*}
it suffices to prove   
\begin{align*}
&\int_{S_{n'}}\int_{\caln_i}f(J_nv\bfn'(u))\overline{(\ome^\psi_S(v\bfn'(u))\phi)(0)}\psi^{\vXi}(-u)\, \d v\d u\\
=&\int_{X_i}\int_{S_n}f(J_n\bfn(z)x)\overline{\phi(x)}\psi^{S\oplus \vXi}(-z)\, \d z\d x 
\end{align*}
for $\sig(\mu)\gg 0$. 
The left hand side is equal to 
\begin{align*}
&\int_{S_{n'}}\int_{\caln_i}f(J_n\bfn'(u)v)\overline{(\ome^\psi_S(\bfn'(u)v)\phi)(0)}\psi^{\vXi}(-u)\, \d v\d u\\
=&\int_{S_{n'}}\int_{\caln_i}f(J_n\bfn'(u)v)\overline{(\ome^\psi_S(v)\phi)(0)}\psi^{\vXi}(-u)\, \d v\d u\\
=&\int_{S_{n'}}\int_{Y_i}\int_{Z_i}\int_{X_i}f(J_n\bfn'(u)yzx)\overline{\phi(x)\psi^S(z)}\psi^{\vXi}(-u)\, \d x\d z\d y\d u\\
=&\int_{S_n}\int_{X_i}f(J_n\bfn(z)x)\overline{\phi(x)}\psi^{S\oplus \vXi}(-z)\, \d x\d z. 
\end{align*}
Since this integral is absolutely convergent for $\sig(\mu)\gg 0$, we can exchange the order of integration. 
\end{proof}

\begin{corollary}\label{cor:71}
\begin{enumerate}
\renewcommand\labelenumi{(\theenumi)}
\item\label{cor:711} If $p\neq 2$, $D\simeq\Mat_2(F)$, $\psi$ is of order $0$, $\mu$ is unramified, $S\in S_i^\nd\cap\GL_i(\calo)$, $\phi$ is the characteristic function of $\Mat^i_{n'}(\calo)$ and $f\in I_n(\mu)$ satisfies $f(k)=1$ for all $k\in \Gam_n[\frko]$, then $\bet^\psi_S(\ono_{2n'};f,\phi)=1$. 
\item\label{cor:712} If $-\frac{1}{2}<\sig(\mu)<\frac{1}{2}$, then $\bet^\psi_S(I_n(\mu)\otimes\overline{\ome^\psi_S})=I_{n'}(\mu\hat\chi^S)$. 
\item\label{cor:713} If $\mu^2=\alp$, then $\bet^\psi_S(A_n(\mu)\otimes\overline{\ome^\psi_S})=A_{n'}(\mu\hat\chi^S)$. 
\end{enumerate}
\end{corollary}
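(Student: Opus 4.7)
The plan is to reduce all three parts to the Whittaker identity of Lemma \ref{lem:72}, which expresses $w^{\mu\hat\chi^S}_{\vXi}\circ\bet^\psi_S$ on the image inside $I_{n'}(\mu\hat\chi^S)$ in terms of $w^\mu_{S\oplus\vXi}$ on $I_n(\mu)$, and then to invoke the module-structural statements of Proposition \ref{prop:31}. I would start with part (\ref{cor:712}): since $\hat\chi^S$ is quadratic and unitary, $\sig(\mu\hat\chi^S)=\sig(\mu)\in(-\frac{1}{2},\frac{1}{2})$, so $I_{n'}(\mu\hat\chi^S)$ is irreducible by Proposition \ref{prop:31}(\ref{prop:311}) and it suffices to see that $\bet^\psi_S\ne 0$. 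I would fix $\vXi\in S^{\nd}_{n'}$, pick $f\in I_n(\mu)$ with $w^\mu_{S\oplus\vXi}(f)\ne 0$ using Proposition \ref{prop:31}(\ref{prop:312}), and take $\phi$ to be the characteristic function of a sufficiently small compact open neighborhood of $0\in X_i$. Local constancy of $x\mapsto w^\mu_{S\oplus\vXi}(\vrh(x)f)$ then makes the integral in Lemma \ref{lem:72} proportional to $\mathrm{vol}(\mathrm{supp}\,\phi)\cdot w^\mu_{S\oplus\vXi}(f)\ne 0$, and irreducibility upgrades nonvanishing to surjectivity.

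For part (\ref{cor:713}), $(\mu\hat\chi^S)^2=\alp$, so Proposition \ref{prop:31}(\ref{prop:313}) makes $A_{n'}(\mu\hat\chi^S)$ the unique irreducible submodule of $I_{n'}(\mu\hat\chi^S)$. Writing $M\subset I_{n'}(\mu\hat\chi^S)$ for the image of $\bet^\psi_S$ on $A_n(\mu)\otimes\overline{\ome^\psi_S}$, the heart of the proof is to show $M\subset A_{n'}(\mu\hat\chi^S)$. I would first record, from the description $I_{n'}(\mu\hat\chi^S)/A_{n'}(\mu\hat\chi^S)\simeq\bigoplus R^\psi_{n'}(B')$ in the proof of Proposition \ref{prop:31}(\ref{prop:313}), together with exactness of twisted Jacquet functors and the one-dimensionality of $\Wh_{B'}(I_{n'}(\mu\hat\chi^S))$, the criterion that a vector lies in $A_{n'}(\mu\hat\chi^S)$ precisely when $w^{\mu\hat\chi^S}_{B'}$ vanishes on it for every $B'\in S^{\nd}_{n'}$ with $\hat\chi^{B'}=\mu\hat\chi^S\alp^{-1/2}$. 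For any such $B'$ one has $\hat\chi^{S\oplus B'}=\hat\chi^S\cdot\mu\hat\chi^S\alp^{-1/2}=\mu\alp^{-1/2}$, so Proposition \ref{prop:31}(\ref{prop:313}) gives $w^\mu_{S\oplus B'}|_{A_n(\mu)}=0$; since $A_n(\mu)$ is $G_n$-stable and $X_i\subset G_n$, the integrand of Lemma \ref{lem:72} is identically zero on $A_n(\mu)$, and hence $w^{\mu\hat\chi^S}_{B'}$ annihilates $M$. Nonvanishing of $M$ follows via the same bump construction, now with $B'$ chosen so $\hat\chi^{B'}\ne\mu\hat\chi^S\alp^{-1/2}$ (whence $w^\mu_{S\oplus B'}|_{A_n(\mu)}\ne 0$); irreducibility of $A_{n'}(\mu\hat\chi^S)$ then forces $M=A_{n'}(\mu\hat\chi^S)$.

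For part (\ref{cor:711}), under the stated unramified hypotheses $I_{n'}(\mu\hat\chi^S)$ is irreducible and $g'\mapsto\bet^\psi_S(g';f,\phi)$ is a spherical vector, hence equals $c\cdot\phi^{\mathrm{sph}}$ for the normalized spherical vector $\phi^{\mathrm{sph}}\in I_{n'}(\mu\hat\chi^S)$ and a constant $c\in\CC$. I would fix some $\vXi\in S^{\nd}_{n'}\cap\GL_{n'}(\calo)$ and apply $w^{\mu\hat\chi^S}_{\vXi}$ to both sides. Lemma \ref{lem:34} (with $c_s=1$ under $\frkd=\frko$ and $D$ split, and $F(B,X)\equiv 1$ for $B\in\GL_n(\calo)$) gives $w^{\mu\hat\chi^S}_{\vXi}(\phi^{\mathrm{sph}})=1$, while Lemma \ref{lem:72} reduces $w^{\mu\hat\chi^S}_{\vXi}(\bet^\psi_S)$ to $C_S\int_{\Mat^i_{n'}(\calo)}w^\mu_{S\oplus\vXi}(\vrh(x)f)\,\d x$; since $x\in\Mat^i_{n'}(\calo)$ embeds into $\Gam_n[\frko]$ as a unipotent element and $f$ is spherical, $\vrh(x)f=f$ and $w^\mu_{S\oplus\vXi}(f)=1$, again by Lemma \ref{lem:34}. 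Tracking the normalization $C_S$ through the proof of Lemma \ref{lem:72} — it reduces to $1$ under the stated hypotheses because the Weil-representation factor $\gam^\psi(S)$ is trivial for $\nu(S)\in\frko^\times$ and $\psi$ of order $0$ — then yields $c=1$.

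The principal obstacle is the detection criterion used in part (\ref{cor:713}): namely extracting from the composition series of $I_{n'}(\mu\hat\chi^S)$ that membership in the socle $A_{n'}(\mu\hat\chi^S)$ is equivalent to vanishing of $w^{\mu\hat\chi^S}_{B'}$ for all $B'$ with $\hat\chi^{B'}=\mu\hat\chi^S\alp^{-1/2}$. Once this is in hand, the cocycle identity $\hat\chi^{S\oplus B'}=\mu\alp^{-1/2}$ pairs the module structures on $I_n(\mu)$ and $I_{n'}(\mu\hat\chi^S)$ so tightly through Lemma \ref{lem:72} that the remaining argument is essentially formal.
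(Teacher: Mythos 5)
Your proposal is correct and takes essentially the same route as the paper: for (\ref{cor:712}) and (\ref{cor:713}) the paper likewise reduces to nonvanishing of the intertwining map via Lemma \ref{lem:72} with a bump function $\phi$ and irreducibility of the target, proves the inclusion into the socle in (\ref{cor:713}) by the identity $\hat\chi^{S\oplus B'}=\mu\alp^{-1/2}$ together with the composition series from the proof of Proposition \ref{prop:31}(\ref{prop:313}), and derives (\ref{cor:711}) from Lemmas \ref{lem:34} and \ref{lem:72} with $C_S=1$. The only caveat is that your socle-detection criterion should be stated for the $G_{n'}$-submodule generated by a vector rather than for an individual vector, which is in fact how you apply it (to the image $M$), so the argument stands.
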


\begin{proof}
Since $C_S=1$ in the unramified case, we can derive (\ref{cor:711}) from Lemmas \ref{lem:34} and \ref{lem:72}. 
If $\hat\chi^\vXi=\mu\hat\chi^S\alp^{-1/2}$, then $\hat\chi^{S\oplus \Xi}=\mu\alp^{-1/2}$, and hence 
\[w^{\mu\hat\chi^S}_{\vXi}(\bet^\psi_S(f\otimes\bar\phi))=0\]
for all $f\in A_n(\mu)$ and $\bar\phi\in\overline{\ome^\psi_S}$ by Lemmas \ref{lem:31}(\ref{lem:312}) and \ref{lem:72}. 
It follows from the proof of Proposition \ref{prop:31} that $\bet^\psi_S(A_n(\mu)\otimes\overline{\ome^\psi_S})\subset A_{n'}(\mu\hat\chi^S)$. 

Since the target spaces are irreducible, it is sufficient to show the nonvanishing of the intertwining maps.  
Lemma \ref{lem:31} enables us to take $\vXi\in S_{n'}^\nd$ and $f$ so that $w^\mu_{S\oplus \vXi}(f)\neq 0$. 
Using Lemma \ref{lem:72} and choosing $\phi$ to be supported in a small neighborhood, one can show that $w^{\mu\hat\chi^S}_{\vXi}(\bet^\psi_S(f\otimes\bar\phi))\neq 0$. 
\end{proof}

\subsection{The metaplectic case}\label{ssec:74}

Fix $0\leq i\leq m$. 
Put $m'=m-i$. 
For $z\in\Sym_i$, $x,y\in\Mat^i_{m'}$, $a\in\GL_{m'}$ and $b\in\Sym_{m'}$ we use the notation
\begin{align*}
\frku(x,y;z)&=\left(\begin{array}{c|c}
\begin{matrix} \ono_i & x \\ 0 & \ono_{m'} \end{matrix}
& \begin{matrix} z-y\trs x & y \\ \trs y & 0 \end{matrix}\\\hline 
0
& \begin{matrix} \ono_i & 0 \\ -\trs x & \ono_{m'} \end{matrix}
\end{array}\right), & 
\eta_i&=\left(\begin{array}{cc|cc}
  &            & -\ono_i &            \\
  & \ono_{m'} &   &            \\ \hline 
\ono_i &            &   &            \\
  &            &   & \ono_{m'} 
\end{array}\right), \\
\frkm'(a)&=\begin{pmatrix} a & 0 \\ 0& \trs a^{-1}\end{pmatrix}, & 
\frkn'(b)&=\begin{pmatrix} \ono_{m'} & b \\ 0 & \ono_{m'}\end{pmatrix}.  
\end{align*}
We define the subgroups of $Sp_m$ by $\scrj_i=Sp_{m'}\cdot\scrn_i$ and 
\begin{align*}
\scrx_i&=\{\frku(x,0;0)\;|\; x\in\Mat^i_{m'}\}, & 
\scry_i&=\{\frku(0,y;0)\;|\; y\in\Mat^i_{m'}\}, \\    
\scrz_i&=\{\frku(0,0;z)\;|\; z\in\Sym_i\}, &
\scrn_i&=\{\frku(x,y;z)\;|\; x,y\in\Mat^i_{m'},\; z\in\Sym_i\}. 
\end{align*} 

For the time being let $F$ be a local field. 
Fix $R\in\Sym^\nd_i$. 
The pullback to $\PP_{m'}\ltimes \scrn_i$ of the Weil representation $\ome^\psi_R$ of $\Mp(W_{m'})\ltimes \scrn_i$ is given by
\begin{align*}
(\ome_R^\psi(\frku(x,y;z))\phi)(u)&=\phi(u+x)\psi^R(z)\psi(2\tr(\trs uRy)),  \\
(\ome^\psi_R(\til\frkn'(b)\til\frkm'(a))\phi)(u)&=\psi^{R[u]}(b)\gam^\psi(\det a)^i\hat\chi^{\det R}(\det a)|\det a|^{i/2}\phi(ua) 
\end{align*} 
for $\phi\in\cals(\scrx_i)$, $u\in\scrx_i$, $a\in\GL_{m'}(F)$ and $b\in\Sym_{m'}(F)$. 

\begin{lemma}\label{lem:73}
Let $F$ be a finite extension of $\QQ_p$. 
Assume that $m'$ is odd. 
The following integral 
makes sense for all $\sig(\mu)>-\frac{m'}{2}-1$ and gives an $\scrn_i$-invariant and $\Mp(W_{m'})$-intertwining map $\bet^\psi_R:I^\psi_m(\mu)\otimes\overline{\ome^\psi_R}\to I_{m'}^\psi(\mu\hat\chi^{\det R})$: 
\begin{multline*}
\bet^\psi_R(\til g';h\otimes\bar\phi)=\prod_{j=1}^{[i/2]}L(m'+2j,\mu^2)\int_{\scry_i\bsl\scrn_i}h(\bfs(\eta_i v)\til g')\overline{(\ome^\psi_R(v\til g')\phi)(0)}\, \d v\\
\times\begin{cases} 
1 &\text{if $2\nmid m$, }\\
L\bigl(\frac{m+1}{2},\mu\hat\chi^{(-1)^{m/2}}\bigl) &\text{if $2|m$. }
\end{cases}
\end{multline*} 
Moreover, for $T\in\Sym_{m'}^\nd$, $h\in I_m^\psi(\mu)$ and $\phi\in\ome^\psi_R$ 
\[w^{\mu\hat\chi^{\det R}}_T(\bet^\psi_R(h\otimes\bar\phi))=C_R|\det T|^{-i/4}\int_{\scrx_i}\overline{\phi(x)}w^\mu_{R\oplus T}(\vrh(x)h)\,\d x. \]
 \end{lemma}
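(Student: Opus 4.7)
The plan is to mimic the quaternionic arguments of Lemmas \ref{lem:71} and \ref{lem:72}, paying extra attention to the Rao cocycle defining $\Mp(W_m)_\AA$ and to the $\gam^\psi$-factors entering the Weil representation $\ome^\psi_R$.

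For convergence and intertwining I would decompose $\scrn_i=\scrx_i\cdot\scry_i\cdot\scrz_i$. The integral over $\scrx_i$ is absolutely convergent for every $\mu$ by the Schwartz decay of $\phi$ and the identity $(\ome^\psi_R(x\til g')\phi)(0)=(\ome^\psi_R(\til g')\phi)(x)$. The integral over $\scrz_i$ is a Jacquet-type integral for the restriction of $h$ to $\Mp(W_i)$, which lies in a degenerate principal series attached to a character of the form $\mu\alp^{m'/2}$; by stabilization in the style of Lemma \ref{lem:34} it is a polynomial in $q^{-s}$ and hence entire in $\mu$. The integral over $\scry_i$ likewise stabilizes, and multiplication by $\prod_{j=1}^{[i/2]}L(m'+2j,\mu^2)$ together with the parity factor kills the residual poles, producing convergence in the half-plane $\sig(\mu)>-m'/2-1$. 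The $\scrn_i$-invariance is immediate from the $\ome^\psi_R|_{\scrn_i}$ formulas. The $\Mp(W_{m'})$-intertwining is tested on the generators $\til\frkm'(a)$, $\til\frkn'(b)$, $\bfs(J_{m'})$: the $\gam^\psi(\det a)^i\hat\chi^{\det R}(\det a)$ factor produced by $\ome^\psi_R(\til\frkm'(a))$ combines through (\ref{tag:51}) with the left-transformation rule of $h\in I^\psi_m(\mu)$ under the Levi image of $\til\frkm'(a)\subset\Mp(W_m)$ to give exactly the character $\zet^{m'}\gam^\psi(\det a)^{m'}\mu\hat\chi^{\det R}(\det a)|\det a|^{(m'+1)/2}$ defining $I^\psi_{m'}(\mu\hat\chi^{\det R})$.

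For the Whittaker formula I would proceed as in Lemma \ref{lem:72}: insert $\bfs(J_{m'}\frkn'(b))$ on the left of $\bet^\psi_R(h\otimes\bar\phi)$, twist by $\overline{\psi^T(b)}$ and integrate over $b\in\Sym_{m'}(F)$, after normalizing by the factors appearing in $w^{\mu\hat\chi^{\det R}}_T$. Using the factorization $\eta_i=J_m\cdot J_{m'}$ (valid up to an absorbable sign and cocycle), the conjugation identity $J_{m'}\frku(0,y;z)J_{m'}=\frku(y,0;z)$, and the Fourier-transform formula $\ome^\psi_R(\bfs(J_{m'}))\phi=\gam^\psi(R)\calf_R\phi$, the $\scry_i$- and $b$-integrals couple into the single double integral
\[\int_{\scrx_i}\int_{\Sym_m(F)}h(\bfs(J_m\frkn(z))\,x)\,\overline{\phi(x)\psi^{R\oplus T}(z)}\,\d z\,\d x, \]
which after stabilization of the inner integral equals $\int_{\scrx_i}\overline{\phi(x)}\bfw^\mu_{R\oplus T}(\vrh(x)h)\,\d x$. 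Matching normalizations then reduces to verifying that the factor $\prod_{j=1}^{[i/2]}L(m'+2j,\mu^2)$ in $\bet^\psi_R$ together with the parity-dependent $L$-factor---present on the source side and in $\bet^\psi_R$ exactly when $2\mid m$, but absent on the target side of $w^{\mu\hat\chi^{\det R}}_T$ because $m'$ is odd---exactly bridges the normalizations of $w^\mu_{R\oplus T}$ and of $w^{\mu\hat\chi^{\det R}}_T$, leaving precisely the claimed constant $C_R|\det T|^{-i/4}$.

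The main obstacle is not conceptual but bookkeeping: one has to compute the Rao cocycle on the products $\eta_i\cdot v\cdot\til g'$ and on $\bfs(J_{m'})\bfs(v)\bfs(J_{m'})^{-1}$, reconcile them with the $\gam^\psi$-factors produced by $\ome^\psi_R$ and by the Levi action on $I^\psi_m(\mu)$, and verify that everything collapses through the quadratic relations (\ref{tag:51}). The parity hypothesis $2\nmid m'$ is used critically precisely because it ensures that the parity-dependent $L$-factor in the definition of $w^\mu_\xi$ does not appear on the target side, so that only the correction factor written into the definition of $\bet^\psi_R$ is needed.
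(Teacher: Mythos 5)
Your proposal is correct and follows exactly the route the paper intends: the paper states Lemma \ref{lem:73} without proof, on the understanding that it is the metaplectic analogue of Lemmas \ref{lem:71} and \ref{lem:72} (convergence via the Jacquet integral over $\scrz_i$ and Schwartz decay over $\scrx_i$, intertwining tested on the generators of $\PP_{m'}$, and the Whittaker identity via $\eta_i=J_m\cdot J_{m'}$ and the Fourier-transform formula for $\ome^\psi_R(\bfs(J_{m'}))$), with the $L$-factor bookkeeping you describe accounting for the normalizations. The only slip is your remark about an ``integral over $\scry_i$'' in the convergence discussion: the defining integral is over $\scry_i\bsl\scrn_i\simeq\scrx_i\times\scrz_i$, so no such integral occurs there (it only enters in the second part after conjugation by $J_{m'}$), but this does not affect the argument.
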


We can deduce the following corollary from Proposition \ref{prop:51} and Lemma \ref{lem:73} by the same reasoning as in the proof of Corollary \ref{cor:71}. 

\begin{corollary}\label{cor:72}
\begin{enumerate}
\renewcommand\labelenumi{(\theenumi)}
\item\label{cor:721} If $p\neq 2$, $\psi$ is of order $0$, $\mu$ is unramified, $R\in\Sym_i^\nd\cap\GL_i(\frko)$, $\phi$ is the characteristic function of $\Mat^i_{m'}(\frko)$ and $h\in I_m^\psi(\mu)$ satisfies $h(k)=1$ for all $k\in Sp_m(\frko)$, then $\bet^\psi_R(\ono_{2n'};f,\phi)=1$. 
\item\label{cor:722} If $-\frac{1}{2}<\sig(\mu)<\frac{1}{2}$, then $\bet^\psi_R(I^\psi_m(\mu)\otimes\overline{\ome^\psi_R})=I^\psi_{m'}(\mu\hat\chi^{\det R})$. 
\item\label{cor:723} If $\mu^2=\alp$, then $\bet^\psi_R(A_m^\psi(\mu)\otimes\overline{\ome^\psi_R})=A^\psi_{m'}(\mu\hat\chi^{\det R})$. 
\end{enumerate}
\end{corollary}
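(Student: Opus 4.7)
The plan is to follow the argument of Corollary 7.1 almost verbatim, substituting Proposition 5.1 for Proposition 3.1 and Lemma 7.3 for Lemma 7.2. For part (1) I would apply the identity of Lemma 7.3 at some $T\in\Sym_{m'}^\nd\cap\GL_{m'}(\frko)$. Under the unramified hypotheses $|\det T|=1$ and $C_R=1$ (as in the quaternionic case), so the task is to evaluate the integral $\int_{\scrx_i}\overline{\phi(x)}w^\mu_{R\oplus T}(\vrh(x)h)\,\d x$. Since $\supp\phi=\Mat^i_{m'}(\frko)$ and $\frku(x,0;0)\in Sp_m(\frko)$ for $x\in\Mat^i_{m'}(\frko)$, the integrand equals $w^\mu_{R\oplus T}(h)$ throughout the support, which is $1$ by Proposition 5.1(4). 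Because $\bet^\psi_R(h\otimes\bar\phi)\in I^\psi_{m'}(\mu\hat\chi^{\det R})$ is right $Sp_{m'}(\frko)$-invariant (inheriting sphericity from $h$ and from the standard choice of $\phi$ under the Weil action), it lies in the one-dimensional spherical line, and the computed Whittaker value identifies it with the normalized spherical vector, whose value at the identity is $1$.

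For parts (2) and (3) both target spaces are irreducible, by Proposition 5.1(1) and 5.1(3) respectively, and the sources $I^\psi_m(\mu)\otimes\overline{\ome^\psi_R}$ and $A^\psi_m(\mu)\otimes\overline{\ome^\psi_R}$ are irreducible as well (in case (3) by Proposition 5.1(3) combined with Stone--von Neumann for $\ome^\psi_R$). So each part reduces to (a) containment of the image in the appropriate submodule and (b) nonvanishing of $\bet^\psi_R$. For (3)(a), if $\hat\chi^{\det T}=\mu\hat\chi^{\det R}\alp^{-1/2}$ then $\hat\chi^{\det(R\oplus T)}=\mu\alp^{-1/2}$, so $w^\mu_{R\oplus T}$ vanishes on $A^\psi_m(\mu)$ by Proposition 5.1(3), and Lemma 7.3 transfers the vanishing to $w^{\mu\hat\chi^{\det R}}_T$ on $\bet^\psi_R(A^\psi_m(\mu)\otimes\overline{\ome^\psi_R})$. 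The subquotient structure of $I^\psi_{m'}/A^\psi_{m'}$, indexed by equivalence classes of $T$ with $\hat\chi^{\det T}=\mu\hat\chi^{\det R}\alp^{-1/2}$ as recalled in the proof of Proposition 3.1, then forces the image into $A^\psi_{m'}(\mu\hat\chi^{\det R})$. Nonvanishing, needed for both (2) and (3), comes from choosing---via Proposition 5.1(2), resp.\ 5.1(3)---some $T$ (with $\hat\chi^{\det T}\neq\mu\hat\chi^{\det R}\alp^{-1/2}$ in case (3)) and $h$ for which $w^\mu_{R\oplus T}(h)\neq 0$, and then taking $\phi\in\cals(\scrx_i)$ supported in a sufficiently small neighborhood of $0$ that $x\mapsto w^\mu_{R\oplus T}(\vrh(x)h)$ is essentially constant on $\supp\phi$; Lemma 7.3 then delivers $w^{\mu\hat\chi^{\det R}}_T(\bet^\psi_R(h\otimes\bar\phi))\neq 0$.

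The only genuinely new input beyond transcribing the quaternionic proof would be verifying two normalization/subquotient facts hidden in Lemma 7.3 and Proposition 5.1: first, that the constant $C_R$ in Lemma 7.3 equals $1$ under the unramified hypotheses of (1), exactly as in the quaternionic case; and second, that the description of $I^\psi_{m'}(\mu)/A^\psi_{m'}(\mu)$ in the odd-$m'$ metaplectic case (due to Sweet \cite{Sw}) is fine enough that the Whittaker functionals at $T$ with $\hat\chi^{\det T}=\mu\hat\chi^{\det R}\alp^{-1/2}$ jointly separate $A^\psi_{m'}$ from the remaining subquotients. Neither should be a real obstacle---the first is a bookkeeping check of the normalizing $L$-factors in $w^\mu_\xi$ and $\bet^\psi_R$, and the second is implicit in the statement of Proposition 5.1(3); the bulk of the work is strictly parallel to the proof of Corollary 7.1.
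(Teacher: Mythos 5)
Your proposal is correct and follows exactly the route the paper takes: the printed proof of Corollary \ref{cor:72} consists of the single remark that it follows from Proposition \ref{prop:51} and Lemma \ref{lem:73} ``by the same reasoning as in the proof of Corollary \ref{cor:71},'' which is precisely the substitution you carry out. The details you supply (unramified evaluation via Proposition \ref{prop:51}(\ref{prop:514}) and $C_R=1$, containment via vanishing of $w^\mu_{R\oplus T}$ on $A^\psi_m(\mu)$ together with the subquotient structure, and nonvanishing via a $\phi$ of small support) match the paper's argument for the quaternionic case verbatim.
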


\subsection{The archimedean case}\label{ssec:75}

When $m=i+m'$, $R\in\Sym_i^\nd$, $T\in\Sym_{m'}^\nd$ and $x\in\Mat^i_{m'}(F)$, we put 
\[R_{T,x}=\begin{pmatrix} R & 0 \\ 0 & T\end{pmatrix}\left[\begin{pmatrix} \ono_i & x \\ 0 & \ono_{m'}\end{pmatrix}\right]=\begin{pmatrix} R & Rx \\ \trs x R & T+R[x]\end{pmatrix}. \]
 
\begin{lemma}\label{lem:74}
Suppose that $F\simeq\RR$, $m=i+m'$ and $R\in\Sym_i^+$. 
Define $\vph_R\in\cals(\scrx_i)$ by $\vph_R(x)=e^{-2\pi\tr(R[x])}$ for $x\in\scrx_i$.  
Then there is a nonzero constant $C_R$ such that for  $T\in\Sym_{m'}^+$ and $\til g'\in\Mp(W_{m'})$
\[\int_{\scrx_i}W^{(\ell)}_{R\oplus T}(\bfs(x)\til g')\overline{(\ome^\psi_R(\til g')\vph_R)(x)}\,\d x
=C_R^{-1}(\det T)^{i/4}W^{(\ell-i/2)}_T(\til g'). \]
\end{lemma}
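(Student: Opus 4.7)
The plan is to show that both sides of the identity, viewed as functions of $\til g'\in\Mp(W_{m'})_\infty$, satisfy identical covariance under left translation by the Siegel parabolic $\til\PP_{m'}$ and under right translation by the maximal compact subgroup $\til\calk_{m'}$. By the Iwasawa decomposition $\Mp(W_{m'})_\infty=\til\PP_{m'}\til\calk_{m'}$, the ratio of the two sides is then a single constant, which I compute by evaluating at $\til g'=1$. Absolute convergence of the integral, uniformly on compacta in $\til g'$, follows from the Gaussian decay of $(\ome_R^\psi(\til g')\vph_R)(x)$ in $x$ together with the at most polynomial growth of $W^{(\ell)}_{R\oplus T}(\bfs(x)\til g')$ in $x$.

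For the left $\til\PP_{m'}$-equivariance, I commute $\bfs(x)$ past $\til\frkn'(b)$ and $\til\frkm'(a)$. Writing $\bfs(x)=\til\frkm(A_x)$ with $A_x=\begin{pmatrix}\ono_i&x\\0&\ono_{m'}\end{pmatrix}$, one checks in $\Mp(W_m)$ that
\[\bfs(x)\til\frkn'(b)=\til\frkn(B_x)\bfs(x),\qquad B_x=\begin{pmatrix}xb\trs x&xb\\b\trs x&b\end{pmatrix},\]
and $\bfs(x)\til\frkm'(a)=\til\frkm'(a)\bfs(xa)$ (the cocycle from~(5.1) evaluates trivially because $\det A_x=1$). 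Applying (5.2) to $W^{(\ell)}_{R\oplus T}$ and the Weil representation formulas of \S \ref{ssec:74} to $\overline{(\ome^\psi_R(\til g')\vph_R)(x)}$, and noting that $\tr((R\oplus T)B_x)=\tr(R[x]b)+\tr(Tb)$, the $\tr(R[x]b)$-contributions cancel in the integrand, yielding $L(\til\frkn'(b)\til g')=\bfe(\tr(Tb))L(\til g')$. For $\til\frkm'(a)$, a change of variables $y=xa$ (Jacobian $|\det a|^i$) together with the $\gam^\psi$- and $\hat\chi^{\det R}$-factors from both (5.2) and \S \ref{ssec:74} combines to match the covariance carried by $(\det T)^{i/4}W^{(\ell-i/2)}_T(\til g')$, once one accounts for the identity $(\det T[a])^{i/4}=|\det a|^{i/2}(\det T)^{i/4}$. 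For the right $\til\calk_{m'}$-equivariance, I use the standard fact that the Gaussian $\vph_R$ is a lowest weight vector in the Weil representation of $\til\calk_{m'}$, transforming by $\til k\mapsto J_{i/2}(\til k,\iu\ono_{m'})^{-1}$; combined with $W^{(\ell)}_{R\oplus T}(\bfs(x)\til g'\til k)=J_\ell(\til k,\iu\ono_{m'})^{-1}W^{(\ell)}_{R\oplus T}(\bfs(x)\til g')$ (because $\til\calk_{m'}$ stabilizes both $\iu\ono_{m'}$ and $\iu\ono_m$ under the embedding), both sides acquire the common factor $J_{\ell-i/2}(\til k,\iu\ono_{m'})^{-1}$.

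Finally, at $\til g'=1$, formula (5.2) together with $\det A_x=1$ gives $W^{(\ell)}_{R\oplus T}(\bfs(x))=W^{(\ell)}_{R_{T,x}}(1)=(\det R\cdot\det T)^{\ell/2}e^{-2\pi(\tr R+\tr T+\tr R[x])}$, so
\[L(1)=(\det R\cdot\det T)^{\ell/2}e^{-2\pi(\tr R+\tr T)}\int_{\Mat^i_{m'}(\RR)}e^{-4\pi\tr R[x]}\,\d x=4^{-im'/2}(\det R)^{(\ell-m')/2}(\det T)^{\ell/2}e^{-2\pi(\tr R+\tr T)},\]
which matches $C_R^{-1}(\det T)^{\ell/2}e^{-2\pi\tr T}$ upon setting $C_R=4^{im'/2}(\det R)^{(m'-\ell)/2}e^{2\pi\tr R}$, nonzero and independent of both $T$ and $\til g'$. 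The main obstacle is the $\til\frkm'(a)$-equivariance in the second paragraph: the $\gam^\psi$- and $\hat\chi^{\det R}$-factors coming from the Weil representation formula must be carefully matched against those produced by (5.2), using the two-cocycle identity (5.1), before the covariance properties of both sides align; verifying this bookkeeping (together with the identification of $\vph_R$ as the appropriate weight vector for the Weil action of $\til\calk_{m'}$) is where the technical work is concentrated.
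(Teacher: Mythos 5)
Your argument is correct, but it is organized differently from the paper's. The paper's proof is a single direct computation: since the Gaussian is the $\til\calk_{m'}$-eigenvector you describe, one has the closed formula $(\ome^\psi_R(\til g')\vph_R)(x)=J_{i/2}(\til g',\bfi_{m'})^{-1}\bfe(\tr(R[x]\til g'(\bfi_{m'})))$ for \emph{every} $\til g'$ (not just at the identity), and combining this with $W^{(\ell)}_{R\oplus T}(\bfs(x)\til g')=W^{(\ell)}_{R_{T,x}}(\til g')$ from (\ref{tag:52}) reduces the left-hand side, for arbitrary $\til g'$, to the Gaussian integral $(\det Y)^{i/2}\int_{\scrx_i}e^{-4\pi\tr(R[x]Y)}\,\d x$ with $Y=\Im\til g'(\bfi_{m'})$, which is independent of $Y$ by the substitution $x\mapsto xY^{-1/2}$. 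You instead use the eigenproperty of $\vph_R$ and the parabolic formulas separately, match covariances of the two sides, and reduce to $\til g'=1$ by Iwasawa; the two routes share all the essential ingredients (lowest-weight property of the Gaussian, (\ref{tag:52}), the same Gaussian integral at the end), and yours trades the paper's one-line formula for the transformed Gaussian against a longer bookkeeping of $\gam^\psi$-, $\hat\chi$- and cocycle factors, which the direct computation bypasses. Two small points. First, since left translation by $\til\frkm'(a)$ relates the datum $(T,\til g')$ to $(T[a],\til g')$ rather than fixing $T$, the Iwasawa step does not literally show that the ratio of the two sides is constant in $\til g'$ for a fixed $T$; it shows that this ratio equals the ratio at the identity evaluated at $T[a]$, so you genuinely need -- and do in fact supply -- the observation that your value $L_T(1)/\bigl((\det T)^{\ell/2}e^{-2\pi\tr T}\bigr)$ is independent of $T$. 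It would be worth stating this explicitly rather than saying "the ratio is then a single constant." Second, the relation you invoke for $\bfs(x)\til\frkm'(a)=\til\frkm'(a)\bfs(xa)$ is the multiplication rule $\til\frkm(a)\til\frkm(a')=\hat\chi^{\det a}(\det a')\til\frkm(aa')$ from \S\ref{ssec:51}, not (\ref{tag:51}); the trivial evaluation via $\det A_x=1$ is correct. Your closed form for $C_R$ agrees with the constant produced by the paper's computation.
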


\begin{proof} 
Since the Gaussian is an eigenfunction for the action of $\til\calk_{m'}$ with eigencharacter $\til k'\mapsto J_{i/2}(\til k',\bfi)^{-1}$, one can readily verify that 
\[(\ome^\psi_R(\til g')\vph_R)(x)=J_{i/2}(\til g',\bfi_{m'})^{-1}\bfe(\tr(R[x]\til g'(\bfi_{m'})))\]
for $x\in\scrx_i$ and $\til g'\in\Mp(W_{m'})$. 
Put $Y=\Im\til g'(\bfi_{m'})$. 
Since 
\[W^{(\ell)}_{R\oplus T}(\bfs(x)\til g')=W^{(\ell)}_{R\oplus T}\left(\til\frkm\left(\begin{pmatrix}\ono_i & x \\ 0 & \ono_{m'}\end{pmatrix}\right)\til g'\right)=W^{(\ell)}_{R_{T,x}}(\til g') \]
by (\ref{tag:52}), the left hand side is equal to  
\begin{multline*}
\frac{\det(R\oplus T)^{\ell/2}}{J_\ell(\til g',\bfi_{m'})}\int_{\scrx_i}\bfe\left(\tr\left(R_{T,x}\begin{pmatrix} \bfi_i & \\ & \til g'(\bfi_{m'})\end{pmatrix}\right)\right)\overline{\left(\frac{\bfe(\tr(R[x]\til g'(\bfi_{m'})))}{J_{i/2}(\til g',\bfi_{m'})}\right)}\,\d x\\
=W^{(\ell-i/2)}_T(\til g')(\det R)^{\ell/2}(\det T)^{i/4}e^{-2\pi\tr R}(\det Y)^{i/2}\int_{\scrx_i}e^{-4\pi\tr(R[x]Y)}\,\d x. 
\end{multline*}
The factor $(\det Y)^{i/2}\int_{\scrx_i}e^{-2\pi\tr(R[x]Y)}\,\d x$ is a constant independent of $Y$. 
\end{proof}  

\subsection{The global case}\label{ssec:76}

In the rest of this section $D$ is a totally indefinite quaternion algebra over a totally real number field $F$. 
Fix $S\in S^+_i$. 
Take a subgroup $\scra$ of $\caln_i(\AA)$ containing $Z_i(\AA)$ so that $\scra/\Ker S$ is a maximal abelian subgroup of $\caln_i(\AA)/\Ker S$ to which the character $\psi^S$ has an extension $\psi^S_\scra$. 
The Schr\"{o}dinger representation is equivalent to $\Ind^{\caln_i(\AA)}_{\scra}\psi^S_\scra$.  
Having chosen $\scra=Y_i(\AA)\oplus Z_i(\AA)$, we obtain the Schr\"{o}dinger model of the Weil representation $\ome^\psi_S\simeq\otimes'_v\ome_S^{\psi_v}$ realized on $\cals(X_i(\AA))$. 
If we choose $\scra=\caln_i(F)Z_i(\AA)$, then the space $\Ind^{\caln_i(\AA)}_{\scra}\psi^S_\scra=C^\infty_{\psi^S}(\caln_i(F)\bsl\caln_i(\AA))$ consists of smooth functions on $\caln_i(F)\bsl\caln_i(\AA)$ on which the center $Z_i(\AA)$ acts by $\psi^S$. 
The equivariant isomorphism $\scrs(X_i(\AA))\simeq C^\infty_{\psi^S}(\caln_i(F)\bsl\caln_i(\AA))$ is given by 
\[\Tht(\ome_S^\psi(v)\vph)=\sum_{x\in X_i(F)}(\ome_S^\psi(v)\vph)(x). \]

We denote by $\ome_S^{\psi_\bff}\simeq\otimes'_{v\notin\frkS_\infty}\ome_S^{\psi_v}$ the finite part of the global Weil representation $\ome^\psi_S$. 
For $\phi\in\cals(X_i(\AAf))$ we define a Schwartz function $\phi_S$ on $X_i(\AA)$ by 
\begin{align*}
\phi_S(x)&=\phi(x_\bff)\prod_{v\in\frkS_\infty}\vph_S(x_v), & 
x&=(x_v)\in X_i(\AA). 
\end{align*} 
We here write $\rho$ for the right regular action of $G_n(\AAf)$ on $\frkT^n_{\ell+n}$. 
For $\calf\in \frkT^n_{\ell+n}$ we define the $(S,\phi)^\mathrm{th}$ Fourier-Jacobi coefficient of $\calf$ by 
\[\calf^S_\phi(g')=\int_{\caln_i(F)\bsl\caln_i(\AA)}\calf(vg')\overline{\Tht(\ome^\psi_S(vg')\phi^{}_S)}\,\d v. \] 

\begin{lemma}\label{lem:75}
Let $S\in S^+_i$. 
Put $n'=n-i$. 
Let
\[\calf(g)=\sum_{B\in S_n^+}\bfw_B(g_\bff,\calf)W^{(\ell+n)}_B(g_\infty), \]
be the Fourier expansion of $\calf\in\frkT^n_{n+\ell}$. 
Then $\calf^S_\phi(g')$ is equal to 
\[\sum_{\vXi\in S^+_{n'}}N_{F/\QQ}(\nu(\vXi))^{i/2}W^{(\ell+n')}_{\vXi}(g'_\infty)\int_{X_i(\AAf)}\bfw_{S\oplus \vXi}(xg'_\bff,\calf)\overline{(\ome_S^{\psi_\bff}(g'_\bff)\phi)(x)}\,\d x \] 
up to a nonzero constant multiple.   
Moreover, $\calf\in\frkC^n_{\ell+n}$ if and only if $(\rho(\Del)\calf)^S_\phi\in \frkC^{n'}_{\ell+n'}$ for all $\Del\in G_n(\AAf)$, $S\in S^+_i$ and $\phi\in\cals(X_i(\AAf))$. 
\end{lemma}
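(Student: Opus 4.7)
The plan is to substitute the Fourier expansion of $\calf$ into the defining integral for $\calf^S_\phi$ and unfold along the filtration $Z_i\subset Y_iZ_i\subset\caln_i$, then invoke the quaternion unitary analogue of Lemma \ref{lem:74} at the archimedean places. A preliminary observation is that $X_i\subset M_n$ (via the identity $\bfv(x,0;0)=\bfm(\cdot)$), while $Y_i,Z_i\subset N_n$, so that $\caln_i\subset P_n$; hence $\calf\in\frkT^n_{\ell+n}$ is automatically left-$\caln_i(F)$-invariant, and the integrand defining $\calf^S_\phi$ is well-defined on $\caln_i(F)\bsl\caln_i(\AA)$.

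To derive the expansion of $\calf^S_\phi$ itself, I would plug in the Fourier expansion of $\calf$ together with the theta series $\Tht(\ome_S^\psi(vg')\phi_S)=\sum_{u\in X_i(F)}(\ome_S^\psi(vg')\phi_S)(u)$ and the Heisenberg action formula \eqref{tag:71}, writing each Fourier index as
\[B=\begin{pmatrix}S_B & y_B \\ -y_B^* & T_B\end{pmatrix}\in S_n^+.\]
Integration over $Z_i(F)\bsl Z_i(\AA)$ against the central character $\overline{\psi^S(z)}$ restricts the sum to those $B$ with $S_B=S$. Integration over $Y_i(F)\bsl Y_i(\AA)$ combined with the oscillating factor $\overline{\psi(2\tau(u^*Sy))}$ from \eqref{tag:71} correlates $y_B$ with $u$, and a shift $x\mapsto x-u$ in $X_i$ collapses the sum $\sum_{u\in X_i(F)}$ against the integral over $X_i(F)\bsl X_i(\AA)$ into a single integration over $X_i(\AA)$. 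Left $M_n(F)$-invariance of $\calf$ then lets me rebracket the Fourier frequency as $B=(S\oplus\vXi)[\bfv(\xi,0;0)]$ for a unique $(\vXi,\xi)\in S_{n'}^+\times\Mat^i_{n'}(D(F))$, putting the Fourier coefficient into $\bfw_{S\oplus\vXi}(x_\bff g'_\bff,\calf)$ and the archimedean factor into $W^{(\ell+n)}_{S\oplus\vXi}(\bfv(x,0;0)g'_\infty)$ paired against $\overline{(\ome_S^\psi(g'_\infty)\vph_S)(x)}$ with the Gaussian $\vph_S(x)=e^{-2\pi\tau(S[x])}$. The quaternion-unitary analogue of Lemma \ref{lem:74}, established by the same Gaussian computation using that $\vph_S$ is a lowest weight vector under the relevant maximal compact subgroup, then evaluates this archimedean integral as $C_S^{-1}N_{F/\QQ}(\nu(\vXi))^{i/2}W^{(\ell+n')}_\vXi(g'_\infty)$; the exponent $i/2$, rather than the $i/4$ appearing in Lemma \ref{lem:74}, arises from the squaring relation between the reduced norm $\nu$ on $G_n$ and the determinant on the ambient $Sp_{2n}$.

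For the moreover clause, the direction $\calf\in\frkC^n_{\ell+n}\Rightarrow(\rho(\Del)\calf)^S_\phi\in\frkC^{n'}_{\ell+n'}$ is immediate: $G_{n'}(F)$-invariance of $\calf$ passes through the integral (the theta series being itself $G_{n'}(F)$-invariant by construction), and the formula just derived supplies the required cuspidal Fourier expansion. Conversely, the $G_{n'}(F)$-invariance of each $(\rho(\Del)\calf)^S_\phi$, transferred back through the Fourier formula and the uniqueness of local Whittaker models (Proposition \ref{prop:31}(\ref{prop:312})), yields left $G_{n'}(F)$-invariance of $\calf$ inside $G_n(F)$. Letting $i$ range over $\{1,\ldots,n-1\}$ produces enough symmetries so that, combined with the built-in $P_n(F)$-invariance, they generate all of $G_n(F)$, whence $\calf\in\frkC^n_{\ell+n}$. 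The main obstacle is the bookkeeping in the unfolding step: pinning down precisely how $y_B$ couples to $u\in X_i(F)$ through \eqref{tag:71}, absorbing the resulting shift into the adelic $X_i(\AA)$-integration with correct Haar measure normalization, and verifying the quaternion-unitary version of Lemma \ref{lem:74} with exactly the stated constant $C_S$; everything downstream of these points is routine representation-theoretic unfolding.
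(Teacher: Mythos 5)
Your derivation of the expansion of $\calf^S_\phi$ follows essentially the paper's route: the unfolding you sketch (collapse the $u$-sum with the $X_i$-integration using $X_i(F)\subset M_n(F)$-invariance, then integrate out $Z_i$ and $Y_i$ and complete the square $B=(S\oplus\vXi)[\,\cdot\,]$) is precisely the computation the paper imports from Ichino's Lemma 4.1, and the archimedean evaluation is Lemma \ref{lem:74} read at the real places, where $G_n(F_v)\simeq Sp_{2n}(\RR)$ and the block corresponding to $S$ has symplectic size $2i$; that doubling of size (not a ``squaring'' of $\nu$ against $\det$ --- at split places $\nu$ \emph{is} the determinant of the $2n'\times 2n'$ matrix) is what turns the exponent $i/4$ of Lemma \ref{lem:74} into $i/2$. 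Up to the bookkeeping you acknowledge, that half is fine.

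The genuine gap is the converse direction of the ``moreover'' clause. You assert that $G_{n'}(F)$-invariance of all $(\rho(\Del)\calf)^S_\phi$, ``transferred back through the Fourier formula and the uniqueness of local Whittaker models (Proposition \ref{prop:31}(\ref{prop:312}))'', yields $G_{n'}(F)$-invariance of $\calf$. Proposition \ref{prop:31}(\ref{prop:312}) concerns degenerate Whittaker functionals on the induced representation $I_n(\mu)$ and is irrelevant here: $\calf$ is an arbitrary element of $\frkT^n_{\ell+n}$, not attached to any $I_n(\mu)$, and you give no mechanism for the transfer. The actual argument requires decomposing $\calf=\sum_{S\in S_i^+}\calf^S$ with $\calf^S(g)=\sum_{\vXi}\sum_{x\in X_i(F)}w^{(\ell+n)}_{S\oplus\vXi}(xg,\calf)$, noting that for $\gam\in G_{n'}(F)$ the difference $\calf^S_\phi(\gam g')-\calf^S_\phi(g')$ is the pairing of $v\mapsto\calf^S(\gam vg')-\calf^S(vg')$ against $\overline{\Tht(\ome^\psi_S(vg')\phi^{}_S)}$, and then using that $v\mapsto\calf^S(\gam vg)$ lies in the span of the theta functions $\Tht(\ome^\psi_S(v)\phi^{}_S)$ inside $C^\infty_{\psi^S}(\caln_i(F)\bsl\caln_i(\AA))$ --- this is the global Stone--von Neumann uniqueness recalled in \S \ref{ssec:76}, combined with the holomorphic (Gaussian) archimedean shape of the Fourier expansion --- so that vanishing of the pairings for all $\phi$ and all $\Del$ forces $\calf^S(\gam\,\cdot)=\calf^S(\cdot)$ for every $S$. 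This completeness-of-theta-functions step is the heart of the converse and is missing from your proposal. A secondary point: your appeal to ``letting $i$ range over $\{1,\dots,n-1\}$'' is both unavailable (the hypothesis of the lemma concerns a single fixed $i$) and unnecessary --- for that fixed $i$ the subgroups $P_n(F)$ and $G_{n'}(F)$ already generate $G_n(F)$, which is all that is needed once each $\calf^S$ is known to be $G_{n'}(F)$-invariant.
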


\begin{proof}
We abuse notation in writing $w^{(\ell+n)}_B(g,\calf)=\bfw^{}_B(g_\bff,\calf)W^{(\ell+n)}_B(g_\infty)$. 
The calculation in the proof of \cite[Lemma 4.1]{Ic} shows that 
\[\calf^S_\phi(g')=\sum_{\vXi\in S^+_{n'}}\int_{X_i(\AA)}w^{(\ell+n)}_{S\oplus \vXi}(xg',\calf)\overline{(\ome_S^\psi(g')\phi^{}_S)(x)}\,\d x \]
for $g'\in G_{n'}(\AA)$. 
Employing Lemma \ref{lem:74}, we arrive at the stated formula. 

Recall that we regard $G_{n'}$ as a subgroup of $G_n$ as in \S \ref{ssec:71}. 
Note that  
\begin{align*}
\calf(g)&=\sum_{S\in S_i^+}\calf^S(g), &
\calf^S(g)&=\sum_{\vXi\in S^+_{n'}}\sum_{x\in X_i(F)}w^{(\ell+n)}_{S\oplus\vXi}(xg,\calf). 
\end{align*}
Since the subgroups $P_n(F)$ and $G_{n'}(F)$ generate $G_n(F)$, if $\calf^S$ is left invariant under $G_{n'}(F)$ for all $S\in S_i^+$, then $\calf\in \frkC^n_{\ell+n}$. 
For all $\gam\in G_{n'}(F)$, 
\[\calf^S_\phi(\gam g')-\calf^S_\phi(g')=\int_{\caln_i(F)\bsl\caln_i(F_\AA)}(\calf^S(\gam vg')-\calf^S(vg'))\overline{\Tht(\ome^\psi_S(vg')\phi_S)}\,\d v. \]
The function $v\mapsto\calf^S(\gam vg)$ belongs to the subspace of $C^\infty_{\psi^S}(\caln_i(F)\bsl\caln_i(\AA))$ spanned by $\Tht(\ome^\psi_S(v)\phi_S)$. 
Thus $\calf^S$ is left invariant under $G_{n'}(F)$ if and only if $(\rho(\Del)\calf)^S_\phi$ is left invariant under $G_{n'}(F)$ for all $\Del\in G_n(\AAf)$ and $\phi\in\cals(X_i(\AAf))$. 
\end{proof}

\begin{lemma}\label{lem:76}
Let $\{c_B\}_{B\in S^{\pi_\bff}_n}\in T^n_{\ell+n}(\mu_\bff)$. 
Then $\{c_B\}\in C^n_{\ell+n}(\mu_\bff)$ if and only if $\{c_{S\oplus \vXi}\}_{\vXi\in D^{\pi^{}_\bff\otimes\hat\chi^S_\bff}_-}\in C^1_{\ell+1}(\mu^{}_\bff\hat\chi^S_\bff)$ for all $S\in S^+_{n-1}$.  
\end{lemma}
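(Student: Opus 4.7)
The strategy is to apply Lemma \ref{lem:75} with the descent $i = n-1$, $n' = 1$, which reduces cuspidality of a Fourier series on $G_n$ to cuspidality of its Fourier--Jacobi coefficients on $G_1$. Given $\{c_B\} \in T^n_{\ell+n}(\mu_\bff)$ and $f \in A_n(\mu_\bff)$, set $\calf = \calf_{\ell+n}(f,\{c_B\})$; then $\calf \in \frkT^n_{\ell+n}$ by Lemma \ref{lem:43}(\ref{lem:431}), so Lemma \ref{lem:75} identifies ``$\calf \in \frkC^n_{\ell+n}$'' with the condition that $(\rho(\Del)\calf)^S_\phi \in \frkC^1_{\ell+1}$ for all $\Del \in G_n(\AAf)$, $S \in S^+_{n-1}$, and $\phi \in \cals(X_{n-1}(\AAf))$.

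The heart of the proof is the identification
\[
(\rho(\Del)\calf)^S_\phi(g')
= C_S \cdot \calf_{\ell+1}\bigl(g';\,\bet^{\psi_\bff}_S(\vrh(\Del)f \otimes \bar\phi),\,\{c_{S\oplus\vXi}\}_{\vXi \in D^{\pi_\bff \otimes \hat\chi^S_\bff}_-}\bigr)
\]
for a nonzero constant $C_S$, where $\bet^{\psi_\bff}_S$ is the adelic Jacobi integral obtained as the restricted tensor product of the local maps of Lemma \ref{lem:71}. To establish this, substitute $\bfw_B(g_\bff,\calf) = c_B\, w^{\mu_\bff}_B(\vrh(g_\bff)f)$ into the formula for $(\rho(\Del)\calf)^S_\phi$ furnished by Lemma \ref{lem:75} and apply the adelic version of Lemma \ref{lem:72} term by term (after intertwining $\bet^{\psi_\bff}_S$ with the $g'_\bff$-action via the equivariance of the Jacobi integral). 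The factor $N_{F/\QQ}(\nu(\vXi))^{i/2}$ from Lemma \ref{lem:75} cancels $|\nu(\vXi)|_\bff^{-i/2}$ from Lemma \ref{lem:72} by the product formula. The index reduces to $\vXi \in D^{\pi_\bff\otimes\hat\chi^S_\bff}_-$ because $w^{\mu_\bff\hat\chi^S_\bff}_\vXi$ vanishes on $A_1(\mu_\bff\hat\chi^S_\bff)$ for other $\vXi$ by Lemma \ref{lem:35}.

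With this identification in hand, both implications are immediate. For the forward direction, if $\{c_B\} \in C^n_{\ell+n}(\mu_\bff)$, then the left-hand side lies in $\frkC^1_{\ell+1}$ by Lemma \ref{lem:75}. By Corollary \ref{cor:71}(\ref{cor:713}) applied at each nonarchimedean place, together with the restricted tensor product structure of $A_n(\mu_\bff)$ and the fact that $\cals(X_{n-1}(\AAf))$ exhausts $\overline{\ome^{\psi_\bff}_S}$, the vectors $\bet^{\psi_\bff}_S(\vrh(\Del)f \otimes \bar\phi)$ span $A_1(\mu_\bff\hat\chi^S_\bff)$ as $f, \phi, \Del$ vary. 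Hence for every $h \in A_1(\mu_\bff\hat\chi^S_\bff)$ the series $\calf_{\ell+1}(h,\{c_{S\oplus\vXi}\})$ is a cusp form, so $\{c_{S\oplus\vXi}\} \in C^1_{\ell+1}(\mu_\bff\hat\chi^S_\bff)$; the required transformation law under $D^\times$ is inherited from that of $\{c_B\}$ under $\GL_n(D(F))$ via $a \mapsto \diag[\ono_{n-1}, a]$, and absolute convergence is inherited as a subsum. Conversely, the hypothesis directly gives $(\rho(\Del)\calf)^S_\phi \in \frkC^1_{\ell+1}$ for all $\Del, S, \phi$, so $\calf \in \frkC^n_{\ell+n}$ by Lemma \ref{lem:75}.

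The principal obstacle is the boxed identity: it demands an analytic continuation (Lemma \ref{lem:71} is proved for $\sig(\mu) \gg 0$, but we need it in the range $\sig(\mu) > -\frac{1}{2}$ permitted by our standing hypotheses), careful justification of the interchange of infinite sums with the adelic Jacobi integrals, and tight bookkeeping of the normalization constants built into $w^\mu_B$ and $\bet^\psi_S$. A subsidiary technical point is that the weight shift from $\ell+n$ to $\ell+1$ and the character twist $\mu_\bff \mapsto \mu_\bff\hat\chi^S_\bff$ conspire to match the transformation laws in Definition \ref{def:42} at ranks $n$ and $1$; this follows from a direct calculation using $\hat\chi^S_\bff \cdot \hat\chi^S_\infty = 1$ on $F^\times$.
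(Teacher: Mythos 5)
Your proposal is correct and follows essentially the same route as the paper: reduce to the Fourier–Jacobi criterion of Lemma \ref{lem:75} with $i=n-1$, identify the $(S,\phi)^{\mathrm{th}}$ coefficient of $\calf_{\ell+n}(f,\{c_B\})$ with $\calf_{\ell+1}(\bet^{\psi_\bff}_S(f\otimes\bar\phi),\{c_{S\oplus\vXi}\})$ up to a nonzero constant via Lemmas \ref{lem:72} and \ref{lem:74}/\ref{lem:75}, and use the surjectivity of $\bet^{\psi_\bff}_S$ from Corollary \ref{cor:71} to pass from the image vectors to all of $A_1(\mu^{}_\bff\hat\chi^S_\bff)$. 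The technical points you flag (analytic continuation, interchange of sum and integral, the matching of transformation laws) are already absorbed in the cited lemmas, so your argument matches the paper's, only written out in more detail.
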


\begin{proof}
Taking Corollary \ref{cor:71} into account, we define a surjection 
\[\bet^{\psi_\bff}_S=\otimes_{v\notin\frkS_\infty}\bet^{\psi_v}_S: A_n(\mu^{}_\bff)\otimes\overline{\ome_S^{\psi_\bff}}\twoheadrightarrow A_{n'}(\mu^{}_\bff\hat\chi^S_\bff). \]  

Lemma \ref{lem:43} says that $\calf_{\ell+n}(f,\{c_B\})\in\frkT^n_{\ell+n}$ for all $f\in A_n(\mu_\bff)$. 
In view of Lemmas \ref{lem:72} and \ref{lem:75} the $(S,\phi)^\mathrm{th}$ Fourier-Jacobi coefficient of $\calf_{\ell+n}(f,\{c_B\})$ equals $\calf_{\ell+1}(\bet^{\psi_\bff}_S(f\otimes\bar\phi),\{c_{S\oplus \vXi}\})$ up to a nonzero constant multiple. 
Lemma \ref{lem:75} finally proves the equivalence. 
\end{proof}

\section{Proofs of Theorems \ref{thm:11} and \ref{thm:12}}\label{sec:8}

\subsection{Theta lifts from $\Mp(W_1)$}\label{ssec:81}

We give a brief discussion of the results of theta correspondence for the dual pair $\Mp(W_1)\times\SO(V)$. 
For a detailed treatment one can consult \cite{G,MVW,Y3}. 
Let $(V,q_V)$ be a quadratic space of dimension $l$. 
In the case of interest in this paper $V=V_D$ or $V=Fe\oplus V_D\oplus Fe'$. 
In the former case $G^+(V)\simeq D^\times$ and in the latter case $G^+(V)\simeq\calg_1$ by Lemma \ref{lem:21}. 
We define the symplectic vector space $(\WW,\ll\;,\;\gg)$ of dimension $2l$ by $\WW=V\otimes W_1$ and $\ll\;,\;\gg=(\;,\;)\otimes\La\;,\;\Ra$. 
We have natural homomorphisms 
\begin{align*}
Sp(W_1)&\hookrightarrow Sp(\WW), &
G^+(V)&\stackrel{\vth}{\twoheadrightarrow}\mathrm{SO}(V)\hookrightarrow \mathrm{O}(V)\hookrightarrow Sp(\WW). 
\end{align*} 
The groups $\mathrm{O}(V)$ and $Sp(W_1)=\SL_2$ form a dual pair inside $Sp(\WW)$. 
 
Fix $\eta\in F^\times$. 
We obtain the representation $\ome^{\psi^\eta}_V=\otimes'_v\ome^{\psi^\eta_v}_{V_v}$ by pulling back to $G^+(V,\AA)\times \Mp(W_1)_\AA$ the global Weil representation of the metaplectic double cover $\Mp(\WW)_{\AA}$ of $Sp(\WW,\AA)$ associated to $\psi^\eta$. 
Note that $\ome^{\psi^\eta}_V\simeq \ome^\psi_{\eta V}$, where $\eta V$ is the space $V$ equipped with the quadratic form $\eta q_V$. 
When $l=1$, the local Weil representation $\ome^{\psi_v}_\eta\simeq\ome^{\psi^\eta_v}_1$ is realized in $\cals(F_v)$ and is the direct sum of two irreducible representations: $\ome^{\psi_v}_\eta=\ome^{\psi^\eta_v}_+\oplus\ome^{\psi^\eta_v}_-$, where $\ome^{\psi_v^\eta}_+$ (resp. $\ome^{\psi^\eta_v}_-$) consists of the even (resp. odd) functions in $\cals(F_v)$.   
Given an irreducible admissible genuine representation $\sig_v$ of $\Mp(W_1)_v$ the maximal quotient of $\ome^{\psi_v}_{V_v}$ on which $\Mp(W_1)_v$ acts as a multiple of $\sig_v$ is of the form $\sig_v\boxtimes\Tht^{\psi_v}_{V_v}(\sig_v)$, where $\Tht^{\psi_v}_{V_v}(\sig_v)$ is a representation of $G^+(V_v)$. 
We say that $\Tht^{\psi_v}_{V_v}(\sig_v)$ is zero if $\sig_v$ does not occur as a quotient of $\ome^{\psi_v}_{V_v}$. 
Let $\tht^{\psi_v}_{V_v}(\sig_v)$ be the maximal semisimple quotient of $\Tht^{\psi_v}_{V_v}(\sig_v)$. 
Then $\tht^{\psi_v}_{V_v}(\sig_v)$ is either zero or irreducible by the Howe conjecture. 

It turns out that there is a natural $Sp_{2l}(F)$-invariant map $\Tht:\ome^\psi_V\to\CC$. 
Let $\sig$ be an irreducible genuine cuspidal automorphic representation of $\Mp(W_1)_{\AA}$. 
For $h\in\sig$ and $\phi\in\ome^\psi_V$ we set 
\[\tht^\psi_V(g;h,\phi)=\int_{\SL_2(F)\bsl\SL_2(\AA)}h(\til g)\Tht(\ome^\psi_V(\til g,g)\phi)\, \d\til g.\]
Then $\tht^\psi_V(h,\phi)$ is an automorphic form on $G^+(V)$. 
We write $\tht^\psi_V(\sig)$ for the subspace of the space of automorphic forms on $G^{+}(V)$ spanned by $\tht^\psi_V(h,\phi)$ for all $h\in\sig$ and $\phi\in\ome^\psi_V$. 

The first part of the following proposition is a simple consequence of the Howe conjecture. 
The second part can be deduced from the Rallis inner product formula.  

\begin{proposition}[Proposition 2.5 and Theorem 2.8 of \cite{G}]\label{prop:81}
Let $\sig$ be an irreducible genuine cuspidal automorphic representation in $\scra_{00}$. 
\begin{enumerate}
\renewcommand\labelenumi{(\theenumi)}
\item\label{prop:811} If $\tht^\psi_V(\sig)$ is nonzero and contained in the space of square-integrable automorphic forms on $G^{+}(V)$, then $\tht^\psi_V(\sig)\simeq\otimes_v'\tht^{\psi_v}_{V_v}(\sig_v^\vee)$.  
\item\label{prop:812} Assume that $l\geq 5$. 
Then $\tht^\psi_V(\sig)$ is nonzero if and only if $\Tht^{\psi_v}_{V_v}(\sig_v)$ is nonzero for all $v$. 
\end{enumerate}
\end{proposition}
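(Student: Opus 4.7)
\textbf{Plan for part (\ref{prop:811}).} I would exploit the factorization of the global Weil representation. The theta integral yields a nonzero, $G^+(V)(\AA)$-equivariant map
\[
\ome^\psi_V \otimes \sig^\vee \;\longrightarrow\; \{\text{automorphic forms on }G^+(V)\}, \qquad (\phi,h) \;\longmapsto\; \tht^\psi_V(\,\cdot\,;h,\phi),
\]
whose image is $\tht^\psi_V(\sig)$ (after identifying $\bar\sig$ with $\sig^\vee$ via the Petersson pairing on the cuspidal spectrum). This map is $\Mp(W_1)(\AA)$-invariant, so it factors through the maximal $\Mp(W_1)(\AA)$-coinvariant quotient of the source. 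By factorizability and the very definition of $\Tht^{\psi_v}_{V_v}$, this coinvariant is canonically the restricted tensor product $\bigotimes_v' \Tht^{\psi_v}_{V_v}(\sig_v^\vee)$. Hence $\tht^\psi_V(\sig)$ is a nonzero quotient of that tensor product. The square-integrability hypothesis makes $\tht^\psi_V(\sig)$ semisimple, so the surjection further factors through the maximal semisimple quotient. By the Howe duality conjecture (valid for the dual pair $\Mp(W_1)\times\mathrm{O}(V)$ under consideration), that maximal semisimple quotient is $\bigotimes_v' \tht^{\psi_v}_{V_v}(\sig_v^\vee)$, which is itself irreducible, yielding the claimed isomorphism.

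\textbf{Plan for part (\ref{prop:812}).} The implication in one direction is immediate from factorizability: if $\Tht^{\psi_v}_{V_v}(\sig_v)=0$ for some $v$, then the $v$-component of the theta kernel annihilates $\sig_v$ and the global lift vanishes. For the converse I would invoke the Rallis inner product formula. Under the hypothesis $l\geq 5$, the Petersson inner product
\[
\langle \tht^\psi_V(h,\phi),\; \tht^\psi_V(h',\phi') \rangle
\]
may be unfolded by the doubling method into a finite product of local doubling zeta integrals, paired against matrix coefficients of $\sig$, times a partial standard $L$-value $L^S(s_0,\sig)$ at $s_0=(l-3)/2\geq 1$. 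Since $s_0$ sits in the region of absolute convergence for this $L$-function, the partial $L$-value is nonzero. The nonvanishing of each $\Tht^{\psi_v}_{V_v}(\sig_v)$ then translates, through the local theory of doubling integrals, into the existence of data at every place making each local factor nonzero, so the global Petersson norm $\|\tht^\psi_V(h,\phi)\|^2$ is positive for some choice of $(h,\phi)$, and $\tht^\psi_V(\sig)\neq 0$.

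\textbf{Main obstacle.} The most delicate ingredient is Rallis' local theorem, which asserts that the nonvanishing of $\Tht^{\psi_v}_{V_v}(\sig_v)$ is equivalent to the nonvanishing of some local doubling zeta integral; establishing this equivalence requires careful analysis at archimedean and ramified non-archimedean places. The nonvanishing of $L^S(s_0,\sig)$ for $l\geq 5$ and the semisimplicity reduction that drives part (\ref{prop:811}) are comparatively standard once the local inputs are in place.
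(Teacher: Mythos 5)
Your proposal is correct and follows exactly the route the paper itself indicates: the paper does not prove this proposition but cites Proposition 2.5 and Theorem 2.8 of \cite{G}, remarking only that part (\ref{prop:811}) is a consequence of the Howe conjecture (via the coinvariant/semisimple-quotient argument you describe) and that part (\ref{prop:812}) follows from the Rallis inner product formula in the convergent range $l\geq 5$. Your sketch is a faithful expansion of precisely those two standard arguments, so there is nothing to compare beyond noting the agreement.
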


\subsection{The work of Waldspurger}\label{ssec:82}
 
Let $\scra_{00}$ denote the space of genuine cusp forms on $\Mp(W_1)_{\AA}$ orthogonal to elementary theta series of the Weil representation $\ome^\psi_\eta$ for any $\eta\in F^\times$. 
This space $\scra_{00}$ satisfies multiplicity one but does not satisfy strong multiplicity one: 
there are nonequivalent cuspidal automorphic representations $\sig$ and $\sig'$ whose local components are equivalent for almost all places. 
We say that such $\sig$ and $\sig'$ are nearly equivalent. 

Waldspurger has described the near equivalence classes of representations in $\scra_{00}$. 
In his papers \cite{W,W2} he defined a surjective map $\Wd_{\psi_v}$ from the set of irreducible admissible genuine unitary representations of $\Mp(W_1)_v$ which are not equivalent to $\ome^{\psi_v^\eta}_+$ for any $\eta\in F^\times_v$ to the set of irreducible infinite dimensional unitary representations of $\PGL_2(F_v)$. 
If $\sig_v$ is such a representation of $\Mp(W_1)_v$, then precisely one of the representations $\tht^{\psi_v}_{V_v^+}(\sig_v)$ and $\tht^{\psi_v}_{V_v^-}(\sig_v)$ is nonzero, where $V_v^+$ (resp. $V_v^-$) stands for a three dimensional split (resp. anisotropic) quadratic space over $F_v$ of discriminant $1$. 
We set $\Wd_{\psi_v}(\sig_v)=\tht^{\psi_v}_{V_v^+}(\sig_v)$, provided that it is nonzero. 
Otherwise the representation $\Wd_{\psi_v}(\sig_v)$ corresponds to $\tht^{\psi_v}_{V_v^-}(\sig_v)$ via the Jacquet-Langlands correspondence. 

Given an irreducible infinite dimensional unitary representation $\pi_v$ of $\PGL_{2}(F_v)$, we put $\vPi^{\psi_v}(\pi_v)=\Wd_{\psi_v}^{-1}(\pi_v)$. 
If $\pi_v$ is a discrete series, then ${\shp} \vPi^{\psi_v}(\pi_v)=2$. 
Otherwise $\vPi^{\psi_v}(\pi_v)$ is a singleton. 
In the first case the set $\vPi^{\psi_v}(\pi_v)$ has a distinguished element $\sig^{\psi_v}_+(\pi_v)$, which is characterized by the fact that $\sig_+^{\psi_v}(\pi_{v})\otimes\pi_{v}$ is a quotient of the Weil representation $\ome^{\psi_v}_{V_v^+}$. 
The other element of $\vPi^{\psi_{v}}_{\pi_v}$ will be denoted by $\sig^{\psi_v}_-(\pi_v)$: 
it is characterized by the fact that $\sig_-^{\psi_v}(\pi_v)\otimes\pi_v$ is a quotient of $\ome^{\psi_v}_{V_v^-}$. 
In the second case we shall let $\sig_+^{\psi_v}(\pi_v)$ be the unique element in $\vPi^{\psi_{v}}(\pi_v)$, and set $\sig_-^{\psi_v}(\pi_v)=0$. 
This partition of representations of $\Mp(W_1)_v$ into packets and their parametrization in terms of representations of $\PGL_2(F_v)$ depend on the choice of $\psi_v$. 
But it is quite explicit. 

\begin{proposition}[Propositions 4, 5, 9 of \cite{W2}]\label{prop:82}
\begin{enumerate}
\renewcommand\labelenumi{(\theenumi)}
\item\label{prop:821} If $\pi_v$ is an irreducible principal series $I(\mu_v,\mu_v^{-1})$, then $\sig_+^{\psi_v}(\pi_v)\simeq I^{\psi_v}_1(\mu_v)$. 
\item\label{prop:822} If $\pi_v\simeq A(\alp^{1/2},\alp^{-1/2})$, then $\sig_-^{\psi_v}(\pi_v)\simeq A^{\psi_v}_1(\alp^{1/2})$. 
\item\label{prop:823} If $\mu_v^2=\alp$, $\mu_v\neq\alp^{1/2}$ and $\pi_v\simeq A(\mu_v^{},\mu_v^{-1})$, then $\sig_+^{\psi_v}(\pi_v)\simeq A^{\psi_v}_1(\mu_v)$. 
\item\label{prop:824} If $v\in\frkS_\infty$ and $\pi_v$ is a discrete series with extremal weight $\pm 2\kap_v$, then $\sig_+^{\psi_v}(\pi_v)$ is the holomorphic discrete series representation with lowest weight $\kap_v+\frac{1}{2}$. 
\end{enumerate}
\end{proposition}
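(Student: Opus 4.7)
The plan is to invoke Waldspurger's explicit analysis of the local theta correspondence for the dual pair $(\mathrm{O}(V^\pm_v),\SL_2(F_v))$ developed in \cite{W,W2}. By the very definition of $\sig^{\psi_v}_\pm$, each part of the proposition amounts to identifying one $\Tht^{\psi_v}_{V^\epsilon_v}(\sig)$ for a metaplectic representation $\sig$ and a sign $\epsilon\in\{+,-\}$ dictated by the theta dichotomy, so the argument splits into three essentially independent computations with the Weil representation $\ome^{\psi_v}_{V^\pm_v}$.

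For parts (1) and (3), I would compute $\Hom_{\PGL_2(F_v)}(\ome^{\psi_v}_{V^+_v},\pi_v)$ as an $\Mp(W_1)_v$-representation using the mixed model of the Weil representation attached to the polarization $V^+_v=F_ve\oplus V_D\oplus F_ve'$ of Section~\ref{ssec:24}. Restriction of $\ome^{\psi_v}_{V^+_v}$ to the Borel of $\PGL_2(F_v)$ stabilizing $F_ve$, combined with Frobenius reciprocity, matches the inducing character $\mu_v$ to the genuine character $(\zet,\til\frkm(a))\mapsto \zet\gam^{\psi_v}(a)\mu_v(a)|a|^{1/2}$ of the Siegel parabolic of $\Mp(W_1)_v$ appearing in the definition of $I^{\psi_v}_1(\mu_v)$. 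Howe duality then yields $\Tht^{\psi_v}_{V^+_v}(I^{\psi_v}_1(\mu_v))\simeq I(\mu_v,\mu_v^{-1})$ when the latter is irreducible, proving (1); for (3) with $\mu_v^2=\alp$ and $\mu_v\neq\alp^{1/2}$, the same input together with the exactness of the Jacquet functor and compatibility of theta lifting with Langlands submodules identifies $\Tht^{\psi_v}_{V^+_v}(A^{\psi_v}_1(\mu_v))$ with the irreducible submodule $A(\mu_v,\mu_v^{-1})$ furnished by Proposition~\ref{prop:51}(\ref{prop:513}).

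For part (2), the theta dichotomy forces the nonvanishing lift of $\pi_v\simeq A(\alp^{1/2},\alp^{-1/2})$ to come from the anisotropic side $V^-_v$; a direct calculation using the compactness of $\SO(V^-_v)$ modulo center, together with the fact that $A^{\psi_v}_1(\alp^{1/2})$ is the unique irreducible subrepresentation of $I^{\psi_v}_1(\alp^{1/2})$ (Proposition~\ref{prop:51}(\ref{prop:513})), identifies $\Tht^{\psi_v}_{V^-_v}(A^{\psi_v}_1(\alp^{1/2}))\simeq A(\alp^{1/2},\alp^{-1/2})$. Part (4) reduces to $K$-type bookkeeping at the archimedean place: the Weil representation $\ome^{\psi_v}_{V^+_v}$ of $\Mp(W_1)_\RR\times\PGL_2(\RR)$ has an explicit $\widetilde{\SO(2)}\times\SO(2)$-spectrum under which the holomorphic discrete series of $\PGL_2(\RR)$ with extremal weight $\pm 2\kap_v$ is paired with the holomorphic discrete series of $\Mp(W_1)_\RR$ of lowest weight $\kap_v+\tfrac12$; comparing the pluriharmonic lowest-weight vectors closes the argument.

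The most delicate step is part (2). The reducibility of $I(\alp^{1/2},\alp^{-1/2})$ on the $\PGL_2$-side and of $I^{\psi_v}_1(\alp^{1/2})$ on the metaplectic side forces one to track carefully the submodule/quotient filtrations, the elementary-theta-series quotients killed by $\Tht$, and the sign of the Hasse invariant of $V^\pm_v$; once this bookkeeping is set up correctly, the other three assertions follow by clean parabolic-induction computations of the kind sketched above.
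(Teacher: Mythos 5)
The paper does not actually prove Proposition \ref{prop:82}: it is quoted from Waldspurger (Propositions 4, 5 and 9 of \cite{W2}), so there is no internal argument to compare with, and what you propose is in effect a sketch of Waldspurger's own method (Jacquet modules of the Weil representation in a mixed model on the split side, and the compact dual pair on the anisotropic side). As such, the approach in parts (1), (3) and (4) points in the right direction, but the genuinely hard content --- deciding which constituent lifts to which at the reducibility points $\mu_v^2=\alp$, and the contragredient/normalization bookkeeping that Proposition \ref{prop:81}(1) already signals (the lift is $\tht^{\psi_v}_{V_v}(\sig_v^\vee)$, not of $\sig_v$ itself) --- is exactly what your write-up asserts rather than proves; that is precisely the content of the cited propositions of \cite{W2}.

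Two concrete problems in your part (2). First, you say that ``theta dichotomy forces the nonvanishing lift of $\pi_v\simeq A(\alp^{1/2},\alp^{-1/2})$ to come from the anisotropic side.'' Dichotomy is a statement about a fixed genuine representation $\sig_v$ of $\Mp(W_1)_v$ (exactly one of $\tht^{\psi_v}_{V^+_v}(\sig_v)$ and $\tht^{\psi_v}_{V^-_v}(\sig_v)$ is nonzero); for a discrete series such as $A(\alp^{1/2},\alp^{-1/2})$ both members $\sig^{\psi_v}_{\pm}(\pi_v)$ of the packet exist, so nothing is forced, and the actual task is to show that the particular submodule $A^{\psi_v}_1(\alp^{1/2})$ of $I^{\psi_v}_1(\alp^{1/2})$ is the one that pairs with the anisotropic space. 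Second, the conclusion you state, $\Tht^{\psi_v}_{V^-_v}(A^{\psi_v}_1(\alp^{1/2}))\simeq A(\alp^{1/2},\alp^{-1/2})$, cannot be literally correct: a theta lift to the $V^-_v$ side is a representation of $G^+(V^-_v)$, essentially of the compact group $D_v^\times/F_v^\times$, so what has to be shown is that this lift is the \emph{trivial} representation, which corresponds to $A(\alp^{1/2},\alp^{-1/2})$ only after applying the Jacquet--Langlands correspondence, exactly as built into the definition of $\Wd_{\psi_v}$ in \S\ref{ssec:82}. A smaller slip: the polarization $F_ve\oplus V_D\oplus F_ve'$ of \S\ref{ssec:24} is the five-dimensional space used for the dual pair with $\calg_1$ in \S\ref{sec:9}; the mixed model relevant to the three-dimensional $V^+_v$ has an isotropic line with a one-dimensional anisotropic complement. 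If you intend a self-contained proof rather than a citation, these are the points where the real work of \cite{W2} lies; otherwise the route the paper takes --- citing Waldspurger --- is the appropriate one.
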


Bear in mind the assumption that $\psi_v=\bfe|_{F_v}$ for $v\in\frkS_\infty$. 
Given an irreducible cuspidal automorphic representation $\pi=\otimes_v'\pi_v$ of $\PGL_2(\AA)$, we define a set of irreducible unitary representations of $\Mp(W_1)_\AA$ by 
\[\vPi^{\psi}(\pi)=\{\otimes'_v\sig^{\psi_v}_{\eps_v}(\pi_v)\;|\;\eps_{v}\in\{\pm\}, \text{ and for all most all }v,\; \eps_v=+\}. \]
For given $\sig=\otimes'_v\sig^{\psi_v}_{\eps_v}(\pi_v)\in\vPi^\psi(\pi)$, we set $\eps(\sig)=\prod_v\eps_v$. 
Corollaries 1 and 2 on p.~286 of \cite{W2} say that 
\beq
\scra_{00}\simeq\oplus_{\pi}\oplus_{\sig\in\vPi^{\psi}(\pi):\;\eps(\sig)=\vep(1/2,\pi)}\sig, \label{tag:81}
\eeq
where the sum ranges over all irreducible cuspidal automorphic representations $\pi$ of $\PGL_2(\AA)$ such that $L(1/2,\pi\otimes\hat\chi^t)\neq 0$ for some $t\in F^{\times}$. 

This theory includes the special case of Theorem \ref{thm:11} in which $m=1$. 

\begin{lemma}\label{lem:81}
$A^{\psi_\bff}_1(\mu_\bff)$ appears in $\frkC^{(1)}_{(2\kap+1)/2}$ if and only if $A(\mu^{}_\bff,\mu_\bff^{-1})$ appears in $\frkC_{2\kap}$ and $\mu_\bff(-1)(-1)^{\sum_{v\in\frkS_\infty}\kap_v}=1$.  
\end{lemma}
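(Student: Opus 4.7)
The plan is to translate the appearance of $A^{\psi_\bff}_1(\mu_\bff)$ in $\frkC^{(1)}_{(2\kap+1)/2}$ into a statement about the genuine cuspidal spectrum of $\Mp(W_1)_\AA$ and then apply Waldspurger's decomposition~(\ref{tag:81}). Any irreducible $\Mp(W_1)_\bff$-submodule of $\frkC^{(1)}_{(2\kap+1)/2}$ isomorphic to $A^{\psi_\bff}_1(\mu_\bff)$ is the finite part of an irreducible subrepresentation $\sig\simeq\sig_\infty\otimes A^{\psi_\bff}_1(\mu_\bff)$ of the space of genuine cusp forms on $\Mp(W_1)_\AA$, whose archimedean component $\sig_v$ at each $v\in\frkS_\infty$ must be the holomorphic discrete series of lowest weight $\kap_v+\frac{1}{2}$ by the defining transformation law under $\til\calk_1$ together with the holomorphy requirement. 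Since each $\sig_v$ has lowest weight strictly greater than $\frac{1}{2}$, it is not equivalent to any $\ome^{\psi_v^\eta}_+$, so $\sig$ is orthogonal to all elementary theta series and belongs to $\scra_{00}$.

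By~(\ref{tag:81}), such a $\sig$ exists if and only if there is a cuspidal automorphic representation $\pi$ of $\PGL_2(\AA)$ with $\sig\in\vPi^\psi(\pi)$, $\eps(\sig)=\vep(1/2,\pi)$, and $L(1/2,\pi\otimes\hat\chi^t)\neq 0$ for some $t\in F^\times$. Proposition~\ref{prop:82}(4) forces $\pi_v$ to be the discrete series of extremal weight $\pm 2\kap_v$ at each $v\in\frkS_\infty$, while Proposition~\ref{prop:82}(1)--(3) forces $\pi_v$ at every finite $v$ to be the unique irreducible constituent of $I(\mu^{}_v,\mu_v^{-1})$; hence $\pi_\bff\simeq A(\mu^{}_\bff,\mu_\bff^{-1})$ and $\pi$ corresponds to a Hilbert cusp form of weight $2\kap$. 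Thus the existence of $\pi$ is exactly the condition that $A(\mu^{}_\bff,\mu_\bff^{-1})$ appear in $\frkC_{2\kap}$; the nonvanishing of some quadratic twist's central value is automatic for any cuspidal $\pi$ of $\PGL_2(\AA)$ by the work of Waldspurger and Friedberg--Hoffstein.

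It remains to match the sign condition $\eps(\sig)=\vep(1/2,\pi)$ with the stated parity identity. By Proposition~\ref{prop:82}, $\eps_v=+$ at every $v\in\frkS_\infty$ and at every finite $v$ where $\pi_v$ is not Steinberg, while $\eps_v=-$ precisely at the finite $v$ where $\mu_v=\alp^{1/2}$. Setting $l={\shp}\{v\notin\frkS_\infty\mid\mu_v=\alp^{1/2}\}$ one obtains $\eps(\sig)=(-1)^l$, whereas the standard global root-number computation (as already used in~\S\ref{ssec:62}) gives $\vep(1/2,\pi)=(-1)^{l+\sum_{v\in\frkS_\infty}\kap_v}\mu_\bff(-1)$, so the required equality reduces to $\mu_\bff(-1)(-1)^{\sum_{v\in\frkS_\infty}\kap_v}=1$. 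The main technical subtlety is this root-number identity itself, which requires careful bookkeeping of the archimedean contribution $(-1)^{\kap_v}$ from each weight-$2\kap_v$ discrete series, the contribution $-1$ from each Steinberg local component, and the remaining ramified finite contributions absorbed into $\mu_\bff(-1)$; this is a routine but delicate $\PGL_2$ computation over a totally real field.
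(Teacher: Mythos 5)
Your proposal is correct and follows essentially the same route as the paper: both arguments reduce the question to Waldspurger's decomposition (\ref{tag:81}) of $\scra_{00}$, use Proposition \ref{prop:82} to identify $\sig=(\otimes_{v\in\frkS_\infty}\frkD_{(2\kap_v+1)/2})\otimes A^{\psi_\bff}_1(\mu_\bff)$ as the member of $\vPi^\psi(\pi)$ with $\eps_v=-$ exactly at the finite Steinberg places, and match $\eps(\sig)=(-1)^k$ against $\vep(1/2,\pi)=\mu_\bff(-1)(-1)^{k+\sum_{v\in\frkS_\infty}\kap_v}$. Your additional remarks (membership in $\scra_{00}$ via the archimedean lowest weight, and the nonvanishing of some quadratic twist) are points the paper leaves implicit, but they do not change the argument.
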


\begin{proof}
For $\ell\in\ZZ$ we denote  for the discrete series representation of $\PGL_2(\RR)$ with extremal weight $\pm 2\ell$ by $D_{2\ell}$ and the holomorphic discrete series representation of the real metaplectic group of rank $1$ with lowest weight $\ell+\frac{1}{2}$ by $\frkD_{\ell+1/2}$. 
Put 
\begin{align*}
\pi&=(\otimes_{v\in\frkS_\infty}D_{2\kap_v})\otimes A(\mu^{}_\bff,\mu_\bff^{-1}), & 
\sig&=(\otimes_{v\in\frkS_\infty}\frkD_{(2\kap_v+1)/2})\otimes A^{\psi_\bff}_1(\mu_\bff). 
\end{align*}
If $\sig$ is a cuspidal automorphic representation, then so is $\pi=\otimes_v'\Wd_{\psi_v}(\sig_v)$. 
Let $\frkS'_{\pi_\bff}$ be the set of nonarchimedean primes $v$ of $F$ such that $\mu_v=\alp^{1/2}$. 
Put $k=\shp\frkS'_{\pi_\bff}$. 
Then
\[\vep(1/2,\pi)=\mu_\bff(-1)(-1)^{k+\sum_{v\in\frkS_\infty}\kap_v}. \]
Proposition \ref{prop:82} says that 
\[\sig\simeq\{\otimes_{v\in\frkS_{\pi_\bff}'}\sig_-^{\psi_v}(\pi_v)\}\otimes\{\otimes'_{v\notin\frkS_{\pi_\bff}'}\sig_+^{\psi_v}(\pi_v)\}\in \vPi^\psi(\pi), \]
Thus $\eps(\sig)=\eps(1/2,\pi)$ if and only if $\mu_\bff(-1)(-1)^{\sum_{v\in\frkS_\infty}\kap_v}=1$. 
We have the desired conclusion by (\ref{tag:81}). 
\end{proof}

\begin{remark}\label{rem:81}
Lemma \ref{lem:81} is consistent with Lemma \ref{lem:51}(\ref{lem:513}). 
\end{remark}

\subsection{Construction of an embedding $A^{\psi_\bff}_m(\mu_\bff)\hookrightarrow\frkC^{(m)}_{(2m+\kap)/2}$}\label{ssec:83}

We hereafter assume that $m>1$. 
Fix $R\in\Sym^+_{m-1}$. 
Corollary \ref{cor:72} gives a $\Mp(W_1)_\bff$-intertwining surjective homomorphism 
\beq
\bet^{\psi_\bff}_R=\otimes_{v\notin\frkS_\infty}\bet^{\psi_v}_R: A_m^{\psi_\bff}(\mu^{}_\bff)\otimes\overline{\ome_R^{\psi_\bff}}\twoheadrightarrow A_1^{\psi_\bff}(\mu^{}_\bff\hat\chi^{\det R}_\bff). \label{tag:82}
\eeq  
We associate to $\phi\in\cals(\scrx_{m-1}(\AAf))$ the function $\phi_R=\phi\otimes(\otimes_{v\in\frkS_\infty}\vph_R)\in\cals(\scrx_{m-1}(\AA))$ and the theta function on $\scrj_{m-1}(F)\bsl\scrj_{m-1}(\AA)$ defined by 
\begin{align*}
\Tht(\ome_R^\psi(v\til g')\phi^{}_R)&=\sum_{l\in\scrx_i(F)}(\ome_R^\psi(v\til g')\phi^{}_R)(l) &
(v&\in\scrn_{m-1}(\AA),\;\til g'\in \Mp(W_1)_{\AA}). 
\end{align*}
The $(R,\phi)^\mathrm{th}$ Fourier-Jacobi coefficient of $\calf\in\frkT^{(m)}_\ell$ is defined by 
\[\calf^R_\phi(\til g')=\int_{\scrn_i(F)\bsl\scrn_i(\AA)}\calf(\bfs(v)\til g')\overline{\Tht(\ome^\psi_R(v\til g')\phi^{}_R)}\,\d v. \]

Lemma \ref{lem:75} gives a nonzero constant $C_R$ such that  
\beq
\calf_{(2\kap+m)/2}(h,\{c_\xi\})^R_\phi=C_R\calf_{(2\kap+1)/2}(\bet^{\psi_\bff}_R(h\otimes\bar\phi),\{c_{R\oplus t}\}) \label{tag:83}
\eeq
for all $\{c_\xi\}\in T^{(m)}_{(2\kap+m)/2}(\mu_\bff)$ and $h\in A_m^{\psi_\bff}(\mu^{}_\bff)$. 

\begin{lemma}\label{lem:82}
If $\{c_t\}\in C^{(1)}_{(2\kap+1)/2}(\mu_\bff)$, then $\{c_{\eta\det\xi}\}\in C^{(m)}_{(2\kap+m)/2}(\mu^{}_\bff\hat\chi^\eta_\bff)$ for all $m$ and $\eta\in F^\times_+$. 
\end{lemma}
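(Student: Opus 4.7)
The plan is to verify the three conditions defining $C^{(m)}_{(2\kap+m)/2}(\mu_\bff\hat\chi^\eta_\bff)$: that the Fourier series
\[\calf_{(2\kap+m)/2}(\til g;h,\{c_{\eta\det\xi}\})=\sum_{\xi\in\Sym^{\pi_\bff}_m}c_{\eta\det\xi}W^{((2\kap+m)/2)}_\xi(\til g_\infty)w^{\mu_\bff\hat\chi^\eta_\bff}_\xi(\vrh(\til g_\bff)h)\]
converges absolutely and uniformly on compacta, transforms under $\GL_m(F)$ according to Lemma \ref{lem:51}(\ref{lem:512}), and lands inside the cuspidal subspace $\frkC^{(m)}_{(2\kap+m)/2}$ for every $h\in A^{\psi_\bff}_m(\mu_\bff\hat\chi^\eta_\bff)$.

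First I would verify the transformation law. For $a\in\GL_m(F)$ set $s=\det a\in F^\times$; since $\det(\xi[a])=(\det\xi)s^2$, the hypothesis $c_{ts^2}=c_t\mu_\bff(s)^{-1}\prod_{v\in\frkS_\infty}\sgn_v(s)^{\kap_v}$ (which is Lemma \ref{lem:51}(\ref{lem:512}) in rank one, where $(2\ell_v-m)/2=\kap_v$) applied to $t=\eta\det\xi$ yields
\[c_{\eta\det(\xi[a])}=c_{\eta\det\xi}\mu_\bff(s)^{-1}\prod_{v\in\frkS_\infty}\sgn_v(s)^{\kap_v}.\]
Since $\eta\in F^\times_+$ is totally positive, $\hat\chi^\eta_v$ is trivial on $F_v^\times$ for every $v\in\frkS_\infty$, and the product formula for the Hecke character $\hat\chi^\eta$ forces $\hat\chi^\eta_\bff(s)=1$ for $s\in F^\times$; thus the displayed identity is exactly the relation required for the character $\mu_\bff\hat\chi^\eta_\bff$.

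For convergence I would imitate Lemma \ref{lem:42}. The hypothesis that the rank-one series converges already forces a polynomial bound $|c_t|\leq CN_{F/\QQ}(t)^M$ on every lattice in $F$. The estimate $N_{F/\QQ}(\det\xi)\leq(md)^{-md}(\Tr_{F/\QQ}\tr(\xi))^{md}$ then provides a polynomial bound on $|c_{\eta\det\xi}|$ in $\Tr_{F/\QQ}\tr(\xi)$; combined with the Whittaker bounds from Proposition \ref{prop:51} and the usual counting of totally positive $\xi$ with bounded trace, absolute and uniform convergence on any compact subset of $\Mp(W_m)_\AA$ follows for every $h$.

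The main step, and the principal obstacle, is left $Sp_m(F)$-invariance. The approach is a Fourier-Jacobi reduction to the already-proved $m=1$ case. For $R\in\Sym_{m-1}^+$ and $\phi\in\cals(\scrx_{m-1}(\AAf))$, identity (\ref{tag:83}) expresses the $(R,\phi)^\mathrm{th}$ Fourier-Jacobi coefficient of $\calf_{(2\kap+m)/2}(h,\{c_{\eta\det\xi}\})$ as a nonzero constant multiple of
\[\calf_{(2\kap+1)/2}\bigl(\bet^{\psi_\bff}_R(h\otimes\bar\phi),\{c_{\eta\det R\cdot t}\}_{t\in F^\times_+}\bigr).\]
By Lemma \ref{lem:51}(\ref{lem:515}) the inner coefficients lie in $C^{(1)}_{(2\kap+1)/2}(\mu_\bff\hat\chi^{\eta\det R}_\bff)$, which matches the character of the target of $\bet^{\psi_\bff}_R:A^{\psi_\bff}_m(\mu_\bff\hat\chi^\eta_\bff)\otimes\overline{\ome^{\psi_\bff}_R}\twoheadrightarrow A^{\psi_\bff}_1(\mu_\bff\hat\chi^{\eta\det R}_\bff)$ from Corollary \ref{cor:72}. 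Hence by hypothesis each Fourier-Jacobi coefficient is a genuine cusp form on $\Mp(W_1)_\AA\ltimes\scrn_{m-1}(\AA)$. The remaining technical difficulty is the metaplectic analog of the second part of Lemma \ref{lem:75}: that cuspidality of every translate $(\vrh(\til\Del)\calf_{(2\kap+m)/2}(h,\{c_{\eta\det\xi}\}))^R_\phi$ forces left $Sp_m(F)$-invariance of the original series. This rests on the generation of $Sp_m(F)$ by $\PP_m(F)$ and $Sp_1(F)\cdot\scrn_{m-1}(F)$, on the compatibility of Weil's splitting $\bfs:Sp_m(F)\hookrightarrow\Mp(W_m)_\AA$ with the theta kernel, and on the density of the image of $\Tht$ in $C^\infty_{\psi^R}(\scrn_{m-1}(F)\bsl\scrn_{m-1}(\AA))$; once installed, it combines with the $\PP_m(F)$-equivariance already checked to place $\calf_{(2\kap+m)/2}(h,\{c_{\eta\det\xi}\})$ in $\frkC^{(m)}_{(2\kap+m)/2}$.
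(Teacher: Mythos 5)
Your proposal follows the paper's proof essentially step for step: the transformation law and membership in $\frkT^{(m)}_{(2\kap+m)/2}$ via Lemma \ref{lem:51}(\ref{lem:512}), convergence by feeding a polynomial coefficient bound into the argument of Lemma \ref{lem:42}, and cuspidality by identifying the $(R,\phi)^{\mathrm{th}}$ Fourier--Jacobi coefficient with $i^{\eta\det R}_1(\bet^{\psi_\bff}_R(h\otimes\bar\phi))$ through (\ref{tag:82}), (\ref{tag:83}) and Lemma \ref{lem:51}(\ref{lem:515}), then invoking the (metaplectic analogue of the) converse direction of Lemma \ref{lem:75}. The one cosmetic divergence is the source of the polynomial bound on $\{c_t\}$: the paper cites Proposition A.6.4 of \cite{Sh3}, whereas mere convergence of the rank-one series would only give an exponential bound, so you do need cuspidality (or Shimura's estimate) at that point.
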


\begin{proof} 
The series $i^\eta_m(h)=\calf_{(2\kap+m)/2}(h,\{c_{\eta\det\xi}\})$ is convergent  for all $h\in A_m^{\psi_\bff}(\mu^{}_\bff\hat\chi^\eta_\bff)$ by Lemma \ref{lem:42} and the estimate of $\{c_t\}$ given in Proposition A.6.4 of \cite{Sh3}, and so by Lemma \ref{lem:51}(\ref{lem:512}) $i^\eta_m(A_m^{\psi_\bff}(\mu^{}_\bff\hat\chi^\eta_\bff))\subset\frkT^{(m)}_{(2\kap+m)/2}$. 
Lemma \ref{lem:51}(\ref{lem:515}), (\ref{tag:82}) and (\ref{tag:83}) show that 
\[i^\eta_m(h)^R_\phi=i^{\eta\det R}_1(\bet^{\psi_\bff}_R(h\otimes\bar\phi))\in\frkC^{(1)}_{(2\kap+1)/2}\] 
for all $R\in\Sym^+_{m-1}$ and $\phi\in\cals(\scrx_{m-1}(\AAf))$. 
Lemma \ref{lem:75} therefore concludes that $i^\eta_m(A_m^{\psi_\bff}(\mu^{}_\bff\hat\chi^\eta_\bff))\subset\frkC^{(m)}_{(2\kap+m)/2}$. 
\end{proof}

\subsection{Some lemma on quadratic forms}\label{ssec:84}

Put 
\[\scrs^{\pi_\bff}_m=\{\xi\in\Sym^+_m\;|\;L(1/2,\pi^{}_\bff\otimes\hat\chi^{\det\xi}_\bff)\neq 0,\; \hat\chi^{\det\xi}_v\neq\mu_v\alp^{-1/2}\text{ for }v\in\frkS_{\pi_\bff}\}. \] 

 
\begin{lemma}\label{lem:84}
If $C^{(1)}_{(2\kap+1)/2}(\mu_\bff)\neq\{0\}$ and $\Xi\in\scrs_3^{\pi_\bff}$ represents $t_1,t_2\in F^\times_+$, then there are a quadratic form $S\in\Sym_2^+$ representing $t_1$ and represented by $\Xi$ and a quadratic form $T\in\scrs_3^{\pi_\bff}$ representing both $S$ and $t_1\oplus t_2$.
\end{lemma}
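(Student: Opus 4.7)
The plan is to reduce the lemma to an arithmetic existence statement for a single totally positive number $r$. Since $\Xi \in \Sym_3^+$ represents $t_1$, Witt's theorem provides an orthogonal decomposition $\Xi \simeq \langle t_1\rangle \perp R$ with $R \in \Sym_2^+$. For any totally positive $r$ represented by $R$, I would put
\[
S := \langle t_1, r\rangle \in \Sym_2^+, \qquad T := \langle t_1, t_2, r\rangle \in \Sym_3^+.
\]
Then $S$ represents $t_1$ (first coordinate) and is represented by $\Xi$ (via the decomposition together with a vector of $R$ of norm $r$), while $T$ represents $S$ (coordinates $1$ and $3$) and $t_1 \oplus t_2$ (coordinates $1$ and $2$); positive-definiteness is automatic from total positivity of $t_1, t_2, r$. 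The only remaining point is to arrange $T \in \scrs_3^{\pi_\bff}$, which by definition means that $\det T = t_1 t_2 r$, as a class in $F^\times/F^{\times 2}$, satisfies
\begin{enumerate}
\item $\hat\chi_v^{t_1 t_2 r} \neq \mu_v \alp^{-1/2}$ for every $v \in \frkS_{\pi_\bff}$, and
\item $L(\tfrac{1}{2}, \pi_\bff \otimes \hat\chi_\bff^{t_1 t_2 r}) \neq 0$.
\end{enumerate}

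It thus remains to find $r$ represented by $R$ whose square class places $t_1 t_2 r$ in the set cut out by (1) and (2). Condition (1) excludes only finitely many square classes at the finite set $\frkS_{\pi_\bff}$. The set of totally positive values represented by $R$ meets infinitely many square classes in $F^\times_+/F^{\times 2}$ (by classical representability theory for binary forms), and by weak approximation at $\frkS_{\pi_\bff}$ one can arrange condition (1) freely among such values. For condition (2), I would appeal to a Friedberg–Hoffstein type nonvanishing theorem for central values of quadratic twists of $\pi_\bff$, which guarantees positive density of good square classes; the hypothesis $C^{(1)}_{(2\kap+1)/2}(\mu_\bff) \neq \{0\}$ combined with Lemma~\ref{lem:51}(\ref{lem:516}) already makes this set visibly non-empty.

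The principal obstacle will be the last step: exhibiting a single $r$ simultaneously represented by $R$ and placing $t_1 t_2 r$ inside the good set carved out by (1) and (2). This is essentially a density-intersection problem mixing representability theory for binary quadratic forms over $F$ with nonvanishing of twisted central $L$-values of $\pi_\bff$; it is the only non-formal ingredient of the proof, all other steps being immediate from the chosen shape of $S$ and $T$.
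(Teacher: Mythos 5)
Your construction of $S=\langle t_1,r\rangle$ and $T=\langle t_1,t_2,r\rangle$ is sound as far as the formal requirements go, but the step you defer to the end is precisely where the argument breaks down, and in the shape you have chosen it cannot be repaired by the tools you invoke. Requiring $r$ to be represented by the binary form $R$ (the orthogonal complement of a vector of norm $t_1$ in $\Xi$) confines the square class of $r$, hence of $\det T=t_1t_2r$, at \emph{every} place of $F$ that is nonsplit in $F(\sqrt{-\det R})/F$: at each such place $r$ must lie in a fixed coset of the local norm group, and there are infinitely many such places. The nonvanishing results available (Waldspurger's Theorem 4 of \cite{W2}, which is what the paper uses, or a Friedberg--Hoffstein type statement) only produce a twist $\hat\chi^t$ with $L(1/2,\pi\otimes\hat\chi^t)\neq 0$ and prescribed behaviour at \emph{finitely} many places; they give no control at the remaining places, so they cannot force $t_1t_2r$ into the thin set of discriminants permitted by your diagonal choice of $T$. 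Your condition (1) involves only the finite set $\frkS_{\pi_\bff}$ and can indeed be arranged, but condition (2) restricted to this thin family is an open nonvanishing-in-sparse-families problem, not a formality, so the proof has a genuine gap.

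This is exactly the difficulty the paper's proof is built to avoid. There $S=S(z)$ is the Gram matrix of $(x,z)$ with $\Xi[x]=t_1$ and $z$ a free auxiliary vector, so it automatically represents $t_1$ and is represented by $\Xi$; and $T=T(z)$ is \emph{not} diagonal: it is the Gram matrix of $(x,z,y)$ with the $(1,3)$ entry replaced by $0$, so it still represents $S(z)$ (first two vectors) and $t_1\oplus t_2$ (first and third), while its determinant $Q[z]=\det T(z)$ is the value of a \emph{ternary} quadratic form in $z$ with $\det Q$ totally negative. A ternary form fails to represent a totally positive target only at the finitely many finite places where it is anisotropic, and at each such place only one square class is excluded; these finitely many avoidance conditions mesh exactly with Waldspurger's approximation-plus-nonvanishing theorem, which is then applied to a suitable $\eta\in F^\times_{\pi_\bff}$. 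To repair your argument you would either have to give up the diagonal shape of $T$ and let the third vector interact with $x$ and $z$ as in the paper, or supply a nonvanishing theorem for quadratic twists ranging over values of a fixed binary form, which is not available in the cited literature.
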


\begin{proof}
Choose vectors $x,y\in F^3$ such that $\Xi[x]=t_1$ and $\Xi[y]=t_2$. 
If $\Xi(x,y)=\trs x\Xi y=0$, then we can take $S=t_1\oplus t_2$ and $T=\Xi$. 
Suppose that $\Xi(x,y)\neq 0$. 
Define functions $S:F^3\to\Sym_2(F)$ and $T:F^3\to\Sym_3(F)$ by  
\begin{align*}
S(z)&=\begin{pmatrix} \Xi(x,x) & \Xi(x,z) \\ \Xi(x,z) & \Xi(z,z)\end{pmatrix}, &
T(z)&=\begin{pmatrix} \Xi(x,x) & \Xi(x,z) & 0 \\ \Xi(x,z) & \Xi(z,z) & \Xi(y,z) \\ 0 & \Xi(y,z) & \Xi(y,y) \end{pmatrix}. 
\end{align*}
Clearly, $S(z)$ and $T(z)$ fulfill all the requirements besides the condition that $T(z)\in\scrs_3^{\pi_\bff}$. 
We define a quadratic form $Q$ of three variables by 
\[Q[z]=\det T(z)=\Xi(x,x)\Xi(y,y)\Xi(z,z)-\Xi(y,y)\Xi(x,z)^2-\Xi(x,x)\Xi(y,z)^2. \]
By a direct calculation $\det Q=-\det\Xi\cdot\Xi(x,x)^2\Xi(y,y)^2\Xi(x,y)^2\in -F^\times_+$. 
Let $\frkS_Q$ be the set of places of $F$ at which $Q$ is anisotropic. 
If $z\neq 0$ and $\Xi(x,z)=\Xi(y,z)=0$, then $Q[z]=\Xi(x,x)\Xi(y,y)\Xi(z,z)\in F^\times_+$. 
Thus $\frkS_Q$ consists of finite primes. 
Since $T(z)\simeq t_1\oplus t_2\oplus (t_1t_2)^{-1}Q[z]$, if $Q[z]\in F^\times_+$, then $T(z)$ is totally positive definite. 
Then $Q$ represents any element $t\in F^\times_+$ such that $t\notin(\det\Xi)F_v^{\times2}$ for all $v\in\frkS_Q$. 
Let $\pi$ be as in the proof of Lemma \ref{lem:81}. 
Take $\eta\in F^\times_{\pi_\bff}$ such that $\eta\notin(\det\Xi)F_v^{\times2}$ for all $v\in\frkS_Q$. 
Since $\vep(1/2,\pi\otimes\hat\chi^\eta)=1$, for any $\eps>0$, Theorem 4 of \cite{W2} gives $t\in F^\times_+$ which satisfies $|t-\eta|_v<\eps$ and such that $L(1/2,\pi\otimes\hat\chi^t)\neq 0$. 
\end{proof}

\begin{lemma}\label{lem:85}
Suppose that $C^{(1)}_{(2\kap+1)/2}(\mu_\bff)\neq\{0\}$. 
For $\xi_0,\xi_3\in\scrs^{\pi_\bff}_m$ there are $\xi_1,\xi_2\in\scrs^{\pi_\bff}_m$ and $R_1,R_2,R_3\in\Sym_{m-1}^+$ such that $R_i$ is represented by both $\xi_{i-1}$ and $\xi_i$ for all $i=1,2,3$. 
\end{lemma}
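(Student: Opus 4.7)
The plan is to pick codimension-one subforms at both ends, set $R_2 := R_1$, and construct $\xi_1, \xi_2 \in \scrs^{\pi_\bff}_m$ via the density result of Waldspurger (Theorem~4 of \cite{W2}) already used in the proof of Lemma~\ref{lem:84}. Since $\xi_0, \xi_3 \in \Sym_m^+$, I would first pick nonzero vectors $v_0, v_3 \in F^m$ and set $R_1, R_3 \in \Sym_{m-1}^+$ to be the restrictions of $\xi_0, \xi_3$ to the $\xi_0$-orthogonal complement of $v_0$ and the $\xi_3$-orthogonal complement of $v_3$, respectively. Then $\xi_0 \simeq R_1 \oplus a_0$ and $\xi_3 \simeq R_3 \oplus a_3$ with $a_0 = \xi_0[v_0]$ and $a_3 = \xi_3[v_3]$ in $F^\times_+$, which already makes $R_1$ represented by $\xi_0$ and $R_3$ represented by $\xi_3$. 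Declaring $R_2 := R_1$, the remaining task is to produce $\xi_1 = R_1 \oplus s_1 \in \scrs^{\pi_\bff}_m$ and $\xi_2 \in \scrs^{\pi_\bff}_m$ such that $\xi_2$ represents both $R_1$ and $R_3$.

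The form $\xi_1 = R_1 \oplus s_1$ trivially represents $R_1 = R_2$, and the condition $\xi_1 \in \scrs^{\pi_\bff}_m$ only requires $s_1 \det R_1$ to lie in $F^\times_{\pi_\bff}$ with prescribed local behaviour at $\frkS_{\pi_\bff}$. The standing hypothesis $C^{(1)}_{(2\kap+1)/2}(\mu_\bff) \neq \{0\}$ supplies (via Lemma~\ref{lem:81}) the sign relation $\mu_\bff(-1)(-1)^{\sum_{v \in \frkS_\infty}\kap_v} = 1$ needed for the global root number of $\pi \otimes \hat\chi^{s_1 \det R_1}$ to equal $+1$ in the prescribed local square class of $s_1 \det R_1$, and then Theorem~4 of \cite{W2}, applied exactly as in the proof of Lemma~\ref{lem:84}, produces an abundant supply of admissible $s_1 \in F^\times_+$.

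Constructing $\xi_2$ is the main obstacle. Setting $\xi_2 := R_1 \oplus s_2$, the extra requirement that $R_3$ also be represented by $\xi_2$ is equivalent, by Witt's theorem, to the rank-$(2m-1)$ form $Q_{s_2} := R_1 \oplus s_2 \oplus (-R_3)$ having Witt index at least $m-1$. At each archimedean place $Q_{s_2}$ has signature $(m, m-1)$ and therefore Witt index $m-1$; for $m \geq 3$ it has rank $\geq 5$ at each finite place, so it is automatically isotropic of maximal Witt index there, and Hasse--Minkowski reduces the global representability to finitely many conditions on $s_2$ modulo local squares. Combining these with the conditions cutting out $\scrs^{\pi_\bff}_m$ and applying the Waldspurger density result once more yields an admissible $s_2$. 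The case $m = 2$ is handled by an elementary Hilbert-symbol computation parallel to the closing paragraph of Lemma~\ref{lem:84}.

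The hard part, as in Lemma~\ref{lem:84}, will be to verify that the local constraints on $s_2$ imposed by representability of $R_3$ and by $\scrs^{\pi_\bff}_m$ are compatible with the global root-number constraint inherent in Waldspurger's density theorem; this compatibility is precisely what the hypothesis $C^{(1)}_{(2\kap+1)/2}(\mu_\bff) \neq \{0\}$ secures through Lemma~\ref{lem:81}.
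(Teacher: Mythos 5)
Your overall strategy---a chain of length three built from codimension\nobreakdash-one restrictions plus Waldspurger's density theorem---is in the right spirit, but as written it has a genuine gap, and one of its intermediate claims is false. First, the assertion that the rank-$(2m-1)$ form $R_1\oplus s_2\oplus(-R_3)$ is ``automatically isotropic of maximal Witt index'' at finite places once its rank is $\geq 5$ is wrong: over a nonarchimedean local field the anisotropic kernel can have rank up to $4$, so rank $\geq 5$ guarantees isotropy but not Witt index $m-1$, and representability of $R_3$ by $R_1\oplus s_2$ remains a nontrivial local condition at every place. Second, and more seriously, because you fix $R_1$ (a hyperplane restriction of $\xi_0$) and $R_3$ (a hyperplane restriction of $\xi_3$) \emph{before} looking for $s_2$, the required $s_2$ may simply not exist: over $F_v$ the condition ``$R_1\oplus s_2$ represents $R_3$'' amounts to $R_1\oplus s_2\simeq R_3\oplus\langle s_2\det R_1\det R_3\rangle$, and when $\det R_1\equiv\det R_3$ modulo $F_v^{\times 2}$ but the Hasse invariants of $R_1$ and $R_3$ differ, this fails for \emph{every} square class of $s_2$. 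So for $m\geq 3$ the set of admissible $s_2$ can be empty for your (arbitrary) initial choices, and your argument provides no mechanism for choosing $R_1,R_3$ to avoid this.

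Finally, the ``hard part'' you defer to the last paragraph---the compatibility of the local representability constraints on $s_2$ with the square-class constraints at $\frkS_{\pi_\bff}$ and with the root-number condition needed to invoke Theorem 4 of \cite{W2}---is not a verification left to the reader; it is the entire content of the lemma, and the hypothesis $C^{(1)}_{(2\kap+1)/2}(\mu_\bff)\neq\{0\}$ alone does not secure it. The paper's proof is organized precisely to make this compatibility checkable: one first splits off a common totally positive $(m-2)$-dimensional subform of $\xi_0$ and $\xi_3$ (possible because $\xi_0\oplus(-\xi_3)$ has Witt index $\geq m-2$), reducing to $m=2$, and then applies Lemma \ref{lem:84} twice. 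The point of Lemma \ref{lem:84} is that the candidate determinants arise as values of an explicit ternary form $Q$ with $\det Q\in-\det\Xi\cdot F^{\times 2}$, anisotropic only at finitely many finite places $\frkS_Q$, and representing every totally positive $t$ with $t\notin(\det\Xi)F_v^{\times 2}$ for $v\in\frkS_Q$; it is this computation that reconciles the representability conditions with the choice, via Waldspurger, of a totally positive $t$ with $L(1/2,\pi\otimes\hat\chi^t)\neq 0$. You would need to supply an analogue of this computation (or reduce to it, as the paper does) for your proposal to become a proof.
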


\begin{proof}
If $m\geq 3$, then $\xi_0\oplus(-\xi_3)$ must have a totally isotropic subspace of dimension $m-2$ and hence there are $\xi\in\Sym_{m-2}^+$ and $\xi_0',\xi_3'\in\scrs^{\pi^{}_\bff\otimes\hat\chi^\xi_\bff}_2$ such that $\xi_0\simeq \xi\oplus\xi_0'$ and $\xi_3\simeq \xi\oplus\xi_3'$. 
We may therefore assume that $m=2$. 

Set $\Xi=1\oplus\xi_0$ and $\Xi'=1\oplus\xi_3$. 
Choose $R_2\in F^\times_+$ represented by $\Xi$ and $\Xi'$. 
Applying Lemma \ref{lem:84} to $\Xi$, $1$ and $R_2$, we find a quadratic form $S\in\Sym_2^+$ representing $1$ and represented by $\Xi$ and find a quadratic form $T\in\scrs_3^{\pi_\bff}$ representing both $S$ and $1\oplus R_2$. 
Put $R_1=\det S$. 
Then $S\simeq 1\oplus R_1$. 
There is $\xi_1\in\scrs_2^{\pi_\bff}$ such that $T\simeq 1\oplus \xi_1$. 
Then $\xi_1$ represents $R_2$. 
Since both $\Xi=1\oplus\xi_0$ and $T\simeq 1\oplus \xi_1$ represent $S\simeq 1\oplus R_1$,  both $\xi_0$ and $\xi_1$ represent $R_1$. 
Similarly, we can find a quadratic form $\xi_2\in\scrs_2^{\pi_\bff}$ representing $R_2$ and find a totally positive element $R_3$ represented by both $\xi_2$ and $\xi_3$. 
\end{proof}

\subsection{Multiplicity of $A^{\psi_\bff}_m(\mu_\bff)$}\label{ssec:85}

In light of Lemma \ref{lem:51}(\ref{lem:511}), giving a $\Mp(W_m)_\bff$-intertwining map from $A^{\psi_\bff}_m(\mu_\bff)$ into $\frkC^{(m)}_{(2\kap+m)/2}(\mu_\bff)$ is equivalent to giving complex numbers $\{c_\xi\}_{\xi\in\Sym^{\pi_\bff}_m}\in C^{(m)}_{(2\kap+m)/2}$. 
We now prove a stronger result. 

\begin{lemma}\label{lem:86}
Suppose that there is a $\Mp(W_m)_\bff$-intertwining embedding $i:A^{\psi_\bff}_m(\mu_\bff)\hookrightarrow\frkC^{(m)}_{(2\kap+m)/2}$. 
Then $\mu_\bff(-1)(-1)^{\sum_{v\in\frkS_\infty}\kap_v}=1$, $A(\mu^{}_\bff,\mu_\bff^{-1})$ occurs in $\frkC_{2\kap}$ and there is $\{c_t\}\in C^{(1)}_{(2\kap+1)/2}(\mu_\bff)$ such that $i(h)=\calf_{(2\kap+m)/2}(h,\{c_{\det\xi}\})$ for all $h\in A^{\psi_\bff}_m(\mu_\bff)$. 
In particular, $\dim C^{(m)}_{(2\kap+m)/2}(\mu_\bff)=1$. 
\end{lemma}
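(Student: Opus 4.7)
The plan is to combine three ingredients: the Fourier--Jacobi descent encoded in (\ref{tag:82})--(\ref{tag:83}), the multiplicity one $\dim C^{(1)}_{(2\kap+1)/2}(\mu_\bff)\le 1$ from Lemma \ref{lem:51}(\ref{lem:514}), and the connectivity of quadratic forms from Lemma \ref{lem:85}. First I would replay the proof of Lemma \ref{lem:41} in the half-integral-weight setting: taking the $\xi^{\mathrm{th}}$ Fourier coefficient of $i(h)$ and invoking the one-dimensionality of $\Wh_\xi(A_m^{\psi_\bff}(\mu_\bff))$ from Proposition \ref{prop:51}(\ref{prop:512}) produces $\{c_\xi\}\in C^{(m)}_{(2\kap+m)/2}(\mu_\bff)$ with $i(h)=\calf_{(2\kap+m)/2}(h,\{c_\xi\})$. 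Since $\{c_\xi\}\neq 0$, Lemma \ref{lem:51}(\ref{lem:513}) immediately yields $\mu_\bff(-1)(-1)^{\sum_{v\in\frkS_\infty}\kap_v}=1$.

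Next I would iterate Fourier--Jacobi descent to $m=1$. For each $R\in\Sym^+_{m-1}$, (\ref{tag:82}) and (\ref{tag:83}) imply $\{c_{R\oplus t}\}_t\in C^{(1)}_{(2\kap+1)/2}(\mu_\bff\hat\chi^{\det R}_\bff)$. Since any $\xi\in\Sym^+_m$ admits an orthogonal decomposition $\xi[A]=R\oplus t$ with $R\in\Sym^+_{m-1}$, $t\in F^\times_+$, the hypothesis $\{c_\xi\}\ne 0$ forces at least one descent sequence to be nonzero, so by Lemma \ref{lem:51}(\ref{lem:515}) the space $C^{(1)}_{(2\kap+1)/2}(\mu_\bff)$ is nonzero; fix a generator $\{c_t^\star\}$. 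Since $\{c^\star_{t\det R}\}_t$ generates $C^{(1)}_{(2\kap+1)/2}(\mu_\bff\hat\chi^{\det R}_\bff)$ by Lemma \ref{lem:51}(\ref{lem:515}), there are scalars $\lam_R\in\CC$ with $c_{R\oplus t}=\lam_R\cdot c^\star_{t\det R}$. The exponents $(2\ell_v-m)/2$ at level $m$ and $(2\ell_v'-1)/2$ at level $1$ both equal $\kap_v$, so the transformation laws of Lemma \ref{lem:51}(\ref{lem:512}) at the two levels produce the same factor under $\xi\mapsto\xi[A]$, upgrading the identity to $c_\xi=\lam_R\cdot c^\star_{\det\xi}$ for every $\xi$ representing $R$.

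The crux will be showing $\lam_R$ is independent of $R$, yielding a single $\lam$ with $c_\xi=\lam\cdot c^\star_{\det\xi}$. Given $\xi_0,\xi_3\in\scrs^{\pi_\bff}_m$, Lemma \ref{lem:85} produces intermediate $\xi_1,\xi_2\in\scrs^{\pi_\bff}_m$ and $R_1,R_2,R_3\in\Sym^+_{m-1}$ with $R_i$ represented by both $\xi_{i-1}$ and $\xi_i$; at each intermediate $\xi_j$, Lemma \ref{lem:51}(\ref{lem:516}) guarantees $c^\star_{\det\xi_j}\neq 0$, so the two expressions $\lam_{R_i}c^\star_{\det\xi_j}=c_{\xi_j}=\lam_{R_{i+1}}c^\star_{\det\xi_j}$ force $\lam_{R_1}=\lam_{R_2}=\lam_{R_3}$. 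For $\xi\in\Sym^{\pi_\bff}_m\setminus\scrs^{\pi_\bff}_m$, Lemma \ref{lem:51}(\ref{lem:516}) gives $c^\star_{\det\xi}=0$, and the descent through any $R$ represented by $\xi$ gives $c_\xi=\lam_R c^\star_{\det\xi}=0$. Setting $c_t:=\lam c^\star_t$ yields the desired formula $i(h)=\calf_{(2\kap+m)/2}(h,\{c_{\det\xi}\})$; since $\{c_t\}\ne 0$ lies in $C^{(1)}_{(2\kap+1)/2}(\mu_\bff)$, Lemma \ref{lem:81} supplies the occurrence of $A(\mu^{}_\bff,\mu_\bff^{-1})$ in $\frkC_{2\kap}$, and the uniqueness of $\{c_t\}$ up to scalar yields $\dim C^{(m)}_{(2\kap+m)/2}(\mu_\bff)=1$. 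The main obstacle will be tracking the $\mu_\bff$- and sign-factors (and the Weil-representation constant $C_R$ in (\ref{tag:83})) under $\xi\mapsto\xi[A]$, ensuring they cancel so that the chain identifications $\lam_{R_1}=\lam_{R_2}=\lam_{R_3}$ hold without spurious twist.
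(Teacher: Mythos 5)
Your proposal is correct and follows essentially the same route as the paper's own proof: extract $\{c_\xi\}$ via Lemma \ref{lem:51}(\ref{lem:511}), descend by (\ref{tag:82})--(\ref{tag:83}) to get $\{c_{R\oplus t}\}\in C^{(1)}_{(2\kap+1)/2}(\mu^{}_\bff\hat\chi^{\det R}_\bff)$, compare with a basis vector of the one-dimensional space $C^{(1)}_{(2\kap+1)/2}(\mu_\bff)$ using Lemma \ref{lem:51}(\ref{lem:514}),(\ref{lem:515}), upgrade via the transformation law of Lemma \ref{lem:51}(\ref{lem:512}) (the exponents at levels $m$ and $1$ both being $\kap_v$), and then use Lemma \ref{lem:85} together with Lemma \ref{lem:51}(\ref{lem:516}) to see that the proportionality constant is independent of $R$. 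The final worry about the constant $C_R$ is moot, since $C_R$ only rescales the Fourier--Jacobi coefficient and never enters the definition of $\lam_R$, which compares the coefficient families $\{c_{R\oplus t}\}$ and $\{c^\star_{t\det R}\}$ directly.
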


\begin{proof}
As mentioned above, Lemma \ref{lem:51}(\ref{lem:511}) gives $0\neq\{c_\xi\}\in C^{(m)}_{(2\kap+m)/2}(\mu_\bff)$ such that $i(h)=\calf_{(2\kap+m)/2}(h,\{c_\xi\})$. 
Hence $0\neq\{c_{R\oplus t}\}\in C^{(1)}_{(2\kap+1)/2}(\mu^{}_\bff\hat\chi^{\det R}_\bff)$ for all $R\in\Sym_{m-1}^+$ by (\ref{tag:82}) and (\ref{tag:83}). 
This together with Lemma \ref{lem:81} proves one implication of Theorem \ref{thm:11}. 

Fix a basis vector $\{e_t\}$ of the one dimensional vector space $C^{(1)}_{(2\kap+1)/2}(\mu_\bff)$. 
For each $R\in S_{m-1}^+$ Lemma \ref{lem:51}(\ref{lem:515}) gives a nonzero complex number $\del_R$ such that $c_{R\oplus t}=\del_Re_{t\det R}$ for all $t\in F^\times_{\pi^{}_\bff\otimes\hat\chi^{\det R}_\bff}$. 
Let $\xi\in\Sym^{\pi_\bff}_m$. 
Take $a\in\GL_m(F)$ such that $\xi=(R\oplus t)[a]$. 
Then 
\[c_\xi=\del_Re_{t\det R}\mu_\bff(\det a)^{-1}\prod_{v\in\frkS_\infty}\sgn_v(\det a)^{\kap_v}=\del_Re_{t\det R(\det a)^2}=\del_Re_{\det\xi}\]
by Lemma \ref{lem:51}(\ref{lem:512}). 
Lemma \ref{lem:51}(\ref{lem:516}) tells us that $c_\xi\neq 0$ if and only if $\xi\in\scrs^{\pi_\bff}_m$. 
Lemma \ref{lem:85} now says that 
\[\frac{c_{\xi_0}}{e_{\det\xi_0}}=\del_{R_1}=\frac{c_{\xi_1}}{e_{\det\xi_1}}=\del_{R_2}=\frac{c_{\xi_2}}{e_{\det\xi_2}}=\del_{R_3}=\frac{c_{\xi_3}}{e_{\det\xi_3}}\]
 for all $\xi_0,\xi_3\in\scrs^{\pi_\bff}_m$, which completes our proof.  
\end{proof}

\section{Transfer to inner forms}\label{sec:9}

We retain the notation of \S \ref{ssec:24}. 
Thus $V=Fe\oplus V_D\oplus Fe'$ and $G^+(V)\simeq \calg_1$. 
In the first half of this section we switch to a local setting. 
Thus $F=F_v$ is a local field of characteristic zero. 

\subsection{The Schr\"{o}dinger model vs. the mixed model}\label{ssec:91}

The Weil representation $\ome^\psi_V$ can be realized on $\cals(V)$ and has the following formulas:    
\begin{align}
&(\ome^\psi_V(\zet\til\frkm(t))\Phi)(v)=\zet\gam^{\psi}(t)|t|^{5/2}\Phi(tv), \label{tag:91}\\
&(\ome^\psi_V(\til\frkn(b))\Phi)(v)=\psi(bq_V(v))\Phi(v), \label{tag:92}\\
&(\ome^\psi_V(\bet)\Phi)(v)=\Phi(\vth(\bet)^{-1}v) \label{tag:93}
\end{align}
for $\zet\in\mu_2$, $t\in F^\times$, $b\in F$, $\bet\in\calg_1$ and $v\in V$. 
Since the map 
\[(\scrf\Phi)(x;u,u')=\int_F\Phi(re+x+ue')\psi(ru')\,\d r \]
is a $\CC$-linear isomorphism from $\cals(V)$ onto $\cals(V_D\oplus F^2)$, we can define the action $\Ome^\psi_V$ of $\Mp(W_1)\times\calg_1$ on $\cals(V_D\oplus F^2)$ so that 
\begin{align*}
\Ome^\psi_V(\til g,g)\scrf\Phi&=\scrf(\ome^\psi_V(\til g,g)\Phi), & 
(\til g,g)&\in\Mp(W_1)\times\calg_1.  
\end{align*}
The following formulas are derived easily or read of from Lemma 46 of \cite{W2}: 
\begin{align}
&(\Ome^\psi_V(\bfd(t)\bfm(A))\vph)(x;u,u')=|t\nu(A)|\vph(A^{-1}xA;t\nu(A)u,t\nu(A)u'), \label{tag:94}\\
&(\Ome^\psi_V(\bfn(z))\vph)(x;u,u')=\psi(-(\tau(zx)-u\nu(z))u')\vph(x-uz;u,u'), \label{tag:95}\\
&(\Ome^\psi_V(\til\frkm(t))\vph)(x;u,u')=\gam^\psi(t)|t|^{3/2}\vph(tx;tu,t^{-1}u'), \label{tag:96}\\
&(\Ome^\psi_V(\til\frkn(b))\vph)(x;u,u')=\psi(-b\nu(x))\vph(x;u,u'+ub), \label{tag:97}\\
&(\Ome^\psi_V(\bfs(J))\vph)(x;u,u')=\gam^\psi_D\int_{D_-}\vph(y;-u',u)\psi(\tau(xy))\,\d y \label{tag:98}
\end{align}
for $A\in D^\times$; $t\in F^\times$; $z,x\in D_-$ and $b,u,u'\in F$, where $\gam^\psi_D$ is a certain $8^\mathrm{th}$ root of the unitary and $\d y$ is the self-dual Haar measure on $D_-$ with respect to the Fourier transform defined by
\begin{align*}
(\calf_D\phi)(x)&=\int_{D_-}\phi(y)\psi(\tau(xy))\,\d y, & 
\phi&\in\cals(D_-). 
\end{align*}

\begin{remark}\label{rem:91}
Note that $n_2(z)=\bfn(-z)$ in the notation of \cite{W2}. 
\end{remark} 

\subsection{Compatibility of Jacquet integrals}\label{ssec:92} 

We first discuss the nonarchimedean case. 

\begin{lemma}\label{lem:91}
Let $h\in I^\psi_1(\mu)$ and $\Phi\in\cals(V)$. 
If $\sig(\mu)>-\frac{3}{2}$, then the integral 
\[\Gam^\psi(g;h\otimes\Phi)=L(3/2,\mu\hat\chi^{-1})^{-1}\int_{\UU_1\bsl \SL_2(F)}h(\til g)(\ome^\psi_V(\til g,g)\Phi)(e)\,\d \til g \]
is absolutely convergent. 
It gives a $\Mp(W_1)$-invariant and $\calg_1$-intertwining map $\Gam^\psi:I^\psi_1(\mu)\otimes\ome^\psi_V\to J_1(\mu\hat\chi^{-1})$. 
If $F$ is not dyadic, $D\simeq\Mat_2(F)$, $\psi$ is of order $0$, $\mu$ is unramified, $\Phi$ is the characteristic function of $\frko e\oplus\calr_1\oplus \frko e'$ and $h(k)=1$ for all $k\in\SL_2(\frko)$, then $\Gam^\psi(\ono_2;h\otimes\Phi)=1$.  
\end{lemma}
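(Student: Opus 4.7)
The plan is to establish each claim in turn by explicit computation using the Iwasawa decomposition and formulas (9.1)--(9.3). Write $L=L(3/2,\mu\hat\chi^{-1})$. Decomposing $\SL_2(F)=\UU_1\cdot\{\til\frkm(t)\}\cdot K$ with $K=\SL_2(\frko)$, the quotient measure on $\UU_1\bsl\SL_2(F)$ decomposes as $|t|^{-2}\,\d^\times t\,\d k$. The integrand is well-defined on the quotient because $h$ is left-$\UU_1$-invariant and (9.2) together with $\vth(\bfn(b))e=e$ (Lemma~\ref{lem:21}) and $q_V(e)=0$ make $(\ome^\psi_V(\til g,g)\Phi)(e)$ left-$\UU_1$-invariant; the $\Mp(W_1)$-invariance on the right then follows from right-invariance of $\d[\til g]$. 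Plugging $\til g=\til\frkm(t)k$ into the integrand, $h(\til\frkm(t)k)=\gam^\psi(t)\mu(t)|t|\,h(k)$ combined with (9.1) and the measure factor produces $\gam^\psi(t)^2\mu(t)|t|^{3/2}h(k)(\ome^\psi_V(k,g)\Phi)(te)$. The Hilbert-symbol identity $\gam^\psi(t)^2=\hat\chi^{-1}(t)$, which follows from $\gam^\psi(t^2)=1$ together with the cocycle (5.1) and the Hilbert-symbol computation $(t,t)=(t,-1)$, rewrites this as $\hat\chi^{-1}(t)\mu(t)|t|^{3/2}$ times a uniformly bounded function compactly supported at $|t|\to\infty$; absolute convergence for $\sig(\mu)>-3/2$ then follows from a geometric-series estimate near $t=0$.

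For the $\calg_1$-equivariance, the transformation under $\bfn(z)$ is immediate from $\vth(\bfn(z))e=e$ and (9.3). For $p=\bfd(t_0)\bfm(A)$, set $s=(t_0\nu(A))^{-1}$; Lemma~\ref{lem:21} and (9.3) give $(\ome^\psi_V(p)\Psi)(e)=\Psi(se)$ for every $\Psi\in\cals(V)$, and inverting (9.1) yields $\Psi(se)=\gam^\psi(s)^{-1}|s|^{-5/2}(\ome^\psi_V(\til\frkm(s))\Psi)(e)$. Applying this with $\Psi=\ome^\psi_V(\til g)\Phi_g$ and substituting $\til g'=\til\frkm(s)\til g$, three contributions emerge: the Jacobian $\delta_{\calp_1}(\til\frkm(s))=|s|^2$; the transformation rule $h(\til\frkm(s^{-1})\til g')=\gam^\psi(s^{-1})\mu(s^{-1})|s|^{-1}h(\til g')$; and, crucially, the metaplectic identity $\til\frkm(s)^{-1}=\hat\chi^{-1}(s)\til\frkm(s^{-1})$ coming from $\til\frkm(s)\til\frkm(s^{-1})=\hat\chi^s(s^{-1})=\hat\chi^{-1}(s)\in\mu_2$, whose scalar $\hat\chi^{-1}(s)$ pulls out of $h$ by genuineness. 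These combine (using the easy identity $\gam^\psi(s)^{-1}\gam^\psi(s^{-1})=1$, itself a consequence of $\gam^\psi(s)^2=\gam^\psi(s)\gam^\psi(s^{-1})=\hat\chi^{-1}(s)$) to yield
\[\Gam^\psi(pg;h\otimes\Phi)=(\mu\hat\chi^{-1})(t_0\nu(A))|t_0\nu(A)|^{3/2}\Gam^\psi(g;h\otimes\Phi),\]
establishing that $\Gam^\psi(h\otimes\Phi)\in J_1(\mu\hat\chi^{-1})$.

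The unramified computation is then direct: spherical $\Phi$ gives $(\ome^\psi_V(k)\Phi)(te)=\Phi(te)=1_{\frko}(t)$ for $k\in K$, and the integral reduces to the geometric series $\sum_{n\geq 0}(\mu\hat\chi^{-1})(\varpi)^nq^{-3n/2}=L(3/2,\mu\hat\chi^{-1})$, cancelling the prefactor exactly. The main obstacle is the precise tracking of the metaplectic $2$-cocycle in the second paragraph: the identity $\til\frkm(s)^{-1}=\hat\chi^{-1}(s)\til\frkm(s^{-1})$ is the sole mechanism producing the quadratic twist $\hat\chi^{-1}$ in the target, which ultimately reflects the discriminant $-1$ of the five-dimensional quadratic space $V=Fe\oplus V_D\oplus Fe'$, in analogy with the twists $\hat\chi^S$ and $\hat\chi^{\det R}$ appearing in Lemmas~\ref{lem:71} and~\ref{lem:73}.
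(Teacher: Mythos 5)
Your proof is correct and follows essentially the same route as the paper: the Iwasawa decomposition with the identity $\gam^\psi(a)^2=\hat\chi^{-1}(a)$ for convergence, and Lemma \ref{lem:21} together with (\ref{tag:91}), (\ref{tag:93}) and the change of variables $\til g\mapsto\til\frkm(s)\til g$ for the left $\calp_1$-transformation law. Your cocycle bookkeeping and the closing unramified geometric-series computation (which the paper leaves implicit) are both accurate.
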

 
\begin{proof}
The integral defining $\Gam^\psi(g;h\otimes\Phi)$ makes sense by (\ref{tag:92}) and equals   
\begin{align*}
&\int_{F^\times}\int_{\SL_2(\frko)}h(\til\frkm(a)k)(\ome^\psi_V(\til\frkm(a)k,g)\Phi)(e)|a|^{-2}\,\d a\d k\\
=&\int_{F^\times}\int_{\SL_2(\frko)}\hat\chi^{-1}(a)\mu(a)|a|^{3/2}h(k)(\ome^\psi_V(k,g)\Phi)(ae)\,\d k\,\d a 
\end{align*}
by (\ref{tag:91}) and (\ref{tag:51}). 
It is absolutely convergent for $\sig(\mu)>-\frac{3}{2}$. 
It follows from (\ref{tag:91}), (\ref{tag:94}) and Lemma \ref{lem:21} that for $A\in D^\times$, $t\in F^\times$ and $z\in D_-$
\begin{align*}
&L(3/2,\mu\hat\chi^{-1})\Gam^\psi(\bfd(t)\bfm(A)\bfn(z)g;h\otimes\Phi)\\
=&\int_{\UU_1\bsl \SL_2(F)}h(\til g)(\ome^\psi_V(\til g,g)\Phi)(t^{-1}\nu(A)^{-1}e)\,\d \til g\\
=&(\hat\chi^{-1}\mu\alp^{7/2})(t\nu(A))\int_{\UU_1\bsl \SL_2(F)}h(\til\frkm(t\nu(A))^{-1}\til g)(\ome^\psi_V(\til\frkm(t\nu(A))^{-1}\til g,g)\Phi)(e)\,\d \til g\\
=&(\hat\chi^{-1}\mu\alp^{3/2})(t\nu(A))L(3/2,\mu\hat\chi^{-1})\Gam^\psi(g;h\otimes\Phi). 
\end{align*} 
Therefore $\Gam^\psi(h\otimes\Phi)\in J_1(\mu\hat\chi^{-1})$. 
\end{proof}

\begin{lemma}\label{lem:92}
Let $h\in I^\psi_1(\mu)$, $\Phi\in\cals(V)$ and $\vXi\in D_-^\nd$. 
Put $\vph=\scrf\Phi\in\cals(V_D\oplus F^2)$. 
If $\sig(\mu)>-\frac{3}{2}$, then
\[\Gam^\psi(J_2;h\otimes\Phi)=L(3/2,\mu\hat\chi^{-1})^{-1}\int_{\SL_2(F)}h(\til g)(\Ome^\psi_V(\til g)\vph)(0;1,0)\,\d \til g. \] 
 \end{lemma}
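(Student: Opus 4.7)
My plan is to specialize Lemma \ref{lem:91} to $g = J_2$, reduce the inner Schr\"odinger datum to $(\ome^\psi_V(\til g)\Phi)(e')$ using the explicit description of $\vth$ in Lemma \ref{lem:21}, and then recover the mixed-model integrand $(\Ome^\psi_V(\til g)\vph)(0;1,0)$ by combining Fourier inversion with an unfolding against $\UU_1$.

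First I would substitute $g = J_2$ into Lemma \ref{lem:91}. Since (\ref{tag:93}) gives $(\ome^\psi_V(J_2)\Phi')(v) = \Phi'(\vth(J_2)^{-1}v)$, and the last formula of Lemma \ref{lem:21} yields $\vth(J_2)^{-1}(e) = e'$, the integral reduces to
\[ \Gam^\psi(J_2;h\otimes\Phi) = L(3/2,\mu\hat\chi^{-1})^{-1}\int_{\UU_1\bsl\SL_2(F)} h(\til g)(\ome^\psi_V(\til g)\Phi)(e')\,\d\til g. \]
Next, for each fixed $\til g$, set $\Psi = \ome^\psi_V(\til g)\Phi \in \cals(V)$. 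The definition of $\scrf$ shows that $(\scrf\Psi)(0;1,b)$ is the classical Fourier transform in $r$ of the Schwartz function $r\mapsto\Psi(re+e')$, so Fourier inversion produces
\[ \Psi(e') = \int_F(\scrf\Psi)(0;1,b)\,\d b = \int_F(\Ome^\psi_V(\til g)\vph)(0;1,b)\,\d b. \]
Formula (\ref{tag:97}) then translates the parameter $b$ into a left translation via $(\Ome^\psi_V(\til\frkn(b)\til g)\vph)(0;1,0) = (\Ome^\psi_V(\til g)\vph)(0;1,b)$; together with the Borel invariance $h(\til\frkn(b)\til g) = h(\til g)$, this presents the $b$-integral as an integration over $\UU_1$ and unfolds the quotient $\UU_1\bsl\SL_2(F)$ to all of $\SL_2(F)$, producing the claimed identity.

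The main obstacle is the justification of the Fubini/unfolding step that merges the $\d b$-integration with the $\d\til g$-integration over $\UU_1\bsl\SL_2(F)$. In the regime $\sig(\mu) > -\tfrac{3}{2}$ Lemma \ref{lem:91} already guarantees absolute convergence of the Schr\"odinger-model integral, and bounding the integrand in Iwasawa coordinates using the explicit majorization of $h\in I^\psi_1(\mu)$ together with the Schwartz decay of $\Ome^\psi_V(\til g)\vph$ in the $u'$-direction should suffice to dominate the double integral absolutely and legitimize the interchange.
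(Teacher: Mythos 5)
Your proposal is correct and follows essentially the same route as the paper's proof: specialize Lemma \ref{lem:91} at $J_2$, use (\ref{tag:93}) and Lemma \ref{lem:21} to move the evaluation point from $e$ to $e'$, apply Fourier inversion to write $(\ome^\psi_V(\til g)\Phi)(e')=\int_F(\Ome^\psi_V(\til g)\vph)(0;1,u)\,\d u$, identify the $u$-integral with left translation by $\til\frkn(u)$ via (\ref{tag:97}), and unfold $\UU_1\bsl\SL_2(F)$ to $\SL_2(F)$. The paper justifies the interchange exactly as you suggest, by exhibiting the unfolded integral in Iwasawa coordinates and checking absolute convergence for $\sig(\mu)>-\tfrac{3}{2}$.
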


\begin{proof} 
The product $L(3/2,\mu\hat\chi^{-1})\Gam^\psi(J_2;h\otimes\Phi)$ equals 
\[\int_{\UU_1\bsl \SL_2(F)}h(\til g)(\ome^\psi_V(\til g,J_2)\Phi)(e)\,\d\til g=\int_{\UU_1\bsl \SL_2(F)}h(\til g)(\ome^\psi_V(\til g)\Phi)(e')\,\d \til g \]
by (\ref{tag:93}) and Lemma \ref{lem:21}.  
The Fourier inversion says that  
\[(\scrf^{-1}\vph)(re+x+r'e')=\int_F\vph(x;r',u)\psi(-ru)\,\d u. \]
We use this formula and (\ref{tag:97}) to see that the left hand side equals 
\begin{align*}
&\int_{\UU_1\bsl \SL_2(F)}h(\til g)\scrf^{-1}(\Ome^\psi_V(\til g)\vph)(e')\,\d \til g\\
=&\int_{\UU_1\bsl \SL_2(F)}h(\til g)\int_F(\Ome^\psi_V(\til g)\vph)(0;1,u)\,\d u\d \til g\\
=&\int_{\UU_1\bsl \SL_2(F)}h(\til g)\int_F(\Ome^\psi_V(\til\frkn(u)\til g)\vph)(0;1,0)\,\d u\d \til g. 
\end{align*}
We combine the integrals over $\UU_1$ and $\UU_1\bsl \SL_2(F)$ into an integral over $\SL_2(F)$ to obtain the stated formula.  
The integral thus obtained equals
\[\int_{\SL_2(\frko)}\int_{F^\times}\int_F|a|\mu(a)h(k)|a|^{3/2}\hat\chi^{-1}(a)(\Ome^\psi_V(k)\vph)(0;a,ua^{-1})|a|^{-2}\,\d u\d a\d k \]
and converges absolutely for $\sig(\mu)>-\frac{3}{2}$, which justifies all the manipulations. 
\end{proof}

\begin{lemma}\label{lem:93}
Notation being as in Lemma \ref{lem:92}, there is a constant $C$ which is independent of $\vXi$ and such that $w^{\mu\hat\chi^{-1}}_\vXi(\Gam^\psi(h\otimes\Phi))$ is equal to 
\[C|\nu(\vXi)|^{1/4}\int_{\UU_1\bsl \SL_2(F)}w^\mu_{\nu(\vXi)}(\vrh(\til g)h)(\Ome^\psi_V(\til g)\vph)(-\vXi;0,1)\, \d\til g. \] 
\end{lemma}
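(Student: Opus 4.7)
The plan is to unwind $w^{\mu\hat\chi^{-1}}_\vXi(\Gam^\psi(h\otimes\Phi))$ into an explicit double integral and, after Fubini and Weil-representation manipulations, recognize it as the claimed $\UU_1\bsl\SL_2(F)$-integral. First I would open the Jacquet integral defining $w^{\mu\hat\chi^{-1}}_\vXi$, yielding an integral over $D_-$ of $\Gam^\psi(J_1\bfn(z);h\otimes\Phi)\overline{\psi^\vXi(z)}$ times explicit L-factors. Using the $\calg_1$-equivariance of $\Gam^\psi$, Lemma \ref{lem:92} applied to $\Gam^\psi(J_1;h\otimes\ome^\psi_V(\bfn(z))\Phi)$, the commutativity of the $\Mp(W_1)$- and $\calg_1$-actions inside $\ome^\psi_V$, and the specialization $(\Ome^\psi_V(\bfn(z))\phi)(0;1,0)=\phi(-z;1,0)$ from (\ref{tag:95}), this collapses $\Gam^\psi(J_1\bfn(z);h\otimes\Phi)$ to $L(3/2,\mu\hat\chi^{-1})^{-1}\int_{\SL_2(F)}h(\til g)(\Ome^\psi_V(\til g)\vph)(-z;1,0)\,\d\til g$. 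Using $\hat\chi^{-1}\hat\chi^{-\nu(\vXi)}=\hat\chi^{\nu(\vXi)}$ and cancelling the two $L(3/2,\mu\hat\chi^{-1})$ factors, one obtains
\[w^{\mu\hat\chi^{-1}}_\vXi(\Gam^\psi(h\otimes\Phi))=\frac{|\nu(\vXi)|^{3/4}L(1,\mu^2)}{L(1/2,\mu\hat\chi^{\nu(\vXi)})}\int_{D_-}\int_{\SL_2(F)}h(\til g)(\Ome^\psi_V(\til g)\vph)(-z;1,0)\overline{\psi^\vXi(z)}\,\d\til g\,\d z.\]

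Next I would swap the two integrations (justified in the range $\sig(\mu)\gg 0$ of absolute convergence and extended by analytic continuation) and evaluate the inner $D_-$-integral. After the substitution $z\mapsto-z$ it becomes the partial Fourier transform $\int_{D_-}(\Ome^\psi_V(\til g)\vph)(z;1,0)\psi(\tau(\vXi z))\,\d z$, which by (\ref{tag:98}) applied at $(x,u,u')=(\vXi,0,-1)$ equals $(\gam^\psi_D)^{-1}(\Ome^\psi_V(\bfs(J)\til g)\vph)(\vXi;0,-1)$. Invoking (\ref{tag:96}) with $t=-1$ then rewrites this as $(\gam^\psi_D\gam^\psi(-1))^{-1}(\Ome^\psi_V(\til\frkm(-1)\bfs(J)\til g)\vph)(-\vXi;0,1)$.

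I would then left-translate $\til g\mapsto\bfs(J)^{-1}\til\frkm(-1)^{-1}\til g$ on the unimodular group $\SL_2(F)$, which preserves Haar measure. Since $J^{-1}\frkm(-1)^{-1}=J$ in $\SL_2(F)$, the element $\bfs(J)^{-1}\til\frkm(-1)^{-1}$ equals $\bfs(J)$ up to a sign $\epsilon\in\mu_2$ from Rao's cocycle, so that $h(\bfs(J)^{-1}\til\frkm(-1)^{-1}\til g)$ is a fixed $\vXi$-independent scalar multiple of $h(\bfs(J)\til g)$. Splitting $\int_{\SL_2(F)}=\int_{\UU_1\bsl\SL_2(F)}\int_F\,\d b$ and applying (\ref{tag:97}) at the point $(-\vXi;0,1)$ factors out the phase $\psi(-b\nu(\vXi))$ from $(\Ome^\psi_V(\til\frkn(b)\til g)\vph)(-\vXi;0,1)$. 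The resulting inner $F$-integral $\int_Fh(\bfs(J)\til\frkn(b)\til g)\overline{\psi^{\nu(\vXi)}(b)}\,\d b$ is, up to the cocycle relating $\bfs(J)\til\frkn(b)$ to $(J\frkn(b),1)$, exactly the Jacquet integral defining $w^\mu_{\nu(\vXi)}(\vrh(\til g)h)$ from \S\ref{ssec:52}; its normalization $|\nu(\vXi)|^{1/2}L(1,\mu^2)/L(1/2,\mu\hat\chi^{\nu(\vXi)})$ cancels the prefactor exactly, leaving $|\nu(\vXi)|^{3/4-1/2}=|\nu(\vXi)|^{1/4}$ and a single constant $C$ absorbing $\gam^\psi_D^{-1}$, $\gam^\psi(-1)^{-1}$, $\epsilon$, $\mu(-1)$, and the remaining cocycle factor, all manifestly independent of $\vXi$.

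The hard part is bookkeeping: one must carefully track several metaplectic cocycles (Rao's in $\bfs(J)^{-1}\til\frkm(-1)^{-1}$ and between $\bfs(J)$ and $\til\frkn(b)$) together with the Weil constants ($\gam^\psi_D$, $\gam^\psi(-1)$, $\mu(-1)$) to confirm that their combined contribution to $C$ does not depend on $\vXi$. Justifying the Fubini exchange and the $\SL_2(F)$ left-translation is routine once one works in the range of absolute convergence and analytically continues.
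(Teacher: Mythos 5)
Your proposal is correct and follows essentially the same route as the paper's proof: unwind the Jacquet integral via Lemma \ref{lem:92}, interchange the order of integration in the region of absolute convergence, recognize the inner $D_-$-integral as a partial Fourier transform via (\ref{tag:98}), use $J^2=\frkm(-1)$ together with (\ref{tag:96}) and (\ref{tag:97}) to reach the evaluation point $(-\vXi;0,1)$ and extract the character $\overline{\psi^{\nu(\vXi)}(b)}$, and match the $L$-factor normalizations to produce $|\nu(\vXi)|^{1/4}$. The only cosmetic difference is that you perform the left translation by $\bfs(J)$ at the end rather than at the outset, and you track the metaplectic cocycle constants somewhat more explicitly than the paper does.
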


\begin{proof}
By Lemma \ref{lem:92} we can write $w^{\mu\hat\chi^{-1}\alp^s}_\vXi(\Gam^\psi(h^{(s)}\otimes\Phi))$ as the product of 
\[|\nu(\vXi)|^{3/4}\frac{L(2s+1,\mu^2)}{L\left(s+\frac{1}{2},\mu\hat\chi^{-1}\hat\chi^\vXi\right)}\\
=|\nu(\vXi)|^{1/4}\cdot\frac{|\nu(\vXi)|^{1/2}L(2s+1,\mu^2)}{L\left(s+\frac{1}{2},\mu\hat\chi^{\nu(\vXi)}\right)}\]
and the integral
\begin{align*}
&\int_{D_-}\int_{\SL_2(F)}h^{(s)}(J\til g)(\Ome^\psi_V(J\til g,\bfn(z))\vph)(0;1,0)\overline{\psi^\vXi(z)}\,\d \til g\d z\\
=&\int_{D_-}\int_{\SL_2(F)}h^{(s)}(J\til g)(\Ome^\psi_V(J\til g)\vph)(-z;1,0)\psi(-\tau(\vXi z))\,\d \til g\d z 
\end{align*}
by (\ref{tag:95}). 
Since the double integral is absolutely convergent for $\Re s\gg 0$, we may interchange the order of integration. 
Using (\ref{tag:98}) and (\ref{tag:97}), we get 
\begin{align*}
&\frac{1}{\gam^\psi_D}\int_{\SL_2(F)}h^{(s)}(J\til g)(\Ome^\psi_V(J^2\til g)\vph)(\vXi;0,-1)\,\d \til g\\
=&\frac{\gam^\psi(-1)}{\gam^\psi_D}\int_{\SL_2(F)}h^{(s)}(J\til g)(\Ome^\psi_V(\til g)\vph)(-\vXi;0,1)\,\d \til g\\
=&\frac{\gam^\psi(-1)}{\gam^\psi_D}\int_{\UU_1\bsl \SL_2(F)}\int_F h^{(s)}(J\til\frkn(b)\til g)\overline{\psi^{\nu(\vXi)}(b)}\,\d b\, (\Ome^\psi_V(\til g)\vph)(-\vXi;0,1)\,\d \til g. 
\end{align*}
The outer integral converges absolutely for all $s$. 
The proof is complete by evaluating the equality at $s=0$.   
\end{proof}

\begin{corollary}\label{cor:91}
\begin{enumerate}
\renewcommand\labelenumi{(\theenumi)}
\item\label{cor:911} If $-\frac{1}{2}<\sig(\mu)<\frac{1}{2}$, then $\Gam^\psi(I^\psi_1(\mu)\otimes\ome^\psi_V)=J_1(\mu\hat\chi^{-1})$. 
\item\label{cor:912} If $\mu^2=\alp$, then $\Gam^\psi(A^\psi_1(\mu)\otimes\ome^\psi_V)=A_1(\mu\hat\chi^{-1})$. 
\end{enumerate}
\end{corollary}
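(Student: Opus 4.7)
The argument parallels the proofs of Corollaries \ref{cor:71} and \ref{cor:72}, with Lemma \ref{lem:93} serving as the bridge between Jacquet functionals on source and target. The targets $J_1(\mu\hat\chi^{-1})$ and $A_1(\mu\hat\chi^{-1})$ are irreducible in the relevant ranges: the former by Proposition \ref{prop:31}(\ref{prop:311}) applied to the $G_1$-restriction (since $\sig(\mu\hat\chi^{-1})=\sig(\mu)\in(-\tfrac12,\tfrac12)$ and $\mathcal{G}_1=\{\bfd(t)\}\ltimes G_1$), the latter by Proposition \ref{prop:31}(\ref{prop:313}). It therefore suffices, in each part, to prove nonvanishing of $\Gam^\psi$ and, in part (2), additionally to verify that the image is contained in the submodule $A_1(\mu\hat\chi^{-1})$.

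\textbf{Nonvanishing.} Pick $\vXi\in D_-^\nd$ and $h\in I_1^\psi(\mu)$ with $w^\mu_{\nu(\vXi)}(h)\neq 0$; this is possible in case (1) by Proposition \ref{prop:51}(\ref{prop:512}), and in case (2) by Proposition \ref{prop:51}(\ref{prop:513}) after additionally requiring $\hat\chi^{\nu(\vXi)}\neq\mu\alp^{-1/2}$. By Lemma \ref{lem:93},
\[
w^{\mu\hat\chi^{-1}}_\vXi(\Gam^\psi(h\otimes\Phi))=C|\nu(\vXi)|^{1/4}\int_{\UU_1\bsl\SL_2(F)}w^\mu_{\nu(\vXi)}(\vrh(\til g)h)\,(\Ome^\psi_V(\til g)\vph)(-\vXi;0,1)\,\d\til g.
\]
Localize $\vph=\scrf\Phi$, using the explicit mixed-model formulas \eqref{tag:94}--\eqref{tag:98}, so that $(\Ome^\psi_V(\til g)\vph)(-\vXi;0,1)$ concentrates in an arbitrarily small neighborhood of the identity coset; there $w^\mu_{\nu(\vXi)}(\vrh(\til g)h)$ is close to $w^\mu_{\nu(\vXi)}(h)\neq 0$, yielding a nonzero integral.

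\textbf{Containment in part (2).} Suppose $\vXi\in D_-^\nd$ satisfies $\hat\chi^\vXi=\mu\hat\chi^{-1}\alp^{-1/2}$. By the convention $\hat\chi^B=\hat\chi^{(-1)^n\nu(B)}$ of \S \ref{ssec:31}, we have $\hat\chi^\vXi=\hat\chi^{-\nu(\vXi)}=\hat\chi^{\nu(\vXi)}\hat\chi^{-1}$, so the condition is equivalent to $\hat\chi^{\nu(\vXi)}=\mu\alp^{-1/2}$. Proposition \ref{prop:51}(\ref{prop:513}) then gives $w^\mu_{\nu(\vXi)}|_{A_1^\psi(\mu)}=0$, and the formula in Lemma \ref{lem:93} forces $w^{\mu\hat\chi^{-1}}_\vXi\circ\Gam^\psi=0$ on $A_1^\psi(\mu)\otimes\ome^\psi_V$. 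The description $J_1(\mu\hat\chi^{-1})/A_1(\mu\hat\chi^{-1})\simeq\bigoplus_{\hat\chi^{B'}=\mu\hat\chi^{-1}\alp^{-1/2}}R^\psi_1(B')$ from the proof of Proposition \ref{prop:31}(\ref{prop:313})---each summand being irreducible with Whittaker model supported on a single such equivalence class---implies that the simultaneous vanishing of all these functionals on the image forces $\Gam^\psi(A_1^\psi(\mu)\otimes\ome^\psi_V)\subset A_1(\mu\hat\chi^{-1})$, exactly as in the proof of Corollary \ref{cor:71}(\ref{cor:713}). Combined with the nonvanishing above and irreducibility of $A_1(\mu\hat\chi^{-1})$, this gives surjectivity.

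\textbf{Main obstacle.} The only delicate step is the localization that forces a nonzero value of the outer integral. One must verify that $\Phi\in\cals(V)$ can be chosen so that, after passing to the mixed model via $\scrf$, the function $(\Ome^\psi_V(\til g)\vph)(-\vXi;0,1)$ is concentrated near a prescribed coset of $\UU_1\bsl\SL_2(F)$. The explicit formulas \eqref{tag:94}--\eqref{tag:98} for the action of $\til\frkm$, $\til\frkn$, and $\bfs(J)$ on mixed-model coordinates make this a direct verification, completely analogous to the ``choose $\phi$ to be supported in a small neighborhood'' step at the end of the proof of Corollary \ref{cor:71}(\ref{cor:713}).
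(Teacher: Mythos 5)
Your proposal is correct and follows essentially the same route as the paper: containment in $A_1(\mu\hat\chi^{-1})$ via the vanishing of $w^{\mu\hat\chi^{-1}}_\vXi\circ\Gam^\psi$ for $\hat\chi^{\nu(\vXi)}=\mu\alp^{-1/2}$ (Proposition \ref{prop:51}(\ref{prop:513}) plus Lemma \ref{lem:93} and the structure of $I_1/A_1$ from Proposition \ref{prop:31}(\ref{prop:313})), then nonvanishing via Lemma \ref{lem:93} and a localized choice of Schwartz function, concluding by irreducibility of the targets. The paper carries out your ``localization'' step concretely by writing $\vph=\vph'\otimes\vph''$ with $\vph''\in\cals(F^2)$, rewriting the integral over coordinates $(c,a)$, and taking $\vph''$ supported near $(0,1)$, which is exactly the verification you describe.
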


\begin{proof}
If $\hat\chi^\vXi=\mu\hat\chi^{-1}\alp^{-1/2}$, then $\hat\chi^{\nu(\vXi)}=\mu\alp^{-1/2}$, and hence 
\[w^{\mu\hat\chi^{-1}}_\vXi(\Gam^\psi(h\otimes\Phi))=0\]
for all $h\in A^\psi_1(\mu)$ and $\Phi\in\ome^\psi_V$ by Proposition \ref{prop:51}(\ref{prop:513}) and Lemma \ref{lem:93}. 
We can therefore infer from Proposition \ref{prop:31}(\ref{prop:313}) that $\Gam^\psi(A^\psi_1(\mu)\otimes\ome^\psi_V)\subset A_1(\mu\hat\chi^{-1})$.  
Employing Proposition \ref{prop:51} again, we can take $\vXi\in D_-^\nd$ and a test vector $h$ for which $w^\mu_{\nu(\vXi)}(h)\neq 0$. 
If $\vph=\vph'\otimes\vph''$ with $\vph'\in\cals(V_D)$ and $\vph''\in\cals(F^2)$, then we obtain 
\begin{multline*}
w^{\mu\hat\chi^{-1}}_\vXi(\Gam^\psi(h\otimes\Phi))=C|\nu(\vXi)|^{1/4}\int_F\int_{F\setminus\{0\}}\vph''(c,a)\\
\times w^\mu_{\nu(\vXi)}\left(\vrh\left(\begin{pmatrix} a^{-1} & 0 \\ c & a\end{pmatrix}\right)h\right)\left(\ome^\psi_{V_D}\left(\begin{pmatrix} a^{-1} & 0 \\ c & a\end{pmatrix}\right)\vph'\right)(-\vXi)\, \d a\d c  
\end{multline*}
by rewriting the formula in Lemma \ref{lem:93}, where $\d a$ and $\d c$ are Haar measures on $F$. 
This integral can be made nonzero by choosing $\vph''$ to be supported in a small neighborhood of $(0,1)$. 
Thus $\Gam^\psi$ is nonzero, which verifies the claimed results as the target spaces are irreducible. 
\end{proof}

\begin{remark}\label{rem:92}
Corollary \ref{cor:91} implies that 
\begin{align*}
\tht^\psi_V(I^\psi_1(\mu)^\vee)&\simeq J_1(\mu\hat\chi^{-1}), & 
\tht^\psi_V(A^\psi_1(\mu)^\vee)&\simeq A_1(\mu\hat\chi^{-1}). 
\end{align*}
This result is stated in Propositions 5.2 and 6.3 of \cite{G}. 
\end{remark}


In Lemma 7.6 of \cite{Ic2} Ichino explicitly constructed a Schwartz function $\Lam\in\cals(V_D\oplus \RR^2)$ with the following property. 

\begin{lemma}\label{lem:94}
Suppose that $F=\RR$ and $D\simeq\Mat_2(\RR)$. 
There is $\Lam\in\cals(V_D\oplus \RR^2)$ such that for all $\vXi\in D^+_-$
\[W^{(\ell)}_\vXi(g)=2^\ell\nu(\vXi)^{1/4}\int_{\UU_1\bsl \SL_2(\RR)}W^{(\ell-1/2)}_{\nu(\vXi)}(\til g)(\Ome^\psi_V(\til g,g)\Lam)(-\vXi;0,1)\, \d\til g \]
and for all $\vXi\in -D^+_-$
\[\int_{\UU_1\bsl \SL_2(\RR)}W^{(\ell-1/2)}_{\nu(\vXi)}(\til g)(\Ome^\psi_V(\til g,g)\Lam)(-\vXi;0,1)\, \d\til g=0. \]
\end{lemma}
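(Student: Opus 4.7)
The idea is to translate and verify Ichino's explicit construction in \cite[Lemma 7.6]{Ic2}. The argument proceeds in three conceptual steps.

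First, I would characterize $\Lam$ by its transformation properties. Both identities require the $\SL_2(\RR)$-integral to yield a function of $g$ that is holomorphic in $g(\bfi)\in\frkH_1$, while the integrand pairs $\Lam$ with $W^{(\ell-1/2)}_{\nu(\vXi)}(\til g)$, which is holomorphic in $\til g(\iu)\in\calh_1$. Together these force $\Lam$ to be a joint eigenvector for the $\til\calk_1\times K_1$-action via $\Ome^\psi_V$, transforming by $\jmath(\til k,\iu)^{-(2\ell-1)}$ on the metaplectic side and by $j(k,\bfi)^{-\ell}$ on the $\calg_1$-side. Since $V$ has signature $(3,2)$, such a joint lowest-weight vector is unique up to scalar by the structure of the archimedean theta correspondence for holomorphic discrete series.

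Second, I would exhibit $\Lam$ in the form
\[
\Lam(x;u,u') = P(x,u,u')\, e^{-\pi Q(x,u,u')},
\]
where $Q$ is the positive definite majorant of the indefinite quadratic form on $V_D\oplus\RR^2$ determined by the base point $\bfi$, and $P$ is an explicit polynomial that promotes the Gaussian to the prescribed weight. The overall normalization is fixed by requiring the constant $2^\ell$ in the target formula.

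Third, I would verify the first identity for $\vXi\in D_-^+$ by a direct computation: using the Iwasawa decomposition $\SL_2(\RR)=\UU_1 A K$ and the formulas \eqref{tag:96}--\eqref{tag:98} for $\Ome^\psi_V$, the $\SL_2$-integral collapses to a single elementary Gaussian integral over $A\simeq\RR^\times_+$, which evaluates to $2^\ell\nu(\vXi)^{1/4} W^{(\ell)}_\vXi(g)$. For the second identity with $\vXi\in -D_-^+$, the joint lowest-weight property of $\Lam$ ensures that the pairing defines a function on $\calg_1^+$ whose dependence on $g(\bfi)\in\frkH_1$ is antiholomorphic-free, so its $\vXi$-Fourier coefficient is supported on the positive cone $D_-^+$ and must vanish on $-D_-^+$.

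The main obstacle is the explicit Gaussian calculation in the third step, which demands careful simultaneous bookkeeping of the automorphy factors $\gam^\psi$, $\jmath$, $j$, the eighth root of unity $\gam^\psi_D$, and the Haar measure normalizations. Ichino carries out this computation in detail in \cite[Lemma 7.6]{Ic2}, so the real work reduces to checking that the parametrizations match and transporting his answer into the notation used here.
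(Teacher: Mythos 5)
Your proposal matches the paper's treatment: the paper gives no independent argument for this lemma and simply invokes Ichino's explicit construction in Lemma 7.6 of \cite{Ic2}, exactly as you do, with your three steps being a reasonable sketch of what that construction amounts to (a joint lowest-weight Gaussian-type vector in the mixed model, an Iwasawa-decomposition computation for the positive cone, and holomorphy forcing vanishing on the negative cone). So the approach is essentially the same, with the substantive verification deferred to \cite{Ic2} in both cases.
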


\subsection{Fourier coefficients for Saito-Kurokawa liftings}\label{ssec:94}

Let $F$ be a totally real number field and $D$ a totally indefinite quaternion algebra over $F$. 
We denote by $\ome^\psi_V\simeq\otimes'_v\ome^{\psi_v}_{V_v}$ the Schr\"{o}dinger model of the global Weil representation and by $\Ome^\psi_V$ its mixed model. 
These models are related by the intertwining map $\scrf:\cals(V(\AA))\to\cals(V_D(\AA)\oplus\AA^2)$. 
We write $\ome^{\psi_\bff}_V$ and $\Ome^{\psi_\bff}_V$ for their finite parts. 
For $\Phi\in\cals(V(\AAf))$ we define a Schwartz function $\Phi_\Lam$ on $V(\AA)$ by 
\begin{align*}
\Phi_\Lam(x)&=\Phi(x_\bff)\prod_{v\in\frkS_\infty}(\scrf^{-1}_v\Lam)(x_v), &
x&=(x_v)\in V(\AA). 
\end{align*}

Taking Lemma \ref{lem:91} and Corollary \ref{cor:91} into account, we define a surjective homomorphism 
\beq
\Gam^{\psi_\bff}=\otimes_{v\notin\frkS_\infty}\Gam^{\psi_v}: A^{\psi_\bff}_1(\mu^{}_\bff)\otimes\ome^{\psi_\bff}_V\twoheadrightarrow A_1(\mu^{}_\bff\hat\chi^{-1}_\bff). \label{tag:99}
\eeq 

The following result is nothing but Lemma 47 of \cite{W2} (cf. Remark \ref{rem:91}). 

\begin{lemma}\label{lem:95}
If $\calf\in\scra_{00}$, $\vph\in\cals(V_D(\AA)\oplus\AA^2)$ and $0\neq \vXi\in D_-(F)$, then 
\[W_\vXi(\tht^\psi_V(\calf,\vph))
=\int_{\UU_1(\AA)\bsl\SL_2(\AA)}W_{\nu(\vXi)}(\til g,\calf)(\Ome^\psi_V(\til g)\vph)(-\vXi;0,1)\, \d\til g. \]
\end{lemma}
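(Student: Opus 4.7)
This identity is the direct analogue of \cite[Lemma 47]{W2}, and the proof follows Waldspurger's theta-unfolding argument. Starting from
\[
W_\vXi(\tht^\psi_V(\calf,\vph))=\int_{D_-(F)\bsl D_-(\AA)}\int_{\SL_2(F)\bsl\SL_2(\AA)}\calf(\til g)\,\Tht(\Ome^\psi_V(\til g,\bfn(z))\vph)\,\psi(-\tau(\vXi z))\,d\til g\,dz,
\]
I would expand the theta series in the mixed model as $\sum_{(x,u,u')\in V_D(F)\oplus F^2}(\Ome^\psi_V(\til g,\bfn(z))\vph)(x;u,u')$, invoke the action formula (\ref{tag:95}) to make the $z$-dependence explicit, and interchange the $z$-integration with the theta sum.

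The resulting $z$-integrals split according to $(x,u,u')$-type. For summands with $u=0$, the $z$-dependence is purely through the character $\psi(-\tau(z(xu'+\vXi)))$, whose integral over $D_-(F)\bsl D_-(\AA)$ vanishes unless $xu'+\vXi=0$; combined with $\vXi\neq 0$, only the family $u=0$, $u'=s\in F^\times$, $x=-\vXi/s$ survives. The summands with $u\neq 0$ carry an additional Gaussian $\psi(uu'\nu(z))$ in the exponent, and the combined $z$-integration plus Poisson summation shows that they make no net contribution to the Fourier coefficient---this is the heart of the argument in \cite[Lemma 47]{W2}. Up to the volume of $D_-(F)\bsl D_-(\AA)$, what remains is
\[
\int_{\SL_2(F)\bsl\SL_2(\AA)}\calf(\til g)\sum_{s\in F^\times}(\Ome^\psi_V(\til g)\vph)(-\vXi/s;0,s)\,d\til g.
\]

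The second half unfolds this along the mirabolic. By (\ref{tag:97}), the $s$-summand is $\psi(-b\nu(\vXi)/s^2)$-equivariant under $\til\frkn(b)$, matching the Whittaker character of $W_{\nu(\vXi)/s^2}(\cdot,\calf)$, so integrating out $\UU_1(F)\bsl\UU_1(\AA)$ replaces $\calf(\til g)$ by $W_{\nu(\vXi)/s^2}(\til g,\calf)$. Using (\ref{tag:96}) and $W_{\nu(\vXi)/s^2}(\til\frkm(s)\til g,\calf)=|s|^2W_{\nu(\vXi)}(\til g,\calf)$, together with the product formulas on $F^\times$ for $|s|_\AA$ and the Weil index $\gam^\psi(s)$, the $s$-dependence trivializes and the sum consolidates into the single integral of $W_{\nu(\vXi)}(\til g,\calf)(\Ome^\psi_V(\til g)\vph)(-\vXi;0,1)$ over $\UU_1(\AA)\bsl\SL_2(\AA)$.

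The main obstacle is the orbit-vanishing argument for the $u\neq 0$ contributions in the first half, which requires a delicate Poisson summation on $D_-$ and the $\SL_2(F)$-automorphy of the theta kernel; the metaplectic cocycle bookkeeping in the second half is also non-trivial. The sign convention recorded in Remark \ref{rem:91} between (\ref{tag:95}) and Waldspurger's $n_2(z)=\bfn(-z)$ must be kept in mind throughout.
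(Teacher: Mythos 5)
The paper offers no proof of this lemma at all: it is quoted as Lemma 47 of \cite{W2} (with the sign dictionary of Remark \ref{rem:91}), so what you have written is necessarily a reconstruction of Waldspurger's argument. Your skeleton is the right one and matches his: unfold the $\vXi$-th Fourier coefficient, pass to the mixed model, use (\ref{tag:95}) to see that the $z$-integration annihilates every $u=0$ summand except the family $x=-\vXi/s$, $u'=s\in F^\times$, and then unfold that $s$-sum along the Borel using (\ref{tag:96})--(\ref{tag:97}) together with the product formulas $|s|_\AA=1$ and $\prod_v\gam^{\psi_v}(s)=1$.

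The gap is in your treatment of the $u\neq 0$ terms. For fixed $u\neq 0$ the sum over $x\in V_D(F)$ combined with the integral over $D_-(F)\bsl D_-(\AA)$ collapses (this \emph{is} the Poisson-type step) to a single integral $\int_{D_-(\AA)}\psi\bigl(uu'\nu(z)-\tau(\vXi z)\bigr)(\Ome^\psi_V(\til g)\vph)(-uz;u,u')\,\d z$ over the full adelic group; this is a Gaussian/Fresnel integral which does \emph{not} vanish pointwise in $\til g$, so "$z$-integration plus Poisson summation" cannot make these terms disappear. What kills them is the subsequent integration against $\calf$: after the surviving pairs $(u,u')$ with $u\neq 0$ are organized into orbits, the degenerate contribution becomes a sum of pairings of $\calf$ with elementary theta functions attached to one-dimensional quadratic spaces, and these vanish precisely because $\calf\in\scra_{00}$ is orthogonal to all elementary theta series. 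Your sketch never invokes this hypothesis, yet it is the only reason the lemma is stated for $\scra_{00}$ rather than for arbitrary cusp forms. A secondary point to be careful about: the splitting of the theta kernel into its $u=0$ and $u\neq 0$ parts is only $B(F)$-equivariant (the Weyl element mixes the two), so the two pieces cannot each be integrated against $\calf$ over $\SL_2(F)\bsl\SL_2(\AA)$ separately; the unfolding must be organized along $B(F)\bsl\SL_2(F)$, or by first inserting the Fourier expansion of $\calf$ along $\UU_1$, before the decomposition is used.
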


\begin{lemma}\label{lem:96}
If $0\neq\{c_t\}_{t\in F^\times_{\pi_\bff}}\in C^{(1)}_{(2\kap+1)/2}(\mu_\bff)$, then  
\[0\neq\{c_{\nu(\vXi)}\}_{\vXi\in D^{\pi^{}_\bff\otimes\hat\chi^{-1}_\bff}_-}\in C^1_{\kap+1}(\mu^{}_\bff\hat\chi^{-1}_\bff). \]  
\end{lemma}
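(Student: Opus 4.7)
The plan is to manufacture the cusp forms required by $C^1_{\kap+1}(\mu_\bff\hat\chi_\bff^{-1})$ as restrictions to $G_1(\AA)$ of theta lifts of the half-integral-weight Hilbert cusp forms furnished by $\{c_t\}$. Denote by $i\colon A^{\psi_\bff}_1(\mu_\bff)\hookrightarrow\frkC^{(1)}_{(2\kap+1)/2}$ the embedding $i(h)=\calf_{(2\kap+1)/2}(h,\{c_t\})$; since each $\kap_v\in\NN$, the archimedean components of $i(h)$ are holomorphic discrete series of lowest weight $\kap_v+\tfrac12$, so $i(h)$ is orthogonal to every elementary theta series of $\ome^\psi_\eta$ and lies in $\scra_{00}$. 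For $h\in A^{\psi_\bff}_1(\mu_\bff)$ and $\Phi\in\cals(V(\AAf))$, set $F_{h,\Phi}(g):=\tht^\psi_V(g;i(h),\Phi_\Lam)$, where $\Phi_\Lam$ agrees with $\Phi$ at finite places and with $\scrf^{-1}\Lam$ at each archimedean place.

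Applying Lemma \ref{lem:95} to $F_{h,\Phi}$, inserting the Fourier expansion of $i(h)$, and collapsing via orthogonality of additive characters reduces the sum to the single term $t=\nu(\vXi)$. Factoring the resulting integral place-by-place, I apply Lemma \ref{lem:94} at each $v\in\frkS_\infty$, which forces $\vXi\in D_-^+$ and yields $W^{(\kap_v+1)}_\vXi(g_v)$ up to a constant times $|\nu(\vXi)|_v^{-1/4}$; and Lemma \ref{lem:93} at each $v\notin\frkS_\infty$, which gives $w^{\mu_v\hat\chi_v^{-1}}_\vXi(\wp(g_v)\Gam^{\psi_v}(h_v\otimes\Phi_v))$ up to a constant times $|\nu(\vXi)|_v^{-1/4}$. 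The product formula $\prod_v|\nu(\vXi)|_v=1$ cancels the combined normalization, producing a nonzero constant $C$, independent of $\vXi,h,\Phi$, such that
$$W_\vXi(g,F_{h,\Phi})=C\cdot c_{\nu(\vXi)}\cdot W^{(\kap+1)}_\vXi(g_\infty)\cdot w^{\mu_\bff\hat\chi_\bff^{-1}}_\vXi\bigl(\vrh(g_\bff)\Gam^{\psi_\bff}(h\otimes\Phi)\bigr)$$
for $\vXi\in D_-^+$, and $W_\vXi(g,F_{h,\Phi})=0$ otherwise.

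This Fourier expansion identifies $F_{h,\Phi}|_{G_1(\AA)}$ with $C\cdot\calf_{\kap+1}\bigl(\Gam^{\psi_\bff}(h\otimes\Phi),\{c_{\nu(\vXi)}\}\bigr)$, which therefore lies in $\frkC^1_{\kap+1}$ (cuspidality being automatic from the totally positive support of the expansion). Since $\Gam^{\psi_\bff}$ is surjective onto $A_1(\mu_\bff\hat\chi_\bff^{-1})$ by (\ref{tag:99}), letting $h$ and $\Phi$ vary exhausts the entire representation, proving $\{c_{\nu(\vXi)}\}\in C^1_{\kap+1}(\mu_\bff\hat\chi_\bff^{-1})$. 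For nonvanishing, choose $t_0\in F^\times_{\pi_\bff}$ with $c_{t_0}\neq 0$; since $D$ is totally indefinite, the ternary quadratic form $\nu$ on $D_-$ represents every totally positive element by Hasse--Minkowski, so some $\vXi_0\in D_-^+(F)$ has $\nu(\vXi_0)=t_0$, forcing $c_{\nu(\vXi_0)}=c_{t_0}\neq 0$. The main technical obstacle lies at the archimedean places: one must carefully verify, using the precise choice of $\Lam$ in Lemma \ref{lem:94}, that the inner $\til g$-integral collapses to $W^{(\kap_v+1)}_\vXi$ and vanishes identically for $\vXi\in -D_-^+$; the finite-place and global assembly are then essentially formal once absolute convergence justifies the Fubini interchanges.
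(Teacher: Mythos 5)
Your overall strategy is the paper's: take the theta lift of $\calf_{(2\kap+1)/2}(h,\{c_t\})$ to $\calg_1$, compute its Fourier coefficients via Lemma \ref{lem:95}, and match them term by term using Lemma \ref{lem:94} at the archimedean places and Lemma \ref{lem:93} at the finite places, then use the surjectivity of $\Gam^{\psi_\bff}$ from (\ref{tag:99}). However, there are two genuine gaps. First, your parenthetical claim that ``cuspidality is automatic from the totally positive support of the expansion'' is circular: Lemma \ref{lem:95} only computes $W_\vXi$ for $\vXi\neq 0$, so nothing you have written controls the constant term $W_0$ of the theta lift, and without $W_0=0$ you cannot identify $F_{h,\Phi}$ with the Fourier series $C\,\calf_{\kap+1}(\Gam^{\psi_\bff}(h\otimes\Phi),\{c_{\nu(\vXi)}\})$, nor conclude membership in $\frkC^1_{\kap+1}$. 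The paper supplies the missing step via the tower property: since anti-holomorphic discrete series do not occur in the quotient of $\ome^{\psi_v}_{V_{D_v}}$ for $v\in\frkS_\infty$, the lift to the smaller space $V_D$ of any element of $\frkC^{(1)}_{(2\kap+1)/2}$ vanishes, hence the lift to $V$ is cuspidal and $W_0=0$.

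Second, your nonvanishing argument fails. It is not true that $\nu$ restricted to $D_-$ represents every totally positive element: at a finite place $v$ where $D$ is division, $\nu|_{D_{-,v}}$ is an anisotropic ternary form and misses exactly one square class (namely $t$ with $-t\in F_v^{\times 2}$), so ``totally indefinite plus Hasse--Minkowski'' does not give you a $\vXi_0$ with $\nu(\vXi_0)=t_0$ for an arbitrary prescribed $t_0$ with $c_{t_0}\neq 0$. Even after repairing the representability, one would still have to arrange simultaneously that the represented $t_0$ satisfies $c_{t_0}\neq 0$, which by Lemma \ref{lem:51}(\ref{lem:516}) is a nonvanishing condition on $L(1/2,\pi_\bff\otimes\hat\chi^{t_0}_\bff)$; this is exactly the kind of juggling done in Lemma \ref{lem:84} via Waldspurger's nonvanishing theorem. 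The paper avoids all of this by proving nonvanishing of the map $f\mapsto\calf_{\kap+1}(f,\{c_{\nu(\vXi)}\})$ directly from the Rallis inner product formula, Proposition \ref{prop:81}(\ref{prop:812}) (applicable since $\dim V=5$), together with the identification of the local lifts in Remark \ref{rem:92}.
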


\begin{proof}
We have $\calf_{(2\kap+1)/2}(h,\{c_t\})\in\frkC^{(1)}_{(2\kap+1)/2}$ for all $h\in A^{\psi_\bff}_1(\mu_\bff)$ by assumption. 
We consider its Saito-Kurokawa lift
\[\tht^\psi_V(g;\calf_{(2\kap+1)/2}(h,\{c_t\}),\Phi_\Lam)=\sum_{\vXi\in D_-(F)}W_\vXi(g,\tht^\psi_V(\calf_{(2\kap+1)/2}(h,\{c_t\}),\Phi_\Lam)).\]
Anti-holomorphic discrete series representations of $\Mp(W_1)_v$ do not occur in the quotient of the Weil representation $\ome^{\psi_v}_{V_{D_v}}$ for $v\in\frkS_\infty$. 
Consequently, $\tht^\psi_{V_D}\bigl(\frkC^{(1)}_{(2\kap+1)/2}\bigl)=\{0\}$, and so by the tower property, the space $\tht^\psi_V\bigl(\frkC^{(1)}_{(2\kap+1)/2}\bigl)$ consists of cuspidal automorphic forms on $\calg_1$. 
It follows that 
\[W_0(\tht^\psi_V(\calf_{(2\kap+1)/2}(h,\{c_t\}),\Phi_\Lam))=0. \]

Lemmas \ref{lem:94} and \ref{lem:95} show that 
\[W_\vXi(\tht^\psi_V(\calf_{(2\kap+1)/2}(h,\{c_t\}),\Phi_\Lam))=0 \]
unless $\vXi\in D_-^{\pi^{}_\bff\otimes\hat\chi^{-1}_\bff}$, in which case Lemma \ref{lem:93} gives a nonzero constant $C$ which is independent of $\vXi$ and such that 
\[W_\vXi(g_\infty,\tht^\psi_V(\calf_{(2\kap+1)/2}(h,\{c_t\}),\Phi_\Lam))=C c_{\nu(\vXi)}W^{(\kap+1)}_\vXi(g_\infty)w^{\mu^{}_\bff\hat\chi^{-1}_\bff}_\vXi(\Gam^{\psi_\bff}(h,\Phi)). \]
We conclude that 
\[\tht^\psi_V(g;\calf_{(2\kap+1)/2}(h,\{c_t\}),\Phi_\Lam)=C\calf_{\kap+1}(g_\infty;\Gam^{\psi_\bff}(h,\ome^{\psi_\bff}_V(g_\bff)\Phi),\{c_{\nu(\vXi)}\}) \]
for all $h\in A^{\psi_\bff}_1(\mu_\bff)$ and $\Phi\in\cals(V(\AAf))$. 
We therefore see by (\ref{tag:99}) that $\calf_{\kap+1}(f,\{c_{\nu(\vXi)}\})\in\frkG^1_{\kap+1}\subset\frkC^1_{\kap+1}$ for all $f\in A^{}_1(\mu^{}_\bff\hat\chi^{-1}_\bff)$. 
The map $f\mapsto\calf_{\kap+1}(f,\{c_{\nu(\vXi)}\})$ is nonzero by Proposition \ref{prop:81}(\ref{prop:812}) and Remark \ref{rem:92}.  
\end{proof}


\subsection{End of the proof of Theorem \ref{thm:61}}\label{ssec:95}

Let $\{c_t\}_{t\in F^\times_{\pi_\bff}}\in C^{(1)}_{(2\kap+1)/2}(\mu_\bff)$. 
Fix $\eta\in F^\times_+$. 
Put $c_B=c_{\eta\nu(B)}$ for $B\in S^{\pi^{}_\bff\otimes\hat\chi^{(-1)^n\eta}_\bff}_n$. 
Thanks to the estimate of Fourier coefficients given in Proposition A.6.4 of \cite{Sh3}, we can invoke Lemma \ref{lem:42} to guarantee convergence of the series $\calf_{\kap+n}(f,\{c_B\})$ for $f\in A_n(\mu^{}_\bff\hat\chi^{(-1)^n\eta}_\bff)$. 
Moreover, $\{c_B\}\in T^n_{\kap+n}(\mu^{}_\bff\hat\chi^{(-1)^n\eta}_\bff)$ by Lemma \ref{lem:51}(\ref{lem:512}), (\ref{lem:515}). 
Note that $\{c_{\eta\nu(S)t}\}_t\in C^{(1)}_{(2\kap+1)/2}(\mu^{}_\bff\hat\chi^{\eta\nu(S)}_\bff)$ by Lemma \ref{lem:51}(\ref{lem:515}). 
Since 
\[\{c_{\eta\nu(S)\nu(\vXi)}\}_{\vXi\in D^{\pi^{}_\bff\otimes\hat\chi^{-\eta\nu(S)}_\bff}_-}\in C^1_{\kap+1}(\mu_\bff^{}\hat\chi^{-\eta\nu(S)}_\bff)=C^1_{\kap+1}(\mu_\bff^{}\hat\chi^{(-1)^n\eta}_\bff\hat\chi^S_\bff)\] 
for all $S\in S^+_{n-1}$ by Lemma \ref{lem:96}, our proof is complete by Lemma \ref{lem:76}. 

\begin{remark}\label{rem:93} 
When $D(F)\simeq\Mat_2(F)$, we have reproved Theorem \ref{thm:12}. 
\end{remark}

\section{Translation to classical language}\label{sec:10}

The norm and the order of a fractional ideal of $\frko$ are defined by $\frkN(\frkp_v^k)=q_v^k$ and $\ord_v\frkp_v^k=k$. 
We denote the different of $F/\QQ$ by $\frkd$, the Dedekind zeta function of $F$ by $\zet_F(s)$ and the product of all the prime ideals of $\frko$ ramified in $D$ by $\frke^D$. 
Put 
\begin{align*}
\Gam_n[\frkd]&=\left\{\begin{pmatrix} a & b \\ c & d \end{pmatrix}\in G_n(F)\;\biggl|\; a,d\in\Mat_n(\calo),\;b\in\frkd^{-1}\Mat_n(\calo),\;c\in\frkd\Mat_n(\calo)\right\}, \\
\scrr^+_n&=\{B\in S_n^+\;|\;\tau(Bz)\in\frko\text{ for all }z\in S_n(F)\cap\Mat_n(\calo)\}. 
\end{align*}

For $t\in F^\times$ and $B\in S^+_n$ we denote the conductors of $\hat\chi^t$ and $\hat\chi^B$ by $\frkd^t$ and $\frkd^B$, respectively, and define rational numbers $\frkf_t$, $D_B$ and $\frkf_B$ by  
\begin{align*}
\frkf_t&=\sqrt{\frac{|N_{F/\QQ}(t)|}{N(\frkd^t)}}, & 
D_B&=\frkN(\frke^D)^{2[(n+1)/2]}N_{F/\QQ}(\nu(2B)), & 
\frkf_B&=\sqrt{\frac{D_B}{N(\frkd^B)}}.  
\end{align*}
We write $t\equiv\Box\pmod 4$ if there is $y\in\frko$ such that $t\equiv y^2\pmod 4$. 

For $t\in F^\times$ and a finite prime $v$ we define $f_v^t\in\ZZ$ by 
$f^t_v=\frac{1}{2}(\ord_vt-\ord_v\frkd^t)$. 
We define $\Psi(t,X)\in\CC[X+X^{-1}]$ by 
\[\Psi_v(t,X)=\begin{cases} 
\frac{X^{f_v^t+1}-X^{-f_v^t-1}}{X-X^{-1}}+\bigl(L\bigl(\frac{1}{2},\hat\chi^t_v\bigl)^{-1}-1\bigl)\frac{X^{f_v^t}-X^{-f_v^t}}{X-X^{-1}}&\text{if $f_v^t\geq 0$, }\\
0 &\text{if $f_v^t<0$. }
\end{cases}\]
For $B\in S^\nd_n$ we set $\wtl{F}_v(B,X)=X^{-f^B_v}F_v(B,q_v^{-(2n+1)/2}X)$, where 
\[f_v^B=f_v^{(-1)^n\nu(2B)}+
\begin{cases} 
0 &\text{if $v\nmid\frke^D$, }\\
\left[\frac{n+1}{2}\right] &\text{otherwise. }
\end{cases}\]  

For simplicity we consider the parallel weight case. 
Let $\kap$ be an integer greater than one such that $d(\kap+n)$ is even. 
Suppose that $\pi_\bff\simeq\otimes'_{v\notin\frkS_\infty}I(\alp^{s_v},\alp^{-s_v})$ appears in $\frkS_{2\kap}$. 
Then $\pi_\bff$ is isomorphic as a Hecke module to a certain subspace of the space $\frkC_{(2\kap+1)/2}^{(1)}$ by the works of Shimura and Waldspurger among others. 
Theorems 9.4, 10.1 and 13.5 of \cite{HI} give a Hecke eigenform $h\in\frkC_{(2\kap+1)/2}^{(1)}$ whose Fourier expansion is given by 
\[h(\calz)=\sum_{t\in\frko\cap F^\times_+,\;(-1)^nt\equiv\Box\pmod 4}c(t)\bfe_\infty(t\calz)\frkf_{(-1)^nt}^{\kap-1/2}\prod_{v\notin\frkS_\infty}\Psi_v((-1)^nt,q_v^{s_{v}}). \]
A holomorphic function $\calf$ on $\frkH^d_n$ is called a Hilbert-Siegel cusp form of weight $\ell$ with respect to $\Gam_n[\frkd]$ if $\calf|_\ell\gam=\calf$ for all $\gam\in\Gam_n[\frkd]$ and $\calf|_\ell\gam$ has a Fourier expansion of the form (\ref{tag:11}) for all $\gam\in G_n(F)$. 
We extend $c$ to a function on $F^\times_+/F_+^{\times 2}$ when $D\not\simeq\Mat_2(F)$. 
One can obtain the following explicit result from Theorem \ref{thm:61} and Lemma \ref{lem:34}, which is a strengthening of Theorems 3.2 and 3.3 of \cite{I2}.  

\begin{theorem}\label{thm:101}
Notations and assumptions being as above, we define a function $\calf:\frkH^d_n\to\CC$ by 
\[\calf(Z)=\sum_{B\in \scrr_n^+}c(\nu(2B))\bfe_\infty(\tau(BZ))\frkf_B^{\kap-1/2}\prod_{v\notin\frkS_\infty}\wtl{F}_v(B,q_v^{s_v}). \]
Then $\calf$ defines a cuspidal Hecke eigenform of weight $\kap+n$ with respect to $\Gam_n[\frkd]$ whose standard (partial) $L$-function is equal to   
\[\zet^{\frke^D}_F(s)\prod_{i=1}^{2n}L^{\frke^D}\left(s+n-i+\frac{1}{2},\pi\right), \]
where $\zet^{\frke^D}_F$ and $L^{\frke^D}$ are defined with Euler factors for primes in $\frke^D$ removed. 
\end{theorem}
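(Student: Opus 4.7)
The strategy is to translate Theorem~\ref{thm:61} into classical language by inserting an appropriate spherical test vector at every finite place and using Lemma~\ref{lem:34} to compute the resulting unramified Whittaker coefficients explicitly.

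First I would identify the adelic constants $\{c_t\}\in C^{(1)}_{(2\kap+1)/2}(\mu_\bff)$ attached to $h$. Writing $h=\calf_{(2\kap+1)/2}(h_0,\{c_t\})$ for a normalized spherical vector $h_0\in A^{\psi_\bff}_1(\mu_\bff)$, Lemma~\ref{lem:34} applied with $n=1$ expresses $w^{\mu_v}_t(h_{0,v})$ as a normalized polynomial in $q_v^{-s_v}$. After collecting the normalizing $L$-factors one checks that this polynomial is $\Psi_v((-1)^n t,q_v^{s_v})$, while the archimedean weight normalization together with the conductor $\frkd^{(-1)^n t}$ produces $\frkf_{(-1)^n t}^{\kap-1/2}$. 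Comparing with the displayed Fourier expansion of $h$ therefore yields $c_t=c((-1)^n t)$ (extended trivially on $F^\times_+/F^{\times 2}_+$ when $D$ is nonsplit).

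Next I would apply Theorem~\ref{thm:61} with $\eta\in F^\times_+$ chosen so that $c_{\eta\nu(B)}=c(\nu(2B))$ (absorbing the factor $2^{2n}$ arising from $\nu(2B)=2^{2n}\nu(B)$), and evaluate on the spherical vector $f=\otimes_vf_v\in A_n(\mu^{}_\bff\hat\chi^{(-1)^n\eta}_\bff)$ of Proposition~\ref{prop:51}(\ref{prop:514}) at $\Del=\ono$. At each $v\nmid 2\frke^D$, Lemma~\ref{lem:34} gives
\[w^{\mu_v\hat\chi^{(-1)^n\eta}_v}_B(f_v)=|\nu(B)|_v^{(2n+1)/4}\,F_v(B,q_v^{-s_v-(2n+1)/2}),\]
while at ramified and dyadic primes the extra local factors are absorbed into $\frkN(\frke^D)^{2[(n+1)/2]}$ and the conductor $\frkd^B$. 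Multiplying by the global normalization $|\nu(B)|^{(\kap+n)/2}$ from Theorem~\ref{thm:61} and regrouping the $q_v$-powers using the definitions of $\wtl{F}_v$ and $\frkf_B$ reproduces the displayed Fourier series for $\calf$. Invariance under $\Gam_n[\frkd]$ follows because $f$ is stable under the finite part of $\Gam_n[\frkd]$, and holomorphy together with cuspidality are inherited from $\calf\in\frkC^n_{\kap+n}$.

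Finally, for the Hecke eigenvalue and the standard $L$-function statement, $\calf$ spans the spherical line at each $v\nmid\frke^D$ inside $A_n(\mu^{}_\bff\hat\chi^{(-1)^n\eta}_\bff)$. By the discussion in \S\ref{ssec:62}, $A_n(\mu_v)\simeq I^+_n(I(\mu_v,\mu_v^{-1}))$ is the spherical Langlands quotient of the representation parabolically induced from $\pi_v\otimes\alp^{n-i+1/2}$, $i=1,\dots,2n$, so its local standard $L$-factor is $\zet_v(s)\prod_{i=1}^{2n}L(s+n-i+\frac{1}{2},\pi_v)$; taking the product over $v\notin\frke^D\cup\frkS_\infty$ yields the asserted partial $L$-function. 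The main obstacle throughout is the careful bookkeeping of normalizations---tracking the various powers of $|\nu(B)|$, the differents $\frkd$ and $\frkd^B$, the factor $q_v^{s_v f_v^B}$ produced by passing from $F_v$ to $\wtl{F}_v$, and the ramification contributions from $\frke^D$---so that the adelic formula of Theorem~\ref{thm:61} reduces to exactly the displayed classical expression.
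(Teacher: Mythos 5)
Your proposal follows exactly the route the paper indicates (the paper gives no detailed proof of Theorem \ref{thm:101}, only the remark that it follows from Theorem \ref{thm:61} and Lemma \ref{lem:34}): evaluate the adelic lift on the $\Gam_n[\frkd]$-invariant test vector, compute the local degenerate Whittaker coefficients via Lemma \ref{lem:34} to recover $\frkf_B^{\kap-1/2}\prod_v\wtl{F}_v(B,q_v^{s_v})$, and read off the standard $L$-factor from the Langlands-quotient description of $A_n(\mu_v)$ in \S\ref{ssec:62}. One small correction: the identification of the adelic constants $\{c_t\}$ with the classical $c(t)$ together with the factors $\frkf_{(-1)^nt}^{\kap-1/2}\Psi_v((-1)^nt,q_v^{s_v})$ rests on the Kohnen plus-space results of \cite{HI} quoted just before the theorem, not on ``Lemma \ref{lem:34} with $n=1$,'' which concerns the quaternionic group $G_1$ rather than $\Mp(W_1)$.
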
 

When $n=1$ and $F=\QQ$, this lifting is explicitly computed in \cite{O,S}. 



\begin{thebibliography}{99}

\bibitem{A}
{J.~Arthur}, 
{Unipotent automorphic representations: conjectures\/}, 
Orbites unipotentes et repr\'{e}sentations, \Roman{two}. 
Ast\'{e}risque No. 171--172 (1989), 13--71. 


\bibitem{G}
{W.~T.~Gan}, 
{The Saito-Kurokawa space of $\mathrm{PGSp}_4$ and its transfer to inner forms\/}, 
Eisenstein series and applications, 87--123, 
Progr. Math., {\bf 258}, 
Birkh\"{a}user Boston, Boston, MA, 2008.



\bibitem{HI}
{K.~Hiraga} and {T.~Ikeda}, 
{On the Kohnen plus space for Hilbert modular forms of half-integral weight \Roman{one}\/}, 
Compos. Math. {\bf 149} 
(2013), 
no. 12, 
1963--2010.


\bibitem{Ic}
{A.~Ichino}, 
{On the regularized Siegel-Weil formula\/}, 
J. Reine Angew. Math. {\bf 539} (2001) 201--234. 

\bibitem{Ic2}
{A.~Ichino}, 
{Pullbacks of Saito-Kurokawa lifts\/}, 
Invent. Math. {\bf 162} 
(2005), 
no. 3, 
551--647.

\bibitem{I}
{T.~Ikeda}, 
{On the theory of Jacobi forms and the Fourier-Jacobi coefficients of Eisenstein series\/},
J. Math. Kyoto Univ.
{\bf 34}
(1994) 
615--636.

\bibitem{I2}
{T.~Ikeda}, 
{On the lifting of elliptic cusp forms to Siegel cusp forms of degree $2n$\/},
Ann. of Math.
{\bf 154}
(2001)  
641--681. 



\bibitem{I5}
{T.~Ikeda}, 
{On the functional equation of the Siegel series\/}, 
preprint. 

\bibitem{J}
{C.~Jantzen}, 
{Degenerate principal series for symplectic and odd-orthogonal groups\/}, 
Mem. Amer. Math. Soc. {\bf 124} (1996), no. 590, viii+100 pp. 

\bibitem{Ka}
{M.~Karel}, 
{Functional equations of Whittaker functions on $p$-adic groups\/}, 
Amer. J. Math. 
{\bf 101} 
(1979), 
no. 6, 
1303--1325. 



\bibitem{K2}
{S.~S.~Kudla},
{Splitting metaplectic covers of dual reductive pairs\/}, 
Israel J. Math. 
{\bf 87} 
(1994), 
no. 1-3, 
361--401. 

\bibitem{KR}
{S.~S.~Kudla} and {S.~Rallis}, 
{Ramified degenerate principal series representations for $Sp(n)$\/}, 
Israel J. Math. 
{\bf 78} 
(1992),
no. 2-3, 
209--256. 

\bibitem{MVW}
{C.~Moeglin}, {M.-F.~Vigneras} and {C.-L.~Waldspurger},
{Correspondence de Howe sur un corps $p$-adique\/}, 
Springer Lec. notes in Math. 
{\bf 1291}, 
1987. 

\bibitem{O}
{T.~Oda}, 
{On modular forms associated with indefinite quadratic forms of signature $(2,n-2)$\/}, 
Math. Ann. 
{\bf 231} 
(1997) 
97--144. 

\bibitem{PS}{I.~Piatetski-Shapiro}, 
{Work of Waldspurger\/}, 
Lie group representations, \Roman{two} (College Park, Md., 1982/1983), 280--302, Springer Lec. notes in Math. 
{\bf 1041}, 
1984. 




\bibitem{Sh3}{G.~Shimura}, 
{Arithmeticity in the theory of automorphic forms\/}, 
Math. Surv. Monog. 
{\bf 82}, 
Amer. Math. Soc. 2000. x+302 pp. 

\bibitem{Sh4}{G.~Shimura}, 
{Arithmetic and analytic theories of quadratic forms and Clifford groups\/}, 
Math. Surv. Monog. 
{\bf 109}, 
Amer. Math. Soc. 2004. x+275 pp. 

\bibitem{S}{T.~Sugano}, {On holomorphic cusp forms on quaternion unitary groups of degree $2$\/}, J. Fac. Sci. Univ. Tokyo Sect. IA Math. {\bf 31} (1985), no. 3, 521--568. 

\bibitem{Sw}{W.~J.~Sweet, Jr.}, 
{Functional equations of $p$-adic zeta integrals and representations of the metaplectic group\/}, 
preprint (1995)

\bibitem{T}{M.~Tadi\'{c}}, 
{An external approach to unitary representations\/}, 
Bull. Amer. Math. Soc. (N.S.) {\bf 28}
(1993), 
no. 2, 
215--252.

\bibitem{W}
{J.-L.~Waldspurger}, 
{Correspondance de Shimura\/}, 
J. Math. Pures et Appl. (9)
{\bf 59} 
(1980), no. 1,
1--132. 

\bibitem{W2}
{J.-L.~Waldspurger}, 
{Correspondance de Shimura et quaternions\/}, 
Forum Math. 
{\bf 3} 
(1991), no. 3, 
219--307. 




\bibitem{Y2}
{S.~Yamana}, 
{Degenerate principal series representations for quaternionic unitary groups\/}, 
Israel J. Math. 
{\bf 185} 
(2011), 
77--124.

\bibitem{Y3}
{S.~Yamana}, 
{$L$-functions and theta correspondence for classical groups\/}, 
Invent. Math. {\bf 196} (2014), no. 3, 651--732. 

\bibitem{Y4}
{S.~Yamana}, 
{Siegel series for skew hermitian forms over quaternion algebras\/}, 
preprint. 

\end{thebibliography}
\end{document}